\title{Fragility and Robustness in Mean-payoff Adversarial Stackelberg Games} %TODO Please add
\titlerunning{Fragility and Robustness in Mean-payoff Adversarial Stackelberg Games} %TODO optional, please use if title is longer than one line
\author{Mrudula Balachander}{Universit\'{e} libre de Bruxelles, Belgium}{}{}{}
\author{Shibashis Guha}{Tata Institute of Fundamental Research, India}{}{}{}
\author{Jean-François Raskin}{Universit\'{e} libre de Bruxelles, Belgium}{}{}{}
\authorrunning{Balachander, Guha, Raskin} %TODO mandatory. First: Use abbreviated first/middle names. Second (only in severe cases): Use first author plus 'et al.'
\keywords{mean-payoff, Stackelberg games, synthesis} %TODO mandatory; please add comma-separated list of keywords
\newcounter{casenum}
\newenvironment{caseof}{\setcounter{casenum}{1}}{\vskip.5\baselineskip}
\newcommand{\case}[2]{\vskip.5\baselineskip\par\noindent {\bfseries Case \arabic{casenum}:} #1\\#2\addtocounter{casenum}{1}}
\newcommand{\track}[1]{{\textcolor{red}{#1}}}
\newcommand{\btrack}[1]{{\textcolor{blue}{#1}}}
\newcommand{\CS}[1][S]{\mathbb{C}(#1)}
\newcommand{\CShat}{\mathbb{C}(\widehat{S})}
\newcommand{\CH}[1]{{\sf CH}(#1)}
\newcommand{\Fmin}[1]{{\sf F_{\min}}{(#1)}}
\newcommand{\poly}{{\sf poly}}
\newcommand{\first}{{\sf first}}
\newcommand{\last}{{\sf last}}
\newcommand{\val}{{\sf Val}}
\newcommand{\asv}{\asvgen^{\epsilon}}
\newcommand{\asvgen}{\mathsf{ASV}}
\newcommand{\mpinf}{\underline{\mpgen}}
\newcommand{\mpgen}{\mathsf{MP}}
\newcommand{\plays}{\mathsf{Plays}_{\mathcal{A}}}
\newcommand{\fm}{\mathsf{FM}}
\newcommand{\ml}{\mathsf{ML}}
\newcommand{\br}{\brgen^{\epsilon}}
\newcommand{\brgen}{\mathsf{BR}_1}
\newcommand{\outv}{\mathsf{Out}_v}
\newcommand{\outgen}{\mathsf{Out}}
\newcommand{\hist}{\mathsf{Hist}}
\newcommand{\zug}[1]{\langle #1  \rangle}
\newcommand{\stam}[1]{}
\newcommand{\mychapter}{{section}}
\begin{document}

\maketitle

%TODO mandatory: add short abstract of the document
\begin{abstract}
Two-player mean-payoff Stackelberg games are nonzero-sum infinite duration games played on a bi-weighted graph by Leader (Player 0) and Follower (Player 1). Such games are played sequentially: first, Leader announces her strategy, second, Follower chooses his best-response. If we cannot impose which best-response is chosen by Follower, we say that Follower, though strategic, is adversarial towards Leader. The maximal value that Leader can get in this nonzero-sum game is called the adversarial Stackelberg value (ASV) of the game.

We study the robustness of strategies for Leader in these games against two types of deviations: (i) Modeling imprecision - the weights on the edges of the game arena may not be exactly correct, they may be delta-away from the right one. (ii) Sub-optimal response - Follower may play epsilon-optimal best-responses instead of perfect best-responses. First, we show that if the game is zero-sum then robustness is guaranteed while in the nonzero-sum case, optimal strategies for ASV are fragile. Second, we provide a solution concept to obtain strategies for Leader that are robust to both modeling imprecision, and as well as to the epsilon-optimal responses of Follower, and study several properties and algorithmic problems related to this solution concept.

\end{abstract}

\section{Introduction} 
  \label{sec:intro}
  \noindent Stackelberg games \cite{S34} were first introduced to model strategic interactions among rational agents in markets that consist of Leader and Follower(s).
Leader in the market makes her strategy public and Follower(s) respond by playing an optimal response to this strategy.
Here, we consider Stackelberg games as a framework for the synthesis of reactive programs \cite{PR89,BCHPRRS16}.
These programs maintain a continuous interaction with the environment in which they operate; they are \emph{deterministic functions} that given a history of interactions so far choose an action.
Our work is a contribution to \emph{rational synthesis} \cite{FKL10,KPV16}, a nonzero-sum game setting where both the program and the environment are considered as rational agents that have their own goals.
While Boolean $\omega$-regular payoff functions have been studied in \cite{FKL10,KPV16}, here we study the quantitative long-run average (mean-payoff) function. 

We illustrate our setting with the example of a game graph as shown in \cref{fig:robust-game-example}. 
The set $V$ of vertices is partitioned into $V_0$ (represented by circles) and $V_1$ (represented by squares) that are owned by Leader (also called Player~0) and Follower (also called Player~1) respectively.
In the tuple on the edges, the first element is the payoff of Leader, while the second one is the payoff of Follower (weights are omitted if they are both equal to 0).
Each player's objective is to maximize the long run average of the payoffs that she receives (a.k.a. mean-payoff). 
In the adversarial Stackelberg setting, Player~0 (Leader) first announces how she will play then Player~1 (Follower) chooses one of his best-responses to this strategy. Here, there are two choices for Player~0: $L$ or $R$. As Player~1 is assumed to be rational, Player~0 deduces that she must play $L$. Indeed, the best response of Player~1 is then to play $LL$ and the reward she obtains is $10$. This is better than playing $R$, for which the best-response of Player~1 is $RL$, and the reward is $8$ instead of $10$. Note that if there are several possible best responses for Player~1, then we consider the worst-case: Player~0 has no control on the choice of best-responses by Player~1.

\subparagraph{{\bf Quantitative models and robustness}}
The study of adversarial Stackelberg games with mean-payoff objectives has been started in~\cite{FGR20} with the concept of {\em adversarial Stackelberg value} ($\asvgen$ for short). $\asvgen$ is the best value that Leader can obtain by fixing her strategy and facing any rational response by Follower. As this setting is quantitative, it  naturally triggers questions about {\em robustness} that were left open in the above paper.

Robustness is a {\em highly desirable} property of quantitative models: small changes in the quantities appearing in a model $M$ (e.g. rewards, probabilities, etc.) should have small impacts on the predictions made from $M$, see e.g.~\cite{BCGHHJKK14}.
Robustness is thus crucial because it accounts for modelling imprecision that are inherent in quantitative modelling and those imprecision may have important consequences. 
For instance, a reactive program synthesized from a model $M$ should provide acceptable performances if it is executed in a real environment that differ slightly w.r.t. the quantities modeled in $M$.

Some classes of models are robust. 
For instance, consider two-player {\em zero-sum} mean-payoff games where players have fully antagonistic objectives.
The \emph{value} of a two-player zero-sum mean-payoff $\mathcal{G}$ is the maximum mean-payoff that Player~0 can ensure against all strategies of Player~1.
A strategy $\sigma_0$ that enforces the optimal value $c$ in $\mathcal{G}$ is {\em robust} in the following sense. 
Let $\mathcal{G}^{\pm \delta}$ be the set of games obtained by increasing or decreasing the weights on the edges of $\mathcal{G}$ by at most $\delta$.
Then for all $\delta > 0$, and for all $\mathcal{H} \in \mathcal{G}^{\pm \delta}$, the strategy $\sigma_0$ ensures in $\mathcal{H}$ a mean-payoff of at least $c-\delta$ for Player~0 against any strategy of Player~1 (Theorem~\ref{robustZS}). So slight changes in the quantities appearing in the model have only a small impact on the worst-case value enforced by the strategy. 
\begin{comment}
For instance, the worst-case value in two player zero-sum mean-payoff games is {\em robust} in the following sense: if Player~0 has a strategy $\sigma_0$ to force a value $c$ against all the strategies of Player~1 in a game $G$, then for all $\delta >0$, for all games $H \in G^{\pm \delta}$ obtained from $G$ by keeping the underlying graph same but with weights that can be increased or decreased by upto $\delta$, then the worst-case optimal strategy $\sigma_0$ computed from $G$ has a worst-case value in $H$ which is guaranteed to be at least $c - \delta$ (see Theorem~\ref{robustZS}). 
\end{comment}

The situation is more complex and less satisfactory in {\em nonzero-sum} games. Strategies that enforce the $\asvgen$ proposed in~\cite{FGR20} may be {\em fragile}: slight differences in the weights of the game, or in the optimality of the response by Player~1, may lead to large differences in the value obtained by the strategy.
We illustrate these difficulties on our running example. The strategy of Player~0 that chooses $L$ in $v_0$ ensures her a payoff of $10$ which is the $\asvgen$.
Indeed, the unique best-response of Player~1 against $L$ is to play $LL$ from $v_1$. However, if the weights in $\mathcal{G}$ are changed by up to $\pm \delta=\pm 0.6$
then there is a game $\mathcal{H} \in \mathcal{G}^{\pm \delta}$ in which the weight on the self-loop over vertex $v_4$ changes to e.g. $9.55$, and the weight on the self-loop over $v_3$ changes to e.g. $9.45$, and the action $LR$ becomes better for Player~1.
So the value of $L$ in $\mathcal{H}$ against a rational adversary is now $0$ instead of $10$. Thus a slight change in the rewards for Player~1 (due to e.g. modelling imprecision) may have a dramatic effect on the value of the optimal strategy $L$ computed on the model $\mathcal{G}$ when evaluated in $\mathcal{H}$.

\subparagraph{{\bf Contributions}}

As a remedy to this situation, we provide an alternative notion of value that is better-suited to synthesize strategies that are robust against perturbations. We consider two types of perturbations. First, the strategies computed for this value are robust against {\em modeling imprecision}: if a strategy has been synthesized from a weighted game graph with weights that are possibly slightly wrong, the value that this strategy delivers is guaranteed to be close to what the model predicts. Second,  strategies computed for this value are robust against {\em sub-optimal responses}: small deviations from the best-response by the adversary have only limited effect on the value guaranteed by the strategy.

Our solution relies on relaxing the notion of best-responses of Player~1 \emph{in the original model $\mathcal{G}$}. More precisely, we define the $\epsilon$-adversarial Stackelberg  value ($\asv$, for short) as the value that Leader can enforce against all $\epsilon$-best responses of Follower. Obviously, this directly accounts for the second type of perturbations. But we show that, additionally, this accounts for the first type of perturbations: if a strategy $\sigma_0$ enforces an $\asv$ equal to $c$ then for all games $\mathcal{H} \in G^{\pm \frac{\epsilon}{2}}$, we have that $\sigma_0$ enforce a value larger than $c-\epsilon$ in $\mathcal{H}$ (Theorem~\ref{thm:combined-robustness} and Theorem~\ref{robustNZS}). 

We illustrate this by considering again the example of Figure~\ref{fig:robust-game-example}. 
Here, if we consider that the adversary can play $2\delta=1.2$-best responses
instead of best responses only, then the optimal strategy of Player~0 is now $R$ and it has a $\asv$ equal to $8$. This value is guaranteed to be robust for all games $\mathcal{H} \in \mathcal{G}^{\pm \delta}$ as $R$ is guaranteed to enforce a payoff that is larger than $8-\delta$ in all games in $\mathcal{G}^{\pm \delta}$.  
Stated otherwise, we use the notion of $\asv$ in the original game to find a strategy for Player~0 that she uses in the perturbed model while playing against a rational adversary.
Thus we show that in the event of modelling imprecision resulting in a perturbed model, the solution concept to be used is $\asv$ instead of $\asvgen$ since the former provides strategies that are robust to such perturbations.

\begin{figure}[t]
\begin{minipage}{1\textwidth}
\centering
\scalebox{.8}{
    \begin{tikzpicture}[->,>=stealth',shorten >=1pt,auto,node distance=1.4cm,
                        semithick, squarednode/.style={rectangle, draw=blue!60, fill=blue!5, very thick, minimum size=7mm}, scale=.8]
    
      \tikzstyle{every state}=[fill=white,draw=black,text=black,minimum size=8mm]
      \node[state, draw=red!60, fill=red!5]                              (A)                {$v_0$};
      \node[squarednode]                                                 (A_1) [right of=A] {$v_2$};
      \node[state, draw=red!60, fill=red!5]                              (B_1) [right of=A_1]  {$v_5$};
      \node[state, draw=red!60, fill=red!5]                              (C_1) [right of=B_1] {$v_6$};
      \node[squarednode]                                                 (A_2) [left of=A] {$v_1$};
      \node[state, draw=red!60, fill=red!5]                              (B_2) [left of=A_2]  {$v_3$};
      \node[state, draw=red!60, fill=red!5]                              (C_2) [left of=B_2] {$v_4$};
      \node[draw=none, fill=none, minimum size=0cm, node distance = 1.2cm] (D) [above of=A] {$start$};
      \path (A)   edge [, above ]          node {$R$}       (A_1)
                  edge [, above ]          node {$L$}       (A_2)
            (A_1) edge [, above ]          node {$RL$}      (B_1)
                  edge [bend right, below] node {$RR$}      (C_1)
            (A_2) edge [, above ]          node {$LL$}      (B_2)
                  edge [bend left]         node {$LR$}      (C_2)
            (B_1) edge [loop above]        node {$(8, 9)$}  (B_1)
            (C_1) edge [loop above]        node {$(4, 5)$}  (C_1)
            (B_2) edge [loop above]        node {$(10, 10)$} (B_2)
            (C_2) edge [loop above]        node {$(0, 9)$}  (C_2)
            (D)   edge [left]              node {}          (A);
    \end{tikzpicture}}
    \caption{A game in which the strategy of Leader that maximizes the adversarial Stackelberg is fragile while the strategy of Leader that maximizes the $\epsilon=1$-adversarial Stackelberg value is robust.\label{fig:robust-game-example}}
    
\end{minipage}
\end{figure}

\begin{table}[hbt]
\scalebox{0.85}{
\begin{tabular}{|c|c|c|c|c|}
\hline
 &
  Robustness &
  Threshold Problem &
  Computing ASV &
  Achievability \\ \hline
\begin{tabular}[c]{@{}c@{}}Adversarial \\ best responses \\ of Follower\end{tabular} &
    \begin{tabular}[c]{@{}c@{}}{ \textbf{No}} \\ \\ { \textbf{[Prop \ref{fragile}, Prop \ref{fragile-tolerance}]}} \end{tabular} &
  \begin{tabular}[c]{@{}c@{}}{ {\sf NP} \cite{FGR20}} \\ \\ { \textbf{Finite}} {\textbf{Memory}} \\ { \textbf{Strategy}} { \textbf{[Thm \ref{LemFinMemWitnessASVNonEps}]}}
  \\ \\
  {\textbf{Memoryless}} \\ {\textbf{Strategy}} 
  {\textbf{[Thm \ref{ThmNPHardMemlessStrat}]}}
  \end{tabular} &
  \begin{tabular}[c]{@{}c@{}}{ Theory of Reals \cite{FGR20}} \end{tabular} &
  { No \cite{FGR20}} \\ \hline
\begin{tabular}[c]{@{}c@{}}Adversarial\\ $\epsilon$-best responses \\ of  Follower\end{tabular} &
    \begin{tabular}[c]{@{}c@{}}{ \textbf{Yes}} \\ \\ { \textbf{[Thm \ref{robustNZS}, Thm \ref{thm:combined-robustness}]}} \end{tabular} &
  { \textbf{\begin{tabular}[c]{@{}c@{}}{\sf \bf NP {[}Thm \ref{ThmNpForASV}{]}}\\ \\ Finite Memory \\ Strategy {[}Thm \ref{ThmNpForASV}{]}
  \\ \\
  {\textbf{Memoryless}} \\ {\textbf{Strategy}} 
  {\textbf{[Thm \ref{ThmNPHardMemlessStrat}]}} \end{tabular}}} &
  { \textbf{\begin{tabular}[c]{@{}c@{}}Theory of Reals\\  {[Thm \ref{ThmComputeASV}]} \\ \\ Solving LP \\ in {\sf EXPTime} \\ {[Thm \ref{ThmComputeASVExpTime}]} \end{tabular}}} &
  { \textbf{\begin{tabular}[c]{@{}c@{}}Yes {[Thm \ref{ThmAchiev}]} \\ \\ (Requires \\ Infinite  Memory\\  {[}Thm \ref{ThmExNeedInfMem}{]}) \end{tabular}}} \\ \hline
\end{tabular}}
% \begin{center}
    \caption{Summary of our results}
% \end{center}
\vspace{-.3cm}
\label{tab:results}
\end{table}

In addition to proving the fragility of the original concept introduced in~\cite{FGR20} (\cref{fragile}) and the introduction of the new notion of value $\asv$ that is robust against modelling imprecision (\cref{robustNZS}), we provide algorithms to handle $\asv$.
First, we show how to decide the threshold problem for $\asv$ in nondeterministic polynomial time and that finite memory strategies suffice (\cref{ThmNpForASV}). Second, we provide an algorithm to compute $\asv$ when $\epsilon$ is fixed (\cref{ThmComputeASV}).
Third, we provide an algorithm that given a threshold value $c$, computes the largest possible $\epsilon$ such that $\asv > c$ (\cref{thm:ComputeEpsilon}). These three results form the core technical contributions of this paper and they are presented in \cref{sec:ThresholdProblem} and \cref{sec:ComputeASV}.
% Sections~4 and Sections~5.
Additionally, in \cref{sec:additional_prop}, we show that $\asv$ is always achievable (\cref{ThmAchiev}), which is in contrast to the case in \cite{FGR20} where Follower only plays best-responses.
Finally, we provide results that concern the memory needed for players to play optimally, and complexity results for subcases (for example when Players are assumed to play memoryless).
Our contributions have been summarized in \cref{tab:results}, where the results obtained in this work are in bold.

\subparagraph{Related Works}
Stackelberg games on graphs have been first considered in \cite{FKL10}, where the authors study rational synthesis for $\omega$-regular objectives with co-operative Follower(s).
In \cite{FGR20}, Stackelberg mean-payoff games in adversarial setting, and  Stackelberg discounted sum games in both adversarial and co-operative setting have been considered. 
However, as pointed out earlier, the model of \cite{FGR20} is not robust to perturbations.
In \cite{GS14}, mean-payoff Stackelberg games in the co-operative setting have been studied. 
In \cite{GS15}, the authors study the effects of limited memory on both Nash and Stackelberg (or leader) strategies in multi-player discounted sum games.
Incentive equilibrium over bi-matrix games and over mean-payoff games in a co-operative setting have been studied in \cite{GS18} and \cite{GSTDP16} respectively.
In \cite{KPV16}, adversarial rational synthesis for $\omega$-regular objectives have been studied.
In \cite{CFGR16}, precise complexity results for various $\omega$-regular objectives have been established for both adversarial and co-operative settings.
In \cite{CHJ06,BMR14}, secure Nash equilibrium has been studied, where each player first maximises her own payoff, and then minimises the payoff of the other player;
Player~0 and Player~1 are symmetric there unlike in Stackelberg games. For discounted sum objectives, in \cite{FGR20}, the gap problem has been studied.
Given rationals $c$ and $\delta$, a solution to the gap problem can decide if $\asvgen > c+\delta$ or $\asvgen < c-\delta$.
The threshold problem was left open in \cite{FGR20}, and is technically challenging.
We leave the case of analysing robustness for discounted sum objective
for future work.

\section{Preliminaries}
  \label{sec:prelim}
  \noindent We denote by $\mathbb{N}$, $\mathbb{N}^+$, $\mathbb{Q}$, and $\mathbb{R}$ the set of naturals, the set of naturals excluding $0$, the set of rationals, and the set of reals respectively.

\subparagraph{Arenas} An (bi-weighted) arena $\mathcal{A}=(V, E, \langle V_0, V_1 \rangle, w_0, w_1)$ consists of a finite set $V$ of vertices, a set $E \subseteq V \times V$ of edges such that for all $v \in V$ there exists $v' \in V$ and $(v, v') \in E$, a partition $\langle V_0, V_1 \rangle$ of $V$, where $V_0$ (resp. $V_1$) is the set of vertices for Player~0 (resp. Player~1), and two edge weight functions $w_0 : E \rightarrow \mathbb{Z}$, $w_1 : E \rightarrow \mathbb{Z}$. In the sequel, we denote the maximum absolute value of a weight in $\mathcal{A}$ by $W$.
A strongly connected component of a directed graph is a subgraph that is strongly connected. 
In the sequel, unless otherwise mentioned, we denote by $SCC$ a subgraph that is strongly connected, and which may or may not be maximal.
% \track{We refer to $w_0$ and $w_1$ as the weight functions assigning weights to the the \emph{first dimension} and the \emph{second dimension} of an edge respectively.
% We assume in the sequel that these weights are given in binary.}

\subparagraph{Plays and histories} A play in $\mathcal{A}$ is an infinite sequence of vertices $ \pi = \pi_0\pi_1 \dots \in V^{\omega}$ such that for all $k \in \mathbb{N}$, we have $(\pi_k, \pi_{k+1}) \in E$.
%  We denote by $\plays$ the set of plays in $\mathcal{A}$, omitting the subscript $\mathcal{A}$ when the underlying arena is clear from the context. 
%  Given $ \pi = \pi_0\pi_1\dots \in \plays$ and $k \in \mathbb{N}$, the prefix $\pi_0\pi_1\dots\pi_k$ of $\pi$ (resp. suffix $\pi_k\pi_{k+1}\dots$ of $\pi$) is denoted by $\pi_{\leqslant k}$ (resp. $\pi_{\geq k}$). 
 A \textit{history} in $\mathcal{A}$ is a (non-empty) prefix of a play in $\mathcal{A}$. 
%  The length $|h|$ of an history $h=\pi_{\leqslant k}$ is the number $|h|=k$ of its edges.
Given $ \pi = \pi_0\pi_1\dots \in \plays$ and $k \in \mathbb{N}$, the prefix $\pi_0\pi_1\dots\pi_k$ of $\pi$ is denoted by $\pi_{\leqslant k}$.
We denote by $\inf(\pi)$ the set of vertices $v$ that appear infinitely many times along $\pi$, i.e., $\inf(\pi) = \{v \in V \mid \forall i \in \mathbb{N}\cdot \exists j \in \mathbb{N}, j \geqslant i: \pi(j) = v \}$. It is easy to see that $\inf(\pi)$ forms an SCC in the underlying graph of the arena $\mathcal{A}$.
 We denote by $\plays$ and $\hist_{\mathcal{A}}$ the set of plays and the set of histories in $\mathcal{A}$ respectively; the symbol $\mathcal{A}$ is omitted when clear from the context. Given $i \in \{0,1\}$, the set $\hist^i_{\mathcal{A}}$ denotes the set of histories such that their last vertex belongs to $V_i$. We denote the first vertex and the last vertex of a history $h$ by $\first(h)$ and $\last(h)$ respectively. 
%  We write $h\leqslant\pi$ whenever $h$ is a prefix of $\pi$.
% A play $\pi$ is called a lasso if it is obtained as the concatenation a history $h$ concatenated with the infinite repetition of another history $l$, i.e. $\pi = h.l^{\omega}$ with $h, l \in \hist_{\mathcal{A}}$ (notice that $l$ is not necessary a simple cycle). The \textit{size} of a lasso $h.l^{\omega}$ is equal to $|h.l|$. Given the vertex $v \in V$ in the arena $\mathcal{A}$, we denote by $\textit{Succ}(v) = \{v' | (v,v') \in E \}$ the set of successors of $v$ and by $\textit{Succ}^*$ the transitive closure of $\textit{Succ}$.

\subparagraph{Games} A \textit{mean-payoff game} $\mathcal{G} = (\mathcal{A},\langle \mpinf_0, \mpinf_1\rangle)$ consists of a bi-weighted arena $\mathcal{A}$, payoff functions $\mpinf_0 : \plays \rightarrow \mathbb{R}$ and $\mpinf_1 : \plays \rightarrow \mathbb{R}$ for for Player~0 and Player~1 respectively which are defined as follows. Given a play $\pi \in \plays$ and $i \in \{0,1\}$, the payoff $\mpinf_i(\pi)$ is given by $\mpinf_i(\pi) = \liminf\limits_{k \to \infty} \frac{1}{k}w_i(\pi_{\leqslant k}) $, where the weight $w_i(h)$ of a history $h \in \hist$ is the sum of the weights assigned by $w_i$ to its edges. 
In our definition of the mean-payoff, we have used $\liminf$ as the limit of the successive average may not exist.
We will also need the $\limsup$ case for technical reasons. Here is the formal definition together with its notation: $\overline{\mpgen}_i(\pi) = \limsup\limits_{k \to \infty} \frac{1}{k}w_i(\pi_{\leqslant k})$.
The size of the game $\mathcal{G}$, denoted $|\mathcal{G}|$, is the sum of the number of vertices and edges appearing in the arena $\mathcal{A}$.

\subparagraph{Unfolding of a game} Let $V$ and $E$ be respectively the set of vertices and the set of edges of $\mathcal{G}$.
The \emph{unfolding} of the game $\mathcal{G}$ starting from a vertex $v \in V$ is a tree $T_v(\mathcal{G})$ of infinite depth with its root $v$ such that there is a one-to-one correspondence between the set of plays $\pi$ of $\mathcal{G}$ with $first(\pi)=v$ and the branches of $T_v(\mathcal{G})$.
Every node $v_1$ of $T_v(\mathcal{G})$ belongs to $V$, and there is an edge from $v_1$ to $v_2$ in $T_v(\mathcal{G})$ iff $(v_1, v_2) \in E$.
Every node $p \in V^{+}$ of $T_v(\mathcal{G})$ is a play $p=v_1 \dots v_n$ in $\mathcal{G}$, where $v_1=v$.
There is an edge from $p=v_1 \dots v_n$ to $p'= v_1 \dots v_n v'_n$ iff $(v_n, v'_n) \in E$.

\subparagraph{Strategies and payoffs} A strategy for Player $i \in \{0,1\}$ in the game $\mathcal{G}$
% = (\mathcal{A},\langle \mpgen_0, \mpgen_1\rangle)$ 
is a function $\sigma: \hist^i_{\mathcal{A}} \rightarrow V$ that maps histories ending in a vertex $v \in V_i$ to a successor of $v$. The set of all strategies of Player $i \in \{0,1\}$ in the game $\mathcal{G}$ is denoted by $\Sigma_i(\mathcal{G})$, or $\Sigma_i$ when $\mathcal{G}$ is clear from the context.
A strategy has memory $\mathsf{M}$ if it can be realized as the output of a state machine 
% \todo{How can the definition with "finite state machine" be adequate for defining an infinite memory strategy?} 
with $\mathsf{M}$ states. A memoryless strategy is a function that only depends on the last element of the history $h \in \hist$. We denote by $\Sigma^{\mathsf{ML}}_i$ the set of memoryless strategies of Player i, and by $\Sigma^{\mathsf{FM}}_i$ her set of finite memory strategies. A \textit{profile} is a pair of strategies $\overline{\sigma} = (\sigma_0, \sigma_1)$, where $\sigma_0 \in \Sigma_0(\mathcal{G})$ and $\sigma_1 \in \Sigma_1(\mathcal{G})$. As we consider games with perfect information and deterministic transitions, any profile $\overline{\sigma}$ yields, from any history $h$, a unique play or \textit{outcome}, denoted $\outgen_h(\mathcal{G}, \overline{\sigma})$. 
Formally, $\outgen_h(\mathcal{G}, \overline{\sigma})$ is the play $\pi$ such that $\pi_{\leqslant|h|-1} = h$ and $\forall k \geqslant |h|-1$ it holds that $\pi_{k+1}=\sigma_i(\pi_{\leqslant k})$ if $\pi_k \in V_i$. 
We write $h\leqslant\pi$ whenever $h$ is a prefix of $\pi$.
The set of outcomes compatible with a strategy $\sigma \in \Sigma_{i\in\{0,1\}}(\mathcal{G})$ after a history $h$ is $\outgen_h(\mathcal{G}, \sigma) = \{ \pi | \exists \sigma'\in \Sigma_{1-i}(\mathcal{G}) $ such that $\pi = \outgen_h(\mathcal{G}, (\sigma,\sigma'))\}$.
% (resp. $\hist_h(\sigma) = \{h' \in \hist(\mathcal{G}) | \pi \in \outgen_h(\mathcal{G}, \sigma), n \in \mathbb{N}:h'= \pi_{\leqslant n}\}$.
Each outcome $\pi \in \mathcal{G} = (\mathcal{A},\langle \mpinf_0, \mpinf_1\rangle)$ yields a payoff $\mpinf(\pi)=(\mpinf_0(\pi),\mpinf_1(\pi))$.
% , where $\mpgen_0(\pi)$ is the payoff for Player~0 and $\mpgen_1(\pi)$ is the payoff for Player~1. We denote with $\mpgen(h, \sigma) = \mpgen(\outgen_h(\mathcal{G}, \overline{\sigma}))$ the payoff of a profile of strategies $\overline{\sigma}$ after a history $h$.

Usually, we consider instances of games such that the players start playing at a fixed vertex $v_0$. Thus, we call an initialized game a pair $(\mathcal{G}, v_0)$, where $\mathcal{G}$ is a game and $v_0 \in V$ is the initial vertex. 
When 
% the initial vertex 
$v_0$ is clear from context, we use $\mathcal{G}$, $\outgen(\mathcal{G}, \overline{\sigma})$, $\outgen(\mathcal{G},\sigma)$, $\mpinf(\overline{\sigma})$ instead of $\mathcal{G}_{v_0}$, $\outgen_{v_0}(\mathcal{G}, \overline{\sigma})$, $\outgen_{v_0}(\mathcal{G},\sigma)$, $\mpinf_{v_0}(\overline{\sigma})$. 
We sometimes 
% simplify further the notation 
omit $\mathcal{G}$ when it is clear from the context.

% \noindent\textbf{Strongly Connected Components $\mathsf{(SCC)}$} 
% In the mathematical theory of directed graphs, a graph is said to be strongly connected if every vertex is reachable from every other vertex. 
% A Strongly Connected Component of a directed graph is a subgraph that is strongly connected. 
% In the sequel, unless otherwise mentioned, we denote by $SCC$ a subgraph that is strongly connected, and which may or may not be maximal.

\subparagraph{Best-responses, $\epsilon$-best-responses} Let $\mathcal{G} = (\mathcal{A},\langle \mpinf_0, \mpinf_1\rangle)$ be a 
% $(\mpgen_0, \mpgen_1)$-game 
two-dimensional mean-payoff game
on the bi-weighted arena $\mathcal{A}$. Given a strategy $\sigma_0$ for Player~0, we define 
%two sets of strategies for Player~1:
\begin{enumerate}
\item 
Player~1's best responses to $\sigma_0$, denoted by $\brgen(\sigma_0)$, as:
\begin{equation*}
    \{\sigma_1 \in \Sigma_1 \mid \forall v \in V . \forall \sigma'_1 \in \Sigma _1: 
    \mpgen_1(\outv(\sigma_0,\sigma_1)) \geqslant \mpgen_1(\outv(\sigma_0,\sigma'_1))\}
% This is for double column
    % \begin{split}
    % \{\sigma_1 \in \Sigma_1 & \mid \forall v \in V . \forall \sigma'_1 \in \Sigma _1: \\
    % & \mpgen_1(\outv(\sigma_0,\sigma_1)) \geqslant \mpgen_1(\outv(\sigma_0,\sigma'_1))\}
    % \end{split}
\end{equation*}
\item \label{epsion_gt_0}
Player~1's $\epsilon$-best-responses to $\sigma_0$, for $\epsilon > 0$\footnote{Since we will use $\epsilon$ in $\asv$ to add robustness, we only consider the cases in which $\epsilon$ is strictly greater than $0$.}, denoted by $\br(\sigma_0)$, %and defined 
as:
\begin{equation*}
    % \label{epsion_gt_0}
% This is for double column
        \{\sigma_1 \in \Sigma_1 \mid \forall v \in V \cdot \forall \sigma'_1 \in \Sigma _1: 
        \mpinf_1(\outv(\sigma_0,\sigma_1)) > \mpinf_1(\outv(\sigma_0,\sigma'_1)) - \epsilon\}
    % \begin{split}
    %     \{\sigma_1 \in \Sigma_1 & \mid \forall v \in V \cdot \forall \sigma'_1 \in \Sigma _1: \\
    %     &\mpinf_1(\outv(\sigma_0,\sigma_1)) > \mpinf_1(\outv(\sigma_0,\sigma'_1)) - \epsilon\}
    % \end{split}
\end{equation*}
\end{enumerate}
% \track{Note that for the $\epsilon$-best-response when $\epsilon > 0$, we use $>$ instead of $\geqslant$\footnote{\track{This is because to remedy $\delta$-perturbation for a positive $\delta$, we consider $2\delta$-best-responses of Player $1$.}}.}

% We define the adversarial value that Player~0, respectively Player~1 can enforce in the game $\mathcal{G}$ from vertex $v$ as:
% \begin{equation*}
%     \mathbf{WCV}_0(v) = \sup\limits_{\sigma_0 \in \Sigma_0} \inf\limits_{\sigma_1 \in \Sigma_1} \mpgen_1(\outv(\sigma_0,\sigma_1)) = \inf\limits_{\sigma_1 \in \Sigma_1} \sup\limits_{\sigma_0 \in \Sigma_0} \mpgen_1(\outv(\sigma_0,\sigma_1))
% \end{equation*}
% \begin{equation*}
%     \mathbf{WCV}_1(v) = \sup\limits_{\sigma_1 \in \Sigma_1} \inf\limits_{\sigma_0 \in \Sigma_0} \mpgen_1(\outv(\sigma_0,\sigma_1)) = \inf\limits_{\sigma_0 \in \Sigma_0} \sup\limits_{\sigma_1 \in \Sigma_1} \mpgen_1(\outv(\sigma_0,\sigma_1))
% \end{equation*}

We note here that the definitions of best-responses can also be defined if we consider $\limsup$ instead of $\liminf$ in the mean-payoff functions.

We also introduce the following notation for zero-sum games (that are needed as intermediary steps in our algorithms). Let $\mathcal{A}$ be an arena, $v \in V$ one of its states, and $\mathcal{O} \subseteq \plays$ be a set of plays (called objective), then we write $ \mathcal{A}, v \vDash \ll i \gg \mathcal{O}$, if:
\begin{equation*}
        \exists \sigma_i\in \Sigma_i \cdot \forall \sigma_{1-i} \in \Sigma_{1-i}:
        \outv(\mathcal{A}, (\sigma_i, \sigma_{1-i})) \in \mathcal{O}, \text{for } i \in \{0,1\}
    % for two column
    % \begin{split}
    %     \exists \sigma_i\in \Sigma_i & \cdot \forall \sigma_{1-i} \in \Sigma_{1-i}:\\ 
    %     &\quad \outv(\mathcal{A}, (\sigma_i, \sigma_{1-i})) \in \mathcal{O}, \text{for } i \in \{0,1\}
    % \end{split}
\end{equation*}
% Here the underlying interpretation is zero-sum: Player $i$ wants to force an outcome in $\mathcal{O}$ and Player $1-i$ has the opposite goal. 
All the zero-sum games we consider in this paper are \textit{determined} meaning that for all $\mathcal{A}$, for all objectives $\mathcal{O} \subseteq \plays$ we have that
% \begin{equation*}
    $\mathcal{A}, v \vDash \ll i \gg \mathcal{O} \iff \mathcal{A}, v \nvDash \ll 1-i \gg \plays \setminus \mathcal{O}$.
% \end{equation*}

\noindent We sometimes omit $\mathcal{A}$ when the arena being referenced is clear from the context.

\subparagraph{Convex hull and ${\sf F_{\min}}$}
Given a finite dimension $d$, a finite set $X \subset \mathbb{Q}^d$ of rational vectors, we define the convex hull $\CH{X} = \{v \mid v = \sum_{x \in X} \alpha_x\cdot x \land \forall x \in X: \alpha_x \in [0,1] \land \sum_{x \in X}\alpha_x = 1\}$ as the set of all their convex combinations. 
% This set is called the \textit{convex hull} of $X$. 
% Finally, we need to recall the following additional, and less standard notions. 
% Given a finite set of $d$-dimensional rational vectors $X \subset \mathbb{Q}^d$, 
Let $f_{\min}(X)$ be the vector $v = (v_1, v_2, \dots , v_d)$ where $v_i = \min \{c \mid \exists x \in X: x_i = c \}$ i.e. the vector $v$ is the pointwise minimum of the vectors in $X$. 
For $S \subseteq \mathbb{Q}^d$, we define $\Fmin{S} = \{ f_{\min}(P) \mid P $ is a finite subset of $S\}$.

\subparagraph{Mean-payoffs induced by simple cycles} 
% Given a play $\pi \in \plays$, we denote by $\inf(\pi)$ the set of vertices $v$ that appear infinitely many times along $\pi$, i.e., $\inf(\pi) = \{v \in V \mid \forall in \in \mathbb{N}\cdot \exists j \in \mathbb{N}, j \geqslant i: \pi(j) = v \}$. It is easy to see that $\inf(\pi)$ forms an SCC in the underlying graph of the arena $\mathcal{A}$. A \textit{cycle} $c$ is a sequence of edges that starts and stops in a given vertex $v$, it is simple if it does not contain repetition of any other vertex.
A \textit{cycle} $c$ is a sequence of edges that starts and stops in a given vertex $v$, it is simple if it does not contain repetition of any other vertex.
Given an SCC $S$, we write $\CS$ for the set of simple cycles inside $S$. Given a simple cycle $c$, for $i \in \{0,1\}$, let $\mpgen_i(c) = \frac{w_i(c)}{\mid c \mid}$ be the mean 
% $\mid c \mid$ of $w_i$ weights along 
of the weights\footnote{We do not use $\underline{\mpgen_i}$ since $\liminf$ and $\limsup$ are the same for a finite sequence of edges.} in each dimension along the
edges in the simple cycle $c$, and we call the pair $(\mpgen_0(c), \mpgen_1(c))$ the mean-payoff coordinate of the cycle $c$. We write $\CH{\CS}$ for the convex-hull of the set of mean-payoff coordinates of simple cycles of $S$. 
% \track[FOSSACS]{The following lemma shows that for every pair of points $(x,y)$ present in $\Fmin{\CH{\CS}}$, we can construct a play $\pi$ in the SCC $S$ such that $(\mpinf_0(\pi), \mpinf_1(\pi)) = (x,y)$. Further, the lemma also shows that for every play $\pi$ in SCC $S$, we have that the $(\mpinf_0(\pi), \mpinf_1(\pi))$ is present in $\Fmin{\CH{\CS}}$.}
% \sgcomment{This lemma can be moved to Section 5.}
% \mbcomment{Make it more precise - use Thm 3 in Mean-Payoff Automaton Expressions.}
% \begin{lemma}
% \label{lemCHToPlay}
% \textbf{\emph{(\cite{FGR20,CDEHR10})}} Let $S$ be an SCC in the arena $\mathcal{A}$ with a set $V$ of vertices, and $W$ be the maximum of the absolute values appearing on the edges in $\mathcal{A}$.
% % , the following three properties hold:
% We have that
% \begin{inparaenum}
%     \item for all $\pi \in \plays$, if $\inf(\pi) \subseteq S$, then $(\mpinf_0(\pi), \mpinf_1(\pi)) \in \Fmin{\CH{\CS}}$
%     \item for all $(x,y) \in \Fmin{\CH{\CS}}$, there exists a play $\pi \in \plays$ such that $\inf(\pi) = S$ and $(\mpinf_0(\pi), \mpinf_1(\pi)) = (x,y)$.
%     \item The set $\Fmin{\CH{\CS}}$ is effectively expressible in $\langle \mathbb{R}, +, < \rangle$ as a conjunction of $\mathcal{O}(m^2)$ linear inequations, where $m$ is the number of mean-payoff coordinates of simple cycles in $S$, which is $\mathcal{O}(W \cdot |V|)$. Hence this set of inequations can be pseudopolynomial in size.
% \end{inparaenum}
% \end{lemma}

\subparagraph{Adversarial Stackelberg Value for $\mpgen$} Since the set of best-responses in mean-payoff games can be empty (See Lemma 3 of \cite{FGR20}), we use the notion of $\epsilon$-best-responses for the definition of $\asvgen$ which are guaranteed to always exist\footnote{For a game $\mathcal{G}$, we also use $\asvgen_\mathcal{G}$ and $\asv_\mathcal{G}$, and drop the subscript $\mathcal{G}$ when it is clear from the context.}. 
% Note that it makes sense to assume that Player~1 chooses $\epsilon$ here:
We define
\begin{equation*}
    \asvgen(v) = \sup_{\sigma_0 \in \Sigma_0, \epsilon > 0} \inf_{\sigma_1 \in \br(\sigma_0)} \mpinf_0(\outv(\sigma_0,\sigma_1)) \text{\footnotemark}
\end{equation*}
\footnotetext{The definition of $\asvgen$, as it appears in \cite{FGR20}, is syntactically different but the two definitions are equivalent, and the one presented here is simpler.}

We also associate a (adversarial) value to a strategy $\sigma_0 \in \Sigma_0$ of Player~0, denoted
\begin{equation*}
    \asvgen(\sigma_0)(v) = \sup_{\epsilon > 0}  \inf_{\sigma_1 \in \br(\sigma_0)} \mpinf_0(\outv(\sigma_0,\sigma_1)).
\end{equation*}
Clearly, we have that
%\begin{equation*}
    $\asvgen(v) = \sup_{\sigma_0 \in \Sigma_0} \asvgen(\sigma_0)(v)$.
%\end{equation*}
% Given an $\epsilon > 0$, we define an adversarial value for Player~0 for $\epsilon$-best-responses of Player~1 as\\
% % \begin{equation*}
%     $\asv(v) = \sup_{\sigma_0 \in \Sigma_0}  \inf_{\sigma_1 \in \br(\sigma_0)} \mpinf_0(\outv(\sigma_0,\sigma_1))$.
% % \end{equation*}

% Note that
% % \begin{equation*}
%     $\asvgen(v) = \sup_{\epsilon > 0} \asv(v)$.
% % \end{equation*}

\begin{comment}
We define the adversarial Stackelberg values, where strategies of Player~0 are restricted to finite memory strategies, as
\begin{equation*}
    % \begin{split}
        \asvgen_\mathsf{FM}(v) =
        \sup_{\sigma_0 \in \Sigma_0^{\fm}
        % \epsilon \geqslant 0| \br(\sigma_0) \neq \varnothing
        }  \inf_{\sigma_1 \in \brgen(\sigma_0)}  \mpinf_0(\outv(\sigma_0,\sigma_1))
    % \end{split}
\end{equation*}
% and
% \begin{equation*}
%     \begin{split}
%         &\asv_\mathsf{FM}(v) = \\
%         &\quad \sup_{\sigma_0 \in \Sigma_0^{\fm}} \inf_{\sigma_1 \in \sf \br(\sigma_0)}  \mpinf_0(\outv(\sigma_0,\sigma_1))
%     \end{split}
% \end{equation*}
where $\Sigma_0^{\fm}$ refers to the set of all finite memory strategies of Player~0.
We note that for every finite memory strategy $\sigma_0$ of Player~0, a best-response of Player~1 to $\sigma_0$ always exists {as noted in \cite{FGR20}.}

We also define the adversarial Stackelberg values, where Player~0 is restricted to using memoryless strategies, as
\begin{equation*}
    % \begin{split}
        \asvgen_\mathsf{ML}(v) =
        \sup_{\sigma_0 \in \Sigma_0^{\ml}
        % \epsilon \geqslant 0| \br(\sigma_0) \neq \varnothing
        }  \inf_{\sigma_1 \in \brgen(\sigma_0)}  \mpinf_0(\outv(\sigma_0,\sigma_1))
    % \end{split}
\end{equation*}
where $\Sigma_0^{\ml}$ is the set of all memoryless strategies of Player~0.
% \noindent\textbf{Achievability of $\asv$} Given $\epsilon > 0$, we say that $\asv(v) = c$ is achievable 
% % in a mean-payoff game $\mathcal{G}$ 
% from a vertex $v$, if there exists a strategy $\sigma_0$ for Player~0 such that
% % \begin{equation*}
% 	$\forall \sigma_1 \in \br(\sigma_0) : \mpinf_0(\outv(\sigma_0, \sigma_1)) \geqslant c$.
% % \end{equation*}
\end{comment}
In the sequel, unless otherwise mentioned, we refer to a two-dimensional nonzero-sum two-player mean-payoff game simply as a mean-payoff game.

\subparagraph{Zero-sum case}
Zero-sum games are special cases of nonzero-sum games, where for all edges $e \in E$, we have that $w_0(e) = -w_1(e)$, i.e. the gain of one player is always equal to the opposite (the loss) of the other player.
%\track{(the gain of one player is always equal to the loss of the other player, and vice versa.)}
For zero-sum games, the classical concept is the notion of (worst-case) value.
It is defined as 
\begin{equation*}
\val_\mathcal{G}(v) = \sup \limits_{\sigma_0 \in \Sigma_0} \inf \limits_{\sigma_1 \in \Sigma_1} \mpinf_0(\outv(\sigma_0, \sigma_1))
\end{equation*}
Additionally, we define the \emph{value} of a Player~0 strategy $\sigma_0$ from a vertex $v$ in a zero-sum mean-payoff game $\mathcal{G}$ as $\val_\mathcal{G}(\sigma_0)(v) = \inf \limits_{\sigma_1 \in \Sigma_1} \mpinf_0(\outv(\sigma_0, \sigma_1))$.
%We then define the value of a zero-sum mean-payoff game $\mathcal{G}$ from a vertex $v$ 

% \track{We also define the value of the Player~0 strategy $\sigma_0$ and the value of the game $\mathcal{G}$ when Player~1 plays sub-optimal or $\epsilon$-best responses from a vertex $v$ as $\val^\epsilon_\mathcal{G}(\sigma_0)(v) = \inf \limits_{\sigma_1 \in \br(\sigma_0)} \mpinf_0(\outv(\sigma_0, \sigma_1))$ and $\val_\mathcal{G}(v) = \sup \limits_{\sigma_0 \in \Sigma_0} \inf \limits_{\sigma_1 \in \br(\sigma_0)} \mpinf_0(\outv(\sigma_0, \sigma_1))$}

\section{Fragility and robustness in games}
  \label{sec:robustness}
  % \todo{Unlike other sections, we do not point to the proofs in the appendix.}
In this section, we study fragility and robustness properties in {\em zero-sum} and {\em nonzero-sum} games.
% (detailed proofs of the results of this section can be found in \cref{app:robustness})
Additionally, we provide a notion of value, for the nonzero-sum case, that is well-suited to synthesize strategies that are robust against two types of perturbations:
\begin{itemize}
    
    \item{Modeling imprecision:} We want guarantees about the value that is obtained by a strategy in the Stackelberg game even if this strategy has been synthesized from a weighted game graph with weights that are possibly slightly wrong: small perturbations of the weight should have only limited effect on the value guaranteed by the strategy.
    %Here, we would like to find a strategy of Player~0 that is robust to the \emph{perturbation} of the weights of the game.
    
    \item{Sub-optimal responses:} We want guarantees about the value that is obtained by a strategy in the Stackelberg game even if the adversary responds with an $\epsilon$-best response instead of a perfectly optimal response (for some $\epsilon >0$):  small deviations from the best-response by the adversary should have only limited effect on the value guaranteed by the strategy.
    %Here, we would like to fi

%    in the second scenario, that we consider is where the follower (Player~1) plays sub-optimally instead of optimally, i.e., he plays an $\epsilon$-best response to the strategy of the leader (Player~0). 
%    Here, we would like to find a strategy that is \emph{tolerant} to the $\epsilon$-optimality of Player~1's strategy, for some given $\epsilon > 0$.
    
\end{itemize}

\subparagraph{Formalizing deviations}
To formalize modeling imprecision, we introduce the notion of a {\em perturbed game graph}.
Given a game $\mathcal{G}$ with arena $\mathcal{A_\mathcal{G}} = (V, E, \langle V_0, V_1 \rangle, w_0, w_1)$, and a value $\delta > 0$, we write $\mathcal{G}^{\pm \delta}$ for the set $\mathcal{H}$ of games with arena $\mathcal{A_\mathcal{H}} = (V, E, \langle V_0, V_1 \rangle, w'_0, w'_1)$ where edge weight functions respect the following constraints:
        $$\forall (v_1, v_2) \in E,  \forall i \in \{0, 1 \},
        \quad w'_i (v_1, v_2) \in (w_i (v_1, v_2) + \delta, w_i (v_1, v_2) - \delta).$$
% for two column
    % \begin{split}
    %     \forall (v_1, v_2) \in E, & \forall i \in \{0, 1 \},\\
    %     &\quad w'_i (v_1, v_2) \in (w_i (v_1, v_2) + \delta, w_i (v_1, v_2) - \delta).
    % \end{split}
\noindent
We note that as the underlying game graph $(V,E)$ is not altered, for both players, the set of strategies in $\mathcal{G}$  is identical to the set of strategies in $\mathcal{H}$.
Finally, to formalize {\em sub-optimal responses}, we naturally use the notion of $\epsilon$-best response introduced in the previous section.

\subparagraph{Robustness in zero-sum games}
In zero-sum games, the worst-case value $\val_\mathcal{G}(\sigma_0)$ is robust against both modeling imprecision and sub-optimal responses of Player~1.
%We now show that all strategies of the leader (Player~0) in zero-sum games are robust both against weight perturbations and sub-optimal responses of the follower (Player~1). Formally,
\begin{proposition}[Robustness in zero-sum games] \label{robustZS}
For all zero-sum mean-payoff games $\mathcal{G}$ with a set $V$ of vertices, for all Player~0 strategies $\sigma_0$,and for all vertices $v \in V$ we have that:
$$    \forall \delta, \epsilon > 0 : \forall \mathcal{H} \in \mathcal{G}^{\pm \delta} : 
    \inf_{\sigma_1 \in {\sf BR}^{\epsilon}_{1,\mathcal{H}}(\sigma_0)} \mpinf^\mathcal{H}_0(\outv(\sigma_0, \sigma_1)) > \val_\mathcal{G}(\sigma_0)(v) - \delta.
$$
\end{proposition}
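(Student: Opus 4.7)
The strategy is to exploit two invariants: (i) the underlying game graph $(V,E)$ is shared by $\mathcal{G}$ and every $\mathcal{H} \in \mathcal{G}^{\pm\delta}$, so the sets of plays and of strategies of both players coincide, and any play $\pi$ in $\mathcal{H}$ is a play in $\mathcal{G}$ with the same sequence of edges; and (ii) the mean-payoff $\mpinf_0$ is a Lipschitz-continuous functional of the edge weights: shifting every edge weight by at most $\delta$ shifts the running average, and hence its liminf, by at most $\delta$. Together these reduce the claim to a one-line estimate once we evaluate $\sigma_0$ in $\mathcal{G}$.

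I would fix arbitrary $\delta,\epsilon>0$, a game $\mathcal{H}\in\mathcal{G}^{\pm\delta}$ with weight function $w'_0$, and an arbitrary $\sigma_1\in{\sf BR}^\epsilon_{1,\mathcal{H}}(\sigma_0)$. Let $\pi=\outgen_v(\sigma_0,\sigma_1)$. Because $\pi$ is a play in $\mathcal{G}$ compatible with $\sigma_0$ from $v$, the very definition
\[
\val_\mathcal{G}(\sigma_0)(v) \;=\; \inf_{\sigma'_1\in\Sigma_1} \mpinf^\mathcal{G}_0\bigl(\outgen_v(\sigma_0,\sigma'_1)\bigr)
\]
gives the key lower bound $\mpinf^\mathcal{G}_0(\pi)\geqslant \val_\mathcal{G}(\sigma_0)(v)$. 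Observe that $\epsilon$ and the membership of $\sigma_1$ in ${\sf BR}^\epsilon_{1,\mathcal{H}}(\sigma_0)$ are not used at this point — the bound would hold for any $\sigma_1\in\Sigma_1$. This is, conceptually, why zero-sum games are automatically robust: the Player~0 guarantee is unilateral and does not depend on Player~1 responding optimally.

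Next I would upper-bound the discrepancy in mean-payoff between $\mathcal{G}$ and $\mathcal{H}$. Since $E$ is finite and $w'_0(e)\in(w_0(e)-\delta,w_0(e)+\delta)$ for every edge, there exists
\[
\delta^* \;=\; \max_{e\in E}\lvert w'_0(e)-w_0(e)\rvert \;<\; \delta.
\]
For every finite prefix $\pi_{\leqslant k}$,
\[
\tfrac{1}{k}\, w'_0(\pi_{\leqslant k}) \;\geqslant\; \tfrac{1}{k}\, w_0(\pi_{\leqslant k}) - \delta^*,
\]
and passing to the liminf (which respects uniform additive shifts) yields $\mpinf^\mathcal{H}_0(\pi)\geqslant \mpinf^\mathcal{G}_0(\pi)-\delta^*$. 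Chaining with the previous step gives $\mpinf^\mathcal{H}_0(\pi)\geqslant \val_\mathcal{G}(\sigma_0)(v)-\delta^*>\val_\mathcal{G}(\sigma_0)(v)-\delta$; since the bound is uniform in $\sigma_1$, taking the infimum over ${\sf BR}^\epsilon_{1,\mathcal{H}}(\sigma_0)$ preserves the strict inequality and concludes the proof.

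The only subtle point is the strict inequality in the conclusion, and this is precisely where the open-interval definition of $\mathcal{G}^{\pm\delta}$ (combined with finiteness of $E$) is essential, via the slack $\delta^*<\delta$. Everything else is routine: no game-theoretic machinery (best-response existence, determinacy, memory bounds) is invoked, since the statement is really an analytic fact about how mean-payoff transforms under $L^\infty$-perturbations of the weight function.
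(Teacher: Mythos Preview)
Your proof is correct and follows essentially the same approach as the paper's: both arguments observe that the $\epsilon$-best-response restriction is immaterial in the zero-sum setting (any $\sigma_1$ works) and then invoke the Lipschitz property of mean-payoff under edge-weight perturbations. Your version is in fact more careful than the paper's sketch, as you explicitly extract the slack $\delta^* < \delta$ from the open-interval definition of $\mathcal{G}^{\pm\delta}$ and the finiteness of $E$ to secure the \emph{strict} inequality, a point the paper's proof leaves implicit.
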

% \subsection{Proof of \cref{robustZS}}
% \label[appendix]{app:robust-zerosum}
\begin{proof}
We first note that it comes with no surprise that considering sub-optimal responses. ($\br(\sigma_0)$) instead of optimal responses ($\mathsf{BR}_1(\sigma_0)$) of Player~1 does not decrease the \emph{value} of the game for Player~0 when she plays $\sigma_0$ as the game is zero-sum. 
Hence the value of $\sigma_0$ 
%\todo{What is minimal value of $\sigma_0$?}
is obtained when Player~1 plays an optimal strategy or the best-response strategy. Therefore, we only need to show that all strategies of Player~0 are robust against weight perturbations. 
Formally, we must show that $\forall \delta > 0 : \forall \mathcal{H} \in \mathcal{G}^{\pm \delta} : \val_\mathcal{H}(\sigma_0)(v) > \val_\mathcal{G}(\sigma_0)(v) - \delta$.

In the case of $\delta$-perturbations, since the weight on an edge of $\mathcal{H}$ can be at most $\delta$ less than the same edge in $\mathcal{G}$, the mean-payoff over a path in $\mathcal{H}$ can be at most $\delta$ less than the mean-payoff of the same path in $\mathcal{G}$.
\end{proof}

\subparagraph{Fragility in non-zero sum games}
On the contrary, the adversarial Stackelberg value $\asvgen(\sigma_0)$ is fragile against both modeling imprecision and sub-optimal responses.
\begin{proposition}[Fragility - modeling imprecision] \label{fragile}
For all $\mu > 0$, we can construct a nonzero-sum mean-payoff game $\mathcal{G}$ and a Player~0 strategy $\sigma_0$, such that there exist $\delta > 0$, a perturbed game $\mathcal{H} \in \mathcal{G}^{\pm \delta}$, and a vertex $v$ in $\mathcal{G}$ with
%:
%\begin{equation*}
    $\asvgen_\mathcal{H}(\sigma_0)(v) < \asvgen_\mathcal{G}(\sigma_0)(v) - \mu$.
%\end{equation*}
\end{proposition}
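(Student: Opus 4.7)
The plan is to generalize the running example of \cref{fig:robust-game-example} into a parametric family of games indexed by $\mu$. The intuition that drives the argument is already present in the example: the $\asvgen$ obtained from a strategy $\sigma_0$ of Leader typically relies on the fact that a particular best-response of Follower is selected in an SCC where, coincidentally, Leader also does well; slightly perturbing Follower's weights can flip Follower's preference to a neighbouring response on which Leader does badly. So the drop in $\asvgen_{\mathcal{H}}(\sigma_0)$ is not proportional to $\delta$ but is as large as the gap between the two Leader-payoffs at the two Follower-responses.

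Concretely, I would construct for each $\mu > 0$ the following game $\mathcal{G}$. Take a Player~0 vertex $v_0$ with a single outgoing edge to a Player~1 vertex $v_1$. From $v_1$ there are two outgoing edges, one to a vertex $u$ and one to a vertex $u'$, each equipped with a self-loop. Put weights $(\mu + 1,\, a)$ on the self-loop at $u$ and $(0,\, a - \eta)$ on the self-loop at $u'$, where $a \in \mathbb{Z}$ and $\eta > 0$ is a small rational parameter to be chosen. Define the strategy $\sigma_0$ of Player~0 in the obvious way (there is only one move) and let $v = v_0$.

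The argument then proceeds in three steps. First, I would check that in $\mathcal{G}$, Follower's unique best response to $\sigma_0$ is to take the edge to $u$, since the $u$-loop yields Follower payoff $a > a - \eta$; consequently $\asvgen_{\mathcal{G}}(\sigma_0)(v_0) = \mu + 1$. Second, I would choose $\delta$ slightly larger than $\eta/2$ and exhibit a perturbation $\mathcal{H} \in \mathcal{G}^{\pm \delta}$ in which Follower's weight on the $u$-loop is decreased to, say, $a - \eta/2 - \kappa$ and Follower's weight on the $u'$-loop is increased to $a - \eta/2 + \kappa$, for some $0 < \kappa < \delta - \eta/2$. Leader's weights are kept unchanged. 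In $\mathcal{H}$, Follower's unique best response to $\sigma_0$ is now the edge to $u'$, hence $\asvgen_{\mathcal{H}}(\sigma_0)(v_0) = 0$, and therefore
\[
\asvgen_{\mathcal{H}}(\sigma_0)(v_0) = 0 < (\mu + 1) - \mu = \asvgen_{\mathcal{G}}(\sigma_0)(v_0) - \mu,
\]
which is the desired strict inequality. Third, I would observe that the $\epsilon$-best-response formulation of $\asvgen$ (with the $\sup$ over $\epsilon > 0$) does not change this conclusion, since for all sufficiently small $\epsilon$ the set of $\epsilon$-best-responses in $\mathcal{H}$ still contains only plays ending in $u'$, and analogously for $\mathcal{G}$ and $u$.

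The main obstacle in turning this into a fully rigorous proof is the quantification over $\epsilon$ implicit in the definition $\asvgen(\sigma_0) = \sup_{\epsilon > 0} \inf_{\sigma_1 \in \br(\sigma_0)} \mpinf_0(\mathsf{Out}(\sigma_0,\sigma_1))$: one has to rule out the possibility that some strategy of Follower that deviates on a transient prefix before reaching $u$ or $u'$ qualifies as an $\epsilon$-best-response and changes the infimum. Here this is easy because after the single edge $(v_0, v_1)$ Follower has exactly two deterministic continuations, but writing it out carefully is the only non-trivial check. Everything else (integrality of weights, choice of $\eta$ and $\kappa$) is routine, and by construction the weights can be kept in $\mathbb{Z}$ after rescaling if desired.
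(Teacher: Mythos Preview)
Your proposal is correct and follows essentially the same approach as the paper: both construct a small game in which Follower's best response to a fixed $\sigma_0$ yields a good Leader payoff, and a tiny perturbation of Follower's weights flips that preference to a response on which Leader does arbitrarily badly. The paper's concrete example (\cref{fig:fragility}) uses Leader payoffs $0$ versus $-2\mu$ and makes the two Follower responses \emph{tie} after perturbation, whereas you use $\mu+1$ versus $0$ and make the alternative \emph{strictly} better; these are cosmetic variants of the same construction, and your handling of the $\sup_{\epsilon>0}$ in the definition of $\asvgen$ matches what is needed.
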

% \subsection{Proof of \cref{fragile}}
% \label[appendix]{app:fragile-model-imprecisions}
\begin{figure}[t]
    \begin{minipage}[b]{.45\textwidth}    
    \centering
    \scalebox{0.9}{
        \begin{tikzpicture}[->,>=stealth',shorten >=1pt,auto,node distance=2cm,
                            semithick, squarednode/.style={rectangle, draw=blue!60, fill=blue!5, very thick, minimum size=7mm}]
          \tikzstyle{every state}=[fill=white,draw=black,text=black,minimum size=8mm]
        
          \node[squarednode]   (A)                    {$v_0$};
          \node[state, draw=red!60, fill=red!5]         (B) [left of=A] {$v_1$};
          \node[state, draw=red!60, fill=red!5]         (C) [right of=A] {$v_2$};
          \node[draw=none, fill=none, minimum size=0cm, node distance = 1.2cm]         (D) [below of=A] {$start$};
          \path (A) edge              node[above] {(0,0)} (B)
                    edge              node {(0,0)} (C)
                (B) edge [loop above] node {(-2$\mu$,1-$\iota/2$)} (B)
                (C) edge [loop above] node {(0,1)} (C)
                (D) edge [left] node {} (A);
        \end{tikzpicture}}
    \caption{In the original game G: $\asvgen(v_0)=0$.}
    \label{fig:fragility}
    \end{minipage}
    \qquad
    \begin{minipage}[b]{.45\textwidth}
    \centering
    \scalebox{0.9}{
    \begin{tikzpicture}[->,>=stealth',shorten >=1pt,auto,node distance=2cm,
                        semithick, squarednode/.style={rectangle, draw=blue!60, fill=blue!5, very thick, minimum size=7mm}]
      \tikzstyle{every state}=[fill=white,draw=black,text=black,minimum size=8mm]
    
      \node[squarednode]   (A)                    {$v_0$};
      \node[state, draw=red!60, fill=red!5]         (B) [left of=A] {$v_1$};
      \node[state, draw=red!60, fill=red!5]         (C) [right of=A] {$v_2$};
      \node[draw=none, fill=none, minimum size=0cm, node distance = 1.2cm]         (D) [below of=A] {$start$};
      \path (A) edge              node[above] {(0,0)} (B)
                edge              node {(0,0)} (C)
            (B) edge [loop above] node {(-2$\mu$,1)} (B)
            (C) edge [loop above] node {(0,1)} (C)
            (D) edge [left] node {} (A);
    \end{tikzpicture}}
    \caption{For $\mathcal{H} \in \mathcal{G}^{\pm \delta}$, we have $\asvgen_\mathcal{H}(v_0) \leqslant \asvgen_\mathcal{G}(v_0)-\mu$.}
    \label{fig:perturbed}
    \end{minipage}
\end{figure}
\begin{proof}
Consider the example in \cref{fig:fragility}, where we assume $0 < \iota/2 < \delta$.
Note that for all values of $\iota > 0$, we have that $\asvgen_\mathcal{G}(v_0)=0$, since Player~$1$ has no incentive to go to the left, thus the payoff of Player~$0$ is $0$ corresponding to a best-response of Player~$1$.
Thus $\asvgen_\mathcal{G}(v_0)=0$.

Now we consider the perturbed game $\mathcal{H} \in \mathcal{G}^{\pm \delta}$ in Figure \ref{fig:perturbed}.
% \todo{$\delta$ in \cref{fig:fragility}. We can wait until we decide on the final notation.}
Taking the left edge and the right edge are equally good for Player~$1$, and so the value that Player~$0$ can ensure is at most -$2 \mu < 0-\mu$.
Hence $\asvgen_\mathcal{H}(\sigma_0)(v_0) < \asvgen_\mathcal{G}(\sigma_0)(v_0)-\mu$.
\end{proof}
\begin{proposition}[Fragility - sub-optimal responses] \label{fragile-tolerance}
For all $\mu > 0$, we can construct a nonzero-sum mean-payoff game $\mathcal{G}$ and a Player~0 strategy $\sigma_0$, such that there exist $\epsilon > 0$ and a vertex $v$ in $\mathcal{G}$ with
%\begin{equation*}
    $\inf \limits_{\sigma_1 \in \br(\sigma_0)}\mpinf_0^{\mathcal{G}}(\outv(\sigma_0, \sigma_1)) < \asvgen_{\mathcal{G}}(\sigma_0)(v) - \mu$.
%\end{equation*}
\end{proposition}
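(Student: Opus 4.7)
The plan is to reuse essentially the same game $\mathcal{G}$ that witnessed fragility against modeling imprecision (Figure~\ref{fig:fragility}), with appropriately chosen parameters, and exploit the fact that although the unique best-response of Player~1 is preserved, a slight tolerance $\epsilon$ in the optimality of his response is enough to enable a dramatically worse outcome for Player~0.

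Concretely, I would take the arena of Figure~\ref{fig:fragility}: Player~1 owns $v_0$ with two outgoing transitions to Player~0 vertices $v_1$ and $v_2$ carrying edge weights $(0,0)$; $v_1$ carries a self-loop with weights $(-2\mu, 1-\iota/2)$ and $v_2$ carries a self-loop with weights $(0,1)$, where $\iota > 0$ is a small parameter to be fixed. Since $v_0$ is the only vertex owned by Player~1 and Player~0 has no real choice, fix $\sigma_0$ to be any Player~0 strategy. First, I would compute $\asvgen_\mathcal{G}(\sigma_0)(v_0)$: for every $\epsilon$ strictly smaller than $\iota/2$, the only $\epsilon$-best-response of Player~1 is to pick the edge to $v_2$ and loop there (the left self-loop yields payoff $1 - \iota/2$, which is more than $\epsilon$ away from the optimal payoff $1$), so $\inf_{\sigma_1 \in \br(\sigma_0)}\mpinf_0(\outv(\sigma_0,\sigma_1))=0$ whenever $\epsilon < \iota/2$, and therefore the supremum over $\epsilon > 0$ yields $\asvgen_\mathcal{G}(\sigma_0)(v_0)=0$.

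Next, I would pick a specific $\epsilon$ with $\epsilon > \iota/2$ (for instance $\epsilon = \iota$). For such an $\epsilon$, the strategy that moves to $v_1$ and loops forever is also an $\epsilon$-best-response, since $1 - \iota/2 > 1 - \epsilon$. The resulting outcome gives Player~0 the mean-payoff $-2\mu$. Because the infimum in the statement is over all $\epsilon$-best-responses of Player~1 for this fixed $\epsilon$, we obtain
\[
\inf_{\sigma_1 \in \br(\sigma_0)} \mpinf_0^{\mathcal{G}}(\outv(\sigma_0,\sigma_1)) \leqslant -2\mu < -\mu = \asvgen_\mathcal{G}(\sigma_0)(v_0) - \mu,
\]
as required. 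Choosing $\iota$ small enough (e.g.\ $\iota < 2\mu$) also keeps the example uniform in $\mu$.

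There is no real obstacle here; the only mild subtlety is being careful with the definition of $\asvgen(\sigma_0)$, which takes a supremum over $\epsilon$: one needs to observe that the supremum over $\epsilon > 0$ of $\inf_{\sigma_1 \in \br(\sigma_0)} \mpinf_0$ equals $0$ because of the strict inequality defining $\br$, while for a fixed, not-too-small $\epsilon$ the infimum drops to $-2\mu$. This asymmetry between taking the supremum over $\epsilon$ and evaluating at a fixed $\epsilon$ is exactly what makes $\asvgen$ fragile against sub-optimal responses and motivates the introduction of $\asv$ for a fixed tolerance $\epsilon$.
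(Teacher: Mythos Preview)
Your proof is correct and follows essentially the same approach as the paper: both use the game of Figure~\ref{fig:fragility} with $\iota>0$, observe that $\asvgen_\mathcal{G}(\sigma_0)(v_0)=0$, and then pick any $\epsilon>\iota/2$ so that going left becomes an $\epsilon$-best-response, dropping the infimum to $-2\mu$. Your additional remark about the supremum over $\epsilon$ being attained via arbitrarily small $\epsilon<\iota/2$ is a nice clarification; the only superfluous comment is the constraint $\iota<2\mu$, which is not needed since the left self-loop already carries the weight $-2\mu$ regardless of $\iota$.
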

% \subsection{Proof of \cref{fragile-tolerance}}
% \label[appendix]{app:fragile-sub-optimal}
\begin{proof}
We again consider the example in \cref{fig:fragility}, where we now assume $\iota > 0$.
Note that for all values of $\mu > 0$, we have that $\asvgen_{\mathcal{G}}(\sigma_0)(v_0)=0$, since Player~$1$ has no incentive to go to the left, thus the payoff of Player~$0$ is $0$ corresponding to a best-response of Player~$1$.
Thus $\asvgen_{\mathcal{G}}(\sigma_0)(v_0)=0$.
We also note that for all $\epsilon > \iota/2$ 
%\todo{$\epsilon > \iota/2$?}
, we have that, in $\mathcal{G}$, playing $v_0 \to v_1$ is an $\epsilon$-best response for Player~1.
Thus, we have that $\inf \limits_{\sigma_1 \in \br(\sigma_0)}\mpinf^{\mathcal{G}}_0(\mathsf{Out}_{v_0}(\sigma_0, \sigma_1)) = -2\mu$.
Therefore, for all $\mu > 0$, we have that $\inf \limits_{\sigma_1 \in \br(\sigma_0)}\mpinf^{\mathcal{G}}_0(\mathsf{Out}_{v_0}(\sigma_0, \sigma_1)) < \asvgen_{\mathcal{G}}(\sigma_0)(v_0) - \mu$.
\end{proof}
\noindent
Note that $\mu$ can be arbitrarily large and thus the adversarial Stackelberg value in the model under deviations can be arbitrarily worse than in the original model.

\subparagraph{Relation between the two types of deviations}
In nonzero-sum mean-payoff games, robustness against modeling imprecision does not imply robustness against sub-optimal responses. 
\begin{lemma}
\label{lem:model-imprec-notimplies-sub-optimal}
For all $\mu,\delta, \epsilon > 0$, we can construct a nonzero-sum mean-payoff game $\mathcal{G}$ such that for all Player~0 strategies $\sigma_0$ and vertex $v$ in $\mathcal{G}$, we have that:
\begin{equation*}
    \forall \mathcal{H} \in \mathcal{G}^{\pm \delta} : \asvgen_\mathcal{H}(\sigma_0)(v) > \inf \limits_{\sigma_1 \in \br{}_{,\mathcal{G}}} \mpinf_0^\mathcal{G} (\outv(\sigma_0, \sigma_1)) + \mu.
\end{equation*}
\end{lemma}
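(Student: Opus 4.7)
The plan is to adapt the two-cycle example from Figure~\ref{fig:fragility} (used to establish Propositions~\ref{fragile} and \ref{fragile-tolerance}), scaling the Player~0 penalty and the Player~1 gap to the given $\mu, \delta, \epsilon$. In that example a small $\epsilon$-relaxation allows Player~1 to switch to a branch that is much worse for Player~0, while a small perturbation of the weights does not flip Player~1's unique best response. By choosing the scale of both parameters carefully, both effects can be made to coexist with the required quantitative margin.

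Concretely, I would pick a Player~1 gap $\tau$ with $2\delta < \tau < \epsilon$ and a Player~0 penalty $K > \mu + \delta$, and build $\mathcal{G}$ from Figure~\ref{fig:fragility} by setting the left self-loop at $v_1$ to weight $(-K, 1 - \tau)$ and the right self-loop at $v_2$ to weight $(0, 1)$. Since Player~0 owns no vertex with more than one successor, the universal quantification over $\sigma_0$ is automatic. Two bounds are then needed. First, in $\mathcal{G}$ the two loops differ by $\tau < \epsilon$ in Player~1's payoff, so the left loop is an $\epsilon$-best response, and taking the worst such response yields $\inf_{\sigma_1 \in \br{}_{,\mathcal{G}}(\sigma_0)} \mpinf_0^\mathcal{G}(\outv(\sigma_0, \sigma_1)) = -K$. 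Second, in every $\mathcal{H} \in \mathcal{G}^{\pm\delta}$ the Player~1 gap between the two loops is at least $\tau - 2\delta > 0$, so the right loop remains the unique best response of Player~1 in $\mathcal{H}$; for any $\epsilon'$ below this perturbed gap the only $\epsilon'$-best response is the right loop, and hence the supremum defining $\asvgen_\mathcal{H}(\sigma_0)(v_0)$ equals the Player~0 weight of the right loop in $\mathcal{H}$, which lies in $[-\delta, \delta]$. Combining the two bounds gives $\asvgen_\mathcal{H}(\sigma_0)(v_0) \geqslant -\delta > -K + \mu$, as required.

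The delicate point is the tension between the two constraints on $\tau$: perturbation-robustness needs $\tau > 2\delta$, while the $\epsilon$-best-response attack needs $\tau < \epsilon$. They are jointly satisfiable precisely when $\epsilon > 2\delta$, which is the meaningful regime in which the two notions of robustness pull in opposite directions; if $\epsilon \leqslant 2\delta$ one reads the statement in that natural regime, or refines the arena (e.g.\ inserting Player~0 vertices that separate cycle segments so that the Player~1 gap is concentrated on a short prefix and averaged away by $\liminf$) in order to tune the two quantitative margins independently.
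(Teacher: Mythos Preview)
Your construction is sound when $\epsilon > 2\delta$, but the lemma is stated for \emph{all} $\mu,\delta,\epsilon>0$, and the hedge in your last paragraph does not close the gap. The tension you identify is intrinsic to the Figure~\ref{fig:fragility} template: once Player~1 has committed to $v_1$ or to $v_2$ there is no way back, so the bad branch can be an $\epsilon$-best response only if the Player~1 gap $\tau$ is below $\epsilon$, while $\delta$-robustness of the good branch forces $\tau>2\delta$. No choice of weights on that arena satisfies both when $\epsilon\leqslant 2\delta$, and the vague ``refine the arena'' suggestion is not a proof.

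The paper sidesteps this by making the arena a single strongly connected component: two Player~1 vertices $v_1,v_2$ with self-loops of weight $(\mu',2\delta)$ and $(0,0)$, plus edges $v_1\leftrightarrow v_2$ of weight $(0,0)$. Under any $\delta$-perturbation the $v_1$ self-loop remains strictly best for Player~1, so $\asvgen_\mathcal{H}(\sigma_0)(v)>\mu'-\delta$ from either vertex. In $\mathcal{G}$, however, because Player~1 can \emph{mix} the two loops within a single play, for every $\epsilon>0$ there are $\epsilon$-best responses spending only a $(1-\tfrac{\epsilon}{2\delta})$-fraction of time on the $v_1$ loop, driving Player~0's infimum down to $\mu'\max(0,1-\tfrac{\epsilon}{2\delta})$. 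Choosing $\mu'$ large enough then gives the required margin for every pair $(\delta,\epsilon)$, with no relation imposed between them. The SCC also handles the ``for all vertices $v$'' quantifier, which your construction violates at the sinks $v_1,v_2$: there the unique play has the same value in $\mathcal{G}$ and in $\mathcal{H}$ up to $\delta$, so the desired $\mu$-gap cannot hold.
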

% \subsection{Proof of \cref{lem:model-imprec-notimplies-sub-optimal}}
% \label[appendix]{app:model-imprec-notimplies-sub-optimal}
\begin{proof}
\begin{figure}[t]
    \begin{minipage}[t]{.45\textwidth}    
    \centering
    \scalebox{0.9}{
    \begin{tikzpicture}[->,>=stealth',shorten >=1pt,auto,node distance=2cm, semithick, squarednode/.style={rectangle, draw=blue!60, fill=blue!5, very thick, minimum size=7mm}]
      \tikzstyle{every state}=[fill=white,draw=black,text=black,minimum size=8mm]
      \node[squarednode]         (A) {$v_1$};
      \node[squarednode]         (B) [right of=A] {$v_2$};
      \node[draw=none, fill=none, minimum size=0cm, node distance = 1.2cm]         (C) [left of=A] {$start$};
      \path (A) edge [bend right, below] node {$(0, 0)$} (B)
                edge [loop above] node {$(\mu', 2\delta)$} (A)
            (B) edge [loop above] node {$(0, 0)$} (B)
                edge [bend right, above] node {$(0,0)$} (A)
            (C) edge [left] node {} (A);
    \end{tikzpicture}}
    \caption{In this game $\mathcal{G}$, all vertices are controlled by Player~1. Here, $\asvgen_\mathcal{G}(v_1) = \mu'$.}
    \label{fig:perturbation-notimplies-tolerance}
    \end{minipage}
    \qquad
    \begin{minipage}[t]{.45\textwidth}    
    \centering
    \scalebox{0.9}{
    \begin{tikzpicture}[->,>=stealth',shorten >=1pt,auto,node distance=2cm, semithick, squarednode/.style={rectangle, draw=blue!60, fill=blue!5, very thick, minimum size=7mm}]
      \tikzstyle{every state}=[fill=white,draw=black,text=black,minimum size=8mm]
      \node[squarednode]         (A) {$v_1$};
      \node[squarednode]         (B) [right of=A] {$v_2$};
      \node[draw=none, fill=none, minimum size=0cm, node distance = 1.2cm]         (C) [left of=A] {$start$};
      \path (A) edge [bend right, below] node {$(0,0)$} (B)
                edge [loop above] node {$(\mu' - \iota, 2\delta - \iota)$} (A)
            (B) edge [loop above] node {$(\iota, \iota)$} (B)
                edge [bend right, above] node {$(0,0)$} (A)
            (C) edge [left] node {} (A);
    \end{tikzpicture}}
    \caption{A $\delta$-perturbed game $\mathcal{H}$ of $\mathcal{G}$ in \cref{fig:perturbation-notimplies-tolerance}. Here, we consider $0 < \iota < \delta$. Here, $\asvgen_\mathcal{H}(v_1) = \mu' - \iota$.}
    \label{fig:perturbation-notimplies-tolerance-perturbed-game}
    \end{minipage}
\end{figure}
Consider the game $\mathcal{G}$ shown in \cref{fig:perturbation-notimplies-tolerance}.
Here, since all the vertices are controlled by Player~1, the strategy of Player~0 is inconsequential.
For every $\delta > 0$, we claim that the best strategy for Player~1 across all perturbed games $\mathcal{H} \in \mathcal{G}^{\pm \delta}$ is to play $v_1 \to v_1$ forever.
% We note here that if Player~1 must be enticed to play $v_1 \to v_2$, then he must get a better payoff than playing $v_1 \to v_1$.
% However, in every perturbed game $\mathcal{H} \in \mathcal{G}^{\pm \delta}$, the maximum payoff that Player~1 can get by deviating from the strategy $v_1 \to v_1$ is strictly lesser than the payoff that Player~1 gets by not deviating.
One such example of a perturbed game is shown in \cref{fig:perturbation-notimplies-tolerance-perturbed-game}. 
Here, for every $0 < \iota < \delta$, we have that $v_1 \to v_1$ is the only best-response for Player~1.
Therefore, we have that $\inf \limits_{\mathcal{H} \in \mathcal{G}^{\pm \delta}} \asvgen_\mathcal{H}(\sigma_0)(v_1) = \mu' - \delta$, for all $ \delta > 0$.

However, if we relax the assumption that Player~1 plays optimally and assume that he plays an $\epsilon$-best response in the game $\mathcal{G}$, we note that Player~1 can play a strategy $(v_1^{k_1+1} v_2^{k_2+1})^\omega$, for some $k_1, k_2 \in \mathbb{N}$, such that 
% $\frac{2 k_2 \cdot (\delta - \iota)}{k_1 + k_2} < \epsilon$.
% $\frac{2 \delta \cdot k_2}{k_1+k_2+2} < \epsilon$.
% We note here that Player~1 gets a payoff of 
$\frac{2\delta \cdot k_1}{k_1 + k_2+2} > 2\delta - \epsilon$, and 
Player~0 gets a payoff of $\frac{k_1 \cdot \mu'}{k_1 + k_2+2} > \mu'(1- \frac{\epsilon}{2\delta})$. 
% \todo[inline]{Player~1 gets a payoff of $\frac{k_1}{k_1 + k_2} > 1-\epsilon$, and and Player~0 gets a payoff of $\frac{k_1*\mu}{k_1 + k_2} > \mu(1-\epsilon)$.}
Thus, we have that $\inf \limits_{\sigma_1 \in \br{}_{, \mathcal{G}}} \mpinf_0^\mathcal{G} (\outv(\sigma_0, \sigma_1)) = \mu'(1- \frac{\epsilon}{2\delta})$.
% Thus, we get that $\forall \mathcal{H} \in \mathcal{G}^{\pm \delta} : \asvgen_\mathcal{H}(\sigma_0) > \inf \limits_{\sigma_1 \in \br{}_{, \mathcal{G}}} \mpinf_0^\mathcal{G} (\outv(\sigma_0, \sigma_1))$.
% \todo{=$\mu(1-\epsilon)$}.
We note that the choice of $\mu'$ is arbitrary, and we can have a $\mu'$ such that $\mu' - \delta > \mu'(1-\frac{\epsilon}{2\delta}) + \mu$, i.e, we choose $\mu'$ to be large enough so that $\mu < \mu' \cdot \frac{\epsilon}{2\delta} -\delta$.
% assign a value $\mu = \mu'(1 + \epsilon) - \delta (1 + \mu')$ to get $\forall \mathcal{H} \in \mathcal{G}^{\pm \delta} : \asvgen_\mathcal{H}(\sigma_0)(v_1) > \inf \limits_{\sigma_1 \in \br{}_{, \mathcal{G}}} \mpinf_0^\mathcal{G} (\outv(\sigma_0, \sigma_1)) + \mu$. 
% \todo[inline]{Seems incorrect: The difference is $\mu \cdot \epsilon$.}
\end{proof}
\noindent
On the contrary, robustness against sub-optimal responses implies robustness against modeling imprecision.
%\todo[inline]{One should specify that $2\epsilon$ best-response is needed for $\epsilon$ perturbation.}

\begin{theorem}[Robust strategy in non-zero sum games] \label{robustNZS}
For all non-zero sum mean-payoff games $\mathcal{G}$ with a set $V$ of vertices, for all $\epsilon > 0$, for all vertices $v \in V$, for all strategies $\sigma_0$ of Player~0, we have that
%\begin{equation*}
 $   \forall \mathcal{H} \in \mathcal{G}^{\pm \epsilon} : \asvgen_\mathcal{H}(\sigma_0)(v) > \inf \limits_{\sigma_1 \in \mathsf{BR}_{1, \mathcal{G}}^{2\epsilon}} \mpinf_0^\mathcal{G} (\outv(\sigma_0, \sigma_1)) - \epsilon.$
%\end{equation*}
\end{theorem}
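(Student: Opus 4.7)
The plan is to exploit the correspondence between $\epsilon'$-best responses in the perturbed game $\mathcal{H}$ and $2\epsilon$-best responses in the original game $\mathcal{G}$. The starting observation is that $\mathcal{A}$ has only finitely many edges and the perturbation in $\mathcal{G}^{\pm \epsilon}$ is defined by strict open intervals, so there exists a uniform constant $\Delta < \epsilon$ such that $|w_i^{\mathcal{H}}(e) - w_i^{\mathcal{G}}(e)| \leqslant \Delta$ for every edge $e$ and every $i\in\{0,1\}$. Consequently, as in the proof of \cref{robustZS}, for every play $\pi$ and every $i \in \{0,1\}$ one has $|\mpinf_i^{\mathcal{H}}(\pi) - \mpinf_i^{\mathcal{G}}(\pi)| \leqslant \Delta$.

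Denote by $c$ the right-hand side quantity $\inf_{\sigma_1 \in \mathsf{BR}_{1,\mathcal{G}}^{2\epsilon}(\sigma_0)} \mpinf_0^{\mathcal{G}}(\outv(\sigma_0,\sigma_1))$, and set $\epsilon' := \epsilon - \Delta > 0$. The key step is to show that every $\sigma_1 \in \mathsf{BR}_{1,\mathcal{H}}^{\epsilon'}(\sigma_0)$ also lies in $\mathsf{BR}_{1,\mathcal{G}}^{2\epsilon}(\sigma_0)$. Indeed, for any competing $\sigma_1' \in \Sigma_1$, the definition of $\mathsf{BR}_{1,\mathcal{H}}^{\epsilon'}$ gives $\mpinf_1^{\mathcal{H}}(\outv(\sigma_0,\sigma_1)) > \mpinf_1^{\mathcal{H}}(\outv(\sigma_0,\sigma_1')) - \epsilon'$. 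Transferring this inequality from $\mathcal{H}$ to $\mathcal{G}$ using the uniform bound on each side yields
$$\mpinf_1^{\mathcal{G}}(\outv(\sigma_0,\sigma_1)) > \mpinf_1^{\mathcal{G}}(\outv(\sigma_0,\sigma_1')) - \epsilon' - 2\Delta = \mpinf_1^{\mathcal{G}}(\outv(\sigma_0,\sigma_1')) - (\epsilon + \Delta),$$
and since $\Delta < \epsilon$, the right-hand side is strictly larger than $\mpinf_1^{\mathcal{G}}(\outv(\sigma_0,\sigma_1')) - 2\epsilon$. So $\sigma_1 \in \mathsf{BR}_{1,\mathcal{G}}^{2\epsilon}(\sigma_0)$, and by definition of $c$, $\mpinf_0^{\mathcal{G}}(\outv(\sigma_0,\sigma_1)) \geqslant c$.

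Applying the perturbation bound once more in dimension $0$ gives $\mpinf_0^{\mathcal{H}}(\outv(\sigma_0,\sigma_1)) \geqslant \mpinf_0^{\mathcal{G}}(\outv(\sigma_0,\sigma_1)) - \Delta \geqslant c - \Delta$, uniformly over $\sigma_1 \in \mathsf{BR}_{1,\mathcal{H}}^{\epsilon'}(\sigma_0)$. Taking the infimum over such $\sigma_1$ and then the supremum over positive tolerances in the definition of $\asvgen_{\mathcal{H}}$ yields $\asvgen_{\mathcal{H}}(\sigma_0)(v) \geqslant c - \Delta$, which, because $\Delta < \epsilon$, is strictly larger than $c - \epsilon$, as required.

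The main obstacle is the careful bookkeeping of strict versus non-strict inequalities. The whole argument hinges on the fact that the perturbation in $\mathcal{G}^{\pm \epsilon}$ uses open intervals, which, combined with the finiteness of the edge set, produces a uniform gap $\epsilon - \Delta > 0$. This gap is exactly what is needed to absorb the $\epsilon'$ tolerance used to witness $\asvgen_{\mathcal{H}}(\sigma_0)(v)$ while still obtaining the strict inequality $> c - \epsilon$ in the conclusion; any sloppiness in this bookkeeping (for instance, allowing $\Delta = \epsilon$ in closed intervals) would collapse the bound to a non-strict one and force passing to an additional limit.
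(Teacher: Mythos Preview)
The proposal is correct and follows essentially the same approach as the paper: show that near-best responses in $\mathcal{H}$ are $2\epsilon$-best responses in $\mathcal{G}$, then transfer the Player~0 payoff bound back from $\mathcal{G}$ to $\mathcal{H}$. Your version is in fact more careful than the paper's, which implicitly uses the strict-interval observation (to obtain $y-\epsilon < y_{\mathcal{H}} < y+\epsilon$) without naming the uniform gap $\Delta<\epsilon$ and argues only about a play attaining the supremum in $\mathcal{H}$ rather than an explicit $\epsilon'$-best-response class.
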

% \subsection{Proof of \cref{robustNZS}}
% \label[appendix]{app:robustNZS}
\begin{proof}
% \todo{Some errors in the proof need to be fixed.}
Consider a nonzero-sum mean-payoff game $\mathcal{G}$ and a vertex $v$ in $\mathcal{G}$ and a strategy $\sigma_0$ of Player~0. 
We let 
% $\asvgen_\mathcal{G}^{2\epsilon}(v) > c$
$\inf \limits_{\sigma_1 \in \mathsf{BR}_{1, \mathcal{G}}^{2\epsilon}} \mpinf_0^\mathcal{G} (\outv(\sigma_0, \sigma_1)) = c$
, for some $c \in \mathbb{Q}$. 
% By \cref{lem:witness_strategy}, we know that there exists a Player~0 strategy $\sigma_0$ such that $\asvgen_\mathcal{G}^{2\epsilon}(\sigma_0)(v) > c$.
Let the supremum of the payoffs that Player~1 gets when Player~0 plays $\sigma_0$ be $y$, where $y \in \mathbb{Q}$, i.e., $\sup \{ \mpinf_1(\rho) \mid \rho \in \outv(\mathcal{G}, \sigma_0)) \} = y$.
For all outcomes $\rho$ which are in Player~1's $2\epsilon$-best response of $\sigma_0$, we have that $\mpinf_1(\rho) > y - 2\epsilon$ and $\mpinf_0(\rho) \geqslant c$.

Now, consider a game $\mathcal{H} \in \mathcal{G}^{\pm \epsilon}$ and a Player~0 strategy $\sigma_0$ played in $\mathcal{H}$. 
We can see that the maximum payoff that Player~1 gets when Player~0 plays $\sigma_0$ is bounded by $y + \epsilon$ and $y - \epsilon$, i.e., $y - \epsilon < \sup \{ \mpinf_1(\rho) \mid \rho \in \outv(\mathcal{H}, \sigma_0)) \} < y + \epsilon$. 
We let this value be denoted by $y_\mathcal{H}$. 
We note that 
% for a play $\rho \in \outv(\mathcal{H}, \sigma_{0})$, 
if $\sup_{\rho \in \outv(\mathcal{H}, \sigma_{0})}(\mpinf_1(\rho)) = y_\mathcal{H}$, then for the corresponding play $\rho_{\mathcal{H}}$ in the game $\mathcal{G}$, the mean-payoff of Player~1 in $\rho_\mathcal{H}$ is $\mpinf_1(\rho_\mathcal{H}) > y - 2\epsilon$. 
Thus, in the game $\mathcal{G}$, we note that $\mpinf_0(\rho_\mathcal{H}) \geqslant c$ and for the corresponding play in $\mathcal{H}$, we have $\mpinf_0(\rho_\mathcal{H}) > c - \epsilon$.
Thus, we have $\asvgen_\mathcal{H}(\sigma_0)(v) > c - \epsilon = \inf \limits_{\sigma_1 \in \mathsf{BR}_{1, \mathcal{G}}^{2\epsilon}} \mpinf_0^\mathcal{G} (\outv(\sigma_0, \sigma_1)) - \epsilon$.
\end{proof}
\noindent
We note that in the above theorem, we need to consider a strategy that is robust against $2\epsilon$-best-responses to ensure robustness against $\epsilon$ weight perturbations.

\subparagraph{\textbf{$\epsilon$-Adversarial Stackelberg Value}}
The results above suggest that, in order to obtain some robustness guarantees in nonzero-sum mean-payoff games, we must consider a solution concept 
%that provides robustness against both perturbations in weights as well as sub-optimal responses. Here, we consider Stackelberg games 
that accounts for $\epsilon$-best responses of the adversary. This leads to the following definition: 
Given an $\epsilon > 0$, we define the adversarial value of Player~0 strategy $\sigma_0$ when Player~1 plays $\epsilon$-best-responses as
% \todo{The $\sigma_0$ on the left of = should go?}
\begin{equation}
\asv(\sigma_0)(v) = \inf_{\sigma_1 \in \br(\sigma_0)} \mpinf_0(\outv(\sigma_0,\sigma_1))
\end{equation}
\noindent
and the $\epsilon$-Adversarial Stackelberg value at vertex $v$ is:
$
\asv(v)=\sup_{\sigma_0 \in \Sigma_0} \asv(\sigma_0)(v)  
$, and we note that $\asvgen(v) = \sup_{\epsilon > 0} \asv(v)$.
We can now state a theorem about combined robustness of $\asv$.

\begin{theorem}[Combined robustness of $\asv$]
\label{thm:combined-robustness}
For all nonzero-sum mean-payoff games $\mathcal{G}$ with a set $V$ of vertices, for all $\epsilon > 0$, for all $\delta > 0$, for all $\mathcal{H} \in \mathcal{G}^{\pm \delta}$, for all vertices $v \in V$, and for all strategies $\sigma_0$, we have that if $\asvgen_\mathcal{G}^{2\delta + \epsilon}(\sigma_0)(v) > c$, then for all $\mathcal{H} \in \mathcal{G}^{\pm \delta}$, we have that $\inf_{\sigma_1 \in {\sf BR}^{\epsilon}_{\mathcal{H}}(\sigma_0)} \mpinf_0^{\mathcal{H}}(\outv(\sigma_0,\sigma_1)) > c-\delta.$
\end{theorem}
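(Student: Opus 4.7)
The plan is to combine the $\epsilon$-best-response argument of Theorem \ref{robustNZS} with the weight-perturbation bound used in Proposition \ref{robustZS}. The key observation is that since $\mathcal{G}$ and $\mathcal{H}$ share the same underlying graph, any play $\pi$ is a play of both games, and by definition of $\mathcal{G}^{\pm \delta}$ the mean-payoff of $\pi$ in the two games differs by strictly less than $\delta$ in each coordinate, for both players.

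Fix an arbitrary $\sigma_1 \in {\sf BR}_{\mathcal{H}}^{\epsilon}(\sigma_0)$ and let $\rho = \outv(\sigma_0, \sigma_1)$; I want to show $\mpinf_0^{\mathcal{H}}(\rho) > c - \delta$ with a margin that is uniform in $\sigma_1$, so that passing to the infimum preserves the strict inequality. The first step is to establish that the same strategy $\sigma_1$ is a $(2\delta + \epsilon)$-best-response to $\sigma_0$ in the \emph{original} game $\mathcal{G}$. Letting $y_{\mathcal{H}} = \sup_{\sigma_1'} \mpinf_1^{\mathcal{H}}(\outv(\sigma_0, \sigma_1'))$ and $y_{\mathcal{G}} = \sup_{\sigma_1'} \mpinf_1^{\mathcal{G}}(\outv(\sigma_0, \sigma_1'))$, perturbation gives $\mpinf_1^{\mathcal{H}}(\rho) > \mpinf_1^{\mathcal{G}}(\rho) - \delta$ and, by taking suprema over the common strategy space, $y_{\mathcal{H}} \geq y_{\mathcal{G}} - \delta$. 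Combining these with the $\epsilon$-optimality assumption $\mpinf_1^{\mathcal{H}}(\rho) > y_{\mathcal{H}} - \epsilon$ yields
\[
\mpinf_1^{\mathcal{G}}(\rho) > \mpinf_1^{\mathcal{H}}(\rho) - \delta > y_{\mathcal{H}} - \epsilon - \delta \geq y_{\mathcal{G}} - 2\delta - \epsilon,
\]
which is precisely the condition $\sigma_1 \in {\sf BR}^{2\delta+\epsilon}_{1,\mathcal{G}}(\sigma_0)$.

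The second step invokes the hypothesis $\asvgen_{\mathcal{G}}^{2\delta + \epsilon}(\sigma_0)(v) > c$: there exists $c' > c$ such that $\mpinf_0^{\mathcal{G}}(\outv(\sigma_0, \sigma_1')) \geq c'$ for every $\sigma_1' \in {\sf BR}^{2\delta+\epsilon}_{1,\mathcal{G}}(\sigma_0)$. Applying this to our $\sigma_1$ and then the weight-perturbation bound to the Player~0 payoff gives $\mpinf_0^{\mathcal{H}}(\rho) > \mpinf_0^{\mathcal{G}}(\rho) - \delta \geq c' - \delta > c - \delta$. Since $c' - \delta$ does not depend on the choice of $\sigma_1$, taking the infimum over $\sigma_1 \in {\sf BR}^{\epsilon}_{\mathcal{H}}(\sigma_0)$ preserves the strict inequality, giving the conclusion.

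The main technical care-point will be the interplay between strict and non-strict inequalities: the definition of $\mathcal{G}^{\pm \delta}$ uses open intervals (so per-play perturbation bounds are strict), but strictness may be lost when passing to suprema over infinite strategy sets, such as in $y_{\mathcal{H}} \geq y_{\mathcal{G}} - \delta$. Factoring the strict hypothesis $\asvgen_{\mathcal{G}}^{2\delta + \epsilon}(\sigma_0)(v) > c$ through an intermediate $c' > c$, as above, is exactly what is needed to reconcile these two kinds of bounds and obtain the required strict conclusion $> c - \delta$.
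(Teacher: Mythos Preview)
Your proof is correct and follows exactly the approach the paper sketches: show that every $\epsilon$-best-response in $\mathcal{H}$ is a $(2\delta+\epsilon)$-best-response in $\mathcal{G}$, then invoke the hypothesis and transfer the Player~0 payoff bound back to $\mathcal{H}$ via the perturbation estimate. Your treatment is in fact more detailed than the paper's, which merely outlines these two steps; the one cosmetic slip is that the middle ``$>$'' in your displayed chain should be ``$\geq$'' (since $\mpinf_1^{\mathcal H}(\rho)>y_{\mathcal H}-\epsilon$ need not survive the supremum strictly), but as you yourself note, the first perturbation inequality is strict and your $c'$-trick handles the final infimum, so the conclusion stands.
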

% \subsection{Proof of \cref{thm:combined-robustness}}
\begin{proof} The proof for \cref{thm:combined-robustness} is very similar to the proof of \cref{robustNZS} and involves looking at the set of $\epsilon$-best-responses in the game $\mathcal{H}$ and showing that the corresponding plays lie in the set of $(2\delta + \epsilon)$-best-responses in the game $\mathcal{G}$.
This would imply that the corresponding Player~0 mean-payoffs for the $\epsilon$-best-responses of Player~1 in every perturbed game $\mathcal{H} \in \mathcal{G}^{\pm \delta}$ would always be greater than $c - \delta$.
Therefore, we can extrapolate that $\asv_\mathcal{H}(\sigma_0)(v) > c - \delta$.
\end{proof}
% \todo{Did we decide to change it to $\delta$ and $\epsilon_2$?}

%\track{We thus describe a strategy which is robust against both perturbations in weights as well as sub-optimal responses in non-zero sum mean-payoff Stackelberg games that can be obtained using the solution concept $\asv$}.
\noindent
In the rest of the paper we study properties of $\asv$ and solve the following two problems:
\begin{itemize}
    \item{Threshold Problem of $\asv$:} Given $\mathcal{G}$, $c \in \mathbb{Q}$, an $\epsilon > 0$, and a vertex $v$, we provide a nondeterministic polynomial time algorithm to decide if $\asv(v) > c$ (see \cref{ThmNpForASV}).
    \item{Computation of $\asv$ and largest $\epsilon$:} Given $\mathcal{G}$, an $\epsilon > 0$, and a vertex $v$, we provide an exponential time algorithm to compute $\asv(v)$ (see \cref{ThmComputeASV}).
    We also establish that $\asv$ is achievable (see \cref{ThmAchiev}).
    Then we show, given a fixed threshold $c$, how to computation of largest $\epsilon$ such that $\asv(v) > c$.
    Formally, we compute $\sup \{ \epsilon > 0 \mid \asv(v) > c\}$ (See \cref{thm:ComputeEpsilon}).
\end{itemize}

% \track{We also study the Stackelberg games where Player~0 is restricted to playing finite memory strategies or memoryless strategies.}
% As the proof of \cref{robustNZS} remains the same for these cases, the robustness of $\asv$ \todo{We should not use "robustness of $\asv$"}in non-zero sum games also hold for such Player~0 strategies.}

\section{Threshold problem for the $\asv$}
  \label{sec:ThresholdProblem}
  \noindent In this \mychapter, given $c \in \mathbb{Q}$, and a vertex $v$ in game $\mathcal{G}$, we study the threshold problem which is to determine if $\asv(v) > c$.
%We start by defining the concept of witnesses for $\asv$.
% We start by showing that if $\asv(v) > c$, then there exists a strategy $\sigma_0$ for Player~0 that enforces $\asv(\sigma_0)(v) > c$. \track[FOSSACS]{The proof can be found in \cref{app:witness_strategy_proof}}.
% \input{ExistenceOfStrategyForDecisionProblem}

\subparagraph{Witnesses for $\asv$} For a game $\mathcal{G}$ and $\epsilon > 0$, we associate with each vertex $v$ in $\mathcal{G}$, sets of pairs or real numbers $(c, d)$ such that Player~1 has a strategy to ensure that the mean-payoffs of Player~0 and Player~1 are at most $c$ and greater than $d - \epsilon$ respectively. Formally, we have:
\begin{equation*}
\Lambda^{\epsilon}(v) = \{(c,d) \in \mathbb{R}^2 \mid v \vDash \ll 1 \gg \mpinf_0 \leqslant c \land \mpinf_1 > d-\epsilon \}.
\end{equation*}
\noindent
A vertex $v$ is $(c,d)^{\epsilon}$-bad if $(c,d) \in \Lambda^{\epsilon}(v)$. Let $c' \in \mathbb{R}$. 
A play $\pi$ in G is called a $(c',d)^{\epsilon}$-witness of $\asv(v) > c$ if $(\mpinf_0(\pi), \mpinf_1(\pi)) = (c', d)$ where $c' > c$, and $\pi$ does not contain any $(c,d)^{\epsilon}$-bad vertex. 
A play $\pi$ is called a witness for $\asv(v) > c$ if it is a $(c',d)^{\epsilon}$-witness for $\asv(v) > c$ for some $c',d$.

We first state the following technical lemma which states that if $\asv(v) > c$, then there exists a strategy $\sigma_0$ for Player~0 that enforces $\asv(\sigma_0)(v) > c$.
% \subsection{Existence of Player~0 Strategy for Threshold Problem}
% \label[appendix]{app:witness_strategy_proof}
\begin{lemma}
\label{lem:witness_strategy}
For all mean-payoff games $\mathcal{G}$, for all vertices $v$ in $\mathcal{G}$, for all $\epsilon > 0$, and for all rationals $c$, we have that $\mathsf{ASV}^{\epsilon}(v) > c$
iff there exists a strategy $\sigma_0 \in \Sigma_0$ such that $\mathsf{ASV}^{\epsilon}(\sigma_0)(v) > c$.
\end{lemma}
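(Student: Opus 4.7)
The plan is to prove both directions of the equivalence directly from the definition
$\asv(v) = \sup_{\sigma_0 \in \Sigma_0} \asv(\sigma_0)(v)$,
using the elementary characterization of the supremum of a nonempty bounded set of reals. No game-theoretic manipulation of strategies is required at this stage; the heavy lifting on the structure of witnessing strategies is done in the subsequent results.

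For the $(\Leftarrow)$ direction I would simply note that if $\sigma_0 \in \Sigma_0$ satisfies $\asv(\sigma_0)(v) > c$, then since $\asv(v)$ is, by definition, an upper bound of $\{\asv(\sigma_0')(v) : \sigma_0' \in \Sigma_0\}$, we obtain $\asv(v) \geqslant \asv(\sigma_0)(v) > c$.

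For the $(\Rightarrow)$ direction, assume $\asv(v) > c$ and set $\eta = (\asv(v) - c)/2 > 0$. By the defining property of the supremum there exists $\sigma_0 \in \Sigma_0$ with $\asv(\sigma_0)(v) > \asv(v) - \eta = c + \eta > c$, as required. The argument relies on the fact that once $\sup S > c$ strictly, the value $c$ cannot be an upper bound for $S$, so some element of $S$ strictly exceeds $c$.

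The only point requiring any real care is that $\asv(v)$ should be a well-defined real so that the supremum argument actually applies. For every fixed $\sigma_0$, the set $\br(\sigma_0)$ is nonempty (this is explicitly observed where $\asv$ is defined, since $\epsilon$-best responses always exist for $\epsilon > 0$), and along any play $\mpinf_0$ lies in $[-W, W]$ where $W$ is the maximum absolute weight in $\mathcal{A}$; hence $\asv(\sigma_0)(v) \in [-W, W]$ and consequently $\asv(v) \in [-W, W]$ too. I do not anticipate any substantive obstacle here: the lemma is essentially the general fact ``$\sup S > c$ iff some $s \in S$ exceeds $c$'', and stronger structural statements about the witness (finite memory, achievability, etc.) are deferred to \cref{LemFinMemWitnessASVNonEps} and \cref{ThmAchiev}.
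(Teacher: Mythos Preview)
Your proposal is correct and follows essentially the same approach as the paper: both directions are obtained directly from the definition $\asv(v) = \sup_{\sigma_0} \asv(\sigma_0)(v)$ via the elementary supremum characterization, with the paper choosing an arbitrary $\delta$ with $c' - \delta > c$ where you fix $\eta = (\asv(v)-c)/2$. Your added remark that $\asv(v) \in [-W,W]$ (so the supremum is a genuine real number) is a nice point of care that the paper leaves implicit.
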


\begin{proof}
\noindent The right to left direction of the proof is trivial as $\sigma_0$ can play the role of witness for 
% $\mathsf{ASV}^{\epsilon}(v) > c$,
$\sup \limits_{\sigma_0 \in \Sigma_0} \inf \limits_{\sigma_1 \in \br(\sigma_0)} \mpinf_0(\outv(\sigma_0, \sigma_1)) > c$,
i.e., if there exists a strategy $\sigma_0$ of Player $0$ such that 
% $\mathsf{ASV}^{\epsilon}(\sigma_0)(v) > c$,
$\inf \limits_{\sigma_1 \in \br(\sigma_0)} \mpinf_0(\outv(\sigma_0, \sigma_1)) > c$,
then 
% $\mathsf{ASV}^{\epsilon}(v) > c$.
$\sup \limits_{\sigma_0 \in \Sigma_0} \inf \limits_{\sigma_1 \in \br(\sigma_0)} \mpinf_0(\outv(\sigma_0, \sigma_1)) > c$.
% \\
% \noindent 

For the left to right direction of the proof, let 
% $\mathsf{ASV}^{\epsilon}(v) = c'$.
$\sup \limits_{\sigma_0 \in \Sigma_0} \inf \limits_{\sigma_1 \in \br(\sigma_0)} \mpinf_0(\outv(\sigma_0, \sigma_1)) = c'$.
% By definition of $\mathsf{ASV}^{\epsilon}( v)$, we have that
% \begin{equation*}
% $c' = \sup\limits_{\sigma_0 \in \Sigma_0}\mathsf{ASV}^{\epsilon}(\sigma_0)(v)$.
% \end{equation*}

\noindent By definition of $\sup$, for all $\delta > 0$, 
% we have that:
there exists $\sigma_0^{\delta}$ such that % \begin{equation*}
    % \exists \sigma_0^{\delta}:
% $\mathsf{ASV}^{\epsilon}(\sigma_0^{\delta})(v) > c' - \delta$.
$\inf \limits_{\sigma_1 \in \br(\sigma_0^\delta)} \mpinf_0(\outv(\sigma_0^\delta, \sigma_1)) > c' - \delta$.
% \end{equation*}

\noindent Let us consider a $\delta > 0$ such that $c' - \delta > c$. Such a $\delta$ exists as $c' > c$. Then we have that there exists $\sigma_0$ such that
% \begin{equation*}
    % \exists \sigma_0: 
% $\mathsf{ASV}^{\epsilon}(\sigma_0)(v) > c' - \delta > c$.
$\inf \limits_{\sigma_1 \in \br(\sigma_0)} \mpinf_0(\outv(\sigma_0, \sigma_1)) > c' - \delta > c$.
% \end{equation*}
\end{proof}

% We now show that witnesses for $\asv > c$ exist: 
The following theorem relates the existence of a witness and the threshold problem.

\begin{theorem}
\label{ThmWitnessASVInfMem}
For all mean-payoff games $\mathcal{G}$, for all vertices $v$ in $\mathcal{G}$, for all $\epsilon > 0$, and $c \in \mathbb{Q}$, we have that $\asv(v) > c$ if and only if there exists a
$(c',d)^{\epsilon}$-witness of $\asv(v) > c$, where $d \in \mathbb{Q}$.
\end{theorem}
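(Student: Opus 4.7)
The plan is to prove the biconditional in two directions, with the $(\Rightarrow)$ direction being the more technical one.

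For the $(\Leftarrow)$ direction, I would build, from a $(c', d)^\epsilon$-witness play $\pi$, a Player~0 strategy $\sigma_0$ that follows $\pi$ as long as Player~1 cooperates and switches to a \emph{punishing} strategy at the first off-path vertex $v'$ reached by a deviation of Player~1. Because $\pi$ avoids $(c, d)^\epsilon$-bad vertices, $v'$ is not $(c, d)^\epsilon$-bad, so by determinacy of the zero-sum game with objective $\mpinf_0 \leq c \land \mpinf_1 > d - \epsilon$ (a Borel objective, determined by Martin), Player~0 has a strategy from $v'$ ensuring $\mpinf_0 > c \lor \mpinf_1 \leq d - \epsilon$. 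Since $\pi$ itself remains available as an outcome of $\sigma_0$, the supremum $y^* := \sup\{\mpinf_1(\rho) \mid \rho \in \outv(\sigma_0)\} \geq d$, and any $\sigma_1 \in \br(\sigma_0)$ satisfies $\mpinf_1(\outv(\sigma_0,\sigma_1)) > y^* - \epsilon \geq d - \epsilon$. Therefore any $\epsilon$-best-response that deviates is punished into $\mpinf_0 > c$, while a non-deviating $\epsilon$-best-response yields $\pi$ itself with $\mpinf_0 = c' > c$. Hence $\asv(\sigma_0)(v) > c$, and so $\asv(v) > c$.

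For the $(\Rightarrow)$ direction, using \cref{lem:witness_strategy} I would fix $\sigma_0$ with $\asv(\sigma_0)(v) > c$ and set $y^* := \sup\{\mpinf_1(\rho) \mid \rho \in \outv(\sigma_0)\}$. The goal is to exhibit a $\sigma_0$-consistent play $\pi$ with $\mpinf_0(\pi) > c$ and $d := \mpinf_1(\pi) \in \mathbb{Q}$, no vertex of which is $(c, d)^\epsilon$-bad. The core argument is by contradiction: if some vertex $v'$ on $\pi$ were $(c, d)^\epsilon$-bad, Player~1 could play along $\pi$ until reaching $v'$ and then switch to the strategy witnessing the bad-vertex condition, producing a $\sigma_0$-consistent outcome with $\mpinf_0 \leq c$ together with $\mpinf_1 > d - \epsilon$; provided this outcome qualifies as an $\epsilon$-best-response to $\sigma_0$, it contradicts $\asv(\sigma_0)(v) > c$.

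The main obstacle is ensuring the switched outcome does qualify as an $\epsilon$-best-response: this requires $\mpinf_1 > y^* - \epsilon$, while the bad-vertex hypothesis only supplies $\mpinf_1 > d - \epsilon$ with $d \leq y^*$. One must therefore choose $\pi$ so that $d$ is sufficiently close to $y^*$ and is simultaneously rational. I plan to address this by exploiting the finite structure of the arena: the tail of any outcome of $\sigma_0$ lives in some SCC $S$, and every such tail's pair of mean-payoffs lies in the rational polytope $\CH{\CS}$. Selecting an ultimately periodic play $\pi$ whose cycle mixture attains a rational point of $\CH{\CS}$ arbitrarily close to the extremal $\mpgen_1$-value allows the strict $\epsilon$-slack in the definitions of $\br$ and of $(c, d)^\epsilon$-bad to absorb the residual gap, so that the switched strategy exceeds $y^* - \epsilon$ and the contradiction closes. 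Orchestrating this finite-cycle approximation despite the potentially infinite-memory nature of $\sigma_0$, and checking that the resulting $\pi$ still satisfies $\mpinf_0(\pi) > c$, is the principal technical hurdle of the proof.
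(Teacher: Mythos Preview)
Your $(\Leftarrow)$ direction is correct and essentially identical to the paper's argument.

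For $(\Rightarrow)$, the contradiction-via-bad-vertex idea is right, but your proposed resolution of the ``obstacle'' does not work, and the paper takes a different route. The problem is monotonicity in the wrong direction: for $d < y^*$, the condition ``$(c,d)^\epsilon$-bad'' is \emph{weaker} than ``$(c,y^*)^\epsilon$-bad'' (since $\mpinf_1 > y^*-\epsilon$ implies $\mpinf_1 > d-\epsilon$), so lowering $d$ can only create \emph{more} bad vertices. Worse, if $v'$ is $(c,d)^\epsilon$-bad, the switched strategy guarantees only $\mpinf_1 > d-\epsilon$, strictly weaker than the $\mpinf_1 > y^*-\epsilon$ you need for the switched outcome to qualify as an $\epsilon$-best-response; no ``$\epsilon$-slack'' bridges this when $d < y^*$. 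Concretely, it can happen that $v'$ is not $(c,y^*)^\epsilon$-bad yet is $(c,d)^\epsilon$-bad for every $d < y^*$.

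The paper sidesteps the obstacle by taking $d := y^*$ from the outset. With this choice the switched outcome from a hypothetical bad vertex is automatically an $\epsilon$-best-response, so the contradiction closes directly and one concludes that \emph{every} $\epsilon$-best-response outcome of $\sigma_0$ avoids $(c,y^*)^\epsilon$-bad vertices. What remains is to exhibit a single play with $\mpinf_1 = y^*$ and $\mpinf_0 > c$ visiting only such vertices. The paper does this by a limit argument: take a sequence of $\epsilon$-best-response outcomes whose Player~1 payoffs tend to $y^*$, pass to a subsequence sharing the same SCC $S$ and visited set $V_\pi$, and then invoke closedness of $\Fmin{\CH{\CS}}$ together with \cref{lemCHToPlay} to obtain a play $\pi^*$ in the arena with $(\mpinf_0(\pi^*),\mpinf_1(\pi^*)) = (\hat c, y^*)$ for some $\hat c \ge c' > c$ and $V_{\pi^*}\subseteq V_\pi$. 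Note that $\pi^*$ need not be $\sigma_0$-consistent; your insistence on a $\sigma_0$-consistent witness is an unnecessary restriction that only makes the construction harder.
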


% \sgcomment{We should state the difficulty in our case compared to \cite{FGR20}.}
% A similar result has been established in \cite{FGR20} 
% In~\cite{FGR20} it has been shown that
% that $\asvgen(v) > c$ if and only if there exists a witness for $\asvgen(v) > c$.
% However, proving the direction that if $\asv(v) > c$, then there exists a witness for $\asv(v) > c$ is significantly harder than and different from that of~\cite{FGR20}, and requires new technical tools.
% This is because, intuitively, when Player~1 plays $\epsilon$-optimal strategy, for every Player~0 strategy $\sigma_0$ that achieves $\asv(v) > c$, we need to show the existence of a witness $\pi^*$ compatible with $\sigma_0$ such that Player~1 would have no reason to deviate from $\pi^*$. Thus, we have to show that a deviation from $\pi^*$ would either give Player~1 a mean-payoff value strictly $\epsilon$ lesser than the mean-payoff value he could obtain while following the witness $\pi^*$ or would give Player~0 a mean-payoff value strictly greater than $c$. Finding this witness $\pi^*$ is non-trivial in our case.
% The full proof can be found in \cref{app:ThmWitnessASVInfMem}.

Towards proving the existence of the witness, 
we first recall a result from \cite{FGR20,CDEHR10} which states that
for every pair of points $(x,y)$ present in $\Fmin{\CH{\CS}}$, we can construct a play $\pi$ in the SCC $S$ such that $(\mpinf_0(\pi), \mpinf_1(\pi)) = (x,y)$. 
Further, for every play $\pi$ in SCC $S$, we have that the $(\mpinf_0(\pi), \mpinf_1(\pi))$ is present in $\Fmin{\CH{\CS}}$.

% \subsection{Existence of Plays from Convex Hull}
% \label[appendix]{app:CHToPlay}
\begin{lemma}
\label{lemCHToPlay}
\textbf{\emph{(\cite{FGR20,CDEHR10})}} Let $S$ be an SCC in the arena $\mathcal{A}$ with a set $V$ of vertices, and $W$ be the maximum of the absolute values appearing on the edges in $\mathcal{A}$.
% , the following three properties hold:
We have that
\begin{enumerate}
    \item for all $\pi \in \plays$, if $\inf(\pi) \subseteq S$, then $(\mpinf_0(\pi), \mpinf_1(\pi)) \in \Fmin{\CH{\CS}}$
    \item for all $(x,y) \in \Fmin{\CH{\CS}}$, there exists a play $\pi \in \plays$ such that $\inf(\pi) = S$ and $(\mpinf_0(\pi), \mpinf_1(\pi)) = (x,y)$.
    \item The set $\Fmin{\CH{\CS}}$ is effectively expressible in $\langle \mathbb{R}, +, < \rangle$ as a conjunction of $\mathcal{O}(m^2)$ linear inequalities, where $m$ is the number of mean-payoff coordinates of simple cycles in $S$, which is $\mathcal{O}(W \cdot |V|)$. Hence this set of inequalities can be pseudopolynomial in size.
\end{enumerate}
\end{lemma}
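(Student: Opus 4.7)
The plan is to address the three parts in sequence; (1) and (2) together characterize the mean-payoff pairs achievable by plays that stay eventually in $S$, which is essentially the multi-dimensional SCC result of \cite{CDEHR10}, so I will only sketch those arguments; (3) is then a quantifier-elimination exercise.

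For (1), let $\pi$ be a play with $\inf(\pi) \subseteq S$. After a finite prefix the play stays in $S$, and a standard cycle decomposition writes each prefix $\pi_{\leqslant n}$ as a concatenation of simple cycles plus a transient of length at most $|V|$. For each dimension $i \in \{0,1\}$, pick a subsequence $(n^i_k)_k$ along which the running average $\tfrac{1}{n^i_k} w_i(\pi_{\leqslant n^i_k})$ converges to $\mpinf_i(\pi)$; by compactness and a diagonal extraction I may also assume that the empirical distribution over the simple cycles used so far converges to some probability vector $\alpha^i$ on $\CS$. The transient contributes at most $|V|W/n^i_k \to 0$, so the limit equals $\sum_{c \in \CS} \alpha^i_c \mpgen_i(c)$, which is the $i$-th coordinate of the point $p^i := \sum_c \alpha^i_c (\mpgen_0(c),\mpgen_1(c)) \in \CH{\CS}$. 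Hence $(\mpinf_0(\pi),\mpinf_1(\pi)) = (p^0_0, p^1_1) = f_{\min}(\{p^0,p^1\}) \in \Fmin{\CH{\CS}}$.

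For (2), given $(x,y) = f_{\min}(P)$ with $P = \{p^{(1)},\dots,p^{(k)}\} \subseteq \CH{\CS}$, I would realize each $p^{(j)}$ as a long finite play inside $S$ by scheduling the simple cycles underlying the convex combination with their target empirical frequencies; bridging paths between cycles have length at most $|V|$ and hence contribute vanishing average weight. Then I splice these $k$ phases into successive rounds of geometrically increasing length, rotating which phase comes last in each round so that, for each coordinate $i$, infinitely often the running average gets within $o(1)$ of $\min_j p^{(j)}_i$ but is never substantially below it. This forces $\mpinf_i(\pi) = \min_j p^{(j)}_i$; a short detour once per round ensures $\inf(\pi) = S$.

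For (3), $\CH{\CS}$ is the convex hull in $\mathbb{R}^2$ of at most $m$ points, expressible as a conjunction of $\mathcal{O}(m)$ linear inequalities describing its facets. The key observation for $f_{\min}$ is that in two dimensions $f_{\min}$ of any finite subset is already $f_{\min}$ of at most two of its points, one minimizing each coordinate. Hence $(x,y) \in \Fmin{\CH{\CS}}$ is equivalent to
\[
\exists y_1\, \exists x_2.\ (x, y_1) \in \CH{\CS} \land (x_2, y) \in \CH{\CS} \land y_1 \geqslant y \land x_2 \geqslant x,
\]
a formula with $\mathcal{O}(m)$ atoms and two existentially quantified variables. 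Fourier--Motzkin elimination of $x_2$ and $y_1$ yields a quantifier-free conjunction of $\mathcal{O}(m^2)$ linear inequalities in $(x,y)$. The bound $m = \mathcal{O}(W|V|)$ follows from standard counting of fractions $w_i(c)/|c|$ with bounded numerator and denominator. The main obstacle is the compactness step in (1): one must justify carefully that the empirical distribution over simple cycles admits a subsubsequential limit in the simplex over $\CS$ and that the bounded transient and bridging contributions are truly negligible in the $\liminf$. Once that is settled, part (2) is a dual round-robin construction and part (3) is routine linear elimination.
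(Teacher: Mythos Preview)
The paper does not give a proof of this lemma; it is stated as a result recalled from \cite{FGR20,CDEHR10} and used as a black box throughout. There is therefore nothing in the paper to compare your argument against.

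On its own merits your sketch follows the standard line of those references and is essentially correct, but two spots in part~(1) need care. First, the identity $(\mpinf_0(\pi),\mpinf_1(\pi)) = f_{\min}(\{p^0,p^1\})$ relies on $p^1_0 \geq p^0_0$ and $p^0_1 \geq p^1_1$; this holds because every subsequential limit of the $i$-th running average dominates the $\liminf$, but you should say so explicitly rather than assert the equality. Second, for the limit along $(n^i_k)$ to equal $\sum_c \alpha^i_c\,\mpgen_i(c)$ your ``empirical distribution over the simple cycles'' must weight each cycle by the number of \emph{edges} it contributes (equivalently, the fraction of time spent in it), not merely by the number of times it occurs in the decomposition; otherwise the limit is $\bigl(\sum_c \alpha_c w_i(c)\bigr)\big/\bigl(\sum_c \alpha_c |c|\bigr)$, which is a different convex combination when cycle lengths differ. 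With those two clarifications parts~(1)--(2) go through, and your treatment of~(3) via the two-point reduction for $f_{\min}$ in dimension two followed by Fourier--Motzkin elimination is clean and delivers the stated $\mathcal{O}(m^2)$ bound.
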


Now we have the ingredients to prove \cref{ThmWitnessASVInfMem}.
\begin{proof}[Proof of \cref{ThmWitnessASVInfMem}]
In~\cite{FGR20} it has been shown that
that $\asvgen(v) > c$ if and only if there exists a witness for $\asvgen(v) > c$.
% \begin{comment}
% However, proving the direction that if $\asv(v) > c$, then there exists a witness for $\asv(v) > c$ is significantly harder than and different from that of~\cite{FGR20}, and requires new technical tools.
% This is because, intuitively, when Player~1 plays $\epsilon$-optimal $\epsilon$-optimal response, for every Player~0 strategy $\sigma_0$ that achieves an $\asv(v) > c$, we need to show the existence of a witness $\pi^*$ compatible with $\sigma_0$ such that Player~1 would have no reason to deviate from $\pi^*$. 
% Thus, we have to show that a deviation from $\pi^*$ would either give Player~1 a mean-payoff value strictly $\epsilon$ lesser than the mean-payoff value he could obtain while following the witness $\pi^*$ or would give Player~0 a mean-payoff value strictly greater than $c$.
% % Finding this witness $\pi^*$ is non-trivial in our case.
% The full proof of the first part of the theorem can be found in \cref{app:ThmWitnessASVInfMem}.
% \btrack{The proof of this direction is similar to the proof of Theorem 5 in \cite{FGR20}.}
% \end{comment}
We begin by proving the right to left direction, that is, showing that the existence of a $(c',d)^\epsilon$-witness implies that $\asv(v) > c$. The proof of this direction is similar to the proof in \cite{FGR20} for the case of $\asvgen$.
% \subsection{Proof of \cref{ThmWitnessASVInfMem}}
% \label[appendix]{app:ThmWitnessASVInfMem}
% Here, we split the proof of \cref{ThmWitnessASVInfMem} into two parts. 
% We first we prove the existence of a $(c, d)^\epsilon$-witness for $\asv > c$.
% We then prove the existence of a \emph{regular} $(c, d)^\epsilon$-witness, i.e., a witness of the form $\pi = u.v^\omega$, where $u$ is the prefix of play and $v$ is a finite sequence of edges.
% \subsubsection{Existence of witness for $\asv > c$}
% \begin{proof}
% Here, we prove the right to left direction.
% \btrack{The proof of this direction is similar to the proof of Theorem 5 in \cite{FGR20}.}
We are given a play $\pi$ in $\mathcal{G}$ that starts from $v$ and the play $\pi$ is such that $(\mpinf_0(\pi), \mpinf_1(\pi)) = (c', d)$ for $c' > c$ and does not cross a $(c,d)^{\epsilon}$-bad vertex. We need to prove that $\asv(v) > c$.
We do this by defining a strategy $\sigma_0$ for Player~0, such that $\asv(\sigma_0)(v) > c$:
\begin{enumerate}
    \item $\forall h \leqslant \pi$, if $\last(h)$ is a Player~0 vertex, the strategy $\sigma_0$ is such that $\sigma_0(h)$ follows $\pi$.
    \item $\forall h \nleqslant \pi$, where there has been a deviation from $\pi$ by Player~1, we assume that Player~0 switches to a punishing strategy defined as follows: In the subgame after history $h'$ where $\last(h')$ is the first vertex from which Player~1 deviates from $\pi$, we know that Player~0 has a strategy to enforce the objective: $\mpinf_0 > c$ $\lor$ $ \mpinf_1 \leqslant d-\epsilon$. This is true because $\pi$ does not cross any $(c,d)^{\epsilon}$-bad vertex and since $n$-dimensional mean-payoff games are determined.
\end{enumerate}

Let us now establish that the strategy $\sigma_0$ satisfies $\asv(\sigma_0)(v) > c$. 
First note that, since $\mpinf_1(\pi) = d$, we have that $\sup\limits_{\sigma_1 \in \br(\sigma_0)} \mpinf_1(\outv(\sigma_0, \sigma_1)) \geqslant d$. Now consider some strategy $\sigma_1' \in \br(\sigma_0)$ and let $\pi' = \outv(\sigma_0, \sigma_1')$. Clearly, $\pi'$ is such that $\mpinf_1(\pi') > \sup\limits_{\sigma_1 \in \br(\sigma_0)} \mpinf_1(\outv(\sigma_0, \sigma_1)) -\epsilon \geqslant d-\epsilon$. If $\pi' = \pi$, we know that $\mpinf_0(\pi') > c$. If $\pi' \neq \pi$, then when $\pi'$ deviates from $\pi$ we know that Player~0 employs the punishing strategy, thus making sure that $\mpinf_0(\pi') > c$ $\lor$ $ \mpinf_1(\pi') \leqslant d-\epsilon$. Since $\sigma_1' \in \br(\sigma_0)$, it must be true that $\mpinf_0(\pi') > c$. Thus, $\forall \sigma_1' \in \br(\sigma_0)$, we have $\mpinf_0(\outv(\sigma_0, \sigma_1')) > c$. Therefore, $\asv(\sigma_0)(v) > c$, which implies $\asv(v) > c$.

We now consider the left to right direction of the proof that requires new technical tools.
% \btrack{The proof of this direction is significantly harder than and different from that of \cite{FGR20}, and requires new technical tools.}
We are given that $\asv(v) > c$.
%Hence by \cref{lem:witness_strategy}, 
% We can show that there exists a strategy $\sigma_0$ for Player $0$ such that $\asv(\sigma_0)(v) > c$.
From \cref{lem:witness_strategy}, we have that $\asv(v) > c$ iff there exists a strategy $\sigma_0$ of Player~0 such that $\asv(\sigma_0)(v) > c$.
% The formal statement and proof is present in \cref{app:witness_strategy_proof}.
Thus, there exists a $\delta > 0$, such that
\begin{equation*}
    \inf\limits_{\sigma_1 \in \br(\sigma_0)} \mpinf_0(\outv(\sigma_0, \sigma_1)) = c' = c + \delta
\end{equation*}
Let $d = \sup\limits_{\sigma_1 \in \br(\sigma_0)} \mpinf_1(\outv(\sigma_0, \sigma_1))$. We first prove that for all $\sigma_1 \in \br(\sigma_0)$, we have that $\outv(\sigma_0, \sigma_1)$ does not cross a $(c,d)^{\epsilon}$-bad vertex. For every $\sigma_1 \in \br(\sigma_0)$, we let $\pi_{\sigma_1} = \outv(\sigma_0, \sigma_1)$. We note that $\mpinf_1(\pi_{\sigma_1}) > d -\epsilon$ and $\mpinf_0(\pi_{\sigma_1}) > c$. For every $\pi' \in \outv(\sigma_0)$, we know that if $\mpinf_1(\pi') > d - \epsilon$, then there exists a strategy $\sigma_1' \in \br(\sigma_0)$ such that $\pi' = \outv(\sigma_0, \sigma_1')$. This means that $\mpinf_0(\pi') > c$. Thus we can see that every deviation from $\pi_{\sigma_1}$ either gives Player~1 a mean-payoff that is at most $d - \epsilon$ or Player~0 a mean-payoff greater than $c$. Therefore, we conclude that $\pi_{\sigma_1}$ does not cross any $(c,d)^{\epsilon}$-bad vertex. 
% We also note that for all $c'> c''> c$, we have that $\asv(\sigma_0)(v) > c''$.
% Thus for all plays in $\outv(\sigma_0)$, for all $\sigma_1 \in \br(\sigma_0)$, we have that $\mpinf_0(\outv(\sigma_0, \sigma_1)) > c''$, giving that $\pi_{\sigma_1}$ does not cross any $(c'',d)^{\epsilon}$-bad vertex.

Now consider a sequence ($\sigma_i$)$_{i \in \mathbb{N}}$ of Player $1$ strategies such that $\sigma_i \in \br(\sigma_0)$ for all $i \in \mathbb{N}$, and $\lim \limits_{i \to \infty} \mpinf_1(\outv(\sigma_0, \sigma_i))=d$. Let $\pi_i = \outv(\sigma_0, \sigma_i)$.
Let $\inf(\pi_i)$ be the set of vertices that occur infinitely often in $\pi_i$, and let $V_{\pi_i}$ be the set of vertices appearing along the play $\pi_i$.
Since there are finitely many SCCs, w.l.o.g., we can assume that for all $i, j \in \mathbb{N}$, we have that $\inf(\pi_i) = \inf(\pi_j)$, that is, all the plays end up in the same SCC, say $S$, and also $V_{\pi_i} = V_{\pi_j}=V_{\pi}$ (say). 
Note that $S \subseteq V_{\pi}$.
% For all vertices $v$ in the SCC $S$, we know that $v$ is not $(c'',d)^{\epsilon}$-bad for $c'\geqslant c''> c$.

Note that for every $\epsilon \ge \delta > 0$, there is a strategy $\sigma_1^\delta \in \br(\sigma_0)$ of Player $1$, and a corresponding play $\pi' = \outv(\sigma_0, \sigma_1^\delta)$ such that $\mpinf_1(\pi') > d-\delta$, and $\mpinf_0(\pi') \geqslant c'$.
Also the set $V_{\pi'}$ of vertices appearing in $\pi'$ be such that $V_{\pi'} \subseteq V_\pi$, and $\inf(\pi') \subseteq S$.

Now since $\Fmin{\CH{\CS}}$ is a closed set, we have that $(\widehat{c},d) \in \Fmin{\CH{\CS}}$ for some $\widehat{c} \geqslant c' > c$.
We now use a result from \cite{FGR20,CDEHR10} which states that
for every pair of points $(x,y)$ in $\Fmin{\CH{\CS}}$, we can construct a play $\pi$ in the SCC $S$ such that $(\mpinf_0(\pi), \mpinf_1(\pi)) = (x,y)$.
Further, for every play $\pi$ in SCC $S$, we also have that $(\mpinf_0(\pi), \mpinf_1(\pi))$ is in $\Fmin{\CH{\CS}}$.
This is formally stated as \cref{lemCHToPlay}.
Thus, we have that, there exists a play $\pi^*$ such that $(\mpinf_0(\pi^*), \mpinf_1(\pi^*)) = (\widehat{c},d)$.
Also $\inf(\pi^*) \subseteq S$, and $V_{\pi^*} \subseteq V_{\pi}$.
The proof follows since for all vertices $v \in V_\pi$, we have that $v$ is not $(c,d)^{\epsilon}$-bad.
% for $c'' > c$.
% \track{ We have thus shown that if $\asv(v) > c$, then there exists a $(c',d)^{\epsilon}$-witness. It remains to prove that this witness can be \emph{regular}.}
% \input{Appendix/Threshold_Problem/ThmWitnessASVFinMem}
\end{proof}

Now, we establish a small witness property to show that the threshold problem is in {\sf NP}. We do this by demonstrating that the witness consists of two simple cycles $l_1$ and $l_2$ in a strongly connected component of the game graph that is reachable from $v$ such that the convex combination of the payoffs of Player~0 in these cycles exceeds the threshold $c$.
% , while at the same time ensure that it does not cross a Player~1 bad vertex.}

% \subsection{Existence of small witness for $\asv$}
% \label{app:LemPlaysAsWitnessForASV}
\begin{lemma}
\label{LemPlaysAsWitnessForASV}
For all mean-payoff games $\mathcal{G}$, for all vertices $v$ in $\mathcal{G}$, for all $\epsilon > 0$, and for all rationals $c$, we have that $\asv(v) > c$ if and only if there exist two simple cycles $l_1 , l_2$, three simple paths $\pi_1, \pi_2, \pi_3$ from $v$ to $l_1$, from $l_1$ to $l_2$, and from $l_2$ to $l_3$ respectively, and $\alpha, \beta \in \mathbb{Q}^{+}$, where $\alpha + \beta = 1$, such that
    \begin{enumerate}[(i)]
        \item $\alpha \cdot \mpgen_0(l_1) + \beta \cdot \mpgen_0(l_2) = c' > c$, and
        \item $\alpha \cdot \mpgen_1(l_1) + \beta \cdot \mpgen_1(l_2) = d$, for some rational $d$, and
        \item there is no $(c,d)^{\epsilon}$-bad vertex $v'$ along $\pi_1, \pi_2, \pi_3, l_1$ and $l_2$.
    \end{enumerate}
    Furthermore, $\alpha$, $\beta$, and $d$ can be chosen so that they can be represented with a polynomial number of bits.
\begin{comment}
For all mean-payoff games $\mathcal{G}$, for all vertices $v$ in $\mathcal{G}$, for all $\epsilon > 0$, and for all rationals $c$, we have that $\asv(v) > c$ if and only if there exist three acyclic plays $\pi_1, \pi_2, \pi_3$, and two simple cycles $l_1 , l_2$ such that:
\begin{inparaenum}
    \item \textit{first}($\pi_1$) $ = v$, \textit{first}($\pi_2$) = \textit{last}($\pi_1$), \textit{first}($\pi_3$) = \textit{last}($\pi_2$), \textit{first}($\pi_2$) = \textit{last}($\pi_3$), and \textit{first}($\pi_2$) = \textit{first}($l_1$), and \textit{first}($\pi_3$) = \textit{first}($l_2$).
    \item there exist $\alpha, \beta \in \mathbb{Q}^{+}$, where $\alpha + \beta = 1$, such that:
    \begin{inparaenum}
        \item $\alpha \cdot \mpgen_0(l_1) + \beta \cdot \mpgen_0(l_2) = c' > c$ and
        \item $\alpha \cdot \mpgen_1(l_1) + \beta \cdot \mpgen_1(l_2) = d$, \track{for some rational $d$}.
    \end{inparaenum}
    \noindent
    \track{Furthermore, $\alpha$, $\beta$, and $d$ can be chosen so that they can be represented with a polynomial number of bits.}
    \item there is no $(c,d)^{\epsilon}$-bad vertex $v'$ along $\pi_1, \pi_2, \pi_3, l_1$ and $l_2$.
\end{inparaenum}
\end{comment}
\end{lemma}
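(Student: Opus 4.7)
The plan is to prove both directions of the equivalence using \cref{ThmWitnessASVInfMem} (existence of an infinite witness play characterises $\asv(v) > c$) and \cref{lemCHToPlay} (the mean-payoff coordinates of plays that stay in an SCC $S$ lie in $\Fmin{\CH{\CS}}$). The forward direction will extract two simple cycles from an abstract infinite witness, while the reverse direction will build an infinite witness from the finite data.

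For the $(\Leftarrow)$ direction, given $l_1, l_2, \pi_1, \pi_2, \pi_3, \alpha, \beta$, I would build a play $\pi$ that follows $\pi_1$ from $v$ and then alternates arbitrarily long stretches of iterations of $l_1$ and $l_2$, separated by $\pi_2$ and $\pi_3$, with stretch lengths chosen so that the long-run visit frequencies of the two cycles equal $\alpha$ and $\beta$. The resulting play satisfies $\mpinf_0(\pi) = c' > c$ and $\mpinf_1(\pi) = d$, and by hypothesis (iii) avoids every $(c,d)^{\epsilon}$-bad vertex, so it is a $(c',d)^{\epsilon}$-witness and \cref{ThmWitnessASVInfMem} yields $\asv(v) > c$.

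For the $(\Rightarrow)$ direction, assume $\asv(v) > c$. By \cref{ThmWitnessASVInfMem} I take a $(c'', d^*)^{\epsilon}$-witness $\pi$ with $c'' > c$ and $d^* = \mpinf_1(\pi)$. Let $S' = \inf(\pi)$. Since $\pi$ visits every vertex of $S'$ and every vertex of its prefix without crossing a $(c,d^*)^{\epsilon}$-bad vertex, all vertices in $S'$ and in the prefix are safe at threshold $d^*$. \cref{lemCHToPlay} then gives $(c'', d^*) \in \Fmin{\CH{\CS[S']}}$, so there exists $(c'', y^*) \in \CH{\CS[S']}$ with $y^* \geqslant d^*$. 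I would then consider the closed convex region $\overline{R} = \{(x,y) \in \CH{\CS[S']} \mid x \geqslant c \land y \geqslant d^*\}$ and select a vertex $(c',d)$ of $\overline{R}$ that maximises $x$: this vertex has $c' \geqslant c'' > c$, and because it cannot lie on the line $x = c$ it must lie on the boundary of $\CH{\CS[S']}$. Hence $(c', d)$ is a convex combination of at most two vertices of $\CH{\CS[S']}$, i.e.\ of two simple cycles $l_1, l_2$ in $S'$ (taking $l_1 = l_2$ if $(c',d)$ is itself a vertex of $\CH{\CS[S']}$), with rational weights $\alpha, \beta \in \mathbb{Q}^{+}$ summing to $1$. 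Because $d \geqslant d^*$ and the set of $(c,\cdot)^{\epsilon}$-bad vertices shrinks as the second argument grows, no vertex of $S'$ is $(c,d)^{\epsilon}$-bad; so $l_1$, $l_2$, and any simple paths $\pi_2, \pi_3$ chosen inside the strongly connected $S'$ are bad-free, and $\pi_1$ is extracted as a simple path from $v$ to $l_1$ inside the safe subgraph induced by the vertices visited by $\pi$ (which is connected since $\pi$ goes from $v$ into $S'$).

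The main obstacle will be the polynomial bit-size of $\alpha, \beta, d$. Vertices of $\CH{\CS[S']}$ are mean-payoff coordinates of simple cycles, hence rationals of polynomial bit-length (cycles have at most $|V|$ edges and weights bounded in absolute value by $W$). The selected vertex $(c', d)$ of $\overline{R}$ is the solution of a $2\times 2$ linear system whose entries are such coordinates or one of the input rationals $c, d^*$; by Cramer's rule both $c'$ and $d$ have polynomial bit-length, and the convex weights $\alpha, \beta$ are then determined from the relative position of $(c',d)$ on the chosen edge of $\CH{\CS[S']}$ by another such system. The strict inequality $c' > c$ comes for free because the maximum of $x$ over $\overline{R}$ is already at least $c'' > c$, so no artificial perturbation is needed.
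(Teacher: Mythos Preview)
Your overall plan mirrors the paper's: both directions go through \cref{ThmWitnessASVInfMem}, and your $(\Leftarrow)$ construction is essentially the play $\pi = \pi_1 \rho_1 \rho_2 \cdots$ with $\rho_i = l_1^{[\alpha i]}\pi_2 l_2^{[\beta i]}\pi_3$ that the paper uses. For $(\Rightarrow)$, your geometric idea---maximise $x$ over the clipped hull so that the optimum is forced onto an edge of $\CH{\CS[S']}$ and is therefore a combination of only two simple cycles---is a clean, more explicit alternative to the paper's joint appeal to Carath\'eodory and to the polynomial-solution lemma of~\cite{BR15}.

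There is, however, a genuine gap in the polynomial bit-size step. You treat $d^*$ as ``one of the input rationals'' when you invoke Cramer's rule, but $d^* = \mpinf_1(\pi)$ is the Player~1 payoff of an \emph{arbitrary} infinite witness handed to you by \cref{ThmWitnessASVInfMem}; nothing there bounds its bit-size, and in general it need not even be rational. This matters: the $x$-maximising vertex of your region $\overline{R}$ can lie on the clipping line $y = d^*$ (this happens precisely when the global $x$-maximiser of $\CH{\CS[S']}$ has second coordinate below $d^*$), and then your chosen $d$ equals $d^*$, so both the rationality and the polynomial-bit conclusions fail. The paper sidesteps $d^*$ entirely: it uses that, for each vertex $v'$ on the witness, the set $\overline{\Lambda}^{\epsilon}(v')$ is definable by linear inequalities with polynomial-bit coefficients (from~\cite{BR15}), so the feasible region ``$(x,y)\in\Fmin{\CH{\CS}}$, $x>c$, and $(c,y)\in\bigcap_{v'}\overline{\Lambda}^{\epsilon}(v')$'' is a polynomial-bit linear system, which then admits a polynomial-bit solution. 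Replacing your half-plane $y \geqslant d^*$ by these $\overline{\Lambda}^{\epsilon}$ constraints repairs your argument while preserving your boundary-selection trick for obtaining two cycles.
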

\begin{proof}
This proof is similar to the proof of Lemma 8 in \cite{FGR20}.
For the right to left direction of the proof, where we are given finite acyclic plays $\pi_1, \pi_2, \pi_3$, simple cycles $l_1$ and $l_2$ and constants $\alpha, \beta$, we consider the witness $\pi = \pi_1\rho_1\rho_2\rho_3\dots$ where, for all $i \in \mathbb{N}$, we let $\rho_i = l_1^{[\alpha \cdot i]}.\pi_2.l_2^{[\beta \cdot i]}.\pi_3$. 
We know that $\mpinf_1(\pi) = \alpha \cdot \mpgen_1(l_1) + \beta \cdot \mpgen_1(l_2) = d$ and $\mpinf_0(\pi) = \alpha \cdot \mpgen_0(l_1) + \beta \cdot \mpgen_0(l_2) > c$. For all vertices $v$ in $\pi_1, \pi_2, \pi_3, l_1$ and $l_2$, it is given that $v$ is not $(c,d)^\epsilon$-bad. 
Therefore, $\pi$ is a suitable witness thus proving from \cref{ThmWitnessASVInfMem} that $\asv(v) > c$.

For the left to right direction of the proof, we are given $\asv(v) > c$. 
Using \cref{ThmWitnessASVInfMem}, we can construct a play $\pi$ such that $\mpinf_0(\pi) > c$ and $\mpinf_1(\pi) = d$, and $\pi$ does not cross a $(c,d)^{\epsilon}$-bad vertex, i.e., for all vertices $v'$ appearing in $\pi$, we have that $v' \nvDash \ll 1 \gg \mpinf_0 \leqslant c \land \mpinf_1 > d-\epsilon$. 
First, the value $d$ must be chosen so that the vertices $v'$ appearing in $\pi$ do not belong to the $(c,d)^{\epsilon}$-bad vertices.
Let $\inf(\pi) = S$ be the set of vertices appearing infinitely often in $\pi$. Note that $S$ forms an SCC.
By abuse of notation, we also denote this SCC by $S$ here.
By \cref{lemCHToPlay}, we have that $(\mpinf_0(\pi), \mpinf_1(\pi)) \in \Fmin{\CH{\CS}}$.
From Proposition 1 of \cite{CDEHR10}, for a bi-weighted arena, we have that $\Fmin{\CH{\CS}} = \CH{\Fmin{\CS}}$.
Since $\CH{\Fmin{\CS}}$ can be expressed using conjunctions of linear inequalities whose coefficients have polynomial number of bits, the same also follows for $\Fmin{\CH{\CS}}$ in a bi-weighted arena.
In addition, it is proven in~\cite{BR15} that the set $\Lambda^{\epsilon}(v')$ is definable by a disjunction of conjunctions of linear inequalities whose coefficients have polynomial number of bits in the descriptions of the game $\mathcal{G}$ and of $\epsilon$.
Hence $\overline{\Lambda}^{\epsilon}(v')$ is also definable by a disjunction of conjunctions of linear inequalities whose coefficients have polynomial number of bits in the descriptions of the game $\mathcal{G}$ and of $\epsilon$.
As a consequence of 
% \todo{\textbf{Theorem~2} in~\cite{BR15} talks about representing points inside $\Lambda^{\epsilon}(v')$ whereas here we are interested in points in $\Fmin{\CH{\CS}}\setminus \Lambda^{\epsilon}(v')$} 
Theorem~2 in~\cite{BR15} which states that given a system of linear inequalities that is satisfiable, there exists a point with polynomial representation that satisfies the system, we have that $d$ can be chosen such that $(c,d) \in \overline{\Lambda}^{\epsilon}(v')$ and $(\mpinf_0(\pi), d) \in \Fmin{\CH{\CS}}$, and hence $d$ can be represented with a polynomial number of bits.

Second, by applying the Carath\'{e}odory baricenter theorem, we can find two simple cycles $l_1, l_2$ in the SCC $S$ and acyclic finite plays $\pi_1, \pi_2 $ and $\pi_3$ from $\pi$, and two positive rational constants $\alpha, \beta \in \mathbb{Q^+}$, such that \first($\pi_1$) $ = v$, \first($\pi_2$) = \last($\pi_1$), \first($\pi_3$) = \last($\pi_2$), \first($\pi_2$) = \last($\pi_3$), and \first($\pi_2$) = \first($l_1$), and \first($\pi_3$) = \first($l_2$), and $\alpha + \beta = 1$, $\alpha \cdot \mpgen_0(l_1) + \beta \cdot \mpgen_0(l_2) > c$ and $\alpha \cdot \mpgen_1(l_1) + \beta \cdot \mpgen_1(l_2) = d$. 
Again, using Theorem~2 in
~\cite{BR15}, 
% \todo{\textbf{Theorem~2} in~\cite{BR15} talks about representing points inside $\Lambda^{\epsilon}(v')$ whereas here we are interested in points in $\Fmin{\CH{\CS}}\setminus \Lambda^{\epsilon}(v')$} 
we can assume that $\alpha$ and $\beta$ are rational values that can be represented using a polynomial number of bits.
We note that for all vertices $v$ in $\pi_1, \pi_2, \pi_3, l_1$ and $l_2$, we have that $v$ is not $(c,d)^{\epsilon}$-bad.
% and thus will not be $(c,d')^{\epsilon}$-bad.
\end{proof}

A play $\pi$ is called a \textit{regular}-witness of $\asv(v) > c$ if it is a witness of $\asv(v) > c$ and can be expressed as $\pi = u \cdot v^\omega$, where $u$ is a prefix of a play, and where $v$ is a finite sequence of edges.
% Using the proof of Lemma \cref{LemPlaysAsWitnessForASV}},
% Using \cref{LemPlaysAsWitnessForASV}, 
We prove in that following theorem there exists a \emph{regular} witness for $\asv(v) > c$. 
% We note here that the establishment of a \emph{regular} witness is an improvement upon the result given in \cite{FGR20}. 
The existence of a regular witness helps in the construction of a finite memory strategy for Player~0.
% \todo{This emphasizes on the existence of a finite memory strategy which is an important result in itself. Also we think that keeping \cref{ThmWitnessASVInfMem} and \cref{ThmWitnessASVFinMem} separate makes the proof and the arguments cleaner.}
\begin{theorem}
\label{ThmWitnessASVFinMem}
For all mean-payoff games $\mathcal{G}$, for all vertices $v$ in $\mathcal{G}$, for all $\epsilon > 0$, and for all rationals $c$, we have that $\asv(v) > c$ if and only if there exists a regular $(c',d)^{\epsilon}$-witness of $\asv(v) > c$, where $d$ is some rational.
\end{theorem}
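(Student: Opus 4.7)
The right-to-left direction is immediate: a regular $(c',d)^{\epsilon}$-witness is in particular a $(c',d)^{\epsilon}$-witness, so \cref{ThmWitnessASVInfMem} will yield $\asv(v) > c$.

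For the left-to-right direction, my plan is to apply \cref{LemPlaysAsWitnessForASV} to obtain simple cycles $l_1, l_2$, acyclic paths $\pi_1, \pi_2, \pi_3$, and positive rationals $\alpha = p/q$, $\beta = r/q$ with $p+r=q$, such that $\alpha \mpgen_0(l_1) + \beta \mpgen_0(l_2) = c' > c$, $\alpha \mpgen_1(l_1) + \beta \mpgen_1(l_2) = d$, and no vertex on $\pi_1, \pi_2, \pi_3, l_1, l_2$ is $(c,d)^{\epsilon}$-bad. I will then define the candidate regular play
\[
\pi_k \;=\; \pi_1 \cdot \bigl( l_1^{\,kp\,|l_2|} \cdot \pi_2 \cdot l_2^{\,kr\,|l_1|} \cdot \pi_3 \bigr)^{\omega}
\]
for a positive integer $k$. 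A direct computation on one period gives
\[
\mpinf_i(\pi_k) \;=\; \frac{k\,|l_1|\,|l_2|\,q \,\bigl(\alpha \mpgen_i(l_1) + \beta \mpgen_i(l_2)\bigr) + w_i(\pi_2) + w_i(\pi_3)}{k\,|l_1|\,|l_2|\,q + |\pi_2| + |\pi_3|},
\]
which tends to $\alpha \mpgen_i(l_1) + \beta \mpgen_i(l_2)$ as $k \to \infty$. In particular $\mpinf_0(\pi_k) \to c' > c$, so $\mpinf_0(\pi_k) > c$ for all sufficiently large $k$, securing the Player 0 threshold.

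What remains is to guarantee that no vertex along $\pi_k$ is $(c, \mpinf_1(\pi_k))^{\epsilon}$-bad. For this I will use a monotonicity observation: if $v'$ is not $(c,d)^{\epsilon}$-bad and $d'' \geqslant d$, then $v'$ is not $(c,d'')^{\epsilon}$-bad either, because the objective $\mpinf_1 > d'' - \epsilon$ is stronger than $\mpinf_1 > d - \epsilon$ and hence any strategy witnessing badness at $(c,d'')$ also witnesses it at $(c,d)$. Combined with the guarantee from \cref{LemPlaysAsWitnessForASV}, it therefore suffices to arrange $\mpinf_1(\pi_k) \geqslant d$.

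The hard part will be the degenerate case $\mpgen_1(l_1) = \mpgen_1(l_2) = d$, where the limit of $\mpinf_1(\pi_k)$ is exactly $d$ and the correction $w_1(\pi_2) + w_1(\pi_3) - d(|\pi_2|+|\pi_3|)$ can be strictly negative, pushing $\mpinf_1(\pi_k)$ below $d$ for every $k$. I will handle this case separately: since $\alpha \mpgen_0(l_1) + \beta \mpgen_0(l_2) > c$ with $\alpha, \beta > 0$, at least one $\mpgen_0(l_j)$ must strictly exceed $c$, and the single-cycle regular plays $\pi_1 \cdot l_1^{\omega}$ (if $j = 1$) or $\pi_1 \cdot \pi_2 \cdot l_2^{\omega}$ (if $j = 2$) then have mean-payoffs $(\mpgen_0(l_j), d)$, with visited vertices still among those listed by \cref{LemPlaysAsWitnessForASV}. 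In the remaining case $\mpgen_1(l_1) \neq \mpgen_1(l_2)$, I will rationally perturb $(\alpha, \beta)$ by a small amount in the direction of the cycle with larger Player 1 mean-payoff, obtaining $(\alpha', \beta')$ such that $\alpha' \mpgen_0(l_1) + \beta' \mpgen_0(l_2) > c$ (exploiting the slack $c'-c > 0$) and $\alpha' \mpgen_1(l_1) + \beta' \mpgen_1(l_2) > d$. The corresponding $\pi_k$ built from $(\alpha', \beta')$ then satisfies $\mpinf_1(\pi_k) > d$ for sufficiently large $k$, and the monotonicity observation closes the argument; the $d$ appearing in the theorem statement is $\mpinf_1(\pi_k)$, which is rational since $\pi_k$ is eventually periodic with integer weights.
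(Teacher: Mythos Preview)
Your proof is correct and follows the same overall route as the paper: start from \cref{LemPlaysAsWitnessForASV}, build a regular play out of the two simple cycles, and use the monotonicity observation that not-$(c,d)^{\epsilon}$-bad implies not-$(c,d'')^{\epsilon}$-bad for $d'' \geqslant d$.

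The difference is in how the parameters are tuned. The paper does a four-way case split on the relative order of $\mpgen_0(l_1),\mpgen_0(l_2)$ and $\mpgen_1(l_1),\mpgen_1(l_2)$: when one cycle dominates in both coordinates it drops the other cycle (your degenerate case is a special instance of this), and in the ``anti-correlated'' cases it keeps the original $\alpha,\beta$ but introduces an additive offset $\tau$ in the exponent of the cycle with larger $\mpgen_1$, solving a pair of equations (worked out in an appendix) so that the regular play hits $\mpinf_1 = d$ exactly. Your approach is more uniform: you perturb $(\alpha,\beta)$ rationally toward the cycle with larger $\mpgen_1$, overshoot $d$ in the limit, and then invoke monotonicity everywhere, which spares you the explicit computation of $k,\tau$. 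Your separate handling of the equality case $\mpgen_1(l_1)=\mpgen_1(l_2)$ is also cleaner than the paper's case list, which as written only covers strict inequalities. Both arguments are fine; yours is a bit more economical, while the paper's yields the explicit pseudopolynomial bounds on $k,\tau$ that feed into the memory bound in \cref{ThmNpForASV}.
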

\begin{proof}
% \subsubsection{Existence of Regular Small-Witness Property}
% \label[appendix]{app:ThmWitnessASVFinMem}
% We begin by stating the following technical lemma which establishes the small witness property of $\asv$.
% We now have the ingredients to prove 
% % \cref{ThmWitnessASVFinMem}.
% the existence of a regular witness.
% % \begin{proof}[Proof of existence of a regular witness]
% We only focus on the left to right direction since the proof of the other direction is exactly the same as in the proof of \cref{ThmWitnessASVInfMem}.
Consider the witness $\pi$ in the proof of \cref{LemPlaysAsWitnessForASV}.
We construct a regular-witness $\pi'$ for $\asv(v) > c$ where $\pi' = \pi_1.(l_1^{[\alpha' \cdot k]}.\pi_2.l_2^{[\beta' \cdot k]}.\pi_3)^{\omega}$ and $\alpha', \beta'$ are constants in $\mathbb{Q}$ and $k$ is some large integer. We construct $\pi'$ by modifying $\pi$ as follows.
We need to consider the following cases.
\begin{caseof}
    \case{$\mpgen_0(l_1) > \mpgen_0(l_2)$ and $\mpgen_1(l_1) < \mpgen_1(l_2)$}
    Here, one simple cycle, $l_1$, increases Player~0's mean-payoff while the other simple cycle, $l_2$, increases Player~1's mean-payoff. We can build a witness $\pi' = \pi_1.(l_1^{[\alpha \cdot k]}.\pi_2.l_2^{[\beta \cdot (k+\tau)]}.\pi_3)^{\omega}$ for some very large $k \in \mathbb{N}$ and for some small $\tau > 0$ such that $\mpinf_0(\pi') > c$ and $\mpinf_1(\pi') = d$.\footnote{For more details, we refer the reader to the \cref{app:expressionForKAndTau}.}
    We note that $k$ and $\tau$ are polynomial in the size of $\mathcal{G}$, and the largest weight $W$ appearing on the edges of $\mathcal{G}$.
    \case{$\mpgen_0(l_1) < \mpgen_0(l_2)$ and $\mpgen_1(l_1) > \mpgen_1(l_2)$}
    This is analogous to \textbf{case 1}, and proceeds as mentioned above.
    \case{$\mpgen_0(l_1) > \mpgen_0(l_2)$ and $\mpgen_1(l_1) > \mpgen_1(l_2)$}
    One cycle, $l_1$, increases both Player~0 and Player~1's mean-payoffs, while the other, $l_1$, decreases it. In this case, we can just omit one of the cycles 
    % - whichever cycle 
    and consider the one that
    gives a larger mean-payoff, to get a finite memory strategy. Thus, $\pi' = \pi_1.l_1^{\omega}$ and we get $\mpinf_0(\pi') > c$ , $\mpinf_1(\pi') \geqslant d$.
    Suppose $\mpinf_1(\pi') =d' \geqslant d$.
    Since no vertex in $\pi_1$, $\pi_2$, $\pi_3$, $l_1$, and $l_2$ is $(c,d)^{\epsilon}$-bad, we also have that they are not $(c,d')^{\epsilon}$-bad, and thus $\pi'$ is a witness for $\asv(v) > c$.
    \case{$\mpgen_0(l_1) < \mpgen_0(l_2)$ and $\mpgen_1(l_1) < \mpgen_1(l_2)$}
    This is analogous to \textbf{case 3}, and proceeds as mentioned above.
\end{caseof}
In each of these cases, we 
% can show that $\pi'$ is a witness for
have that
$\asv(v) > c$: $\mpinf_0(\pi') > c$ and $\mpinf_1(\pi') \geqslant d$.
Since we know that $\pi$ does not cross a $(c,d)^{\epsilon}$-bad vertex, 
% the modified play $\pi'$ does not either, as 
and the vertices of the play $\pi'$ are a subset of the vertices of the play $\pi$, we have that $\pi'$ is a witness for $\asv(v) > c$.
% \end{proof}
\end{proof}

The following statement can be obtained by exploiting the existence of finite regular witnesses of polynomial size proved above. 
% Its detailed proof is provided in Appendix~\ref{}.
%As a consequence of the existence of finite regular witnesses, we can prove the existence of a nondeterminstic polynomial time algorithm to solve the threshold problem. For hardness, we can show that this problem is at least as hard as solving zero-sum mean-payoff games.
%Finally, while the existence of an infinite memory strategy $\sigma_0$ for Player~$0$ such that $\asvgen(\sigma_0)(v) > c$ has been shown in \cite{FGR20}, we strengthen this result here by showing that pseudopolynomial strategies for Player~0 exists.
%This has been formally stated in \cref{app:ASVpseudopolystrategy}.
% following theorem that establishes the $\mathbf{NP}$-membership of $\asv(v) > c$.
\begin{theorem}
    \label{ThmNpForASV}
        For all mean-payoff games $\mathcal{G}$, for all vertices $v$ in $\mathcal{G}$, for all $\epsilon > 0$, and for all $c \in \mathbb{Q}$, it can be decided in nondeterministic polynomial time if $\asv(v) > c$, and a 
    % finite 
    pseudopolynomial memory strategy of Player~0 suffices for this threshold.
    Furthermore, this decision problem is at least as hard as solving zero-sum mean-payoff games.
\end{theorem}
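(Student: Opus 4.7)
The plan is to prove three claims separately: membership in ${\sf NP}$, the pseudopolynomial memory bound for Player~0, and the hardness lower bound. For the ${\sf NP}$ membership and memory bound, I will exploit the small regular witness guaranteed by \cref{LemPlaysAsWitnessForASV} and \cref{ThmWitnessASVFinMem}; for hardness, a direct reduction from the value problem on zero-sum mean-payoff games suffices.

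For the ${\sf NP}$ upper bound, the algorithm nondeterministically guesses the witness of \cref{LemPlaysAsWitnessForASV}: two simple cycles $l_1, l_2$, three simple paths $\pi_1, \pi_2, \pi_3$, the rational coefficients $\alpha, \beta$ with $\alpha + \beta = 1$, and the rational value $d$. \cref{LemPlaysAsWitnessForASV} guarantees that all of these objects admit representations with a polynomial number of bits. Verification in polynomial time then comprises the graph-theoretic conditions on the paths and cycles, the arithmetic conditions $\alpha \cdot \mpgen_0(l_1) + \beta \cdot \mpgen_0(l_2) > c$ and $\alpha \cdot \mpgen_1(l_1) + \beta \cdot \mpgen_1(l_2) = d$, and---the main technical obstacle---the absence of any $(c,d)^{\epsilon}$-bad vertex along the witness. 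For each vertex $v'$ appearing on the witness, non-badness means $v' \nvDash \ll 1 \gg \mpinf_0 \leqslant c \land \mpinf_1 > d - \epsilon$, equivalently, by determinacy, that Player~0 wins from $v'$ the zero-sum two-dimensional mean-payoff game with disjunctive objective $\mpinf_0 > c \lor \mpinf_1 \leqslant d - \epsilon$. Since the conjunctive problem for Player~1 lies in ${\sf coNP}$ by known results on multi-dimensional mean-payoff games, the disjunctive problem for Player~0 lies in ${\sf NP}$ and admits a polynomial-size certificate; the overall algorithm guesses one such certificate per witness vertex alongside the witness itself.

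The pseudopolynomial memory bound follows from the structure of the regular witness $\pi_1 \cdot (l_1^{[\alpha k]} \cdot \pi_2 \cdot l_2^{[\beta k]} \cdot \pi_3)^{\omega}$ constructed in the proof of \cref{ThmWitnessASVFinMem}, whose period length is pseudopolynomial in $|\mathcal{G}|$ and the maximum absolute weight $W$. Player~0's strategy follows this period as long as Player~1 cooperates, and switches to a finite-memory punishing strategy for the disjunctive objective $\mpinf_0 > c \lor \mpinf_1 \leqslant d - \epsilon$ upon any deviation; such a punishing strategy exists precisely because no vertex on the witness is $(c,d)^{\epsilon}$-bad, and pseudopolynomial memory suffices for the winner of a two-dimensional mean-payoff disjunction. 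Combining the two phases yields a pseudopolynomial memory strategy achieving $\asv(\sigma_0)(v) > c$.

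For the hardness lower bound, I reduce from the threshold problem on zero-sum mean-payoff games. Given a zero-sum game with weight function $w$, build the bi-weighted game with $w_0 = w$ and $w_1 = -w$ and ask whether $\asv > c$ at the same initial vertex. For any Player~0 strategy $\sigma_0$, every $\sigma_1$ satisfies $\mpinf_0(\outv(\sigma_0, \sigma_1)) \geqslant \val(\sigma_0)$ by definition, while any $\epsilon$-best response satisfies $\mpinf_0(\outv(\sigma_0, \sigma_1)) < \val(\sigma_0) + \epsilon$ because $w_1 = -w_0$; approximating the infimum of $\mpinf_0$ through strategies $\sigma_1$ that are nearly optimal for Player~1 in the zero-sum game gives $\asv(\sigma_0)(v) = \val(\sigma_0)(v)$, so $\asv(v) > c$ iff $\val_{\mathcal{G}}(v) > c$, yielding the claimed hardness.
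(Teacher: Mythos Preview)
Your {\sf NP} upper bound and the pseudopolynomial memory construction follow the paper's route and are essentially correct. Two minor points of comparison: the non-badness condition involves a \emph{strict} inequality $\mpinf_1 > d-\epsilon$, which is not directly covered by the standard {\sf coNP} results on conjunctive multi-dimensional mean-payoff objectives; the paper handles this explicitly via \cref{ConjGrtIsGrtEq}, and your appeal to ``known results'' implicitly relies on that reduction. Also, the paper shows the punishing strategy can be taken \emph{memoryless} (via \cref{LemWeightPlayLessThanC}), not merely pseudopolynomial; your weaker claim still yields a pseudopolynomial overall bound, so the conclusion is unaffected.

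Your hardness reduction, however, has a genuine gap. With $w_1 = -w_0$ one has $\mpinf_1(\pi) = \liminf \frac{1}{k}(-w_0(\pi_{\leqslant k})) = -\overline{\mpgen}_0(\pi)$, \emph{not} $-\mpinf_0(\pi)$. Hence $\sigma_1 \in \br(\sigma_0)$ means $\overline{\mpgen}_0(\pi) < \inf_{\sigma_1'} \overline{\mpgen}_0(\pi') + \epsilon$. Your claim that every $\epsilon$-best response satisfies $\mpinf_0(\pi) < \val(\sigma_0) + \epsilon$ does not follow: what one obtains is $\mpinf_0(\pi) \leqslant \overline{\mpgen}_0(\pi) < \inf_{\sigma_1'} \overline{\mpgen}_0 + \epsilon$, and $\inf_{\sigma_1'} \overline{\mpgen}_0$ may exceed $\val(\sigma_0) = \inf_{\sigma_1'} \mpinf_0$. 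Conversely, a strategy $\sigma_1$ that nearly achieves $\inf_{\sigma_1'} \mpinf_0$ can have $\overline{\mpgen}_0(\pi)$ far from its infimum (via oscillating plays), so it need not lie in $\br(\sigma_0)$; thus ``approximating the infimum of $\mpinf_0$ through strategies nearly optimal for Player~1 in the zero-sum game'' does not establish $\asv(\sigma_0)(v) \leqslant \val(\sigma_0)(v)$ for arbitrary $\sigma_0$, and the implication $\asv(v) > c \Rightarrow \val(v) > c$ is left open. The paper avoids this $\liminf/\limsup$ mismatch entirely by taking $w_1 \equiv 0$: then $\mpinf_1(\pi) = 0$ for every play, so $\br(\sigma_0) = \Sigma_1$ and $\asv(\sigma_0)(v) = \val(\sigma_0)(v)$ is immediate for every $\sigma_0$.
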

% \subsection{Proof of \cref{ThmNpForASV}} \label[appendix]{app:ThmNpForASV}
\noindent 
% The rest of this \mychapter is devoted towards proving \cref{ThmNpForASV}}, and a hardness result for the threshold problem. 
For proving \cref{ThmNpForASV}, we start by stating a property of multi-dimensional mean-payoff games proved in \cite{VCDHRR15} that we rephrase here for a two-dimensional mean-payoff game. This property expresses a relation between mean-payoff $\limsup$ and mean-payoff $\liminf$ objectives.
We recall that in \cite{VCDHRR15}, the objective of Player~1 is to maximize the payoff in each dimension, i.e., for two-dimensional setting, given two rational $c$ and $d$, Player~1 wins if he has a winning strategy for $\mpinf_0 \geqslant c \land \mpinf_1 \geqslant d$; otherwise Player~0 wins due to determinacy of multi-dimensional mean-payoff games.
We call the mean-payoff game setting in \cite{VCDHRR15} {\sf 2D-max} mean-payoff games to distinguish it from the mean-payoff games that we consider here.
Later we will relate the two settings.
\begin{proposition}
    \label{PropInfEqSup}
    \textbf{\emph{(Lemma 14 in \cite{VCDHRR15})}} For all mean-payoff games $\mathcal{G}$, for all vertices $v$ in the game $\mathcal{G}$, and for all rationals $c, d$, we have
    \begin{equation*}
        v \vDash \ll 1 \gg \overline{\mpgen}_0 \geqslant c \land \mpinf_1 \geqslant d
    \end{equation*}
    if and only if
    \begin{equation*}
        v \vDash \ll 1 \gg \mpinf_0 \geqslant c \land \mpinf_1 \geqslant d
    \end{equation*}    
\end{proposition}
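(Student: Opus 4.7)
The $(\Leftarrow)$ direction is immediate from the pointwise inequality $\mpinf_0(\pi) \leqslant \overline{\mpgen}_0(\pi)$ for every play $\pi$: any Player~1 strategy forcing $\mpinf_0 \geqslant c \land \mpinf_1 \geqslant d$ a fortiori forces $\overline{\mpgen}_0 \geqslant c \land \mpinf_1 \geqslant d$ on every compatible play, so the right-to-left implication of the proposition holds with the same witnessing strategy.

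For $(\Rightarrow)$, the plan is first to exhibit a \emph{finite-memory} Player~1 strategy $\tau$ witnessing $\overline{\mpgen}_0 \geqslant c \land \mpinf_1 \geqslant d$, and then to show by a cycle analysis in the product arena that the same $\tau$ already witnesses the stronger $\mpinf_0 \geqslant c \land \mpinf_1 \geqslant d$. Finite-memory determinacy for two-dimensional mean-payoff objectives combining one $\limsup$ component with one $\liminf$ component is established as part of the general study of multi-dimensional mean-payoff games in \cite{CDEHR10,VCDHRR15}; it is this step that is inherited from the cited references rather than reproved here.

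Given such a finite-memory $\tau$ with memory skeleton $M_\tau$, consider the product arena $\mathcal{A} \times M_\tau$. Once $\tau$ is fixed, each Player~1 vertex in the product has a unique outgoing edge, so the product is effectively a one-player arena in which only Player~0 makes real choices, and its plays project exactly onto the plays of $\mathcal{A}$ consistent with $\tau$. By hypothesis, every such play $\pi$ satisfies $\overline{\mpgen}_0(\pi) \geqslant c$. For any simple cycle $\ell$ reachable from (the product image of) $v$, Player~0 can reach $\ell$ and then loop on $\ell$ forever, yielding a play whose $\overline{\mpgen}_0$ equals $\mpgen_0(\ell)$; hence $\mpgen_0(\ell) \geqslant c$ must hold for every simple cycle reachable in the product.

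Finally, for any play $\pi$ consistent with $\tau$, \cref{lemCHToPlay} applied to the SCC $\inf(\pi)$ gives $(\mpinf_0(\pi), \mpinf_1(\pi)) \in \Fmin{\CH{\mathbb{C}(\inf(\pi))}}$. Since every simple cycle in $\inf(\pi)$ has dimension-$0$ mean at least $c$, so does every convex combination of their mean-payoff coordinates, and therefore every coordinate-wise minimum over a finite family of such convex combinations also has dimension-$0$ value at least $c$. This yields $\mpinf_0(\pi) \geqslant c$, which together with the already-guaranteed $\mpinf_1(\pi) \geqslant d$ shows that $\tau$ witnesses $v \vDash \ll 1 \gg \mpinf_0 \geqslant c \land \mpinf_1 \geqslant d$. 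The main obstacle is the invocation of finite-memory determinacy for the mixed $\limsup$/$\liminf$ objective in the first step; once finite memory is in hand, the subsequent cycle analysis is a clean one-player argument that upgrades the $\limsup$ guarantee to a $\liminf$ guarantee without disturbing the dimension-$1$ component.
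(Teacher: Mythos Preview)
The paper does not prove this proposition; it is cited as Lemma~14 of \cite{VCDHRR15}. The argument there (as the present paper's proof of \cref{LemWeightPlayLessThanC} makes explicit) runs through the \emph{contrapositive on Player~0's side}: if Player~1 fails to win $\mpinf_0 \geqslant c \land \mpinf_1 \geqslant d$, then by determinacy Player~0 wins the complementary disjunction, and she does so with a \emph{memoryless} strategy; the prefix bounds of \cref{LemWeightPlayGrtThanC} for that memoryless strategy then force $\overline{\mpgen}_0 < c \lor \mpinf_1 < d$ on every compatible play, which is the contrapositive of $(\Rightarrow)$. Your route is the natural dual---fix a finite-memory strategy for Player~1 and do the cycle analysis in the resulting one-player product---and your steps from the product-cycle analysis onward are correct.

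The genuine gap is your first step. Finite-memory determinacy for Player~1 for the mixed $\limsup$/$\liminf$ two-dimensional objective is not established independently of Lemma~14 in the references you cite, and it is in fact \emph{false}. Take the one-player arena with vertices $a,b$, self-loops $a\!\to\!a$ of weight $(1,0)$ and $b\!\to\!b$ of weight $(0,1)$, and transitions $a\!\to\!b$, $b\!\to\!a$ of weight $(0,0)$. Player~1 wins $\overline{\mpgen}_0 \geqslant \tfrac{1}{2} \land \mpinf_1 \geqslant \tfrac{1}{2}$ from $a$ (for instance via $a^{1}b^{1}a^{2}b^{2}a^{3}b^{3}\cdots$, interleaved with the necessary transitions), yet every finite-memory strategy yields an eventually periodic play: if the period visits both $a$ and $b$ it contains at least two transition edges so its two averages sum to strictly less than $1$, and if it visits only one vertex then one average is $0$; either way both cannot reach $\tfrac{1}{2}$. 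In fact your own steps~2--4 show that any finite-memory witness for the mixed objective is already a finite-memory witness for the pure $\liminf$ objective, so finite-memory sufficiency for Player~1 would transfer from mixed to pure $\liminf$---and the latter is well known to fail. The repair is precisely the approach of \cite{VCDHRR15}: run the cycle/prefix analysis on Player~0's side, where memorylessness of the falsifier \emph{is} available independently of the proposition being proved.
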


We now recall another property of multi-dimensional mean-payoff games proved in \cite{VCDHRR15} that we rephrase here for a {\sf 2D-max} mean-payoff game. This property expresses a bound on the weight of every finite play $\pi^f \in \outv(\sigma_0)$ where $\sigma_0$ is a memoryless winning strategy for Player~0.
\begin{lemma}{\textbf{\emph{(Lemma 10 in \cite{VCDHRR15})}}}
    \label{LemWeightPlayGrtThanC}
    For all {\sf 2D-max} mean-payoff games $\mathcal{G}$, for all vertices $v$ in $\mathcal{G}$, and for all rationals $c, d$, if Player~0 \footnote{Player~0 is called Player~2 in \cite{VCDHRR15}.} wins $\mpinf_0 < c \lor \mpinf_1 < d$ from $v$ then she has a memoryless winning strategy $\sigma_0$ to do so, and there exist three constants $m_\mathcal{G}, c_\mathcal{G}, d_\mathcal{G} \in \mathbb{R}$ such that: $c_\mathcal{G} < c, d_\mathcal{G} < d$, and for all finite plays $\pi^f \in \outv(\sigma_0)$, i.e. starting from $v$ and compatible with $\sigma_0$, we have that
    \begin{equation*}
        w_0(\pi^f) \leqslant m_\mathcal{G} + c_\mathcal{G} \cdot |\pi^f|
    \end{equation*}
    or
    \begin{equation*}
        w_1(\pi^f) \leqslant m_\mathcal{G} + d_\mathcal{G} \cdot |\pi^f|
    \end{equation*}
\end{lemma}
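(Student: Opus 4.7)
The plan is to establish the two parts of the lemma separately: (a) existence of a memoryless winning strategy $\sigma_0$ for Player~0 for the disjunctive objective $\mpinf_0 < c \lor \mpinf_1 < d$, and (b) the linear upper bounds on the weights of finite plays compatible with $\sigma_0$. For (a) I will first invoke \cref{PropInfEqSup} to rephrase Player~1's (complementary) conjunctive objective equivalently in terms of $\overline{\mpgen}_0$ and $\mpinf_1$, and then appeal to standard memoryless determinacy for the disjunctive player in two-dimensional mean-payoff games, which yields the desired $\sigma_0$.

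For part (b), I will fix $\sigma_0$ and consider the single-player graph $G_{\sigma_0}$ reachable from $v$, in which only Player~1 still chooses edges. The essential combinatorial observation is that no simple cycle $C$ of $G_{\sigma_0}$ can satisfy $\mpgen_0(C) \geqslant c$ and $\mpgen_1(C) \geqslant d$ simultaneously, since otherwise Player~1 would win by playing $C^\omega$ against $\sigma_0$. Consequently every simple cycle is either of \emph{Type A} (with $\mpgen_0(C) < c$) or of \emph{Type B} (with $\mpgen_1(C) < d$). Taking $c_\mathcal{G}$ to be the maximum of $\mpgen_0(C)$ over Type A cycles (and $-\infty$ if none exists), and $d_\mathcal{G}$ similarly for Type B, yields $c_\mathcal{G} < c$ and $d_\mathcal{G} < d$ with a strict uniform gap, since the set of simple cycles is finite.

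To derive the linear bound, I will use a greedy cycle-extraction decomposition of a finite play $\pi^f \in \outv(\sigma_0)$: whenever a vertex repeats, the enclosed simple cycle is extracted, and iteration leaves an acyclic residual path $R$ of length at most $|V|$. This yields $w_i(\pi^f) = \sum_j w_i(C_j) + w_i(R)$ with $|w_i(R)| \leqslant |V| \cdot W$ for $i \in \{0,1\}$. The subtle step is to translate the cycle-level dichotomy into a dichotomy for finite plays. For this, I will strengthen the choice of $c_\mathcal{G}$ and $d_\mathcal{G}$ by a convex-separation argument applied in each reachable SCC $S$ of $G_{\sigma_0}$: since $\CH{\CS}$ is disjoint from $[c,\infty) \times [d,\infty)$, hyperplane separation yields uniform slacks, so that any convex combination of cycle mean-payoff vectors in $S$ either has first coordinate at most $c_\mathcal{G}$ or second coordinate at most $d_\mathcal{G}$. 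Taking the maxima of these thresholds over the finitely many reachable SCCs, and setting $m_\mathcal{G}$ large enough to absorb $|V| \cdot W$ together with the discretisation slack inherent in mixing cycles of the two types, then yields the stated bound.

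The main obstacle will be the last part: moving from a dichotomy on simple cycles (or on infinite plays) to a dichotomy for every single finite play requires care, because a finite play can mix Type A and Type B cycles and, without a carefully chosen uniform separation, such a mix could in principle defeat both linear bounds simultaneously. Handling this cleanly requires exploiting the geometry of the forbidden quadrant $[c,\infty) \times [d,\infty)$ together with the finiteness of the set of simple cycles to obtain an $m_\mathcal{G}$ independent of the play length, which I expect is the technical heart of the argument of \cite{VCDHRR15}.
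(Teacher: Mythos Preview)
The paper does not prove this lemma: it is quoted as Lemma~10 of \cite{VCDHRR15} and used as a black box, so there is no proof in the paper to compare your proposal against.

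Your plan has a genuine gap precisely where you yourself locate the ``main obstacle'', and the resolution you sketch (per-SCC convex separation, then taking maxima of the thresholds over the finitely many SCCs) does not close it. The per-SCC argument says nothing about finite plays that cross several SCCs of $G_{\sigma_0}$ and mix Type~A cycles from one SCC with Type~B cycles from another. Concretely, take $c=d=0$ and let $G_{\sigma_0}$ consist of a Player~1 vertex $v_1$ with a self-loop of weight $(1,-1)$, an edge of weight $(0,0)$ to a Player~1 vertex $v_2$, and a self-loop of weight $(-1,1)$ on $v_2$. Every infinite play from $v_1$ is either $v_1^\omega$ (with $\mpinf_1=-1<0$) or eventually $v_2^\omega$ (with $\mpinf_0=-1<0$), so Player~0 trivially wins $\mpinf_0<0 \lor \mpinf_1<0$. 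Your per-SCC hypothesis is satisfied: each SCC has a single cycle, with mean-payoff $(1,-1)$ resp.\ $(-1,1)$, both outside $[0,\infty)^2$. Yet the finite play $v_1^N v_2^N$ has $w_0=w_1=0$ and length $2N-1$, so for any fixed $c_\mathcal{G},d_\mathcal{G}<0$ and any $m_\mathcal{G}$, both inequalities $0\le m_\mathcal{G}+c_\mathcal{G}\cdot(2N-1)$ and $0\le m_\mathcal{G}+d_\mathcal{G}\cdot(2N-1)$ fail once $N$ is large. Hence no choice of constants works for the unique memoryless $\sigma_0$ in this game. This means either the restatement in this paper is stronger than what \cite{VCDHRR15} actually establishes, or the argument there handles SCC transitions by a mechanism your outline does not capture; in either case, the step you defer is exactly where your plan, as written, breaks. (A smaller gap: within a single SCC, ``no simple cycle lies in $[c,\infty)\times[d,\infty)$'' does not by itself imply that $\CH{\CS}$ avoids the quadrant --- you need the winning condition again, via a multi-cycle argument, to rule out convex combinations.)
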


We now relate the {\sf 2D-max} mean-payoff game in \cite{VCDHRR15} where the objective of Player~1 is to maximize the payoff in both dimensions to our setting where in a game $\mathcal{G}$, Player~1 maximizes the payoff in the second dimension, and minimizes the payoff on the first dimension from his set of available responses to a strategy of Player~0. The objective of Player~0 then is to maximize the payoff in the first dimension and minimize the payoff in the second dimension, i.e. given two rationals $c$ and $d$, Player~0's objective is to ensure $v \vDash \ll 0 \gg \mpinf_0 > c \lor \mpinf_1 < d$. We now state a modification of \cref{LemWeightPlayGrtThanC} as follows:
\begin{lemma}
    \label{LemWeightPlayLessThanC}
    For all mean-payoff games $\mathcal{G}$, for all vertices $v$ in $\mathcal{G}$, and for all rationals $c, d$, if Player~0 wins $\mpinf_0 > c \lor \mpinf_1 < d$ from $v$, then she has a memoryless winning strategy $\sigma_0$ to do so, and there exist three constants $m_\mathcal{G}, c_\mathcal{G}, d_\mathcal{G} \in \mathbb{R}$ such that $c_\mathcal{G} > c, d_\mathcal{G} < d$, and for all finite plays $\pi^f \in \outv(\sigma_0)$, i.e. starting in $v$ and compatible with $\sigma_0$, we have that
    \begin{equation*}
        w_0(\pi^f) \geqslant -m_\mathcal{G} + c_\mathcal{G} \cdot |\pi^f|
    \end{equation*}
    or
    \begin{equation*}
        w_1(\pi^f) \leqslant m_\mathcal{G} + d_\mathcal{G} \cdot |\pi^f|
    \end{equation*}
\end{lemma}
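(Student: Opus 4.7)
The plan is to reduce \cref{LemWeightPlayLessThanC} to \cref{LemWeightPlayGrtThanC} by negating the weights on the first dimension, so that ``winning with $\mpinf_0$ above a threshold'' becomes ``winning with a $\liminf$ mean-payoff below a threshold'' in the transformed game. Concretely, I would introduce an auxiliary game $\mathcal{G}'$ on the same arena as $\mathcal{G}$, with weight functions $w_0' := -w_0$ and $w_1' := w_1$. Under this transformation, for every play $\pi$ we have $\mpinf_0^{\mathcal{G}'}(\pi) = -\overline{\mpgen}_0^{\mathcal{G}}(\pi)$ and $\overline{\mpgen}_0^{\mathcal{G}'}(\pi) = -\mpinf_0^{\mathcal{G}}(\pi)$, while the second-coordinate payoffs coincide. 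In particular $\mpinf_0^{\mathcal{G}}(\pi) > c$ iff $\overline{\mpgen}_0^{\mathcal{G}'}(\pi) < -c$. Since the sets of strategies in $\mathcal{G}$ and $\mathcal{G}'$ are identical, Player~0 winning $\mpinf_0 > c \lor \mpinf_1 < d$ in $\mathcal{G}$ is equivalent to Player~0 winning $\overline{\mpgen}_0 < -c \lor \mpinf_1 < d$ in $\mathcal{G}'$.

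Next, to bring the objective into the exact shape required by \cref{LemWeightPlayGrtThanC}, I would replace the $\limsup$ disjunct by a $\liminf$ one using \cref{PropInfEqSup}. By determinacy, that proposition is equivalent to the statement that Player~0 wins $\overline{\mpgen}_0 < c \lor \mpinf_1 < d$ iff she wins $\mpinf_0 < c \lor \mpinf_1 < d$. Applying this to $\mathcal{G}'$ with threshold $-c$, I conclude that Player~0 wins $\mpinf_0 < -c \lor \mpinf_1 < d$ in $\mathcal{G}'$, viewed as a \textsf{2D-max} mean-payoff game.

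At this point \cref{LemWeightPlayGrtThanC} delivers a memoryless strategy $\sigma_0$ of Player~0 in $\mathcal{G}'$ together with constants $m_{\mathcal{G}'}, c_{\mathcal{G}'}, d_{\mathcal{G}'}$ such that $c_{\mathcal{G}'} < -c$, $d_{\mathcal{G}'} < d$, and every finite play $\pi^f \in \outv(\sigma_0)$ satisfies
\[
  w_0^{\mathcal{G}'}(\pi^f) \leqslant m_{\mathcal{G}'} + c_{\mathcal{G}'}\cdot|\pi^f| \quad \text{or} \quad w_1^{\mathcal{G}'}(\pi^f) \leqslant m_{\mathcal{G}'} + d_{\mathcal{G}'}\cdot|\pi^f|.
\]
Substituting $w_0^{\mathcal{G}'} = -w_0$ in the first disjunct rewrites it as $w_0(\pi^f) \geqslant -m_{\mathcal{G}'} + (-c_{\mathcal{G}'})\cdot|\pi^f|$, while the second disjunct is unchanged. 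Setting $m_{\mathcal{G}} := m_{\mathcal{G}'}$, $c_{\mathcal{G}} := -c_{\mathcal{G}'}$ (so $c_{\mathcal{G}} > c$) and $d_{\mathcal{G}} := d_{\mathcal{G}'}$ (so $d_{\mathcal{G}} < d$) yields exactly the two inequalities in the statement. Since $\mathcal{G}$ and $\mathcal{G}'$ share the same arena, the same memoryless $\sigma_0$ witnesses the claim in $\mathcal{G}$.

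The only step that is not a mechanical sign flip is the passage from the $\overline{\mpgen}_0$ objective in $\mathcal{G}'$ to the $\mpinf_0$ objective demanded by \cref{LemWeightPlayGrtThanC}; this is where \cref{PropInfEqSup} is essential, because in general $\overline{\mpgen}_0 < -c$ is strictly stronger than $\mpinf_0 < -c$, and the equivalence holds only at the level of who possesses a winning strategy. I expect this to be the sole delicate point; everything else is bookkeeping on signs and additive constants.
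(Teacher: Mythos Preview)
Your reduction is exactly the paper's approach: flip the sign on the first weight dimension, invoke \cref{PropInfEqSup} (via determinacy) to pass from the $\overline{\mpgen}_0$ objective to the $\mpinf_0$ objective in $\mathcal{G}'$, and then apply \cref{LemWeightPlayGrtThanC}. The paper's transformation $w_0 \mapsto 2c - w_0$ differs from your $w_0 \mapsto -w_0$ only by a harmless additive shift that keeps the threshold at $c$ rather than $-c$; the translation of the finite-play bounds back to $\mathcal{G}$ is identical.

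One small step you leave implicit: \cref{LemWeightPlayGrtThanC} hands you a memoryless $\sigma_0$ winning $\mpinf_0^{\mathcal{G}'} < -c \lor \mpinf_1 < d$, which under the sign flip translates back only to $\overline{\mpgen}_0^{\mathcal{G}} > c \lor \mpinf_1 < d$, a priori weaker than the $\mpinf_0^{\mathcal{G}} > c$ disjunct the lemma requires of $\sigma_0$. The paper closes this gap explicitly by citing the proof of Lemma~14 in~\cite{VCDHRR15}, which shows the same memoryless strategy wins both the $\liminf$ and the $\limsup$ variants. Alternatively, the finite-play bounds you have already derived suffice on their own: if $\mpinf_1(\pi) \geqslant d$, then the disjunct $w_1(\pi^f) \leqslant m_{\mathcal{G}} + d_{\mathcal{G}}|\pi^f|$ fails for all sufficiently long prefixes, so the other disjunct must hold for all such prefixes, yielding $\mpinf_0(\pi) \geqslant c_{\mathcal{G}} > c$.
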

\begin{proof}
We show this by a reduction to a {\sf 2D-max} mean-payoff game where Player~0's objective is to ensure $v \vDash \ll 0 \gg \mpinf_0 < c \lor \mpinf_1 < d$ in a {\sf 2D-max} mean-payoff game.
% $\mathcal{G}$, where $v$ is a vertex in $\mathcal{G}$. 

We prove this lemma in two parts.
If Player~0 wins $\mpinf_0 > c \lor \mpinf_1 < d$, we show
\begin{inparaenum}[(i)]
\item \label{part1}  the existence of a memoryless strategy $\sigma_0$ for Player~0, and
\item \label{part2} that there exist three constants $m_\mathcal{G}, c_\mathcal{G}, d_\mathcal{G} \in \mathbb{R}$ such that $c_\mathcal{G} > c, d_\mathcal{G} < d$, and for all finite plays $\pi^f \in \outv(\sigma_0)$, i.e. starting from $v$ and compatible with $\sigma_0$, we have that either $w_0(\pi^f) \geqslant -m_\mathcal{G} + c_\mathcal{G} \cdot |\pi^f|$ or $w_1(\pi^f) \leqslant m_\mathcal{G} + d_\mathcal{G} \cdot |\pi^f|$.
\end{inparaenum}

Assume that Player~0 has a winning strategy from vertex $v$ in $\mathcal{G}$ for $\mpinf_0 > c \lor \mpinf_1 < d$.
To prove (\ref{part1}), we subtract $2c$ from the weights on the first dimension of all the edges, followed by multiplying them with -$1$.
We call the resultant {\sf 2D-max} mean-payoff game $\mathcal{G}'$, and we have that $v \vDash \ll 0 \gg \mpinf_0 > c \lor \mpinf_1 < d$ in $\mathcal{G}$ if and only if $v \vDash \ll 0 \gg \overline{\mpgen}_0 < c \lor \mpinf_1 < d$ in $\mathcal{G}'$.
Using \cref{PropInfEqSup} and determinacy of multi-dimensional mean-payoff games, it follows that $v \vDash \ll 0 \gg \mpinf_0 > c \lor \mpinf_1 < d$ in $\mathcal{G}$ if and only if $v \vDash \ll 0 \gg \mpinf_0 < c \lor \mpinf_1 < d$ in $\mathcal{G}'$.
Also from \cite{VCDHRR15}, we have that if Player~0 has a winning strategy in $\mathcal{G}'$ for $\mpinf_0 < c \lor \mpinf_1 < d$, then she has a memoryless strategy $\sigma_0$ for the same, and the proof of Lemma 14 in \cite{VCDHRR15} shows that same memoryless strategy $\sigma_0$ is also winning for $\overline{\mpgen}_0 < c \lor \mpinf_1 < d$, thus concluding that if Player~0 wins $\mpinf_0 > c \lor \mpinf_1 < d$ in $\mathcal{G}$ from vertex $v$, then she has a memoryless winning strategy.

We prove (\ref{part2}) by contradiction.
Assume that Player~0 wins $\mpinf_0 > c \lor \mpinf_1 < d$ from vertex $v$ in $\mathcal{G}$, and by part (\ref{part1}), she has a memoryless winning strategy $\sigma_0$.
Assume for contradiction, that there does not exist three constants $m_\mathcal{G}, c_\mathcal{G}, d_\mathcal{G} \in \mathbb{R}$ such that $c_\mathcal{G} > c, d_\mathcal{G} < d$, such that for all finite plays $\pi^f \in \outv(\sigma_0)$, i.e. starting in $v$ and compatible with $\sigma_0$, we have either $w_0(\pi^f) \geqslant -m_\mathcal{G} + c_\mathcal{G} \cdot |\pi^f|$ or $w_1(\pi^f) \leqslant m_\mathcal{G} + d_\mathcal{G} \cdot |\pi^f|$.

Consider the steps in the construction of $\mathcal{G}'$ as defined above.
As we subtract $2c$ from the weights on the first dimension of each edge, and multiply the resultant weights on the first dimension by -$1$, we have that there does not exist three constants $m_\mathcal{G}, c_\mathcal{G}, d_\mathcal{G} \in \mathbb{R}$ such that $c_\mathcal{G} > c, d_\mathcal{G} < d$, and for all finite plays $\pi^f \in \outv(\sigma_0)$ in $\mathcal{G}$, i.e. starting from $v$ and compatible with $\sigma_0$, we have either 
    \begin{equation*}
        w_0(\pi^f) \geqslant -m_\mathcal{G} + c_\mathcal{G} \cdot |\pi^f|
    \end{equation*}
    or
    \begin{equation*}
        w_1(\pi^f) \leqslant m_\mathcal{G} + d_\mathcal{G} \cdot |\pi^f|
    \end{equation*}
if and only if there does not exist three constants $m_\mathcal{G}, c_\mathcal{G}, d_\mathcal{G} \in \mathbb{R}$ such that $c_\mathcal{G} > c, d_\mathcal{G} < d$, and for all finite plays $\pi^f \in \outv(\sigma_0)$ in the {\sf 2D-max} mean-payoff game $\mathcal{G}'$ for the objective $\mpinf_0 < c \lor \mpinf_1 < d$, i.e. starting in $v$ and compatible with $\sigma_0$, we have either 
    \begin{equation*}
        w_0(\pi^f) \leqslant m_\mathcal{G} + (2c-c_\mathcal{G}) \cdot |\pi^f|
    \end{equation*}
    or
    \begin{equation*}
        w_1(\pi^f) \leqslant m_\mathcal{G} + d_\mathcal{G} \cdot |\pi^f|
    \end{equation*}
Let $2c-c_\mathcal{G} = c'_\mathcal{G}$, and we have that $c'_\mathcal{G} < c$.
Now since $\sigma_0$ is a winning for Player~0 for the objective $\mpinf_0 < c \lor \mpinf_1 < d$ in $\mathcal{G}'$ from $v$, we reach a contradiction by \cref{LemWeightPlayGrtThanC}, and due to determinacy of multi-dimensional mean-payoff games.
\end{proof}
\noindent 
% If Player~0 is to ensure that Player~1 does not deviate from a certain play $\pi$, Player~0 must ensure that from every vertex $v$ in $\pi$, $v \nvDash \ll 1 \gg \mpinf_0 \leqslant c \land \mpinf_1 > d-\epsilon$. 
Using \cref{LemWeightPlayLessThanC}, we can now prove that Player~0 can ensure from a vertex $v$ that 
% $v \nvDash \ll 1 \gg \mpinf_0 \leqslant c \land \mpinf_1 > d-\epsilon$, then he can also ensure that $v \nvDash \ll 1 \gg \mpinf_0 \leqslant c \land \mpinf_1 \geqslant d-\epsilon$.
$v \nvDash \ll 1 \gg \mpinf_0 \leqslant c \land \mpinf_1 > d$ if and only if she can also ensure that $v \nvDash \ll 1 \gg \mpinf_0 \leqslant c \land \mpinf_1 \geqslant d'$ for all $d' > d$.
This is established in the following lemma.
\begin{lemma}
    \label{ConjGrtIsGrtEq}
    For all mean-payoff games $\mathcal{G}$, for all vertices $v \in \mathcal{G}$, and for all rationals $c, d$, we have that:
    \begin{equation*}
        v \vDash \ll 1 \gg \mpinf_0 \leqslant c \land \mpinf_1 > d
    \end{equation*}
    if and only if there exists a $d' \in \mathbb{R}$, where $d' > d$ such that
    \begin{equation*}
        v \vDash \ll 1 \gg \mpinf_0 \leqslant c \land \mpinf_1 \geqslant d'
    \end{equation*}
\end{lemma}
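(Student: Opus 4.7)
The plan is to prove the non-trivial direction by combining determinacy of multi-dimensional mean-payoff games with the memoryless-strategy property established in Lemma~\ref{LemWeightPlayLessThanC} and a pigeonhole/limit argument. The right-to-left direction is immediate, since any Player~1 strategy witnessing $\mpinf_0 \leqslant c \land \mpinf_1 \geqslant d'$ with $d' > d$ also witnesses $\mpinf_0 \leqslant c \land \mpinf_1 > d$, so I will focus on the left-to-right implication.

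Suppose for contradiction that $v \vDash \ll 1 \gg \mpinf_0 \leqslant c \land \mpinf_1 > d$ yet, for every rational $d' > d$, $v \nvDash \ll 1 \gg \mpinf_0 \leqslant c \land \mpinf_1 \geqslant d'$. By determinacy, for every such $d'$ we then have $v \vDash \ll 0 \gg \mpinf_0 > c \lor \mpinf_1 < d'$. Applying Lemma~\ref{LemWeightPlayLessThanC} to the sequence $d_n := d + 1/n$, for each $n \in \mathbb{N}^+$ Player~0 has a \emph{memoryless} winning strategy $\sigma_0^n$ for $\mpinf_0 > c \lor \mpinf_1 < d_n$ from $v$. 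Since the arena is finite, there are only finitely many memoryless strategies of Player~0, so by the pigeonhole principle some memoryless strategy $\sigma_0^{*}$ coincides with $\sigma_0^n$ for infinitely many indices $n$; without loss of generality, $\sigma_0^{*}$ is winning for $\mpinf_0 > c \lor \mpinf_1 < d_n$ for all $n$ in an infinite subset $N \subseteq \mathbb{N}^+$.

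I will then show that $\sigma_0^{*}$ is in fact winning for the relaxed objective $\mpinf_0 > c \lor \mpinf_1 \leqslant d$, which by determinacy contradicts the assumption $v \vDash \ll 1 \gg \mpinf_0 \leqslant c \land \mpinf_1 > d$ and closes the argument. Fix any play $\pi \in \outv(\sigma_0^{*})$. For each $n \in N$, the winning condition of $\sigma_0^{*}$ against the strategy of Player~1 inducing $\pi$ yields $\mpinf_0(\pi) > c$ or $\mpinf_1(\pi) < d_n$. If $\mpinf_0(\pi) > c$ we are done; otherwise $\mpinf_0(\pi) \leqslant c$, so $\mpinf_1(\pi) < d_n$ must hold for every $n \in N$, and taking the infimum over $N$ gives $\mpinf_1(\pi) \leqslant \inf_{n \in N} d_n = d$. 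Hence $\sigma_0^{*}$ enforces $\mpinf_0 > c \lor \mpinf_1 \leqslant d$, which is the desired contradiction.

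The only delicate point is the transition from the strict inequality $\mpinf_1 < d_n$ (which holds for each $n$ separately) to the non-strict bound $\mpinf_1 \leqslant d$ in the limit; this works precisely because a single memoryless strategy $\sigma_0^{*}$ is obtained from the pigeonhole step and can be evaluated against all $d_n$ simultaneously. The finiteness of memoryless strategies, guaranteed by Lemma~\ref{LemWeightPlayLessThanC}, is what makes this pigeonhole argument go through; without it one would only obtain a distinct strategy per $d_n$ and the limit step would fail.
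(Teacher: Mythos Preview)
Your proof is correct and follows the same overall scheme as the paper: pass to Player~0 via determinacy, invoke Lemma~\ref{LemWeightPlayLessThanC} to get memoryless winning strategies for each threshold $d_n > d$, use finiteness of memoryless strategies (pigeonhole) to extract a single $\sigma_0^*$, and then push to the limit. The difference is only in how the limit step is executed. The paper appeals to the explicit constants $m_{\mathcal{G}}, c_{\mathcal{G}}, d_{\mathcal{G}}$ supplied by Lemma~\ref{LemWeightPlayLessThanC}, obtains a uniform $d_{\mathcal{G}} \leqslant d$, and then routes through a game transformation $\mathcal{G} \to \mathcal{G}' \to \mathcal{G}$ (negating the first coordinate and using Proposition~\ref{PropInfEqSup}) before concluding $v \vDash \ll 0 \gg \mpinf_0 > c \lor \mpinf_1 \leqslant d$. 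You instead argue directly on an arbitrary play $\pi$ compatible with $\sigma_0^*$: since $\sigma_0^*$ wins for every $d_n$ with $n \in N$, either $\mpinf_0(\pi) > c$ or $\mpinf_1(\pi) < d_n$ for all such $n$, and taking the infimum (which equals $d$ because $N$ is infinite) gives $\mpinf_1(\pi) \leqslant d$. This bypasses both the constants and the game transformations, so your argument is shorter and more transparent; the paper's detour through the constants is not needed for this lemma, though those constants are independently useful elsewhere.
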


\begin{proof}
    For the right to left direction of the proof, it is trivial to see that 
    % if $\mpinf_1 > d'$ for some $d' > d$ then this implies $\mpinf_1 \geqslant d$. Hence we get that 
    if $v \vDash \ll 1 \gg \mpinf_0 \leqslant c \land \mpinf_1 \geqslant d'$ for some $d' > d$, then we have that $v \vDash \ll 1 \gg \mpinf_0 \leqslant c \land \mpinf_1 > d$.

    For the left to right direction of the proof, we prove the contrapositive, i.e., we assume that $\forall d' > d$, we have $v \nvDash \ll 1 \gg \mpinf_0 \leqslant c \land \mpinf_1 \geqslant d'$. Now we prove that $v \nvDash \ll 1 \gg \mpinf_0 \leqslant c \land \mpinf_1 > d$.

    Since $\forall d'> d$, Player~1 loses $\mpinf_0 \leqslant c \land \mpinf_1 \geqslant d'$ from a given vertex $v$, due to determinacy of multi-dimensional mean-payoff games, Player~0 wins $\mpinf_0 > c \lor \mpinf_1 < d'$ from vertex $v$. By \cref{LemWeightPlayLessThanC}, Player~0 has a memoryless strategy $\sigma_0$ to achieve the objective $\mpinf_0 > c \lor \mpinf_1 < d'$ from vertex $v$. Note that Player~0 has only finitely many memoryless strategies. Therefore there exists a strategy $\sigma_0^*$ that achieves the objective $v \vDash \ll 0 \gg \mpinf_0 > c \lor \mpinf_1 < d'$ for all $d' > d$. 
    Now from \cref{LemWeightPlayGrtThanC}, for every $d' > d$, there exists three constants $m_\mathcal{G}, c_\mathcal{G}, d'_\mathcal{G} \in \mathbb{R}$ such that $c_\mathcal{G} > c, d'_\mathcal{G} < d'$, and for all finite plays $\pi^f \in \outv(\sigma_0^*)$, we have that
    \begin{equation*}
        w_0(\pi^f) \geqslant -m_\mathcal{G} + c_\mathcal{G} \cdot |\pi^f|
    \end{equation*}
    or
    \begin{equation*}
        w_1(\pi^f) \leqslant m_\mathcal{G} + d'_\mathcal{G} \cdot |\pi^f|
    \end{equation*}
    
    Note that since the above is true for every $d'>d$, we can indeed consider a $d_\mathcal{G} \in \mathbb{R}$, where $d_\mathcal{G} \leqslant d$, such that for all $d' > d$, and for all finite plays $\pi^f \in \outv(\sigma_0^*)$, we have that
    \begin{equation*}
        w_0(\pi^f) \geqslant -m_\mathcal{G} + c_\mathcal{G} \cdot |\pi^f|
    \end{equation*}
    or we have that
    \begin{equation*}
        w_1(\pi^f) \leqslant m_\mathcal{G} + d_\mathcal{G} \cdot |\pi^f|
    \end{equation*}
    
    \begin{comment}
    From \cref{LemWeightPlayGrtThanC}, we also get three constants $m_\mathcal{G}, c_\mathcal{G}, d_\mathcal{G} \in \mathbb{R}$ such that for all $d' > d$, we have that $c_\mathcal{G} > c, d_\mathcal{G} < d'$ and for all finite plays $\pi^f \in \outv(\sigma_0^*)$, we have that
    \begin{equation*}
        w_0(\pi^f) \geqslant -m_\mathcal{G} + c_\mathcal{G} \cdot |\pi^f|
    \end{equation*}
    or we have that
    \begin{equation*}
        w_1(\pi^f) \leqslant m_\mathcal{G} + d_\mathcal{G} \cdot |\pi^f|
    \end{equation*}
    \end{comment}

    Hence, for every play $\pi \in \outv(\sigma_0^*)$, we
    % know that $|\pi| \to \infty$. Thus, we get 
    have that
    \begin{equation*}
        \overline{\mpgen}_0(\pi) \geqslant c_\mathcal{G} \lor \mpinf_1(\pi) \leqslant d_\mathcal{G}
    \end{equation*}

    Thus, we get 
    \begin{align*}
        v & \vDash \ll 0 \gg \overline{\mpgen}_0 \geqslant c_\mathcal{G} \lor \mpinf_1 \leqslant d_\mathcal{G} && \\
        \iff v & \vDash \ll 0 \gg \overline{\mpgen}_0 > c \lor \mpinf_1 < d' && \text{for every $d' > d$,}
        \text{ since $c_\mathcal{G} > c$ and } d_\mathcal{G} < d'.
    \end{align*}
    
    We now construct a {\sf 2D-max} mean-payoff game $\mathcal{G'}$ from the given game $\mathcal{G}$ by multiplying the first dimension of the weights of all the edges by $-1$. Thus, in the game $\mathcal{G'}$, we get
    \begin{align*}
        v & \vDash \ll 0 \gg \mpinf_0 < -c \lor \mpinf_1 < d' && \text{for every $d' > d$} \\
        \iff v & \vDash \ll 0 \gg \overline{\mpgen}_0 < -c \lor \mpinf_1 < d' && \text{for every $d' > d$} \text{ [from \cref{PropInfEqSup}]}
    \end{align*}
    
    We now construct a game $\mathcal{G''}$ from the game $\mathcal{G'}$ by multiplying the first dimension of the weights of all the edges by -$1$. Note that, we get back the original game $\mathcal{G}$ after this modification, i.e. 
        % \todo{should we use $\vDash$, $\nvDash$ or $\vDash$, $\not \vDash$; be uniform.}
    % there exists a bijection between the vertices, edges and weights of the games $\mathcal{G} $ and $ \mathcal{G''}$. 
    $\mathcal{G}''$ has the same arena as that of $\mathcal{G}$.
    Thus, in the game $\mathcal{G}$, we have that 
    \begin{align*}
        \hspace{0.29in} v & \vDash \ll 0 \gg \mpinf_0 > c \lor \mpinf_1 < d' &&  \text{for every $d'> d$ \hspace{1.44in}}
    \end{align*}
    Recall by \cref{LemWeightPlayLessThanC}, if Player~0 wins $\mpinf_0 > c \lor \mpinf_1 < d'$, then she has a memoryless strategy for this objective, and since there are finitely many memoryless strategies, there exists a memoryless strategy $\sigma_0^*$ of Player $0$ that wins for all $d' > d$.
    This also implies that by using $\sigma_0^*$, from vertex $v$, Player~0 can ensure $\mpinf_0 > c \lor \mpinf_1 \leqslant d$, that is,
    \begin{align*}
        v \vDash & \ll 0 \gg \mpinf_0 > c \lor \mpinf_1 \leqslant d && \\
        \iff v \nvDash & \ll 1 \gg \mpinf_0 \leqslant c \land \mpinf_1 > d && \text{[By determinacy of multi-dimensional} \\
        & && \text{mean-payoff games]}
    \end{align*}
\end{proof}

\begin{comment}
\begin{lemma}
    \label{lemLambdaClosed}
    For all mean-payoff game $\mathcal{G}$, for all vertices $v \in \mathcal{G}$, for all rational constants $c, d \in \mathbb{Q}$, we have that:
        \begin{equation*}
        v \vDash \ll 1 \gg \mpinf_0 \leqslant c \land \mpinf_1 \geqslant d'
    \end{equation*}
    
    \begin{equation*}
        v \vDash \ll 1 \gg \mpinf_0 \leqslant c \land \mpinf_1 > d
    \end{equation*}
    if and only if there exists a $d' \in \mathbb{R}$, where $d' > d$ such that
\end{lemma}
\end{comment}

\begin{comment}
\track{
\begin{corollary} \label{cor:NPmembership}
For every mean-payoff game $\mathcal{G}$, and $\epsilon > 0$, for all vertices $v$ in $\mathcal{G}$, and for all rationals $c,d$, Player~0 can ensure with a memoryless strategy $\sigma_0^v$
\begin{align*}
v \nvDash & \ll 1 \gg \mpinf_0 \leqslant c \land \mpinf_1 > d-\epsilon && \text{iff} \\
\forall d' > d-\epsilon, \\
v \nvDash & \ll 1 \gg \mpinf_0 \leqslant c \land \mpinf_1 \geqslant d' && \text{with strategy $\sigma_0^v$ iff}
\end{align*}
$\forall d' > d-\epsilon$, there exists a {\sf 2D-max} men-payoff game $\mathcal{G}_{d'}$, and we have that Player~0 can ensure with strategy $\sigma_0$ that
\begin{align*}
v \nvDash & \ll 1 \gg \mpinf_0 \geqslant 0 \land \mpinf_1 \geqslant 0 && \text{with strategy $\sigma_0^v$} \end{align*}
\end{corollary}
}
\end{comment}

% \noindent\textit{Proof of \cref{ThmNpForASV}} 
\noindent We are finally ready to prove \cref{ThmNpForASV}.
\begin{proof}[Proof of \cref{ThmNpForASV}]
According to \cref{LemPlaysAsWitnessForASV}, we consider a nondeterministic Turing machine that establishes the membership to {\sf NP} by guessing a reachable SCC $S$, a finite play $\pi_1$ to reach $S$ from $v$, two simple cycles $l_1, l_2$, along with the finite play $\pi_1$ from $v$ to $l_1$, and the two finite plays $\pi_2$ and $\pi_3$ that connects the two simple cycles, and parameters $\alpha, \beta \in \mathbb{Q}^{+}$. 
Additionally, for each vertex $v'$ that appear along the plays $\pi_1, \pi_2$ and $\pi_3$, and on the simple cycles $l_1$ and $l_2$, the Turing machine guesses a memoryless strategy $\sigma_0^{v'}$ for Player~0 that establishes $v' \nvDash \ll 1 \gg \mpinf_0 \leqslant c \land \mpinf_1 > d - \epsilon$ which implies by determinacy of multi-dimensional mean-payoff games, that $v' \vDash \ll 0 \gg \mpinf_0 > c \lor \mpinf_1 \leqslant d - \epsilon$.

Besides, from \cref{ThmWitnessASVFinMem}, we can obtain a regular witness $\pi'$. Using $\pi'$, we build a finite memory strategy $\sigma_0^{\sf FM}$ for Player~0 as stated below:
\begin{enumerate}
    \item Player~0 follows $\pi'$ if Player~1 does not deviate from $\pi'$. The finite memory strategy stems from the finite $k$ as required in the proof of \cref{ThmWitnessASVFinMem}.
    \item For each vertex $v' \in \pi'$, Player~0 employs the memoryless strategy $\sigma_0^{v'}$ that establishes $v' \nvDash \ll 1 \gg \mpinf_0 \leqslant c \land \mpinf_1 > d-\epsilon$.
    The existence of such a memoryless strategy follows from the proof of \cref{ConjGrtIsGrtEq}.
\end{enumerate}

It remains to show that all the guesses can be verified in polynomial time. The only difficult part concerns the memoryless strategies of Player~0 to punish deviations of Player~1 from the witness play $\pi'$. These memoryless strategies are used to prove that the witness does not cross $(c,d)^{\epsilon}$-bad vertices. 
For vertex $v'\in \pi'$, we consider a memoryless strategy $\sigma_0^{v'}$, and we need to establish that it can enforce $\mpinf_0 > c \lor \mpinf_1 \leqslant d - \epsilon$.
Towards this, we adapt the proof of Lemma 10 in \cite{VCDHRR15}, which in turn is based on the polynomial time algorithm of Kosaraju and Sullivan~\cite{KS88} for detecting zero-cycles in multi-weighted directed graphs.

Consider the bi-weighted graph obtained from $\mathcal{G}$ by fixing the choices of Player~0 according to the memoryless strategy $\sigma_0^{v'}$.
We first compute the set of maximal SCCs that are reachable from $v'$ in this bi-weighted graph.
This can be done in linear time.
For each SCC $S$, we need to check that Player~1 cannot achieve $\mpinf_0 \leqslant c \land \mpinf_1 > d - \epsilon$.

We first recall the definition of multi-cycles from \cite{VCDHRR15}, which is a multi-set of simple cycles from the SCC $S$.
For a simple cycle $C = (e_1, \dots, e_n)$, let $w(c) = \sum_{e \in C} w(e)$.
For a multi-cycle $\mathcal{C}$, let $w(\mathcal{C}) = \sum_{C \in \mathcal{C}} w(C)$ (note that in this summation, a cycle $C$ may appear multiple times in $\mathcal{C}$).
A non-negative multi-cycle is a non-empty multi-set of simple cycles $\mathcal{C}$ such that $w(\mathcal{C}) \geqslant 0$ (i.e., in both the dimensions, the weight is non-negative).
In \cite{VCDHRR15}, it has been shown that the problem of deciding if $S$ has a non-negative multi-cycle can be solved in polynomial time by solving a set of linear inequalities.
In our case, we are interested in multi-cycles such that $w_0(\mathcal{C}) \leqslant c$ and $w_1(\mathcal{C}) > d-\epsilon$.
As in the proof of Lemma 10 in \cite{VCDHRR15}, this can be checked by defining the following set of linear constraints.
Let $V_S$ and $E_S$ respectively denote the set of vertices and the set of edges in $S$.
For every edge $e \in E_S$, we consider a variable $\chi_e$.
\begin{enumerate}[(a)]
    \item For $v \in V_S$, let $\mathsf{In}(v)$ and $\mathsf{Out}(v)$ respectively denote the set of incoming edges to $v$ and the set of outgoing edges from $v$.
    For every $v \in V_S$, we define the linear constraint $\sum_{e \in \mathsf{In}(v)} \chi_e = \sum_{e \in \mathsf{Out}(v)} \chi_e$ which intuitively models flow constraints.
    \item For every $e \in E_S$, we define the constraint $\chi_e \geqslant 0$.
    \item We also add the constraint $\sum_{e \in E_S} \chi_e \cdot w_0(e) \leqslant c$ and $\sum_{e \in E_S} \chi_e \cdot w_1(e) > d-\epsilon$.
    \item Finally, we define the constraint $\sum_{e \in E_S} \chi_e \geqslant 1$ that ensures that the multi-cycle is non-empty.
\end{enumerate}
This set of linear constraints can be solved in polynomial time, and formally following the arguments from \cite{KS88}, it has a solution if and only if there exists a multi-cycle $\mathcal{C}$ such that $w_0(\mathcal{C}) \leqslant c$ and $w_1(\mathcal{C}) > d-\epsilon$.
The {\sf NP}-membership follows since we have linearly many maximal SCCs from each vertex $v'$ in the bi-weighted graph that is obtained from $\mathcal{G}$ by fixing the choices of Player~0 according to the memoryless strategy $\sigma_0^{v'}$, and there are linearly many vertices $v'$ for which we need to check that $v'$ is not $(c,d)^\epsilon$-bad.

Now we show that the memory required by the strategy $\sigma_0^{\sf FM}$ as described above is pseudopolynomial in the input size.
Recall from the proof of \cref{ThmWitnessASVFinMem} that $k$ and $\tau$ are polynomial in the size of $\mathcal{G}$, and the largest weight $W$ appearing on the edges of $\mathcal{G}$.
Assuming that the weights are given in binary, 
% the amount of memory required for storing $k$ and $\tau$ are thus $\poly(\log (|\mathcal{G}|), \log(W))$, 
the number of states in the finite state machine realizing this strategy is thus $\poly(|\mathcal{G}|, W)$, and hence pseudopolynomial in the input size, assuming that the weights are given in binary.
% \end{proof}

%\subsection{Proof of \cref{ThmHardnessThreshold}}
% \subsubsection{Hardness Threshold for $\asv$}
% \label{app:ThmHardnessThreshold}
% Now we are ready to prove the hardness of the threshold problem for $\asv$, i.e., the problem of deciding if $\asv > c$ is at least as hard solving zero-sum mean-payoff games.
Now we prove that the threshold problem is at least as hard as solving zero-sum mean-payoff games.
% \begin{proof}
We show the proof for $\asv(v) > c$.
The proof for the case of $\asvgen(v) > c$ is exactly the same.
Consider a zero-sum mean-payoff game $\mathcal{G}_0 = (\mathcal{A}, \mpgen)$, where $\mathcal{A} = (V,E, \zug{V_0,V_1}, w)$.
We construct a bi-weighted mean-payoff game $\mathcal{G} = (\mathcal{A'}, \mpgen_0, \mpgen_1)$ from $\mathcal{G}_0$ simply by adding to the arena $\mathcal{A}$ a weight function $w_1$ that assigns a weight $0$ to each edge.
Stated formally, $\mathcal{A'} = (V,E, \zug{V_0,V_1}, w, w_1)$ such that for all $e \in E$, we have that $w_1(e)=0$.

Now consider that from a vertex $v \in V$, Player~0 has a winning strategy $\sigma_0$ in $\mathcal{G}_0$ such that $\mpgen(\sigma_0, \sigma_1) > c$ for all Player~1 strategies $\sigma_1$, and where $c$ is a rational.
We show that by playing $\sigma_0$ from $v$ in $\mathcal{G}$, we have that $\asv(v) > c$.
For every play $\pi \in \outv(\sigma_0)$ in $\mathcal{G}$, we have that $\mpgen_1(\pi)=0$, and Player~1 has a response $\sigma_\pi$ such that $\outv(\sigma_0, \sigma_{\pi})=\pi$.
In $\mathcal{G}$, Player~1 thus chooses a strategy that minimizes the mean-payoff of Player~0, and since in $\mathcal{G}_0$, we have that $\mpgen(\sigma_0, \sigma_1) > c$ for all strategies $\sigma_1$ of Player~1, it follows that $\asv(v) > c$.

Now in the other direction, consider that in $\mathcal{G}$, we have $\asv(v) > c$.
Thus from \cref{lem:witness_strategy}, there exists a strategy $\sigma_0$ for Player~0 such that $\asv(v)(\sigma_0) > c$.
Using similar arguments as above, we see that $\sigma_0$ is also a winning strategy in $\mathcal{G}_0$ giving a mean-payoff greater than $c$ to Player~0.

\end{proof}

  \subparagraph{Finite memory strategies of Player~0}
% \track{We study the case where Player~0 is restricted to using only finite memory strategies.}
In \cite{FGR20}, it has been shown that given a mean-payoff game $\mathcal{G}$, a vertex $v$ in $\mathcal{G}$, and a rational $c$, the problem of deciding if $\asvgen(v) > c$ is in {\sf NP}.
The use of an infinite memory strategy $\sigma_0$ for Player~$0$ such that $\asvgen(\sigma_0)(v) > c$ has been shown in \cite{FGR20}.
Here we give an improvement to that result in \cite{FGR20} showing that if 
$\asvgen(v) > c$, then there exists a finite memory strategy $\sigma_0$ of Player~0 such that $\asvgen(\sigma_0)(v) > c$.
The proof arguments are similar to that of \cref{ThmNpForASV}, and hence omitted.

Towards this, we first define the notion of a witness for $\asvgen$ as it appears in \cite{FGR20}.

\noindent\textbf{Witnesses for $\asvgen$} For a mean-payoff game $\mathcal{G}$, we associate with each vertex $v$ in $\mathcal{G}$, the following set of pairs of real numbers:
% \begin{equation*}
$\Lambda(v) = \{(c,d) \in \mathbb{R}^2 \mid v \vDash \ll 1 \gg \mpinf_0 \leqslant c \land \mpinf_1 \geqslant d \}$.
% \end{equation*}
A vertex $v$ is said to be $(c,d)$-bad if $(c,d) \in \Lambda(v)$. Let $c' \in \mathbb{R}$. A play $\pi$ in $\mathcal{G}$ is called a $(c',d)$-witness of $\asvgen(v) > c$ if $(\mpinf_0(\pi), \mpinf_1(\pi)) = (c', d)$ where $c' > c$, and $\pi$ does not contain any $(c,d)$-bad vertex. A play $\pi$ is called a witness of $\asv(v) > c$ if it is a $(c',d)$-witness of $\asvgen(v) > c$ for some $c',d$. 

We state the following theorem which is similar to \cref{ThmWitnessASVFinMem}, but in the context of $\asvgen$ instead of $\asv$.
\begin{theorem}
\label{ThmWitnessASVNoEpsFinMem}
For all mean-payoff games $\mathcal{G}$, for all vertices $v$ in $\mathcal{G}$, and for all rationals $c$, we have that $\asvgen(v) > c$ if and only if there exists a regular $(c',d)$-witness of $\asvgen(v) > c$.
\end{theorem}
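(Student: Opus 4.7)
The plan is to adapt the proof of \cref{ThmWitnessASVFinMem} to the $\asvgen$ setting, exploiting the fact that in the absence of the $\epsilon$-tolerance, Player~1's coordinate condition in the witness becomes $\mpinf_1 \geqslant d$ rather than $\mpinf_1 > d - \epsilon$. The right-to-left direction mirrors the witness-to-strategy construction used in \cref{ThmWitnessASVInfMem}: given a regular $(c',d)$-witness $\pi = u \cdot v^{\omega}$, I would build a Player~0 strategy $\sigma_0$ that follows $\pi$ on path and, upon any Player~1 deviation at vertex $v'$, switches to a punishing strategy that ensures $\mpinf_0 > c \lor \mpinf_1 < d$ from $v'$. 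Such a punishing strategy exists because $v'$ is not $(c,d)$-bad and multi-dimensional mean-payoff games are determined. A routine verification then shows that every best-response of Player~1 to $\sigma_0$ yields a Player~0 payoff strictly greater than $c$, hence $\asvgen(\sigma_0)(v) > c$ and therefore $\asvgen(v) > c$.

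For the left-to-right direction, assume $\asvgen(v) > c$. By the analogue of \cref{ThmWitnessASVInfMem} for $\asvgen$ (established in~\cite{FGR20}), there exists a $(c',d)$-witness $\pi$; and by the analogue of \cref{LemPlaysAsWitnessForASV} for $\asvgen$ (also from~\cite{FGR20}), $\pi$ can be decomposed into two simple cycles $l_1, l_2$ lying in a common SCC, together with simple paths $\pi_1$ (from $v$ to $l_1$), $\pi_2$ (from $l_1$ to $l_2$), and $\pi_3$ (from $l_2$ back to $l_1$), plus positive rationals $\alpha, \beta$ with $\alpha+\beta=1$ satisfying $\alpha \mpgen_0(l_1) + \beta \mpgen_0(l_2) = c' > c$ and $\alpha \mpgen_1(l_1) + \beta \mpgen_1(l_2) = d$, such that no vertex occurring on these paths or cycles is $(c,d)$-bad.

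To produce a regular witness from this decomposition I would carry out the same four-way case analysis as in \cref{ThmWitnessASVFinMem} on the relative signs of $\mpgen_i(l_1) - \mpgen_i(l_2)$ for $i \in \{0,1\}$. In the two cases where one cycle dominates the other in both coordinates, take $\pi' = \pi_1 \cdot l_j^{\omega}$ for the dominating $l_j$, obtaining $\mpinf_0(\pi') > c$ and $\mpinf_1(\pi') \geqslant d$. In the two mixed cases, where the cycles trade off between the two coordinates, use the construction $\pi' = \pi_1 \cdot \big(l_1^{[\alpha k]} \cdot \pi_2 \cdot l_2^{[\beta(k+\tau)]} \cdot \pi_3\big)^{\omega}$ for a sufficiently large integer $k$ and small rational correction $\tau \geqslant 0$, chosen (as in \cref{ThmWitnessASVFinMem} and its referenced appendix) precisely to compensate for the fixed finite contributions of $\pi_2, \pi_3$ per period and deliver $\mpinf_0(\pi') > c$ together with $\mpinf_1(\pi') \geqslant d$.

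The concluding ingredient is the monotonicity of $\Lambda$ in its second argument: if $d'' \geqslant d$ then $(c, d'') \in \Lambda(v')$ implies $(c, d) \in \Lambda(v')$, since the Player~1 objective $\mpinf_0 \leqslant c \land \mpinf_1 \geqslant d''$ strengthens $\mpinf_0 \leqslant c \land \mpinf_1 \geqslant d$. Since every vertex of $\pi'$ is among the vertices of $\pi$ (none of which is $(c,d)$-bad) and $\mpinf_1(\pi') \geqslant d$, contrapositive of the above monotonicity yields that no vertex of $\pi'$ is $(c, \mpinf_1(\pi'))$-bad. Hence $\pi'$ is a regular $(\mpinf_0(\pi'), \mpinf_1(\pi'))$-witness of $\asvgen(v) > c$. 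The main technical obstacle is precisely the fine-tuning of $k$ and $\tau$ in the mixed cases: unlike the $\asv$ case where the $\epsilon$-tolerance provides slack in Player~1's coordinate, here the inequality $\mpinf_1(\pi') \geqslant d$ must hold exactly, and it is monotonicity of $\Lambda$ that lets a slight overshoot (rather than an exact match) suffice.
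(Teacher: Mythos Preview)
Your proposal is correct and follows essentially the same approach as the paper: the paper simply states that the proof is ``exactly the same as that of \cref{ThmWitnessASVFinMem}, and hence omitted,'' and your write-up is precisely that adaptation, invoking the $\asvgen$ analogues of \cref{ThmWitnessASVInfMem} and \cref{LemPlaysAsWitnessForASV} from~\cite{FGR20} and then running the four-case analysis verbatim. The monotonicity observation you highlight for $\Lambda$ is exactly the argument used in Case~3 of \cref{ThmWitnessASVFinMem}, so nothing new is needed there either.
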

The proof of this theorem is exactly the same as that of 
% \track{the second part of} 
\cref{ThmWitnessASVFinMem}, and hence omitted.

Now using Lemma 8 in \cite{FGR20} (which is similar to \cref{LemPlaysAsWitnessForASV}, but in the context of $\asvgen$ instead of $\asv$), and using  \cref{ThmWitnessASVNoEpsFinMem}, we obtain the following theorem.

% \subsection{Pseudopolynomial strategy for $\asvgen(v) > c$}
% \label{app:ASVpseudopolystrategy}
\begin{theorem}
\label{LemFinMemWitnessASVNonEps}
For all mean-payoff games $\mathcal{G}$, for all vertices $v \in V$, and for all rationals $c$, if $\asvgen(v) > c$, then there exists a pseudopolynomial
% \todo{Change to exponential memory.}
% \sgcomment{Check if linear; this will be exponential as we discussed} 
memory strategy $\sigma_0$ for Player~0 such that $\asvgen(\sigma_0)(v) > c$.
\end{theorem}
The proof follows since as in the proof of \cref{ThmWitnessASVFinMem}, the values of $k$ and $\tau$ are polynomial in the size of $\mathcal{G}$, and the weights on the edges which are assumed to be given in binary.
The proof arguments are similar to that of \cref{ThmNpForASV}, and hence omitted.

% The proof of this theorem follows the same steps as that of \cref{ThmNpForASV}}, and hence omitted.

\begin{comment}
\begin{theorem}
\label{LemFinMemWitnessASVNonEps}
For all mean-payoff games $\mathcal{G}$, for all vertices $v \in V$, and for all rationals $c$, if $\asvgen(v) > c$, then there exists a pseudopolynomial
% \todo{Change to exponential memory.}
% \sgcomment{Check if linear; this will be exponential as we discussed} 
memory strategy $\sigma_0$ for Player~0 such that $\asvgen(\sigma_0)(v) > c$.
\end{theorem}
The proof follows since as in the proof of \cref{ThmWitnessASVFinMem}, the values of $k$ and $\tau$ are polynomial in the size of $\mathcal{G}$, and the weights on the edges which are assumed to be given in binary.

\btrack{It has been shown in \cite{FGR20} that given a mean-payoff game $\mathcal{G}$, a vertex $v$, and a rational $c \in \mathbb{Q}$, checking if $\asvgen(v) > c$ is in {\sf NP}.}
\end{comment}
% The use of $\leqslant$ for establishing the existence of memoryless strategies for Player~0 for $\mpinf_0 > c \lor \mpinf_1 \leqslant d-\epsilon$ in the case of $\asv$ instead of $<$ in $\mpinf_0 > c \lor \mpinf_1 < d$ in the case of $\asvgen$ makes the proof for $\asv$ more involved.
% \todo{Should this sentence be placed here?}
% As a corollary of \cref{LemFinMemWitnessASVNonEps}, we now show that in a mean-payoff game $\mathcal{G}$, the $\asv$ and the $\asvgen$ from every vertex $v$ in the game $\mathcal{G}$ do not change even if Player~0 is restricted to using only finite memory strategies.
% As a corollary of \cref{ThmNpForASV}, we can deduce that 
We now define the $\epsilon$-adversarial Stackelberg value and adversarial Stackelberg value when Player~0 is restricted to using finite memory strategies as :
%Consider the case where Player~0 is restricted to using only finite memory strategies.
%We then define 
\begin{equation*}
    % \begin{split}
        \asv_\mathsf{FM}(v) =
        \quad \sup_{\sigma_0 \in \Sigma_0^{\fm}} \inf_{\sigma_1 \in \sf \br(\sigma_0)}  \mpinf_0(\outv(\sigma_0,\sigma_1))
    % \end{split}
\end{equation*}
\begin{equation*}
    % \begin{split}
        \asvgen_\mathsf{FM}(v) =
        \quad \sup_{\sigma_0 \in \Sigma_0^{\fm}} \inf_{\sigma_1 \in \mathsf{BR}_1(\sigma_0)}  \mpinf_0(\outv(\sigma_0,\sigma_1))
    % \end{split}
\end{equation*}
% \todo[inline]{The above can come in the same line}
where $\Sigma_0^{\fm}$ refers to the set of all finite memory strategies of Player~0.
We note that for every finite memory strategy $\sigma_0$ of Player~0, a best-response of Player~1 to $\sigma_0$ always exists as noted in \cite{FGR20}.
%This is expressed in the following corollary. 
As a corollary of \cref{ThmNpForASV}, we observe that in a mean-payoff game $\mathcal{G}$, the $\asv$ from every vertex $v$ 
% in $\mathcal{G}$ 
does not change even if Player~0 is restricted to using only finite memory strategies.
The result also holds for $\asvgen$ due to \cref{LemFinMemWitnessASVNonEps}.
Formally,
% Formally, we state this below.
\begin{corollary}
\label{CorASVEqASVFinNonEps}
For all games $\mathcal{G}$, for all vertices $v$ in $\mathcal{G}$, and for all $\epsilon \!>\! 0$, we have that
%and for all rationals $c$, if $\asv(v)\!=\!c$ 
%$(\asvgen(v) \!=\! c)$
%, 
%then for every $c' \!<\! c$, there exists a finite memory strategy $\sigma_0^{FM}$ for Player~0 such that $\asv(\sigma_0^{FM})(v) > c'$ 
%$(\asvgen(\sigma_0^{FM})(v) > c')$
%. This implies that $\sup\limits_{\sigma_0 \in \Sigma_0^{\mathsf{FM}}} \asv(\sigma_0)(v) \!=\! \asv(v) = c \:
%(\!\!\sup\limits_{\sigma_0 \in \Sigma_0^{\mathsf{FM}}} \asvgen(\sigma_0)(v) = \asvgen(v) = c)
%$, that is, 
$\asv_{\mathsf{FM}}(v) = \asv(v)$ and $\asvgen_{\mathsf{FM}}(v) = \asvgen(v)$.
\end{corollary}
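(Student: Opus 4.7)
The plan is to prove the two equalities by showing the two inequalities in each case. The direction $\asv_{\mathsf{FM}}(v) \leqslant \asv(v)$ (and similarly $\asvgen_{\mathsf{FM}}(v) \leqslant \asvgen(v)$) is immediate since $\Sigma_0^{\fm} \subseteq \Sigma_0$, so the supremum over finite memory strategies is bounded above by the supremum over all strategies.

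For the reverse inequality on $\asv$, I would argue by considering an arbitrary rational $c < \asv(v)$. Since $\asv(v) > c$, Theorem~\ref{ThmNpForASV} (together with Theorem~\ref{ThmWitnessASVFinMem}) gives us a finite memory (in fact pseudopolynomial memory) strategy $\sigma_0^{\mathsf{FM}} \in \Sigma_0^{\fm}$ realizing the regular witness and the punishing memoryless strategies at deviation vertices, such that $\asv(\sigma_0^{\mathsf{FM}})(v) > c$. Therefore $\asv_{\mathsf{FM}}(v) \geqslant c$ for every rational $c < \asv(v)$, which yields $\asv_{\mathsf{FM}}(v) \geqslant \asv(v)$. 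Combined with the trivial direction this gives $\asv_{\mathsf{FM}}(v) = \asv(v)$.

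For the reverse inequality on $\asvgen$, the same argument applies, using Theorem~\ref{LemFinMemWitnessASVNonEps} in place of Theorem~\ref{ThmNpForASV}: for every rational $c < \asvgen(v)$ there exists a pseudopolynomial memory strategy $\sigma_0^{\mathsf{FM}}$ with $\asvgen(\sigma_0^{\mathsf{FM}})(v) > c$, so $\asvgen_{\mathsf{FM}}(v) \geqslant c$, and taking the supremum over such $c$ yields $\asvgen_{\mathsf{FM}}(v) \geqslant \asvgen(v)$.

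There is no real obstacle here since the heavy lifting has already been done in Theorem~\ref{ThmNpForASV} and Theorem~\ref{LemFinMemWitnessASVNonEps}, which already establish that strict threshold achievement only requires finite (pseudopolynomial) memory. The only subtlety worth flagging explicitly is that the equality is between suprema, and both suprema may fail to be achieved by any single strategy (that is the achievability question addressed separately in Theorem~\ref{ThmAchiev} and Theorem~\ref{ThmExNeedInfMem}); the corollary concerns only the value of the supremum, not its attainment, so no additional argument is needed beyond the ``for every $c$ below the supremum'' reasoning above.
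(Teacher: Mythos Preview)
Your proposal is correct and follows essentially the same approach as the paper: both argue that for every $c' < \asv(v)$ (resp.\ $\asvgen(v)$), Theorem~\ref{ThmNpForASV} (resp.\ Theorem~\ref{LemFinMemWitnessASVNonEps}) supplies a finite-memory strategy beating $c'$, so the finite-memory supremum coincides with the unrestricted one. Your explicit mention of the trivial direction $\Sigma_0^{\fm} \subseteq \Sigma_0$ and your closing remark distinguishing the supremum equality from achievability are slightly more detailed than the paper's terse justification, but the argument is the same.
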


Let $\asv(v) = c$ $(\asvgen(v) = c)$, for some $c \in \mathbb{Q}$.
Note that for every $c' \!<\! c$, there exists a finite memory strategy $\sigma_0^{FM}$ for Player~0 such that $\asv(\sigma_0^{FM})(v) > c'$ 
$(\asvgen(\sigma_0^{FM})(v) > c')$ leading to$\sup\limits_{\sigma_0 \in \Sigma_0^{\mathsf{FM}}} \asv(\sigma_0)(v)~\!=\!~\asv(v)=~c$ $(\sup\limits_{\sigma_0 \in \Sigma_0^{\mathsf{FM}}} \asvgen(\sigma_0)(v) = \asvgen(v) = c)$.
This gives $\asv_{\mathsf{FM}}(v) = \asv(v)$ ($\asvgen_{\mathsf{FM}}(v) = \asvgen(v)$).

This corollary is important from a practical point of view as it implies that both the $\asv$ and $\asvgen$ can be approached to any precision with a finite memory strategy. Nevertheless, we show in \cref{ThmAchiev} that infinite memory is necessary to achieve the exact $\asv$.

\subparagraph{Memoryless strategies of Player~0}
We now establish that the threshold problem is {\sf NP-complete} when Player~0 is restricted to play {\em memoryless} strategies.
% \todo[inline]{Which one is stronger? Both players restricted to finite memory strategies or only Player~0 is restricted to finite memory strategies?}
First we define 
\begin{equation*}
    % \begin{split}
        \asv_\mathsf{ML}(v) =
        \quad \sup_{\sigma_0 \in \Sigma_0^{\ml}} \inf_{\sigma_1 \in \sf \br(\sigma_0)}  \mpinf_0(\outv(\sigma_0,\sigma_1))
    % \end{split}
\end{equation*} and
\begin{equation*}
    % \begin{split}
        \asvgen_\mathsf{ML}(v) =
        \quad \sup_{\sigma_0 \in \Sigma_0^{\ml}} \inf_{\sigma_1 \in \mathsf{BR}_1(\sigma_0)}  \mpinf_0(\outv(\sigma_0,\sigma_1))
    % \end{split}
\end{equation*}
% \todo[inline]{The above can come in the same line}
where $\Sigma_0^{\ml}$ is the set of all memoryless strategies of Player~0.

\begin{theorem}
\label{ThmNPHardMemlessStrat}
For all mean-payoff games $\mathcal{G}$, for all vertices $v$ in $\mathcal{G}$, for all $\epsilon > 0$, and for all rationals $c$, the problem of deciding if $\asv_{\mathsf{ML}}(v) > c$ $(\asvgen_{\mathsf{ML}}(v) > c)$ is {\sf NP-Complete}.
% when Player~0 is restricted to using only memoryless strategies.
\end{theorem}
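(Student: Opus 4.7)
The plan is to establish NP-membership by guess-and-check and NP-hardness by a reduction from 3-SAT.

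For NP-membership, the nondeterministic procedure guesses a memoryless strategy $\sigma_0 \in \Sigma_0^{\ml}$, which is a mapping $V_0 \to V$ of polynomial size. Once $\sigma_0$ is fixed, the game degenerates into a one-player arena $\mathcal{G}_{\sigma_0}$ controlled entirely by Player~1, and it suffices to verify $\asv_{\mathcal{G}_{\sigma_0}}(v) > c$ deterministically in polynomial time. I would first compute, for every reachable vertex $v'$, the quantity $d^\star(v') := \sup_{\sigma_1 \in \Sigma_1} \mpinf_1(\outgen_{v'}(\sigma_0, \sigma_1))$, which equals the maximum mean weight of a cycle in dimension~$1$ over SCCs reachable from $v'$ in $\mathcal{G}_{\sigma_0}$; this is a standard polynomial-time computation. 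It then suffices to check whether there exists a play $\pi$ from $v$ that is an $\epsilon$-best response, i.e.\ satisfies $\mpinf_1(\pi) > d^\star(v') - \epsilon$ for every vertex $v'$ occurring on $\pi$, and additionally $\mpinf_0(\pi) \leqslant c$. By \cref{lemCHToPlay}, for each reachable SCC $S$ this reduces to checking whether $\Fmin{\CH{\CS}}$ contains a point $(x,y)$ with $x \leqslant c$ and $y$ strictly above the tight threshold dictated by the vertices of $S$ and of the connecting path. Following the proof of \cref{ThmNpForASV}, this membership admits a polynomial-size flow-based linear program over edge-flow variables $\chi_e$ (flow conservation at each vertex, $\chi_e \geqslant 0$, $\sum_e \chi_e \geqslant 1$, together with the two payoff inequalities $\sum_e w_0(e)\chi_e \leqslant c \sum_e \chi_e$ and $\sum_e w_1(e)\chi_e > (d^\star - \epsilon) \sum_e \chi_e$), so the entire verification runs in polynomial time. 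An analogous argument with the $\epsilon$ slack replaced by equality gives NP-membership for $\asvgen_{\ml}$.

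For NP-hardness, I would reduce from 3-SAT. Given $\phi = C_1 \wedge \cdots \wedge C_m$ over variables $x_1, \dots, x_n$, I would construct a mean-payoff game in which Player~0 owns $n$ variable vertices $u_1, \dots, u_n$, each with two outgoing edges encoding $x_i = \top$ or $x_i = \bot$; since Player~0 is restricted to memoryless strategies, $\sigma_0$ corresponds exactly to a truth assignment $\tau : \{x_1, \dots, x_n\} \to \{\top, \bot\}$. From a central Player~1 vertex $w$ reached after the variable-selection phase, Player~1 may either remain in a reference loop with $(\mpgen_0, \mpgen_1) = (c+1, d)$ or jump into one of $m$ clause-gadget sinks $G_1, \dots, G_m$. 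Inside $G_j$, Player~1 selects a literal $\ell$ of $C_j$, which routes the play through the corresponding variable vertex $u_i$; depending on $\sigma_0(u_i)$, the play settles in either a falsified cycle with $(\mpgen_0, \mpgen_1) = (-W, d)$ (when $\ell$ is made false by $\tau$, exposing $C_j$) or a satisfied cycle with $\mpgen_1 \leqslant d - 2\epsilon$ (when $\ell$ is made true by $\tau$). Consequently, $\tau \models \phi$ iff every $\epsilon$-best response of Player~1 commits to the reference loop and yields $\mpinf_0 = c+1 > c$, iff $\asv_{\ml}(v) > c$. The same construction adapts to $\asvgen_{\ml}$ by tuning the gap between satisfied and falsified literals to any strictly positive value rather than $\epsilon$.

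The main obstacle will be the careful design of the clause gadget $G_j$ so that (i) the memoryless choice $\sigma_0(u_i)$ is consistently re-read inside $G_j$, which is naturally achieved by routing $G_j$'s internal cycle back through $u_i$; (ii) $G_j$ is a genuine sink, preventing Player~1 from mixing gadget and reference trajectories into spurious $\epsilon$-best responses; and (iii) the separation between the Player~1 mean-payoffs of a satisfied and a falsified literal strictly exceeds $\epsilon$. Choosing weights with denominators tuned to $\epsilon$ and padding with zero-weight edges of appropriate length should make these three conditions simultaneously achievable with a polynomial-size gadget.
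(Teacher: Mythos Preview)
Your NP-membership argument is essentially the same as the paper's: guess a memoryless $\sigma_0$, collapse to the one-player arena $\mathcal{G}_{\sigma_0}$, and use the flow-based LP from the proof of \cref{ThmNpForASV} to check in polynomial time whether Player~1 has an $\epsilon$-best response with $\mpinf_0 \leqslant c$. One minor simplification: since $d^\star(v') \leqslant d^\star(v)$ for every $v'$ reachable from $v$, your condition ``$\mpinf_1(\pi) > d^\star(v') - \epsilon$ for every $v'$ on $\pi$'' collapses to the single constraint $\mpinf_1(\pi) > d^\star(v) - \epsilon$.

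Your hardness reduction, however, has a genuine gap. In your clause gadget $G_j$, \emph{Player~1} selects the literal $\ell$, and the gadget punishes Player~0 precisely when $\ell$ is false under~$\tau$. Player~1 therefore has a profitable $\epsilon$-best response in $G_j$ whenever $C_j$ contains \emph{any} false literal, not only when \emph{all} literals of $C_j$ are false. Consequently your construction tests whether $\tau$ makes every literal of every clause true, which is not 3-SAT (and is trivially decidable). A satisfying assignment that leaves even one literal of some clause false is defeated by Player~1, so the claimed equivalence ``$\tau \models \phi$ iff $\asv_{\ml}(v) > c$'' fails in the forward direction. The obstacles you list at the end (re-reading $\sigma_0(u_i)$ consistently across gadgets) are real, but the more fundamental problem is the wrong quantifier on the literal choice; flipping it to Player~0 runs into the consistency issue you anticipate, and fixing that requires substantially more gadgetry than you sketch.

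For comparison, the paper obtains hardness by a much shorter reduction from \textsc{Partition}: given $\{a_1,\dots,a_n\}$ with sum $2T$, Player~0 owns a cycle $v_1 \to \cdots \to v_n \to v_1$ where each $v_i$ has two parallel outgoing edges weighted $(a_i,0)$ and $(0,a_i)$; a memoryless strategy is exactly a bipartition of the $a_i$'s. Player~1 owns a start vertex $v_0$ that can either enter this cycle or go to a sink with self-loop $(0,\tfrac{T-0.5}{n})$. For $\epsilon < \tfrac{1}{2n}$ one has $\asv_{\ml}(v_0) > \tfrac{T-1}{n}$ iff a balanced partition exists. This sidesteps all consistency gadgets because Player~0's $n$ memoryless choices directly \emph{are} the certificate.
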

% \subsection{Proof of \cref{ThmNPHardMemlessStrat}}
% \label{app:ThmNPHardMemlessStrat}
\begin{proof}
The proof of hardness is a reduction from the partition problem while easiness is straightforwardly obtained by techniques used in the proof of Theorem~\ref{ThmNpForASV}.
\begin{figure}[t]
    \begin{minipage}[b]{1\textwidth}    
    \centering
    \scalebox{0.9}{
    \begin{tikzpicture}[->,>=stealth',shorten >=1pt,auto,node distance=2cm,
                            semithick, squarednode/.style={rectangle, draw=blue!60, fill=blue!5, very thick, minimum size=7mm}]
          \tikzstyle{every state}=[fill=white,draw=black,text=black,minimum size=8mm]
        
          \node[squarednode]   (A)                    {$v_0$};
          \node[state, draw=red!60, fill=red!5]         (B_1) [left of=A]   {$v_1$};
          \node[state, draw=red!60, fill=red!5]         (B_2) [left of=B_1] {$v_2$};
          \node[state, draw = none, fill= none]         (B_)  [left of=B_2] {$\dotsc$};
          \node[state, draw=red!60, fill=red!5]         (B_n) [left of=B_]  {$v_n$};
          \node[state, draw=red!60, fill=red!5]         (C)   [right of=A]    {$v'$};
          \node[draw=none, fill=none, minimum size=0cm, node distance = 1.2cm]         (D) [below of=A] {$start$};
          \path (A)   edge                    node {}               (B_1)
                      edge                    node {}               (C)
                (B_1) edge [bend left, below] node {(0,$a_1$)}      (B_2)
                      edge [bend right, above] node {($a_1$,0)}     (B_2)
                (B_2) edge [bend left, below] node {(0,$a_2$)}      (B_)
                      edge [bend right, above] node {($a_2$,0)}     (B_)
                (B_)  edge [bend left, below] node {(0,$a_{n-1}$)}  (B_n)
                      edge [bend right, above] node {($a_{n-1}$,0)} (B_n)
                (B_n) edge [bend left=70, above] node {($a_n$,0)}      (B_1)
                      edge [bend right=70, below] node {(0,$a_n$)}     (B_1)
                (C)   edge [loop above]       node {(0,$\frac{T - 0.5}{n}$)} (C)
                (D) edge [left] node {} (A);
    
    \end{tikzpicture}}
    \caption{Reduction of the partition problem to the threshold problem where Player~0 is restricted to memoryless strategies.}
    \label{fig:partitionproblem}
    \end{minipage}
\end{figure}
Note that Player~0 can guess a memoryless strategy $\sigma_0$ from a vertex $v$ in {\sf NP}, and we consider the bi-weighted graph obtained by fixing the choices of Player~0 according to the strategy $\sigma_0$.
The set of maximal SCCs that are reachable from $v$ in this bi-weighted graph can be computed in linear time.
For each SCC $S$, we need to check that Player~1 cannot achieve $\mpinf_0 \leqslant c \land \mpinf_1 > d - \epsilon$ ($\mpinf_0 \leqslant c \land \mpinf_1 \geqslant d$) to ensure $\asv(\sigma_0) > c$ ($\asvgen(\sigma_0) > c$).
This can be done in polynomial time as has been done in the proof of \cref{ThmNpForASV}.

We prove the {\sf NP-}hardness result by reducing an {\sf NP-complete} problem, i.e., the \emph{partition problem}, to solving the threshold problem in a two-player nonzero-sum mean-payoff game. 
The partition problem is described as follows: Given a set of natural numbers $S = \{ a_1, a_2, a_3, \dotsc a_n \}$, to decide if we can partition $S$ into two sets $R$ and $M$ such that $\sum \limits_{a_i \in R} a_i = \sum \limits_{a_j \in M} a_j$.
W.l.o.g. assume that $\sum \limits_{a_i \in S} a_i = 2T$.

Given an instance of a partition problem, we construct a two-player nonzero-sum mean-payoff game as described in \cref{fig:partitionproblem} such that $\asv_{\mathsf{ML}}(v_0)> \frac{T-1}{n}$ for some $\epsilon < \frac{1}{2n}$ if and only if there exists a solution to the partition problem.

% \track{Consider the two-player nonzero-sum mean-payoff game described in \cref{fig:partitionproblem}. We show here that the set $S = \{a_1, a_2, \dotsc a_n \}$ can be partitioned if and only if Player~0 has a strategy $\sigma_0$ such that $\asv(\sigma_0)(v_0) > \frac{T-1}{n}$, that is $\asv(v_0)> \frac{T-1}{n}$.}

Consider the case, where a solution to the partition problem exists.
In the game in \cref{fig:partitionproblem}, we can construct a Player~0 strategy $\sigma_0$ as follows: from each Player~0 vertex $v_i$, for all $i \in \{1, 2, \dotsc n \}$, Player~0 chooses to play $(a_i, 0)$ (or the top-arrow) if $a_i \in R$ or $(0, a_i)$ (or the bottom-arrow) if $a_i \in M$ and from $v'$, she plays $v' \to v'$.
Thus, we have that from vertex $v_0$, if Player~1 chooses to play $v_0 \to v_1$, both players get a mean-payoff of $\frac{T}{n}$, but if he chooses to play $v_0 \to v'$, Player~1 gets a mean-payoff of $\frac{T-0.5}{n}$ which is not the $\epsilon$-best response for $\epsilon < \frac{1}{2n}$. Thus, we can see that $\asv_{\mathsf{ML}}(v_0) > \frac{T-1}{n}$.

% Let us construct the sets $R$ and $M$ accordingly such that both sets are mutually exclusive and exhaustive and the sum of elements in both subsets are equal. Here we can construct a Player~0 strategy $\sigma_0$ as follows: from each Player~0 vertex $v_i$, for all $i \in \{1, 2, \dotsc n \}$, we choose to play $(a_i, 0)$ (or the top-arrow) if $a_i \in R$ or $(0, a_i)$ (or the bottom-arrow) if $(a_i) \in T$. Thus, we have that from vertex $v_0$, if Player~1 chooses to play $v_0 \to v_1$, both players get a mean-payoff of $\frac{T}{n}$, but if he chooses to play $v_0 \to v'$, Player~1 gets a mean-payoff of $\frac{T-0.5}{n}$ which is not the $\epsilon$-best response for all $\epsilon < \frac{1}{2n}$. Thus, we can see that $\asv(v_0) \geqslant \frac{T}{n}$.}

Now consider the case, where the solution to the partition problem does not exist. 
We note that any memoryless strategy $\sigma_0$ for Player~0 would involve choosing to play from every vertex $v_i$, where $i \in \{1, 2, \dotsc n \}$, either $(a_i, 0)$ (or the top-arrow) or $(0, a_i)$ (or the bottom-arrow). This leads to a cycle from $v_1 \to v_n$, which either gives Player~1 a mean-payoff of $\leqslant \frac{T-1}{n}$ or Player~0 a mean-payoff of $\leqslant \frac{T-1}{n}$. In the first case, the $\epsilon$-best response of Player~1 would be to play $v_0 \to v'$, and thus $\asv_{\mathsf{ML}}(v_0) = 0$. For the second case, although Player~1's $\epsilon$-best response is to play $v_0 \to v_1$, Player~0 would only get a mean-payoff which is $\leqslant \frac{T-1}{n}$. Thus, $\asv_{\mathsf{ML}}(v_0)\leqslant \frac{T-1}{n}$.
Therefore in the two-player nonzero-sum mean-payoff game described in \cref{fig:partitionproblem},  we have that $\asv_{\mathsf{ML}}(v) > \frac{T-1}{n}$ if and only if there exists a solution to the partition problem for the set $S$.

We can construct a similar argument for the case of $\asvgen_{\mathsf{ML}}(v)$ by considering only best-responses (and not $\epsilon$-best responses) to the Player~0 strategy $\sigma_0$. Thus, in the two-player nonzero-sum mean-payoff game described in \cref{fig:partitionproblem},  we have that $\asvgen_{\mathsf{ML}}(v) > \frac{T-1}{n}$ if and only if there exists a solution to the partition problem for the set $S$.
\end{proof}
% \todo[inline]{Should we add the following? ``We also note that the {\sf NP}-hardness holds even when Player~1 is also restricted to using only memoryless strategies." I think may be as a footnote at most.}
% As a corollary of \cref{ThmNPHardMemlessStrat}, we observe that in a mean-payoff game $\mathcal{G}$, the problem of deciding if $\asv_{\mathsf{ML}}(v) > c$ for some $c \in \mathbb{Q}$ lies in {\sf NP}, 
\begin{comment}
Note that Player~0 can guess a memoryless strategy $\sigma_0$ from a vertex $v$ in {\sf NP}, and we consider the bi-weighted graph obtained by fixing the choices of Player~0 according to the strategy $\sigma_0$.
The set of maximal SCCs that are reachable from $v$ in this bi-weighted graph can be computed in linear time.
For each SCC $S$, we need to check that Player~1 cannot achieve $\mpinf_0 \leqslant c \land \mpinf_1 > d - \epsilon$ ($\mpinf_0 \leqslant c \land \mpinf_1 \geqslant d$) to ensure $\asv(\sigma_0) > c$ ($\asvgen(\sigma_0) > c$).
This can be done in polynomial time as has been done in the proof of \cref{ThmNpForASV} (see \cref{app:ThmNpForASV}).
This gives us that the threshold problem is {\sf NP-complete} when Player~0 is restricted to using only memoryless strategies.
\begin{corollary} \label{cor:memoryless}
For all mean-payoff games $\mathcal{G}$, for all vertices $v$ in $\mathcal{G}$, for all $\epsilon > 0$, and for all rationals $c$, the problem of deciding if $\asv_{\mathsf{ML}}(v) > c$ ($\asvgen_{\mathsf{ML}}(v) > c$) is {\sf NP-complete}.
\end{corollary}
% \todo[inline]{Check: NP-membership for this case.}
\end{comment}

\section{Computation of the $\asv$ and the largest $\epsilon$ possible}
  \label{sec:ComputeASV}
Here, we express the $\asv$ as a formula in the theory of reals by adapting a method provided in \cite{FGR20} for $\asvgen$. We then provide a new {\sf EXPTime} algorithm to compute the $\asv$ based on LP which in turn is applicable to $\asvgen$ as well.
% The detailed proofs of the results of this section appear in \cref{app:ComputeASV}.

\subparagraph{Extended mean-payoff game} Given a mean-payoff game $\mathcal{G} = (\mathcal{A},\langle \mpgen_0, \mpgen_1\rangle)$ with $\mathcal{A}=(V, E, \langle V_0, V_1 \rangle, w_0, w_1)$, we construct an extended mean-payoff game $\mathcal{G}^{\mathsf{ext}} = (\mathcal{A}^{\mathsf{ext}},\langle \mpgen_0, \mpgen_1\rangle)$, and whose vertices and edges are defined as follows.
The set of vertices is $V^{\mathsf{ext}} = V \times 2^V$. 
With every history $h$ in $\mathcal{G}$, we associate a vertex in $\mathcal{G}^{\mathsf{ext}}$ which is a pair $(v, P)$, where $v = \last(h)$ and $P$ is the set of the vertices traversed along $h$.
Accordingly the set of edges and the weight functions are respectively defined as $E^{\mathsf{ext}} = \{((v,P),(v', P')) \mid (v,v') \in E \land P' = P \cup \{v'\}\}$ and $w_i^{\mathsf{ext}}((v,P),(v', P')) = w_i(v, v')$, for $i \in \{0,1\}$.
We observe that there exists a bijection between the plays $\pi$ in $\mathcal{G}$ and the plays $\pi^{\mathsf{ext}}$ in $\mathcal{G}^{\mathsf{ext}}$ which start in vertices of the form $(v, \{v\})$, i.e. $\pi^{\mathsf{ext}}$ is mapped to the play $\pi$ in $\mathcal{G}$ that is obtained by erasing the second dimension of its vertices.
% where $V$ denotes the set of vertices in $\mathcal{G}$. 
Note that the second component of the vertices of the play $\pi^{\mathsf{ext}}$ 
% keeps track of the set of vertices visited along $\pi^{\mathsf{ext}}$ and
stabilises into a set of vertices of $\mathcal{G}$ which we denote by $V^{*}(\pi^{\mathsf{ext}})$. 
% The details are formalized in \cref{app:ExtendedMeanPayoffGames}.

To relate the witnesses with the $\asv$ in the game $\mathcal{G}^{\mathsf{ext}}$, we introduce the following proposition. This proposition is an adaptation of Proposition 10 in \cite{FGR20} that is used to compute $\asvgen$.
\begin{proposition}
\label{PropExtGamePlayConverges}
For all mean-payoff games $\mathcal{G}$, the following holds:
\begin{itemize}
    \item Let $\pi^{\mathsf{ext}}$ be an infinite play in the extended mean-payoff game and $\pi$ be its projection on the original mean-payoff game $\mathcal{G}$ (over the first component of each vertex); the following properties hold: 
    \begin{itemize}
        \item For all $i < j$, if $\pi^{\mathsf{ext}}(i) = (v_i, P_i)$ and $\pi^{\mathsf{ext}}(j) = (v_j, P_j)$, then $P_i \subseteq P_j$
        \item $\underline{\mpgen}_i(\pi^{\mathsf{ext}}) = \underline{\mpgen}_i(\pi)$, for $i \in \{0,1\}$.
    \end{itemize}
    \item The unfolding of $\mathcal{G}$ from $v$ and the unfolding of $\mathcal{G}^{\mathsf{ext}}$ from $(v, \{v\})$ are isomorphic and so $\asv(v) = \asv(v, \{v\})$
\end{itemize}
\end{proposition}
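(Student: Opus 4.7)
The plan is to address the three claims in the order stated, each being a fairly direct consequence of the construction of $\mathcal{G}^{\mathsf{ext}}$. First, the monotonicity $P_i \subseteq P_j$ for $i < j$ follows by a one-step induction: every edge $((v,P),(v',P')) \in E^{\mathsf{ext}}$ satisfies $P' = P \cup \{v'\} \supseteq P$ by definition, so the second components form a non-decreasing chain of subsets of $V$ along any play, and transitivity yields the claim.

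Second, for the payoff equality $\underline{\mpgen}_i(\pi^{\mathsf{ext}}) = \underline{\mpgen}_i(\pi)$, I would simply observe that the weight function $w_i^{\mathsf{ext}}((v,P),(v',P')) = w_i(v,v')$ has been defined to copy the weight of the projected edge. Consequently, for every $k$ the partial sum $w_i^{\mathsf{ext}}(\pi^{\mathsf{ext}}_{\leqslant k})$ equals $w_i(\pi_{\leqslant k})$; the two sequences of prefix-averages are therefore identical, and their $\liminf$'s agree. No limit analysis beyond this is needed.

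Third, to establish the isomorphism of unfoldings and thereby $\asv(v) = \asv(v,\{v\})$, I would exhibit a map $\Phi$ from nodes of $T_v(\mathcal{G})$ to nodes of $T_{(v,\{v\})}(\mathcal{G}^{\mathsf{ext}})$ sending a history $h = v_0 v_1 \dots v_n$ (with $v_0 = v$) to $(v_0, \{v_0\})(v_1, \{v_0, v_1\}) \dots (v_n, \{v_0,\dots,v_n\})$. Because the rule $P' = P \cup \{v'\}$ forces the second component to be a deterministic function of the sequence of first components, $\Phi$ is a bijection whose inverse is projection onto the first coordinate, and edge-preservation is immediate from the definition of $E^{\mathsf{ext}}$; ownership is also preserved by letting $(v,P)$ belong to the player who owns $v$ in $\mathcal{A}$. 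Through $\Phi$, strategies of each player in $(\mathcal{G},v)$ correspond bijectively to strategies in $(\mathcal{G}^{\mathsf{ext}},(v,\{v\}))$, outcomes project to outcomes, and by the previous bullet the mean-payoffs along corresponding plays coincide. Hence the sets of $\epsilon$-best responses $\br(\sigma_0)$ match under $\Phi$, the resulting Player~0 payoffs are preserved, and the identity $\asv(v) = \asv(v,\{v\})$ follows by taking $\sup_{\sigma_0}\inf_{\sigma_1 \in \br(\sigma_0)}$ on both sides.

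The only mildly delicate bookkeeping is ensuring that the ownership partition and the correspondence between $\epsilon$-best responses transfer cleanly through $\Phi$, but the proposition is essentially structural and I do not anticipate a real obstacle.
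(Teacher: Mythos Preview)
Your proposal is correct. The paper does not actually give a proof of this proposition: it is stated as ``an adaptation of Proposition 10 in \cite{FGR20}'' and left without argument, so your direct verification from the definition of $\mathcal{G}^{\mathsf{ext}}$ fills in exactly what the paper leaves implicit. The three points you make (monotonicity from $P' = P \cup \{v'\}$, equality of prefix weights from $w_i^{\mathsf{ext}}((v,P),(v',P')) = w_i(v,v')$, and the explicit history-level bijection $\Phi$ between unfoldings) are the natural structural arguments and match what the cited result in \cite{FGR20} would supply.
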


By the first point of the above proposition and since the set of vertices of the mean-payoff game is finite, the second component of any play $\pi^{\mathsf{ext}}$, that keeps track of the set of vertices visited along $\pi^{\mathsf{ext}}$, stabilises into a set of vertices of $\mathcal{G}$ which we denote by $V^{*}(\pi^{\mathsf{ext}})$.

We now characterize $\asv(v)$ with the notion of witness introduced earlier and the decomposition of $\mathcal{G}^{\mathsf{ext}}$ into SCCs.
For a vertex $v$ in $V$, let $\mathsf{SCC}^{\mathsf{ext}}(v)$ be the set of strongly-connected components in $\mathcal{G}^{\mathsf{ext}}$ which are reachable from $(v, \{v\})$.
% This is formalized in the following lemma which is similar to \textbf {Lemma 11} in \cite{FGR20} where it characterizes $\asvgen$ instead of $\asv$ \track{and is proved in the \textbf{\cref{app:LemASVMaxSCCExtGamePlay}}}.
\begin{lemma}
\label{LemASVMaxSCCExtGamePlay}
 For all mean-payoff games $\mathcal{G}$ and for all vertices $v$ 
%  Given a mean-payoff game $\mathcal{G}$, and a vertex $v$ 
 in $\mathcal{G}$, 
%  let $\mathsf{SCC}^{\mathsf{ext}}(v)$ be the set of strongly-connected components in $\mathcal{G}^{\mathsf{ext}}$ which are reachable from $(v, \{v\})$. Then 
 we have
\begin{equation*}
    \begin{split}
         \asv(v) = \max \limits_{S \in \mathsf{SCC}^{\mathsf{ext}}(v)} \sup \{ c \in \mathbb{R}  \mid & \exists \pi^{\mathsf{ext}}: \pi^{\mathsf{ext}} \textit{ is a witness for } \asv(v, \{v\}) > c \\ & \textit{ and } V^{*}(\pi^{\mathsf{ext}}) = S \}
% for two column
    % \begin{split}
    %      \asv(v) &= \max \limits_{S \in \mathsf{SCC}^{\mathsf{ext}}(v)}\\
    %      &\quad 
    %      \begin{split}
    %          \sup \{ c \in \mathbb{R}  & \mid \exists \pi^{\mathsf{ext}}: \pi^{\mathsf{ext}} \textit{ is a witness for }\\
    %      &\quad  \asv(v, \{v\}) > c \textit{ and } V^{*}(\pi^{\mathsf{ext}}) = S \}
    %      \end{split}
    % \end{split}
    \end{split}
\end{equation*}
\end{lemma}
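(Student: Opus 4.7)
The plan is to combine \cref{ThmWitnessASVInfMem}, which equates $\asv(v)>c$ with the existence of a witness, with \cref{PropExtGamePlayConverges}, which lets us transport the problem to the extended game $\mathcal{G}^{\mathsf{ext}}$, and then to partition the space of witnesses according to which SCC of $\mathcal{G}^{\mathsf{ext}}$ they eventually stay in. First I would invoke the second bullet of \cref{PropExtGamePlayConverges} to get $\asv(v)=\asv(v,\{v\})$, together with the isomorphism of the unfoldings, which yields a bijection between plays starting at $v$ in $\mathcal{G}$ and plays starting at $(v,\{v\})$ in $\mathcal{G}^{\mathsf{ext}}$ that preserves $\liminf$ mean-payoff coordinates. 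Because the subgame rooted at $(v',P)$ in $\mathcal{G}^{\mathsf{ext}}$ is isomorphic to the subgame rooted at $v'$ in $\mathcal{G}$ and the ownership partition is inherited, a vertex $(v',P)$ is $(c,d)^{\epsilon}$-bad in $\mathcal{G}^{\mathsf{ext}}$ iff $v'$ is $(c,d)^{\epsilon}$-bad in $\mathcal{G}$; consequently, witnesses in $\mathcal{G}$ from $v$ correspond bijectively to witnesses in $\mathcal{G}^{\mathsf{ext}}$ from $(v,\{v\})$.

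Applying \cref{ThmWitnessASVInfMem} inside the extended game I would then rewrite
\[
\asv(v) \;=\; \asv(v,\{v\}) \;=\; \sup\{c\in\mathbb{R} \mid \exists \pi^{\mathsf{ext}}\text{ witness for } \asv(v,\{v\})>c\}.
\]
Next, by the first bullet of \cref{PropExtGamePlayConverges}, along any infinite play $\pi^{\mathsf{ext}}$ the second components form a non-decreasing chain in the finite power set $2^V$ and thus stabilize to $V^{*}(\pi^{\mathsf{ext}})$ after finitely many steps. From that point onwards, $\pi^{\mathsf{ext}}$ remains within the vertices of $\mathcal{G}^{\mathsf{ext}}$ whose second component equals $V^{*}(\pi^{\mathsf{ext}})$, and the vertices visited infinitely often form an SCC of $\mathcal{G}^{\mathsf{ext}}$ that is reachable from $(v,\{v\})$, i.e.\ an element of $\mathsf{SCC}^{\mathsf{ext}}(v)$; we identify this SCC $S$ with $V^{*}(\pi^{\mathsf{ext}})$ via their shared second component, which justifies the shorthand $V^{*}(\pi^{\mathsf{ext}})=S$. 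Conversely, any play eventually trapped in an SCC $S$ with common second component $P_S$ has $V^{*}(\pi^{\mathsf{ext}})=P_S$.

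Since $\mathsf{SCC}^{\mathsf{ext}}(v)$ is finite, the overall sup splits as a finite union and the outer supremum is attained, giving
\[
\asv(v) \;=\; \max_{S\in\mathsf{SCC}^{\mathsf{ext}}(v)} \sup\{c\in\mathbb{R} \mid \exists \pi^{\mathsf{ext}}\colon \pi^{\mathsf{ext}}\text{ is a witness for } \asv(v,\{v\})>c \text{ and } V^{*}(\pi^{\mathsf{ext}})=S\},
\]
which is the claimed identity. The most delicate step is the faithful transfer of the witness structure between $\mathcal{G}$ and $\mathcal{G}^{\mathsf{ext}}$: both the mean-payoff coordinates $(c',d)$ and the clause requiring $\pi^{\mathsf{ext}}$ to avoid $(c,d)^{\epsilon}$-bad vertices must survive the passage to the extended game. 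Both preservations rely on the fact that $\mathcal{G}^{\mathsf{ext}}$ copies the edge set, ownership partition, and both weight functions of $\mathcal{G}$ verbatim through its projection, so the strategic content of every subgame is unchanged.
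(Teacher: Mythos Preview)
Your proposal is correct and follows essentially the same route as the paper's proof: rewrite $\asv(v)$ as $\sup\{c\mid \asv(v)>c\}$, invoke \cref{ThmWitnessASVInfMem} to replace the threshold condition by the existence of a witness, use the second bullet of \cref{PropExtGamePlayConverges} to pass to $\mathcal{G}^{\mathsf{ext}}$, and then use the first bullet (stabilization of the second component) to split the supremum over the finitely many SCCs in $\mathsf{SCC}^{\mathsf{ext}}(v)$, turning the outer $\sup$ into a $\max$. Your additional discussion of why the $(c,d)^{\epsilon}$-badness predicate transfers verbatim between $\mathcal{G}$ and $\mathcal{G}^{\mathsf{ext}}$ is a welcome elaboration of what the paper compresses into a citation of \cref{PropExtGamePlayConverges}.
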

% \subsection{Proof of \cref{LemASVMaxSCCExtGamePlay}}
% \label[appendix]{app:LemASVMaxSCCExtGamePlay}
\begin{proof}
First, we note the following sequence of inequalities:
\begin{equation*}
    \begin{split}
        \asv(v) &= \sup \{ c \in \mathbb{R} \mid \asv(v) \geqslant c \} \\
        &= \sup \{ c \in \mathbb{R} \mid \asv(v) > c \} \\
        &= \sup \{ c \in \mathbb{R} \mid \exists \pi: \pi \textit{ is a witness for } \asv(v) > c \} \\
        &= \sup \{ c \in \mathbb{R} \mid \exists \pi^{\mathsf{ext}}: \pi^{\mathsf{ext}} \textit{ is a witness for } \asv(v, \{v\}) > c \}\\
        &= \max \limits_{S \in \mathsf{SCC}^{\mathsf{ext}}(v)} \sup \{ c \in \mathbb{R} \mid \exists \pi^{\mathsf{ext}}: \pi^{\mathsf{ext}} \textit{ is a witness for } \asv(v, \{v\}) > c \textit{ and } V^{*}(\pi^{\mathsf{ext}}) = S \}
    \end{split}
\end{equation*}

The first two equalities follow from the definition of the supremum and that $\asv \in \mathbb{R}$. The third equality follows from \cref{ThmWitnessASVInfMem} that guarantees the existence of witnesses for strict inequalities. The fourth equality is due to the second point in \cref{PropExtGamePlayConverges}. The last equality is a consequence of first point in \cref{PropExtGamePlayConverges}.
\end{proof}

By definition of $\mathcal{G}^{\mathsf{ext}}$, for every $\mathsf{SCC}$ $S$ of $\mathcal{G}^{\mathsf{ext}}$, there exists a set $V^{*}(S)$ of vertices of $\mathcal{G}$ such that every vertex of $S$ is of the form $(v', V^*(S))$, where $v'$ is a vertex in $\mathcal{G}$.
% The set of bad thresholds for $S$ is then defined as
Now, we define $\Lambda_S^{\mathsf{ext}} = \bigcup_{v \in V^{*}(S)} \Lambda^{\epsilon}(v)$ as the set of $(c,d)$ such that Player~1 can ensure $v \vDash \ll 1 \gg \mpinf_0 \leqslant c \land \mpinf_1 > d-\epsilon$ from some vertex $v \in S$.
% Applying \cref{LemPsiEpsToThrOfReal}}, we can now construct a formula $\Psi^{\epsilon}_{S}(x,y)$ which symbolically encodes the set $\Lambda^{\mathsf{ext}}$.
The set $\Lambda_S^{\mathsf{ext}}$ can be represented by a formula $\Psi^{\epsilon}_{S}(x,y)$ in the first order theory of reals with addition, $\langle \mathbb{R}, +, < \rangle$, with two free variables.
Before we begin to prove the above, we refer to the following lemma which has been established in \cite{FGR20}.

\begin{lemma}
\label{LemPsiToThrOfReal}
 \emph{[Lemma 9 in \cite{FGR20}]}
 For all mean-payoff games $\mathcal{G}$, for all vertices $v$ in $\mathcal{G}$, and for all rationals $c, d$, we can effectively construct a formula $\Psi_v(x, y)$ of $\langle \mathbb{R}, +, < \rangle$ with two free variables such that $(c,d) \in \Lambda(v)$ if and only if the formula $\Psi_v(x, y)[x/c, y/d]$ is true.
\end{lemma}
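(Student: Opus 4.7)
The plan is to characterize the set $\Lambda(v)$ combinatorially in terms of sub-arenas that Player~1 can force the play into, and then to exploit \cref{lemCHToPlay} to express the resulting achievability condition as a finite Boolean combination of linear inequalities in the free variables $x, y$.

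First, I would establish the following equivalence: $(c,d) \in \Lambda(v)$ if and only if there exists a pair $(T, E_T)$ with $T \subseteq V$ and $E_T \subseteq E \cap (T \times T)$ satisfying (i) every vertex $u \in T$ is reachable from $v$ by a Player~1 strategy that also keeps the play in $T$, (ii) at every Player~1 vertex $u \in V_1 \cap T$ at least one outgoing edge of $u$ lies in $E_T$, (iii) at every Player~0 vertex $u \in V_0 \cap T$ all outgoing edges of $u$ in $\mathcal{G}$ lie in $E_T$ (so Player~0 cannot escape $T$), (iv) the directed graph $(T, E_T)$ is strongly connected, and (v) there exists $(c', d') \in \Fmin{\CH{\mathbb{C}(T)}}$ with $c' \leqslant c$ and $d' \geqslant d$. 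The $(\Leftarrow)$ direction is direct: Player~1 first drives the play to $T$ along the witnessing strategy for (i), and then plays inside $T$ a strategy realising $(c', d')$, whose existence is guaranteed by \cref{lemCHToPlay}(2). Against any Player~0 response, condition (iii) ensures that the play stays in $T$, so the payoff objective is met.

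Second, conditions (i)--(iv) are purely combinatorial properties of the underlying arena and do not depend on $c$ or $d$, so the (at most exponentially many) candidates $(T, E_T)$ can be enumerated and filtered upfront; conditions (i)--(iii) reduce to solving standard two-player reachability/safety games on the graph. Condition (v) is then, by \cref{lemCHToPlay}(3), expressible as a conjunction $\Phi_T(x', y')$ of $\mathcal{O}(m^2)$ linear inequalities over two fresh variables $x', y'$, with $m = \mathcal{O}(W \cdot |V|)$. I would then set
\[
\Psi_v(x, y) \;\equiv\; \bigvee_{(T, E_T) \text{ valid}} \; \exists x' \, \exists y' \, \bigl( \Phi_T(x', y') \land x' \leqslant x \land y' \geqslant y \bigr),
\]
which is a bona fide formula of $\langle \mathbb{R}, +, < \rangle$ since the disjunction ranges over a finite set of sub-arenas computed from $\mathcal{G}$.

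The main obstacle is the $(\Rightarrow)$ direction of the sub-arena characterization: given that Player~1 wins $\mpinf_0 \leqslant c \land \mpinf_1 \geqslant d$ from $v$, one must exhibit a candidate $(T, E_T)$ satisfying (i)--(v). I would fix a winning Player~1 strategy $\sigma_1$, consider an outcome $\pi$ compatible with $\sigma_1$ against a Player~0 strategy that is worst-case for $\sigma_1$, let $T = \inf(\pi)$, and let $E_T$ collect the edges used infinitely often; determinacy of multi-dimensional mean-payoff games together with a Carath\'eodory-style decomposition of the limit mean-payoffs into simple-cycle contributions (in the spirit of the arguments used for \cref{ThmWitnessASVInfMem}) then yields (v), while the fact that Player~0 plays adversarially forces (ii) and (iii) to hold on $T$. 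Once this combinatorial lemma is in place, the assembly of $\Psi_v$ above is routine, and effectiveness follows because each ingredient (enumerating $(T, E_T)$, checking (i)--(iv), producing $\Phi_T$) is computable from $\mathcal{G}$.
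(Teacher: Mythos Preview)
The paper does not prove this lemma; it merely cites it from \cite{FGR20} (and the underlying machinery is that of \cite{BR15}). So there is no ``paper's own proof'' to compare against, but your proposal contains a genuine error in the combinatorial characterization itself, not just in the level of detail.

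Your equivalence is false in both directions. For $(\Leftarrow)$: \cref{lemCHToPlay}(2) only guarantees that some \emph{play} in $T$ realises $(c',d')$, not that Player~1 has a \emph{strategy} realising it against all Player~0 responses. Condition~(iii) traps Player~0 inside $T$ but does not remove her choices there; if, say, $T$ consists of Player~0 vertices whose edges form both a ``good'' cycle and a ``bad'' cycle, Player~0 can pick the bad one and defeat the objective even though the good point sits in $\Fmin{\CH{\mathbb{C}(T)}}$. For $(\Rightarrow)$: take $v_0\in V_0$ with edges to $v_1$ and $v_2$, each carrying a self-loop with weight $(0,1)$. Then $(0,1)\in\Lambda(v_0)$ trivially, yet no single strongly connected $(T,E_T)$ satisfies your (i)--(iv): Player~1 cannot force reaching $\{v_1\}$ or $\{v_2\}$, and $\{v_0,v_1,v_2\}$ is not strongly connected. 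The flaw in your argument for~(iii) is that the particular adversarial Player~0 play you fix uses only \emph{some} of Player~0's edges; the remaining edges out of $T$ need not land in $E_T$. The correct characterization, as used in \cite{BR15}, has the opposite quantifier structure: $(c,d)\in\Lambda(v)$ iff for \emph{every} memoryless Player~0 strategy $\sigma_0$ there \emph{exists} an SCC $S$ reachable from $v$ in the $\sigma_0$-restricted graph with a suitable point in $\CH{\CS}$. This yields a formula of the shape $\bigwedge_{\sigma_0}\bigvee_{S}\exists x'\,y'(\ldots)$ rather than the single existential over sub-arenas you propose.
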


Using the above lemma, we can now compute an effective representation of the infinite set of pairs $\Lambda^{\epsilon}(v)$ for each vertex $v$ of the mean-payoff game.
This is stated in the following lemma.

\begin{lemma}
\label{LemPsiEpsToThrOfReal}
 For all mean-payoff games $\mathcal{G}$, for all vertices $v$ in $\mathcal{G}$, for all $\epsilon > 0$, and for all rationals $c, d$, we can effectively  construct a formula $\Psi_v^{\epsilon}(x, y)$ of $\langle \mathbb{R}, +, < \rangle$ with two free variables such that $(c,d) \in \Lambda^{\epsilon}(v)$ if and only if the formula $\Psi_v^{\epsilon}(x, y)[x/c, y/d]$ is true.
\end{lemma}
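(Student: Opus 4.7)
The plan is to obtain $\Psi_v^{\epsilon}$ from the previously-constructed formula $\Psi_v$ of \cref{LemPsiToThrOfReal} by adding one existential quantifier, with the translation between the strict inequality $\mpinf_1 > d - \epsilon$ and the non-strict inequality $\mpinf_1 \geqslant d''$ supplied by \cref{ConjGrtIsGrtEq}.

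First, I would instantiate \cref{ConjGrtIsGrtEq} with the rational $d - \epsilon$ in place of $d$. This yields
\[
v \vDash \ll 1 \gg \mpinf_0 \leqslant c \,\land\, \mpinf_1 > d - \epsilon \quad\iff\quad \exists d'' > d - \epsilon \ :\ v \vDash \ll 1 \gg \mpinf_0 \leqslant c \,\land\, \mpinf_1 \geqslant d'',
\]
and hence the set-level equivalence
\[
(c,d) \in \Lambda^{\epsilon}(v) \quad\iff\quad \exists d'' > d - \epsilon \ :\ (c, d'') \in \Lambda(v).
\]

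Second, I would use \cref{LemPsiToThrOfReal} to obtain the formula $\Psi_v(x,y)$ of $\langle \mathbb{R}, +, < \rangle$ defining $\Lambda(v)$. Since $\epsilon$ is a fixed positive rational, the term $y - \epsilon$ is expressible in $\langle \mathbb{R}, +, < \rangle$ (rationals being first-order definable by sums of unit constants, or encoded directly in the signature), so I can define
\[
\Psi_v^{\epsilon}(x, y) \,:=\, \exists z \cdot \bigl(\, z > y - \epsilon \,\land\, \Psi_v(x, z) \,\bigr).
\]
The correctness is then immediate: for any rationals $c, d$, the formula $\Psi_v^{\epsilon}(x, y)[x/c, y/d]$ holds iff there exists $d'' > d - \epsilon$ with $\Psi_v(x, y)[x/c, y/d'']$ true, iff $(c, d'') \in \Lambda(v)$ for some such $d''$, iff $(c,d) \in \Lambda^{\epsilon}(v)$ by the characterization above.

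There is no genuine obstacle here: the work has been done in the two earlier lemmas, and the only conceptual point is that \cref{ConjGrtIsGrtEq} lets us replace a strict mean-payoff threshold by an existentially quantified non-strict one, so that the pre-existing definability result for $\Lambda(v)$ can be lifted to $\Lambda^{\epsilon}(v)$ at the cost of a single extra quantifier.
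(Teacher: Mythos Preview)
Your proof is correct and essentially identical to the paper's. The paper defines $\Psi_v^{\epsilon}(x, y) \equiv \exists e > 0 \cdot \Psi_v(x, y - \epsilon + e)$, which is your formula after the change of variable $z = y - \epsilon + e$; you are in fact more explicit than the paper in citing \cref{ConjGrtIsGrtEq} to justify the passage from the strict threshold $\mpinf_1 > d - \epsilon$ to an existentially quantified non-strict one.
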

\begin{proof}
From the definition of $\Lambda(v)$ from \cite{FGR20}, we know that a pair of real values $(c,d) \in \Lambda(v)$ if $v \vDash \ll 1 \gg \underline{\mpgen}_0 \leqslant c \land \underline{\mpgen}_1 \geqslant d$.
We now recall from the definition of $\Lambda^{\epsilon}(v)$ that $(c,d) \in \Lambda^{\epsilon}(v)$ if $v \vDash \ll 1 \gg \underline{\mpgen}_0 \leqslant c \land \underline{\mpgen}_1 > d-\epsilon$.
From this, we can see that
\begin{equation*}
    \Psi_v^{\epsilon}(x, y) \equiv \exists e > 0 \cdot \Psi_v(x, y - \epsilon + e)
\end{equation*}
\end{proof}

We can now state the following theorem about the computability of $\asv(v)$:
% This is expressed in the following theorem and established in its proof:

\begin{theorem}
\label{ThmComputeASV}
For all mean-payoff games $\mathcal{G}$, for all vertices $v$ in $\mathcal{G}$ and for all $\epsilon > 0$, the $\asv(v)$ can be effectively expressed by a formula in $\langle \mathbb{R}, +, < \rangle$, and can be computed from this formula.
%Furthermore, the formula can be effectively transformed into exponentially many linear programs which establish membership in {\sf EXPTime}.
\end{theorem}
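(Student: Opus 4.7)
My plan is to combine three ingredients already available in the paper: the characterization of $\asv(v)$ as a maximum over reachable SCCs of the extended game given in Lemma~\ref{LemASVMaxSCCExtGamePlay}, the linear-arithmetic description of the achievable mean-payoff coordinates $\Fmin{\CH{\CS}}$ from Lemma~\ref{lemCHToPlay}, and the formula $\Psi_{v'}^{\epsilon}(x,y)$ characterizing $(c,d)^{\epsilon}$-bad vertices provided by Lemma~\ref{LemPsiEpsToThrOfReal}.

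The construction I envisage proceeds as follows. First, I build the extended arena $\mathcal{G}^{\mathsf{ext}}$ and enumerate its (finitely many) maximal SCCs $S$ reachable from $(v,\{v\})$; recall that with each such $S$ is canonically associated a set $V^{*}(S) \subseteq V$ of vertices of $\mathcal{G}$. For each such $S$, using Lemma~\ref{lemCHToPlay}(3), I express $\Fmin{\CH{\mathbb{C}(V^{*}(S))}}$ as a conjunction $\Xi_S(x',y)$ of linear inequalities over $\langle \mathbb{R}, +, < \rangle$ with two free variables, whose solutions are exactly the pairs $(\mpinf_0(\pi), \mpinf_1(\pi))$ achievable by plays that stay ultimately inside $S$. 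Then, for each vertex $v' \in V^{*}(S)$, I invoke Lemma~\ref{LemPsiEpsToThrOfReal} to obtain $\Psi_{v'}^{\epsilon}(x,y)$ describing $\Lambda^{\epsilon}(v')$. Combining these following Lemma~\ref{LemASVMaxSCCExtGamePlay} and the definition of a witness, I form
\begin{equation*}
\Theta_S^{\epsilon}(c) \;\equiv\; \exists x', y \,:\, \Xi_S(x', y) \,\land\, x' > c \,\land\, \bigwedge_{v' \in V^{*}(S)} \neg\,\Psi_{v'}^{\epsilon}(c, y),
\end{equation*}
which holds exactly when there is a $(x',y)^{\epsilon}$-witness for $\asv(v) > c$ whose limit SCC projects to $V^{*}(S)$. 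Taking the disjunction over the finitely many reachable SCCs gives a formula $\Phi_v^{\epsilon}(c) \equiv \bigvee_S \Theta_S^{\epsilon}(c)$ with one free variable that captures $\asv(v) > c$ by Lemma~\ref{LemASVMaxSCCExtGamePlay}.

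It then suffices to characterize $\asv(v)$ itself as the supremum of the definable set $\{c \mid \Phi_v^{\epsilon}(c)\}$. Since $\langle \mathbb{R}, +, < \rangle$ is closed under boolean combinations and quantification, the one-variable formula
\begin{equation*}
\Omega_v^{\epsilon}(z) \;\equiv\; \bigl(\forall c\,:\, c < z \Rightarrow \Phi_v^{\epsilon}(c)\bigr) \,\land\, \bigl(\forall c\,:\, c > z \Rightarrow \neg \Phi_v^{\epsilon}(c)\bigr)
\end{equation*}
is the desired formula whose unique solution is $\asv(v)$. To extract the numerical value, I apply quantifier elimination (Ferrante--Rackoff / Cooper style) to $\Omega_v^{\epsilon}(z)$, producing a quantifier-free formula in $z$ whose unique solution can be read off.

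The main obstacle I expect is mostly bookkeeping rather than conceptual: carefully accounting for the $\epsilon$-strictness in the definition of $\br$ versus the weak inequalities of $\Lambda^{\epsilon}$, and making sure that the existential over $(x', y)$ inside $\Theta_S^{\epsilon}$ correctly captures \emph{achievability} of a witness (the role played by Lemma~\ref{lemCHToPlay}(2)) while simultaneously forbidding every vertex of $V^{*}(S)$ from being $(c,y)^{\epsilon}$-bad (the role played by Lemma~\ref{LemPsiEpsToThrOfReal}). Once the formula $\Theta_S^{\epsilon}$ is set up correctly, effectiveness of the theorem reduces to the well-known decidability of $\langle \mathbb{R}, +, < \rangle$.
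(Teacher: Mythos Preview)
Your proposal is correct and follows essentially the same route as the paper: the paper's per-SCC formula $\rho^S_v(c) \equiv \exists x,y\,.\, x>c \land \Phi_S(x,y) \land \neg\Psi_S^{\epsilon}(c,y)$ is exactly your $\Theta_S^{\epsilon}(c)$ (since $\neg\Psi_S^{\epsilon} \equiv \bigwedge_{v'\in V^*(S)}\neg\Psi_{v'}^{\epsilon}$), and the paper's supremum formula $\rho^S_{\max,v}(z)$ plays the same role as your $\Omega_v^{\epsilon}(z)$. One small correction: the achievable-payoff set should be $\Fmin{\CH{\mathbb{C}(S)}}$ for the extended-game SCC $S$ (as in the paper's $\Phi_S$), not $\Fmin{\CH{\mathbb{C}(V^{*}(S))}}$, since $V^{*}(S)$ records \emph{all} vertices ever visited along the play and need not itself be strongly connected in $\mathcal{G}$.
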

% \subsection{Proof of \cref{ThmComputeASV}}
% \label[appendix]{app:ThmComputeASV}
% \track{We prove \cref{ThmComputeASV} in two parts.
% We first express the $\asv$ as a formula in $\langle \mathbb{R}, +, < \rangle$.
% We then express this formula as a set of exponentially many linear programs and thus establish that $\asv$ can be computed in {\sf EXPTime}.}

% \subsubsection*{Expression of $\asv$ in $\langle \mathbb{R}, +, < \rangle$}

% In \cite{FGR20}, the set $\Lambda(v)$ has been defined as $\Lambda(v) = \{(c,d) \in \mathbb{R}^2 \mid v \vDash \ll 1 \gg \underline{\mpgen}_0 \leqslant c \land \underline{\mpgen}_1 \geqslant d \}$.

% \track{Now, we are all set to prove the first part of \cref{ThmComputeASV}}

\begin{proof} 
% [Proof of expression of $\asv$ in $\langle \mathbb{R}, +, < \rangle$]
\noindent To prove this theorem, we build a formula in $\langle \mathbb{R}, +, < \rangle$ that is true iff $\asv(v) = z$. 
Recall from \cref{LemASVMaxSCCExtGamePlay} 
% to reduce this to the construction of a formula that expresses the existence of witness for $\asv(v)$ from $(v, \{v\})$:
that
 \begin{equation*}
    \asv(v) = \max \limits_{S \in \mathsf{SCC}^{\mathsf{ext}}(v)} \sup \{c \in \mathbb{R} \mid \exists \pi^{\mathsf{ext}}: \pi^{\mathsf{ext}} \textit{ is a witness for } \asv(v, \{v\}) > c \textit{ and } V^{*}(\pi^{\mathsf{ext}}) = S \}
 \end{equation*}
 
\noindent Since it is easy to express $\max \limits_{S \in \mathsf{SCC}^{\mathsf{ext}}(v)}$ in $\langle \mathbb{R}, +, < \rangle$, we concentrate on one $\mathsf{SCC}$ $S$ reachable from $(v, \{v\})$, and we show how to express
 \begin{equation*}
    \begin{split}
        \sup \{ c \in \mathbb{R} & \mid \exists \pi^{\mathsf{ext}}: \pi^{\mathsf{ext}} \textit{ is a witness for } \asv(v, \{v\}) > c \textit{ and } V^{*}(\pi^{\mathsf{ext}}) = S \}
     \end{split}
 \end{equation*}
in $\langle \mathbb{R}, +, < \rangle$.

\noindent Such a value of $c$ can be encoded by the following formula
% This is done by the following formula:
\begin{equation*}
    % \rho^S_{v_0}(z) \equiv \forall e > 0 \cdot \exists x,y \cdot x > z - e \land \Phi_S(x,y) \land \neg \Psi_S^{\epsilon}(z-e, y)
    \rho^S_{v}(c) \equiv \exists x,y \cdot x > c \land \Phi_S(x,y) \land \neg \Psi_S^{\epsilon}(c, y)
\end{equation*}
\noindent where $\Phi_S(x,y)$ is the symbolic encoding of $\Fmin{\CH{\CS}}$ in $\langle \mathbb{R}, +, < \rangle$ as defined in \cref{lemCHToPlay}. 
This states that the pair of values $(x,y)$ are the mean-payoff values realisable by some play in $S$. 
By \cref{LemPsiEpsToThrOfReal}, the formula $\neg \Psi_S^{\epsilon}(c, y)$ expresses that the play does not cross a $(c, y)^{\epsilon}$-bad vertex. So the conjunction $\exists x,y \cdot x > c \land \Phi_S(x,y) \land \neg \Psi_S^{\epsilon}(c, y)$ establishes the existence of a witness with mean-payoff values $(x,y)$ for the threshold $c$, and hence satisfying this formula implies that $\asv(v) > c$.
% The quantification over $e > 0$ expresses that $z$ is the sup over all the thresholds $z-e$ that have a witness.
Now we consider the formula
\begin{equation*}
    % \rho^S_{\max,v_0}(z) \equiv \forall x \cdot \rho^S_{v_0}(z) \land \rho^S_{v_0}(x) \implies z \geqslant x
    \rho^S_{\max,v}(z) \equiv \forall e > 0 \cdot \rho^S_{v}(z-e) \land \forall c \cdot \rho^S_{v}(c) \implies c < z
\end{equation*}
which is satisfied by a value that is the supremum over the set of values $c$ such that $c$ satisfies the formula $\rho^S_{v}$, and hence the formula $\rho^S_{\max,v}(z)$ expresses
\begin{equation*}
    \begin{split}
        \sup \{ c \in \mathbb{R} & \mid \exists \pi^{\mathsf{ext}}: \pi^{\mathsf{ext}} \textit{ is a witness for } \asv(v, \{v\}) > c \textit{ and } V^{*}(\pi^{\mathsf{ext}}) = S \}
    \end{split}
\end{equation*}
From the formula $\rho^S_{\max,v}$, we can compute the $\asv(v)$ by quantifier elimination in
\begin{equation*}
    \max \limits_{S \in \mathsf{SCC}^{\mathsf{ext}}(v)} \exists z \cdot \rho^S_{\max,v}(z)
\end{equation*}
\noindent and obtain the unique value of $z$ that makes this formula true, and equals $\asv(v)$.
\end{proof}
% We provide another example that is little more complex than the one in \cref{Ex:CompASVTOR} to illustrate the computation of $\asv$.
% Note that in this example, we consider small $\epsilon$ values, specifically, we consider $\epsilon$ values which are smaller than 1.
\begin{example}
\label{Ex:CompASVTOR_new}
We illustrate the computation of $\asv$ with an example. 
Consider the mean-payoff game $\mathcal{G}$ depicted in \cref{fig:ach_example_new} and its extension $\mathcal{G}^{\mathsf{ext}}$ as shown in \cref{fig:extended_game_graph}.
\begin{figure}
    \centering
    \begin{minipage}[t]{.35\textwidth}
    \centering
        \scalebox{0.8}{
    \begin{tikzpicture}[->,>=stealth',shorten >=1pt,auto,node distance=2cm,
                        semithick, squarednode/.style={rectangle, draw=blue!60, fill=blue!5, very thick, minimum size=7mm}]
      \tikzstyle{every state}=[fill=white,draw=black,text=black,minimum size=8mm]
    
      \node[squarednode]   (A)                    {$v_0$};
      \node[state, draw=red!60, fill=red!5]         (B) [left of=A] {$v_1$};
      \node[state, draw=red!60, fill=red!5]         (C) [right of=A] {$v_2$};
      \node[draw=none, fill=none, minimum size=0cm, node distance = 1.2cm]         (D) [above of=A] {$start$};
      \path (A) edge [bend left, below] node {(1,1)} (B)
                edge              node {(0,1)} (C)
            (B) edge [loop above] node {(0,2)} (B)
                edge [bend left, above] node {(1,1)} (A)
            (C) edge [loop above] node {(0,1)} (C)
            (D) edge [left] node {} (A);
    \end{tikzpicture}}
    \caption{Example to calculate $\asv(v)$}
    \label{fig:ach_example_new}
    \end{minipage}
    \quad
    \begin{minipage}[t]{.55\textwidth}
    \centering
    \scalebox{0.8}{
    \begin{tikzpicture}[->,>=stealth',shorten >=1pt,auto,node distance=2cm,
                        semithick, squarednode/.style={rectangle, draw=blue!60, fill=blue!5, very thick, minimum size=7mm}]
      \tikzstyle{every state}=[text=black,minimum size=8mm]
      \node[squarednode]                    (A)                {$v_{0^2}'$};
      \node[state, draw=red!60, fill=red!5] (B)  [left of=A]   {$v_1'$};
      \node[squarednode]                    (A_) [below of=B]  {$v_{0^1}'$};
      \node[state, draw=red!60, fill=red!5] (C)  [right of=A]  {$v_{2^2}'$};
      \node[state, draw=red!60, fill=red!5] (C_) [right of=A_] {$v_{2^1}'$};
      \node[draw=none, fill=none, minimum size=0cm, node distance = 1.2cm]         (D) [left of=A_] {$start$};
      \path 
      (A)  edge [bend left, below] node {(1,1)} (B)
          edge                    node {(0,1)} (C)
      (A_) edge [, left]           node {(1,1)} (B)
          edge                    node {(0,1)} (C_)
      (B)  edge [loop above]       node {(0,2)} (B)
          edge [bend left, above] node {(1,1)} (A)
      (C)  edge [loop above]       node {(0,1)} (C)
      (C_) edge [loop below]       node {(0,1)} (C_)
      (D)  edge [left]             node {}      (A_);
    \end{tikzpicture}}
    \caption{Extended Mean-Payoff Game where $v_{0^1}' = (v_0, \{v_0\})$, $v_{0^2}' = (v_0, \{v_0, v_1\})$, $v_1' = (v_1, \{v_0, v_1\})$, $v_{2^1}' = (v_1, \{v_0, v_2\})$, and $v_{2^2}' = (v_2, \{v_0, v_1, v_2\})$}
    \label{fig:extended_game_graph}
    \end{minipage}
\end{figure}

Note that in $\mathcal{G}^{\mathsf{ext}}$ there exist three $\mathsf{SCC}$s which are $S_1 = \{v_{0^2}', v_1'\}, S_2 = \{v_{2^1}'\},$ and $S_3 = \{v_{2^2}'\}$. The $\mathsf{SCC}$s $S_2$ and $S_3$ are similar, and thus $\rho^{S_2}_{\max,v_0}(z)$ and $\rho^{S_3}_{\max,v_0}(z)$ would be equivalent.
We start with $\mathsf{SCC}$ $S_1$ that contains two cycles $v_1' \to v_1'$ and $v_{0^2}' \to v_1', v_1' \to v_{0^2}'$, and $\mathsf{SCC}$ $S_2$ contains one cycle $v_{2^1}' \to v_{2^1}'$.
Since $S_3$ is similar to $S_2$, we consider only $S_2$ in our example.
Thus, the set $\Fmin{\CH{\mathbb{C}(S_1)}}$ is represented by the Cartesian points within the triangle represented by $(0,2), (1,1)$ and $(0,1)$ \footnote{Note that the coordinate $(0,1)$ is obtained as the pointwise minimum over the two coordinates separately.} and $\Fmin{\CH{\mathbb{C}(S_2)}} = \{(0,1)\}$.
Thus, we get that $\Phi_{S_1}(x,y) \equiv (x \geqslant 0 \land x \leqslant 1) \land (y \geqslant 1 \land y \leqslant 2) \land (x+y) \leqslant 2$ and $\Phi_{S_2}(x,y) \equiv x = 0 \land y = 1$.
Now, we calculate $\Lambda^{\epsilon}(v_{0^2}')$, $\Lambda^{\epsilon}(v_{2^1}')$ and $\Lambda^{\epsilon}(v_1')$ for some value of $\epsilon$ less tan $1$.
We note that the vertex $v_1'$ is not $(0,2+\epsilon -\delta)^{\epsilon}$-bad, for all $0 < \delta < 1$, as Player~0 can always choose the edge $(v_1', v_{0^2}')$ from $v_1'$, thus giving Player~1 a mean-payoff of $1$.
\begin{figure}[ht]
    \begin{minipage}[t]{.25\textwidth}
    \centering
    \scalebox{1}{
    \begin{tikzpicture}
        \tkzInit[xmax=1,ymax=2]
        \tkzDefPoints{0/0/O,0/2/I,1/1/J,0/1/K}
        \tkzDrawXY[noticks,>=latex]
        \tkzLabelPoint[left](I){\scriptsize $P_1 (0,2)$}
        \tkzLabelPoint[right](J){\scriptsize $P_2 (1,1)$}
        \tkzLabelPoint[left](K){\scriptsize $P_3 (0,1)$}
        \tkzDrawPoints[shape=cross](I,J,K)
        \tkzFillPolygon[color=red!40, opacity=0.2](I,J,K)
        \tkzDrawPolySeg[color=red](I,J,K)
    \end{tikzpicture}}
    \caption{The red triangle represents the set of points in $\Phi_{S_1}$.}
    \label{fig:phi_points}
    \end{minipage}
    \quad
    \begin{minipage}[t]{.23\textwidth}
    \centering
    \scalebox{1}{\begin{tikzpicture}
        \tkzInit[xmax=1,ymax=2]
        \tkzDefPoints{0/0/O,0/1.3/I,1.3/1.3/J,1.3/0/K}
        \tkzDrawXY[noticks,>=latex]
        \tkzLabelPoint[left](I){\scriptsize $(0, 1+\epsilon)$}
        \tkzDrawPoints[shape=cross](I)
        \tkzFillPolygon[color=blue!40, opacity=0.2](O,I,J,K)
        \tkzDrawSegments[color = gray,->,
style=dashed](I,J)
    \end{tikzpicture}}
    \caption{The blue region under and excluding the line $y = (1-\epsilon)$ represents the set of points in $\Psi^{\epsilon}_{S_1}$ and $\Psi^{\epsilon}_{S_2}$.}
    \label{fig:psi_points}
    \end{minipage}
    \quad
    \begin{minipage}[t]{.45\textwidth}
    \centering
    \scalebox{1}{\begin{tikzpicture}
        \tkzInit[xmax=1,ymax=2]
        \tkzDefPoints{0/0/O,0/2/I,1/1/J,0/1/K}
        \tkzDefPoints{0/1.3/A,1.3/1.3/B,1.3/0/C, 0.7/1.3/D}
        \tkzDrawXY[noticks,>=latex]
        \tkzLabelPoint[left](I){\scriptsize $P_1 (0,2)$}
        \tkzLabelPoint[right](J){\scriptsize $P_2 (1,1)$}
        \tkzLabelPoint[left](K){\scriptsize $P_3 (0,1)$}
        \tkzLabelPoint[above right](D){\scriptsize $A (1-\epsilon,1+\epsilon)$}
        \tkzDrawPoints[shape=cross](I,J,K,D)
        \tkzFillPolygon[color=red!40, opacity=0.2](I,J,K)
        \tkzDrawPolySeg[color=red](I,J,K)
        \tkzLabelPoint[left](A){\scriptsize $(0, 1+\epsilon)$}
        \tkzDrawPoints[shape=cross](A)
        \tkzFillPolygon[color=blue!40, opacity=0.2](O,A,B,C)
        \tkzDrawSegments[color = gray,->,
style=dashed](A,B)
    \end{tikzpicture}}
    \vspace{-.1cm}
    \caption{The formula $\rho^{S_1}(c)$ is represented by the points in $\Phi_{S_1}$ and not in $\Psi^{\epsilon}_{S_1}$, i.e., the points in the triangle which are not strictly below the line $y = (1-\epsilon)$. Here, the max $c$ value is represented by point $A$.}
    \label{fig:rho_formula}
    \end{minipage}
    % \vspace{-.2cm}
\end{figure}
Additionally, the vertex $v_{0^2}'$ is both $(0,1+\epsilon-\delta)^{\epsilon}$-bad, for all $\delta > 0$, since Player~1 can choose the edge $(v_{0^2}', v_{2^2}')$ from $v_{0^2}'$, and $(1,1+\epsilon-\delta)^{\epsilon}$-bad, for all $\delta > 0$, since Player~1 can choose the edge $(v_{0^2}', v_{1}')$ from $v_{0^2}'$.
Thus, we get that $\Lambda^{\epsilon}(v_1') = \Lambda^{\epsilon}(v_{0^2}') = \{(c,d) \mid (c \geqslant 1 \land d < 1 + \epsilon)\} \bigcup \{(c,d) \mid (c \geqslant 0 \land d < 1 + \epsilon)\}$ which is the same as $\{(c,d) \mid (c \geqslant 0 \land d < 1 + \epsilon)\}$, and $\Lambda^{\epsilon}(v_{2^1}') = \{(c,d) \mid (c \geqslant 0 \land d < 1 + \epsilon)\}$.
Therefore, we have that $\Lambda_{S_1}^{\mathsf{ext}} = \Lambda_{S_2}^{\mathsf{ext}} = \{(c,d) \mid  (c \geqslant 0 \land d < 1 + \epsilon) \}$.
Hence, we get that $\Psi_{S_1}^{\epsilon}(x,y)= \Psi_{S_2}^{\epsilon}(x, y) \equiv (x \geqslant 0 \land y < 1 + \epsilon)$ as shown in \cref{fig:psi_points}.
From \cref{fig:rho_formula}, the formula $\rho^{S_1}(c)$ holds true for values of $c$ less than $(1 - \epsilon)$ and the formula $\rho^{S_2}(c)$ holds for values of $c$ less than 0. 
Hence, by assigning $(1 - \epsilon)$ to $x$, and $(1 + \epsilon)$ to $y$, we get that $\rho^{S_1}_{\max,v_0}(z)$ holds true for $z = (1-\epsilon)$.
Additionally, by assigning $0$ to $x$, and $1$ to $y$, we get that $\rho^{S_2}_{\max,v_0}(z)$ holds true for $z = 0$.
It follows that $\asv(v_0) = 1 - \epsilon$ for $\epsilon < 1$ as it is the maximum of the values over all the $\mathsf{SCC}$s. \qed
\end{example}

% \paragraph*{An $\mathsf{EXPTime}$ algorithm for computing $\asv$}
\subparagraph{An $\mathsf{EXPTime}$ algorithm for computing $\asv$}
Now we provide a new linear programming
based method that extends the previous approach to compute the $\asv$, and show that the value can be computed in $\mathsf{EXPTime}$. 
We first illustrate our approach with the help of the following
example by constructing the LP formulation for $\rho^{S}_v(c)$ for each {\sf SCC} $S$ thereby building the system of LPs for computing $\asv(v_0)$.

\begin{example}
\label{Ex:CompASVLP_new}
% \subsection{Another example to illustrate the computation of $\asv$ with linear programs}
% \label[appendix]{app:CompASVLP}
% \begin{example}
% \label{Ex:CompASVLP_new}
% \textnormal{Consider again
% the mean-payoff game $\mathcal{G}$ in \cref{Ex:CompASVTOR_new}.}
\textnormal{We previously showed that the $\asv(v_0)$ can be computed by quantifier elimination of a formula in the theory of reals with addition. Now, we compute the $\asv(v_0)$ by solving a set of linear programs for every $\mathsf{SCC}$ in $\mathcal{G}^{\mathsf{ext}}$.
We recall that there are three $\mathsf{SCC}$s $S_1, S_2$ and $S_3$ in $\mathcal{G}^{\mathsf{ext}}$.}
% We start by expressing the set $F_{\min}(\mathsf{CH}(\mathbb{C}(S)))$ as a polyhedron, i.e.
% In \cite{CDEHR10}, it has been shown 
\textnormal{From \cref{lemCHToPlay}, we have that $\Fmin{\CH{\mathbb{C}(S_i)}}$ for $i \in \{1, 2, 3\}$ can be defined using a set of linear inequalities.
Now recall that $\Fmin{\CH{\mathbb{C}(S_2}} = \Fmin{\CH{\mathbb{C}(S_3)}} = \{(0,1)\}$, and $\Fmin{\CH{\mathbb{C}(S_1)}}$ is represented by the set of points enclosed by the triangle formed by connecting the points $(0,1), (1,1)$ and $(0,2)$ as shown in \cref{fig:phi_points}, and
$\Lambda^{\epsilon}(v_{0^2}') = \Lambda^{\epsilon}(v_{2^1}') = \Lambda^{\epsilon}(v_{2^2}') = 
\Lambda^{\epsilon}(v_{1}') = \{(c,y) \:|\: c \geqslant 0 \land y < 1 + \epsilon \}$.}
% \sgcomment{Here we should instead write the LP for each SCC, and then the $\asv$ is the maximum over all SCCs. We should also relate $\neg \Psi$ with the complement of $\Lambda-\epsilon$ for each SCC.} 
\textnormal{Now, we consider the $\mathsf{SCC}$ $S_1$, and the formula $\neg \Psi_{S_1}^{\epsilon}$. We start this by finding the complement of $\Lambda^{\epsilon}(v_{0^2}')$ and $\Lambda^{\epsilon}(v_{1}')$, that is, $\overline{\Lambda}^{\epsilon}(v_{0^2}') = \overline{\Lambda}^{\epsilon}(v_1') = \mathbb{R} \times \mathbb{R} - \Lambda^{\epsilon}(v_{0^2}') = \mathbb{R} \times \mathbb{R} - \Lambda^{\epsilon}(v_1') = \{(c,y) \:|\: c < 0 \lor y \geqslant 1 + \epsilon \}$. Now, we get that $\neg \Psi_{S_1}^{\epsilon} = \overline{\Lambda}^{\epsilon}(v_{0^2}') \bigcap \overline{\Lambda}^{\epsilon}(v_{1}') = \{(c,y) \:|\: c < 0 \lor y \geqslant 1 + \epsilon \}$.}

\textnormal{Similarly for the $\mathsf{SCC}$ $S_2$ and $\mathsf{SCC}$ $S_3$, we calculate the complement of $\Lambda^{\epsilon}(v_{2^1}')$ and $\Lambda^{\epsilon}(v_{2^2}')$, that is, $\overline{\Lambda}^{\epsilon}(v_{2^2}') = \overline{\Lambda}^{\epsilon}(v_{2^1}') = \mathbb{R} \times \mathbb{R} - \Lambda^{\epsilon}(v_{2^1}') = \mathbb{R} \times \mathbb{R} - \Lambda^{\epsilon}(v_{2^2}') = \{(c,y) \:|\: c < 0 \lor y \geqslant 1 + \epsilon \}$ and obtain $\neg \Psi_{S_2}^{\epsilon} = \overline{\Lambda}^{\epsilon}(v_{2^1}') = \{(c,y) \:|\: c < 0 \lor y \geqslant 1 + \epsilon \}$ and $\neg \Psi_{S_3}^{\epsilon} = \overline{\Lambda}^{\epsilon}(v_{2^2}') = \{(c,y) \:|\: c < 0 \lor y \geqslant 1 + \epsilon \}$.
Note that the formulaes $\Phi_{S_2}(x,y)$ and $\Phi_{S_3}(x,y)$ are represented by the set of linear inequations $x=0 \wedge y=1$ and the formula $\Phi_{S_1}(x,y)$ is represented by the set of linear inequations $y \geqslant 1 \wedge y \leqslant 2 \wedge x \leqslant 1 \wedge (x+y) \leqslant 2$.
Now the formula $\rho_{v_0}^{S_1}(c)$ can be expressed using a set of linear equations and inequalities as follows: $x > c \land y \geqslant 1 \wedge y \leqslant 2 \wedge x \leqslant 1 \wedge (x+y) \leqslant 2 \land (c < 0 \lor y \geqslant 1+\epsilon)$ and the formula $\rho_{v_0}^{S_2}(c)$ can be expressed using a set of linear equations and inequalities as follows: $x > c \land x=0 \land y=1 \land (c < 0 \lor y \geqslant 1+\epsilon)$.
We maximise the value of $c$ in the formula $\rho_{v_0}^{S_1}(c)$ to get the following two linear programs:}
\textit{maximise $c$ in $(x > c \wedge y \geqslant 1 \wedge y \leqslant 2 \wedge x \leqslant 1 \wedge (x+y) \leqslant 2 \land c < 0)$} \textnormal{which gives a solution $\{0\}$ and}
\textit{maximise $c$ in $(x > c \wedge y \geqslant 1 \wedge y \leqslant 2 \wedge x \leqslant 1 \wedge (x+y) \leqslant 2 \land y \geqslant (1 + \epsilon))$}  \textnormal{which gives us a solution $ \{(1-\epsilon)\}$.
Similarly, maximising $c$ in the formulaes $\rho_{v_0}^{S_2}(c)$ and $\rho_{v_0}^{S_3}(c)$ would give us the following two linear programs:}
\textit{maximise c in $(x > c \land x = 0 \land y = 1 \land c < 0)$}  \textnormal{which gives a solution $\{0\}$ and}
\textit{maximise c in $(x > c \land x = 0 \land y = 1 \land y \geqslant (1 + \epsilon))$} \textnormal{which gives us a solution $\{0\}$.
% \sgcomment{May be we should say what are the solutions from the two LPs, and then state the following sentence? Note that the assumption holds for $1-\epsilon > 0$.}
Thus, we conclude that $\asv(v_0) = 1-\epsilon$ which is the maximum value amongst all the $\mathsf{SCC}$s.
% \footnote{Here, we assume that $\epsilon$ value is small and $\epsilon < 1$.}.
Note that in an LP, the strict inequalities are replaced with non-strict inequalities, and computing the supremum in the objective function is replaced by maximizing the objective function.}

Again, for every $\mathsf{SCC}$ $S$ and for every LP corresponding to $S$, we can fix a value of $c$ and change the objective function to \textit{maximise $\epsilon$} from \textit{maximise $c$} in order to obtain the maximum value of $\epsilon$ that allows $\asv(v_0) > c$.
For example, consider the LP \textit{$(x > c \wedge y \geqslant 1 \wedge y \leqslant 2 \wedge x \leqslant 1 \wedge (x+y) \leqslant 2 \land y \geqslant (1 + \epsilon))$} in $\mathsf{SCC}$ $S_1$ and fix a value of $c$, and then maximize the value of $\epsilon$.
Doing this over all linear programs in an $\mathsf{SCC}$, and over all $\mathsf{SCC}$s, reachable from $v_0$ for a fixed $c$ gives us the supremum value of $\epsilon$ such that we have $\asv(v_0) > c$.
% \end{example}
\qed
\end{example}

% which is proven in \cref{app:ComputeEpsilon}.
% \subsection{Proof of \cref{ExpTimeForComputeASV}}

% \subsubsection*{Computation of $\asv$ in {\sf EXPTime}}
% \label[appendix]{app:ExpTimeForComputeASV}
While no complexity upper bound for the computation of the ASV was reported in \cite{FGR20}, we show here that our procedure executes in {\sf EXPTime}. Indeed, for each SCC $S$, we express the formulae in $\langle \mathbb{R}, +, < \rangle$ as a set of exponentially many linear programs and thus, solve them in {\sf EXPTime}.
Therefore, we state the following theorem.
\begin{theorem}
\label{ThmComputeASVExpTime}
For all mean-payoff games $\mathcal{G}$, for all vertices $v$ in $\mathcal{G}$ and for all $\epsilon > 0$, the $\asv(v)$ can be computed in {\sf EXPTime}.
\end{theorem}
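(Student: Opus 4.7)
The plan is to follow the formula-based characterization of \cref{ThmComputeASV} and \cref{LemASVMaxSCCExtGamePlay}, but to replace quantifier elimination over $\langle \mathbb{R}, +, < \rangle$ (which in general is costly) by the resolution of a collection of linear programs, each of which is of exponential size in $|\mathcal{G}|$ and solvable in polynomial time in its size, for an overall $\mathsf{EXPTime}$ bound.

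First, I would build the extended game $\mathcal{G}^{\mathsf{ext}}$ whose set of vertices is $V \times 2^V$, hence of size at most $|V|\cdot 2^{|V|}$, and enumerate the SCCs $S \in \mathsf{SCC}^{\mathsf{ext}}(v)$ reachable from $(v,\{v\})$; there are at most exponentially many such SCCs and each can be identified in time polynomial in $|\mathcal{G}^{\mathsf{ext}}|$, hence exponential in $|\mathcal{G}|$. For each such SCC $S$, by \cref{LemASVMaxSCCExtGamePlay} it suffices to compute the supremum of those $c$ for which $\rho^S_v(c) \equiv \exists x,y \cdot x > c \land \Phi_S(x,y) \land \neg \Psi_S^\epsilon(c,y)$ is satisfiable, and then take the maximum over SCCs.

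Next, by \cref{lemCHToPlay}(3), $\Phi_S(x,y)$ is expressible as a conjunction of pseudopolynomially many linear inequalities whose coefficients are of pseudopolynomial size. By \cref{LemPsiEpsToThrOfReal} (and the proof of \cref{LemPsiToThrOfReal} from~\cite{FGR20}), $\Psi_S^\epsilon(c,y)$ is expressible as a finite disjunction of conjunctions of linear inequalities of pseudopolynomial size; after negation and distribution, $\neg\Psi_S^\epsilon(c,y)$ becomes a disjunction of at most exponentially many conjunctions of linear inequalities. Conjoining with $\Phi_S(x,y)$ and with $x > c$, the formula $\rho^S_v(c)$ becomes a disjunction of at most exponentially many systems $\Sigma^S_1, \dots, \Sigma^S_{k_S}$ of linear inequalities in the free variables $(c,x,y)$. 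For each $\Sigma^S_j$, I would formulate the linear program that maximizes $c$ subject to $\Sigma^S_j$ (replacing strict inequalities by non-strict ones, which is sound because we are computing a supremum and all constants are rational), and solve it in time polynomial in the bit-size of the LP, which is pseudopolynomial in $|\mathcal{G}|$, hence exponential in the input. Finally, I would return $\asv(v) = \max_S \max_j \mathsf{opt}(\Sigma^S_j)$, where the outer maxima range over exponentially many items.

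The main obstacle is to justify the replacement of strict by non-strict inequalities without loss of value and to ensure that the exponential blow-up is confined to the disjunctive normal form rewriting of $\neg \Psi_S^\epsilon$ and to the enumeration of SCCs of $\mathcal{G}^{\mathsf{ext}}$. For the former, one argues that by \cref{ThmWitnessASVInfMem} and the closure properties of the relevant sets (used already in the proof of \cref{LemPlaysAsWitnessForASV}), the optimum of each LP coincides with the supremum over the original strict system, so that the finite maximum over all LPs yields $\asv(v)$ exactly. For the latter, each individual LP has pseudopolynomial size and is solved in polynomial time in that size by standard LP algorithms, and there are at most exponentially many LPs in total, giving the claimed $\mathsf{EXPTime}$ bound.
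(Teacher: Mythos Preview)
Your overall approach is exactly the one the paper takes: build $\mathcal{G}^{\mathsf{ext}}$, enumerate the reachable SCCs $S$, rewrite $\rho^S_v(c)$ as a disjunction of linear systems, maximize $c$ over each system by LP (with strict inequalities relaxed to non-strict, since you compute a supremum), and return the maximum over all systems and all SCCs.

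There is, however, a genuine gap in your counting argument. You obtain $\neg\Psi_S^\epsilon$ by negating a DNF of pseudopolynomial size and then distributing back to DNF. But naive CNF-to-DNF distribution on a formula with $\mathcal{O}((W\cdot|V|)^k)$ clauses can produce up to $(W\cdot|V|)^{k\cdot (W\cdot|V|)^k}$ conjuncts, which is doubly exponential in the binary encoding of $W$ and therefore does not yield an $\mathsf{EXPTime}$ bound. The paper avoids this blow-up via \cref{lem:expression-inequalities}: using the cell decomposition of~\cite{BR15}, one observes that both $\Lambda^\epsilon(v)$ and its complement $\overline{\Lambda}^\epsilon(v)$ are unions of cells from the same arrangement, hence each is a union of only $\mathcal{O}(|V_{\mathcal{G}}|^{d+1})$ polyhedra, and the intersection $\bigcap_{v\in V^*(S)}\overline{\Lambda}^\epsilon(v)$ is again a union of at most that many cells. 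This gives exponentially many LPs (single exponential in the input), not the potentially doubly exponential count your distribution step would give. Replacing your ``negation and distribution'' sentence with this cell-based bound closes the gap.
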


To prove \cref{ThmComputeASVExpTime}, we begin by stating and proving the following technical lemma.
\begin{lemma}
\label{lem:expression-inequalities}
The set $\bigcap_{v \in V^*(S)} \overline{\Lambda}^{\epsilon}(v)$ can be expressed as a union of exponentially many systems of strict and non-strict inequalities
\end{lemma}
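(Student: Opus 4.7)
The plan is to start from the known representation of each $\Lambda^{\epsilon}(v)$ as a disjunction of conjunctions of linear inequalities, complement it, and then distribute the resulting Boolean combinations to obtain a single disjunction of conjunctions, i.e.\ a union of systems of inequalities. Throughout we keep careful track of the number of clauses produced to argue that the blow-up is (at most) exponential.

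First, using Lemma \ref{LemPsiEpsToThrOfReal} together with the fact cited from \cite{BR15} in the proof of Lemma \ref{LemPlaysAsWitnessForASV}, each set $\Lambda^{\epsilon}(v)$ is definable by a formula of $\langle \mathbb{R}, +, < \rangle$ that is a disjunction of conjunctions of linear inequalities whose coefficients have polynomially many bits. Writing this in closed form, there is a representation
\[
\Lambda^{\epsilon}(v) \;=\; \bigcup_{i=1}^{k_v} \, \bigcap_{j=1}^{m_{v,i}} \, \ell_{v,i,j},
\]
where each $\ell_{v,i,j}$ is a strict or non-strict linear inequality and both $k_v$ and $m_{v,i}$ are at most exponential in the size of $\mathcal{G}$.

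Second, I would complement this representation vertex by vertex. Since the Boolean complement of a conjunction is a disjunction (and vice versa), and since the complement of a strict linear inequality is the reverse non-strict inequality (and symmetrically), we obtain
\[
\overline{\Lambda}^{\epsilon}(v) \;=\; \bigcap_{i=1}^{k_v} \, \bigcup_{j=1}^{m_{v,i}} \, \overline{\ell_{v,i,j}},
\]
where each $\overline{\ell_{v,i,j}}$ is again a (strict or non-strict) linear inequality. Distributing over $i$ using standard propositional reasoning yields a disjunctive-normal-form representation
\[
\overline{\Lambda}^{\epsilon}(v) \;=\; \bigcup_{f \in F_v} \, \bigcap_{i=1}^{k_v} \, \overline{\ell_{v,i,f(i)}},
\]
where $F_v$ is the set of choice functions selecting, for every $i \in \{1,\dots,k_v\}$, one index $f(i) \in \{1,\dots,m_{v,i}\}$. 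Each inner intersection is thus a system of linear inequalities, and $|F_v|$ is exponential in the size of $\mathcal{G}$.

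Third, I would take the intersection over $v \in V^*(S)$ and distribute once more. Setting $R_v := \overline{\Lambda}^{\epsilon}(v)$ written in the DNF form above,
\[
\bigcap_{v \in V^*(S)} R_v \;=\; \bigcap_{v \in V^*(S)} \bigcup_{f \in F_v} \bigcap_{i=1}^{k_v} \overline{\ell_{v,i,f(i)}} \;=\; \bigcup_{(f_v)_{v} \in \prod_v F_v} \; \bigcap_{v \in V^*(S)} \bigcap_{i=1}^{k_v} \overline{\ell_{v,i,f_v(i)}}.
\]
Each term of the outer union is a conjunction of linear inequalities, i.e.\ a system of strict and non-strict linear inequalities. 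The number of such terms is $\prod_{v \in V^*(S)} |F_v|$, which is at most exponential in the size of $\mathcal{G}$ since $|V^*(S)| \leqslant |V|$ and each $|F_v|$ is already at most exponential. This is exactly the promised decomposition.

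The only real obstacle I foresee is making the bound on $k_v$ and $m_{v,i}$ precise: the quoted representation from \cite{BR15} is given as a formula rather than a DNF of a specified size, so one has to argue, either by direct inspection of the construction in \cite{BR15} or by a generic bound on converting such formulas into DNF, that the blow-up remains at most exponential in $|\mathcal{G}|$ and the bit-size of $\epsilon$. Once this is in hand, the two distributivity steps above deliver the result without additional difficulty.
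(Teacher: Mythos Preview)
Your Boolean/distributivity approach is natural, but the counting does not work out to a singly exponential bound, which is what the lemma (and the {\sf EXPTime} claim it supports) actually needs. Concretely: with $k_v$ disjuncts in $\Lambda^{\epsilon}(v)$ and $m_{v,i}$ conjuncts in each, the number of choice functions is $|F_v|=\prod_{i=1}^{k_v} m_{v,i}$. From the representation in \cite{BR15} that you invoke, $k_v$ itself is already of order $|V_{\mathcal{G}}|^{d+1}$ with $|V_{\mathcal{G}}|=\mathcal{O}(W\cdot|V|)$, hence exponential in the input size when weights are given in binary; and each $m_{v,i}$ is of order $|V_{\mathcal{G}}|\cdot 2^d$, also exponential. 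Thus $|F_v|\geqslant 2^{k_v}$ is doubly exponential, and the final product $\prod_{v\in V^*(S)}|F_v|$ only makes this worse. So the step ``$|F_v|$ is exponential in the size of $\mathcal{G}$'' is where the argument breaks.

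The paper avoids this blow-up by exploiting a geometric fact you do not use: all the sets $\Lambda^{\epsilon}(v)$, for every $v$, are unions of cells of a \emph{single} fixed arrangement in $\mathbb{R}^2$, namely the one determined by the mean-payoff coordinates $V_{\mathcal{G}}$ of simple cycles (Lemmas~4 and~6 of \cite{BR15}, via Carath\'eodory). Because the cell decomposition does not depend on $v$, each complement $\overline{\Lambda}^{\epsilon}(v)$ is again a union of cells of that same arrangement, and hence so is $\bigcap_{v\in V^*(S)}\overline{\Lambda}^{\epsilon}(v)$: it is simply the union of those cells that lie in every $\overline{\Lambda}^{\epsilon}(v)$. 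The total number of cells is $\mathcal{O}(|V_{\mathcal{G}}|^{d+1})$, and each cell is described by $\mathcal{O}(|V_{\mathcal{G}}|\cdot 2^d)$ inequalities, giving the singly exponential bound. In short, the missing idea is to work with a common cell decomposition rather than to complement and distribute syntactically; without it your bound is doubly exponential.
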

% \subsection{Expression of $\underset{v \in V^*(S)}{\bigcap} \overline{\Lambda}^{\epsilon}(v)$}
% \label[appendix]{app:expression-inequalities}
\begin{proof}
From Lemma 4 of \cite{BR15}, we have that $\Lambda^{\epsilon}(v)$ can be represented as a finite union of polyhedra.
Considering a $d$-dimensional space, the set of points that satisfy the same set of linear inequalities forms an equivalence class, also called \emph{cells} \cite{BR15}.
Let $V_{\mathcal{G}}$ denote the set of mean-payoff coordinates of simple cycles in $\mathcal{G}$, and we have that $|V_{\mathcal{G}}| = \mathcal{O}(W \cdot |V|)^{\poly(d)}$.
Let $B(V_{\mathcal{G}})$ denote the set of geometric centres where each geometric centre is a centre of at most $d+1$ points from $V_{\mathcal{G}}$.
Thus $|B(V_{\mathcal{G}})| = \mathcal{O}(|V_{\mathcal{G}}|^{d+1})$.
From Lemma 6 of \cite{BR15} that uses Carath\'{e}odory baricenter theorem in turn, we have that $\Lambda^{\epsilon}(v)$ can be represented as a union of all cells that contain a point from $B(V_{\mathcal{G}})$ which is in $\Lambda^{\epsilon}(v)$.
Each cell is a polyhedron that can be represented by $\mathcal{O}(V_\mathcal{G})$ extremal points, or equivalently, by Theorem 3 of \cite{BR15}, by $\mathcal{O}(V_\mathcal{G}) \cdot 2^d$ inequalities.
It follows that $\overline{\Lambda}^{\epsilon}(v)$ can also be represented as a union of $\mathcal{O}(|V_{\mathcal{G}}|^{d+1})$ polyhedra.
Hence $\underset{v \in V^*(S)}{\bigcap} \overline{\Lambda}^{\epsilon}(v)$ can also be represented as a union of $\mathcal{O}(|V_{\mathcal{G}}|^{d+1})$ polyhedra.
Thus we have exponentially many linear programs corresponding to $\underset{v \in V^*(S)}{\bigcap} \overline{\Lambda}^{\epsilon}(v)$, since the weights on the edges are given in binary.
In our case, we have $d=2$.
Further, from \cref{lemCHToPlay}, for bi-weighted arena, we have that $\Fmin{\CH{\CS}}$ can be represented by $\mathcal{O}(|V_{\mathcal{G}}|^{2})$ linear inequalities, and hence $\Fmin{\CH{\CS}} \cap \underset{v \in V^*(S)}{\bigcap} \overline{\Lambda}^{\epsilon}(v)$ can be represented by exponentially many LPs.
Further, these LPs can be constructed in exponential time. \end{proof}

We are now ready to prove \cref{ThmComputeASVExpTime}.
\begin{proof}[Proof of \cref{ThmComputeASVExpTime}]
% [Proof Sketch of \cref{ExpTimeForComputeASV}]
% The {\sf EXPTime} algorithm to compute $\asv(v)$ is defined as follows.
First note that using 
    % \cref{lemCHToPlay}} that we have that, 
    % results from \cite{CDEHR10},
    \cref{lemCHToPlay},
    for each $\mathsf{SCC}$ $S$ in $\mathcal{G}^{\mathsf{ext}}$, the set satisfying the formula $\Phi_S(x,y)$, which is the symbolic encoding of $\Fmin{\CH{\CS}}$, can be expressed as a set of \emph{exponentially} many inequalities.
    % \item From \cref{LemPsiEpsToThrOfReal}}, we can compute a representation of $\Lambda^{\epsilon}(v)$ as a finite union of system of strict and non-strict inequations. Thus, we can represent the complement $(\mathbb{R}^2 - \Lambda^{\epsilon}(v))$ as a finite union of system of strict and non-strict inequations.
    % \sgcomment{I think we should define $\Lambda'^{\epsilon}(v)=\{(x,y) \:|\: x > c \land y < d+\epsilon\}$. Here $c$ and $d$ are as in the definition of $\Lambda^{\epsilon}(v)$.}
    % \mbcomment{What are the values $c$ and $d$ in this context?}
    % 
    % From \textbf{Lemma 4} in \cite{BR15}, we represent $\Lambda^{\epsilon}(v)$ as a finite union of a system of strict and non-strict inequations. Hence, we can also express the set $\overline{\Lambda}^{\epsilon}(v) = \mathbb{R} \times \mathbb{R} - \Lambda^{\epsilon}(v)$ as a finite union of a system of strict and non-strict inequations.
    % Recall that for every $\mathsf{SCC}$ $S$ of $\mathcal{G}^{\mathsf{ext}}$, there exists a set of vertices of $\mathcal{G}$ which we denote by $V^{*}(S)$ such that every vertex of $S$ is of the form $(v, V^*(S))$.
    % Now note 
    Now recall that the formula $\Psi_S^{\epsilon}$ corresponds to $\bigcup_{v \in V^*(S)} \Lambda^{\epsilon}(v)$, and hence $\neg \Psi_S^{\epsilon}$ in the formula $\rho^S_{v}(c)$ corresponds to the set $\bigcap_{v \in V^*(S)} \overline{\Lambda}^{\epsilon}(v)$.
    We show in \cref{lem:expression-inequalities} that this set can be represented as a union of exponentially many systems of strict and non-strict inequalities.

For each $\mathsf{SCC}$ $S$ in the mean-payoff game $\mathcal{G}^{\mathsf{ext}}$, we see that the value satisfying the formula $\rho^S_{\max,v}$ can be expressed as a set of linear programs in the following manner.
    % \begin{itemize}
    %   \item 
      For each linear program, we have two variables $x$ and $y$ that represent the mean-payoff values of Player~0 and Player~1 respectively, corresponding to the formula $\Phi_S(x,y)$, and equivalently for plays in $\Fmin{\CH{\CS}}$, and a third variable $c$ represents the $c$ in the formula $\rho^S_{v}(c)$.
    %   
    %   \item 
      The 
    %   set of linear inequations which represents $F_{\min}(\mathsf{CH}(\mathbb{C}(S))) \backslash \Lambda'^{\epsilon}(v)$
      formula $\rho^S_{v}(c)$ can be expressed as a disjunction of a set of linear equations and inequalities, i.e., $\bigvee_i \mathsf{Cst}_i$ where each $\mathsf{Cst}_i$ is a conjunction of linear inequalities. 
      % The disjunctions arise due to the representation of $\underset{v \in V^*(S)}{\bigcap}\overline{\Lambda}^{\epsilon}(v)$ as stated above.
      We use the variables $(c, d)$ in the linear inequalities which represent the set $\bigcap_{v \in V^*(S)}\overline{\Lambda}^{\epsilon}(v)$, and the variables $(x, y)$ in the linear inequalities representing the set $\Fmin{\CH{\CS}}$ as mentioned above. We also include the inequation $x > c$.
    Further, 
    % to express the formula $\rho^S_{v}(c)$, 
    the variable $d$ should assume the value of $y$ that corresponds to the mean-payoff of Player~$1$.
    % as stated earlier.
    % Note that, we would need to represent the difference between $\Fmin{\CH{\CS}}$ that uses variables $c, y$ and $\underset{v \in V^*(S)}{\bigcap} \Lambda^{\epsilon}(v)$ that uses variables $x,y$.
    % , and this can be represented by $\mathcal{O}(|V_\mathcal{G}|)^k$ LPs where each LP is of size $\mathcal{O}(|V|^2 \cdot W)^{k'}$, where $k'$ is a constant.
    In the LP formulation, each strict inequation is replaced by a non-strict inequation, and supremum in the objective function is replaced with maximizing the objective.
    %   \item The system of constraints will be the set of linear inequations which represents $F_{\min}(\mathsf{CH}(\mathbb{C}(S))) \land (\mathbb{R}^2 - \Lambda^{\epsilon}(v))$ which can be expressed by a disjunction of sets of strict and non-strict linear inequations, i.e., $\bigvee_i \mathsf{Cst}_i$ where each $\mathsf{Cst}_i$ is a conjunction of strict and non-strict linear inequations. Since the linear program does not handle strict linear inequations, we relax the strict linear inequations to non-strict linear inequations. This is correct because we are trying to find a value $(x, y)$ in the convex hull, and therefore they are bounded. Let the set of relaxed inequations be $\bigvee_i \mathsf{Cst}^{rel}_i$.
    % 
    %   \item 
      The set of linear programs we solve is \textit{maximise $c$ under the linear constraints $\mathsf{Cst}_i$}
    %   $\exists x, y \cdot (c,x,y) \vDash \mathsf{Cst}_i$} 
    for each $\mathsf{Cst}_i \in \bigvee_i \mathsf{Cst}_i$. 
    %   Note that, following the proof of \textbf{Lemma 4} in~\cite{BR15}, there are exponentially many such $\mathsf{Cst}_i$ clauses. 
     % The value satisfying the formula $\rho^S_{\max,v}$ is the maximum over the set of solutions of the linear programs obtained.
% \end{itemize}

We solve the above sets of LPs for each $\mathsf{SCC}$ $S$ present in $\mathcal{G}$ to get a set of values satisfying the formula $\rho^S_{\max,v}$.
% in the $\mathsf{SCC}$s of $\mathcal{G}$. 
We choose the maximum of this set of values
% $\rho^S_{\max,v_0}$s 
which is the $\asv(v)$. 
% the number of $\mathsf{SCC}$s are exponential. 
The algorithm runs in {\sf EXPTime} as there can be exponentially many $\mathsf{SCC}$s.
\end{proof}
On the other hand, as illustrated in \cref{Ex:CompASVLP_new}, we note that if we fix a value of $c$ in every linear program corresponding to an $\mathsf{SCC}$ $S$, and replace the objective function $\mathsf{maximise}$ $c$ with the function $\mathsf{maximise}$ $\epsilon$, then we can find the supremum over $\epsilon$ which allows $\asv(v) > c$ in $S$.
Again, taking the maximum over all $\mathsf{SCC}$s reachable from $(v, \{v\})$, we get the largest $\epsilon$ possible so that we have $\asv(v) > c$.
Thus we get the following corollary.
\begin{corollary}
\label{thm:ComputeEpsilon}
For all mean-payoff games $\mathcal{G}$, for all vertices $v$ in $\mathcal{G}$, and for all $c \in Q$, we can compute in {\sf EXPTime} the maximum possible value of $\epsilon$ such that $\asv(v) > c$.
\end{corollary}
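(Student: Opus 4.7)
The plan is to adapt the linear programming approach developed in the proof of Theorem~\ref{ThmComputeASVExpTime} by swapping the roles of the objective variable $c$ and the parameter $\epsilon$. Recall that for each SCC $S \in \mathsf{SCC}^{\mathsf{ext}}(v)$ of the extended game $\mathcal{G}^{\mathsf{ext}}$, the formula $\rho^{S}_{v}(c) \equiv \exists x, y \cdot x > c \land \Phi_S(x,y) \land \neg\Psi_S^{\epsilon}(c,y)$ was rewritten as a disjunction $\bigvee_i \mathsf{Cst}_i(x,y,c,\epsilon)$ of systems of linear (in)equalities, where $\epsilon$ was treated as a fixed parameter. In the new setting, we fix $c$ to the given rational threshold and treat $\epsilon$ as a free nonnegative real variable, then, in each disjunct $\mathsf{Cst}_i$, we solve the linear program \emph{maximise $\epsilon$} subject to $\mathsf{Cst}_i(x,y,c,\epsilon) \land \epsilon \geqslant 0$. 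Taking the maximum of these optima over all disjuncts of all SCCs in $\mathsf{SCC}^{\mathsf{ext}}(v)$ yields $\sup\{\epsilon > 0 \mid \asv(v) > c\}$.

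The first step is to verify that the feasible region remains defined by linear inequalities when $\epsilon$ is viewed as a variable. By Lemma~\ref{LemPsiEpsToThrOfReal}, the parameter $\epsilon$ appears only through the substitution $y \mapsto y - \epsilon + e$ in the underlying formula $\Psi_v(\cdot,\cdot)$, i.e.\ additively in a single coordinate. Consequently, each inequality that defines $\Lambda^{\epsilon}(v')$, and therefore each inequality that defines the complement $\overline{\Lambda}^{\epsilon}(v')$ and the intersection $\bigcap_{v' \in V^{*}(S)} \overline{\Lambda}^{\epsilon}(v')$ used in $\neg\Psi_S^{\epsilon}$, is still affine in $(x,y,c,\epsilon)$. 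The formula $\Phi_S(x,y)$ is independent of $\epsilon$ and remains a system of linear inequalities by Lemma~\ref{lemCHToPlay}. Hence each $\mathsf{Cst}_i$ with $c$ fixed is a bona fide linear program in $(x,y,\epsilon)$.

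Next, I would argue correctness. By the LP formulation in Theorem~\ref{ThmComputeASVExpTime}, for any fixed $\epsilon_0 > 0$ we have $\asv(v) > c$ precisely when some disjunct $\mathsf{Cst}_i$ associated to some SCC $S$ is feasible at $(c, \epsilon_0)$. Thus the set $\{\epsilon > 0 \mid \asv(v) > c\}$ equals the union, over all SCCs $S$ and all disjuncts $i$, of the $\epsilon$-projections of the LP feasible regions. The supremum of this union equals the maximum of the per-LP optima, justifying the reduction.

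Finally, for the complexity bound, the argument of Lemma~\ref{lem:expression-inequalities} shows that for each SCC there are exponentially many disjuncts $\mathsf{Cst}_i$, each of pseudo-polynomial size, and $\mathcal{G}^{\mathsf{ext}}$ has at most exponentially many SCCs. Since each individual LP is solvable in polynomial time in its size, the total cost is exponential in $|\mathcal{G}|$, yielding the desired {\sf EXPTime} bound. The only delicate point is the linearity of the constraints once $\epsilon$ is promoted to a decision variable; the additive way $\epsilon$ enters Lemma~\ref{LemPsiEpsToThrOfReal} is exactly what makes this go through without any blow-up beyond what was already present for computing $\asv(v)$.
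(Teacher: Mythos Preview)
Your proposal is correct and follows the same approach as the paper: fix the threshold $c$, treat $\epsilon$ as the decision variable, and in each of the exponentially many LPs arising from the SCC decomposition of $\mathcal{G}^{\mathsf{ext}}$ replace the objective ``maximise $c$'' by ``maximise $\epsilon$'', then take the maximum over all LPs and all SCCs. Your write-up is in fact more careful than the paper's, which only sketches this via Example~\ref{Ex:CompASVLP_new} and the paragraph preceding the corollary; in particular, your explicit verification that $\epsilon$ enters the constraints only additively (through the substitution $y \mapsto y - \epsilon + e$ in Lemma~\ref{LemPsiEpsToThrOfReal}), so that the disjuncts $\mathsf{Cst}_i$ remain linear and their combinatorial structure is unchanged, fills in a point the paper leaves implicit.
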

%  \input{Appendix/Computation/compute-eps}

% \section{Achievability of the $\asv$}
%   \input{Achievability_v2}
\section{Additional Properties of $\asv$}
  \label{sec:additional_prop}
  In this section, we first show that the $\asv$ is \emph{achievable}, i.e., there exists a Player~0 strategy that achieves $\asv$.
Then we study the memory requirement in strategies of Player~0 for achieving the $\asv$, as well as the memory requirement by Player~1 for playing the the $\epsilon$-best-responses. 
% The detailed proofs of the results of this section appear in~\cref{app:achievability}.
  \subparagraph{\bf Achievability of the $\asv$}
  % \sgcomment{Add an intro for this section: What this section is about.}
% While it has been shown in \cite{FGR20} that \textbf{$\asvgen$} may not be achievable, in this section, we show that $\asv$ is indeed achievable. This is expressed in the following theorem.
% \noindent\textbf{Achievability of $\asv$} 
% \todo[inline]{Why is this here?}
% In this section, we show that the $\asv$ value is \emph{achievable}, i.e., there exists a Player~0 strategy that achieves $\asv$. 
We formally define achievability as follows.
Given $\epsilon > 0$, we say that $\asv(v) = c$ is achievable from a vertex $v$, if there exists a strategy $\sigma_0$ for Player~0 such that $\forall \sigma_1 \in \br(\sigma_0) : \mpinf_0(\outv(\sigma_0, \sigma_1)) \geqslant c$.
% Now we show that $\asv$ is achievable which 
We note that this result is in contrast to the case for $\asvgen$ as shown in \cite{FGR20}.
% \todo{The reference [10] became [9]. Check.}
\begin{theorem}
    \label{ThmAchiev}
    For all mean-payoff games $\mathcal{G}$, for all vertices $v$ in $\mathcal{G}$, and for all $\epsilon > 0$, we have that the $\asv(v)$ is achievable.
\end{theorem}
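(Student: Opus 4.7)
The plan is to construct an infinite memory strategy $\sigma_0^*$ that achieves $c = \asv(v)$ as the limit of a sequence of regular witnesses for strictly smaller thresholds. I would start from the characterization in Lemma~\ref{LemPlaysAsWitnessForASV}: for any sequence $c_n \nearrow c$ (say $c_n = c - 1/n$), there exist, for each $n$, two simple cycles $l_1^n, l_2^n$ in an SCC reachable from $v$, finite connecting paths $\pi_1^n, \pi_2^n, \pi_3^n$, and rationals $\alpha_n, \beta_n > 0$ with $\alpha_n + \beta_n = 1$ realizing Player~0 mean-payoff $c'_n > c_n$ and Player~1 mean-payoff $d_n$, such that no vertex appearing on the witness is $(c_n, d_n)^\epsilon$-bad.

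Next I would compactify. Since the number of pairs of simple cycles together with the associated paths is finite, I can pass to a subsequence (reindexed again as $n$) along which a single tuple $(l_1, l_2, \pi_1, \pi_2, \pi_3)$ appears. Since $(\alpha_n, \beta_n) \in [0,1]^2$ and the possible values of $d_n$ lie in $[-W, W]$, I can further extract a subsequence with $\alpha_n \to \alpha$, $\beta_n \to \beta$, $d_n \to d$, where $\alpha \mpgen_0(l_1) + \beta \mpgen_0(l_2) = c$ and $\alpha \mpgen_1(l_1) + \beta \mpgen_1(l_2) = d$. I then build the play
\[
\pi^* = \pi_1 \cdot \bigl(l_1^{k_i^1}\, \pi_2\, l_2^{k_i^2}\, \pi_3\bigr)_{i \in \mathbb{N}}
\]
with $k_i^1, k_i^2$ chosen so that the running averages converge to the target, yielding $\mpinf_0(\pi^*) = c$ and $\mpinf_1(\pi^*) = d$.

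The heart of the argument is transferring non-badness to the limit $(c, d)$. For each vertex $v'$ on $\pi^*$ and each $n$ large enough that $v'$ is on the $n$-th witness, Player~0 has, by Lemma~\ref{LemWeightPlayLessThanC}, a \emph{memoryless} strategy $\sigma_0^{v',n}$ from $v'$ enforcing $\mpinf_0 > c_n \lor \mpinf_1 \leqslant d_n - \epsilon$. Because there are only finitely many memoryless strategies of Player~0, some fixed memoryless $\sigma_0^{v'}$ works along a cofinal subsequence of $n$'s; using Lemma~\ref{ConjGrtIsGrtEq} and the fact that $c_n \to c, d_n \to d$, this $\sigma_0^{v'}$ enforces $\mpinf_0 \geqslant c \lor \mpinf_1 \leqslant d - \epsilon$ from $v'$ against every Player~1 strategy. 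I then define $\sigma_0^*$ to follow $\pi^*$ as long as Player~1 complies, and switch irreversibly to $\sigma_0^{v'}$ the first time Player~1 deviates at some vertex $v'$.

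Finally I would verify $\asv(\sigma_0^*)(v) \geqslant c$. The supremum of Player~1's mean-payoff against $\sigma_0^*$ is at least $d$ (achieved along $\pi^*$ itself), so any $\sigma_1 \in {\sf BR}_1^\epsilon(\sigma_0^*)$ yields $\mpinf_1 > d - \epsilon$; in particular, the alternative $\mpinf_1 \leqslant d - \epsilon$ in the punishment guarantee is violated, hence $\mpinf_0(\outv(\sigma_0^*, \sigma_1)) \geqslant c$ (both on $\pi^*$ and on any deviation branch). The main obstacle I anticipate is the limit transfer step: establishing that a \emph{single} memoryless punishing strategy at each vertex works uniformly for all large $n$, and that the resulting limit enforces the correct disjunction; this is where the combination of Lemma~\ref{ConjGrtIsGrtEq} with the finiteness of memoryless strategies is indispensable.
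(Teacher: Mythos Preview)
Your proposal is correct and follows the same overall architecture as the paper: take a sequence $c_n \nearrow c$, pin down a single combinatorial witness tuple $(l_1,l_2,\pi_1,\pi_2,\pi_3)$ by finiteness, build a limiting play $\pi^*$ with $(\mpinf_0,\mpinf_1)(\pi^*)=(c,d)$, and define $\sigma_0^*$ as ``follow $\pi^*$ and punish deviations with a memoryless strategy at each vertex.''

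The one genuine methodological difference is in the limit transfer. The paper first proves that the sequence $(d_i)$ is \emph{monotonic} (Proposition~\ref{PropMonotonicDi}) and then argues by contradiction, splitting into the non-decreasing and decreasing cases to locate an index $j$ with $c' < c_j$ and $d' > d_j - \epsilon$. You bypass this entirely via compactness: since Player~0 has only finitely many memoryless strategies, a single $\sigma_0^{v'}$ works along a cofinal subsequence, and then the disjunctive guarantee $\mpinf_0 > c_n \lor \mpinf_1 \leqslant d_n - \epsilon$ passes to the limit $\mpinf_0 \geqslant c \lor \mpinf_1 \leqslant d - \epsilon$ directly (if the first disjunct holds infinitely often take the $\limsup$, otherwise the second holds eventually and take the $\liminf$). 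This is cleaner and avoids the case analysis; the paper's monotonicity argument, on the other hand, makes the convergence of $(d_i)$ to $d$ concrete and ties it to the specific form of the regular witnesses from Theorem~\ref{ThmWitnessASVFinMem}. One small remark: the memoryless-strategy claim for the objective $\mpinf_0 > c_n \lor \mpinf_1 \leqslant d_n - \epsilon$ (with the non-strict second disjunct) is not literally Lemma~\ref{LemWeightPlayLessThanC}; it comes out of the proof of Lemma~\ref{ConjGrtIsGrtEq}, which you do cite, so the plan is fine but the attribution should be adjusted.
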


The rest of this section is devoted to proving \cref{ThmAchiev}. We start by defining the notion of a witness for $\asv(\sigma_0)(v)$ for a strategy $\sigma_0$ of Player~0.

\subparagraph{Witness for $\asv(\sigma_0)(v)$}
Given a mean-payoff game $\mathcal{G}$, a vertex $v$ in $\mathcal{G}$, and an $\epsilon > 0$, we say that a play $\pi$ is a witness for $\asv(\sigma_0)(v) > c$ for a strategy $\sigma_0$ of Player~0 if 
\begin{inparaenum}[(i)]
\item $\pi \in \outv(\sigma_0)$, and
\item $\pi$ is a witness for $\asv(v) > c$ when Player~0 uses strategy $\sigma_0$
\end{inparaenum}
where the strategy $\sigma_0$ is defined as follows:
\begin{enumerate}
    \item $\sigma_0$ follows $\pi$ if Player~1 does not deviate from $\pi$.
    \item If Player~1 deviates $\pi$, then for each vertex $v \in \pi$, we have that $\sigma_0$ consists of a memoryless strategy that establishes $v \nvDash \ll 1 \gg \mpinf_0 \leqslant c \land \mpinf_1 > d - \epsilon$, where $d = \mpinf_1(\pi)$.
    The existence of such a memoryless strategy of Player~0 has been established in \cref{sec:ThresholdProblem}.
\end{enumerate}

% From \cref{CorASVEqASVFinNonEps}}, we know that $\asv(v) = \asv_{\mathsf{FM}}(v)$, i.e., the Adversarial Stackelberg Value for fixed $\epsilon$ from vertex $v$ of a game $\mathcal{G}$ remains the same even if we restrict Player~0 to using only finite memory strategies.
% Note that however it is possible that the $\asv(v)$ is not achievable by a finite memory strategy of Player~0 as shown in \cref{ThmExNeedInfMem}}.

% Without loss of generality, we can disregard the mean-payoff games where $\asv(v)$ can be achieved by a finite memory strategy of Player~0. Hence, we consider the mean-payoff games where Player~0 requires at least infinite memory to achieve $\asv(v)$.

% We consider below the interesting case, where 
Assume that the $\asv(v)$ cannot be achieved by a finite memory strategy.
We show that for such cases, it can indeed be achieved by an infinite memory strategy.
 
Let $\asv(v) = c$.
For every $c' < c$, from \cref{ThmNpForASV}, there exists a finite memory strategy $\sigma_0$ such that $\asv(\sigma_0)(v) > c'$, and recall from \cref{ThmWitnessASVFinMem} that there exists a corresponding regular witness.
% Now, consider a sequence of increasing real numbers $c_1 < c_2 < c_3 < \dotsc < c$ for which there exist finite memory strategies $\sigma_0^{1}, \sigma_0^{2}, \sigma_0^{3}, \dotsc$ such that for each $c_i$, we have that $\asv(\sigma_0^i)(v) > c_i$.
% \track{Since we have that the $\asv$ cannot be achieved by a finite memory strategy, it follows that the sequence $\asv(\sigma_0^i)(v)$ is increasing for increasing $(i)_{i \in \mathbb{N}^+}$}.
% We note from \cref{ThmWitnessASVFinMem} that there exists a regular witness $\pi^i$ for $ \asv(\sigma_0^i)(v) > c_i$ for each $\sigma_0^i$ where $\pi^i= \pi_{1i}(l^{\alpha \cdot k_i}_{1i} \cdot \pi_{2i} \cdot l^{\beta \cdot k_i}_{2i} \cdot \pi_{3i})^{\omega}$.
% , i.e, the strategy $\sigma_0^i$ is described as follows:
% \begin{itemize}
%     \item Player~0 follows $\pi_i$ if Player~1 does not deviate from $\pi_i$.
%     \item If Player~1 deviates from $\pi_i$, then for each vertex $v \in \pi_i$, Player~0 employs a memoryless strategy that establishes $v \nvDash \ll 1 \gg \mpinf_0 \leqslant c_i \land \mpinf_1 > d_i - \epsilon$, where $d_i = \mpinf_1(\pi_i)$.
% \end{itemize}
% 
% We now describe a sequence of plays which serve as witnesses for the sequence of finite memory strategies mentioned above. 
% We show that these regular witnesses or plays in the sequence differ from each other only in the value of $k_i$ that they use. This is stated in the following proposition.
First we state the following proposition.
% \begin{proposition}
%     \label{PropConvergenceStrategies}
%     There exists a sequence of increasing real numbers, $c_1 < c_2 < c_3 < \dotsc < c$, and finite memory strategies $\sigma_0^1, \sigma_0^2, \sigma_0^3, \dotsc$ of Player~0 such that for each $c_i$, we have $\asv (\sigma_0^i)(v) > c_i$, and  $\forall i, j \in \mathbb{N}$, for the plays $\pi_i= \pi_{1i}(l^{\alpha_i \cdot k_i}_{1i} \cdot \pi_{2i} \cdot l^{\beta_i \cdot k_i}_{2i} \cdot \pi_{3i})^{\omega}$ and the play $\pi_j= \pi_{1j}(l^{\alpha_j \cdot k_j}_{1j} \cdot \pi_{2j} \cdot l^{\beta_j \cdot k_j}_{2j} \cdot \pi_{3j})^{\omega}$ which are witnesses for $ \asv(\sigma_0^i)(v) > c_i$ and $ \asv(\sigma_0^j)(v) > c_j$ respectively, we have that:
%     $\alpha_i = \alpha_j = \alpha, \beta_i = \beta_j = \beta, \pi_{1i} = \pi_{1j} = \pi_1, \pi_{2i} = \pi_{2j} = \pi_2, \pi_{3i} = \pi_{3j} = \pi_3, l_{1i} = l_{1j} = l_1$ and $l_{2i} = l_{2j} = l_2$ where $\pi_1, \pi_2$ and $\pi_3$ are some finite plays and $l_1, l_2$ are simple cycles in the game $\mathcal{G}$.
% \end{proposition}
% \sgcomment{Should we say that $k_i$ is a function of $i$? Also is it okay to have the same $k_i$ in both $\alpha \cdot k_i$ and $\beta \cdot k_i$?}
\begin{proposition}
    \label{PropConvergenceStrategies}
    There exists a sequence of increasing real numbers, $c_1 < c_2 < c_3 < \dotsc < c$, such that the sequence converges to $c$, and a set of finite memory strategies $\sigma_0^1, \sigma_0^2, \sigma_0^3, \dotsc$ of Player~0 such that for each $c_i$, we have $\asv (\sigma_0^i)(v) > c_i$, and there exists a play $\pi^i$ that is a witness for $ \asv(\sigma_0^i)(v) > c_i$, where $\pi^i= \pi_{1}(l^{\alpha \cdot k_i}_{1} \cdot \pi_{2} \cdot l^{\beta \cdot k_i}_{2} \cdot \pi_{3})^{\omega}$, and $\pi_1, \pi_2$ and $\pi_3$ are simple finite plays, and $l_1, l_2$ are simple cycles in the arena of the game $\mathcal{G}$.
\end{proposition}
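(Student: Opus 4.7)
The plan is to construct the sequence directly from the supremum definition of $\asv(v)$ together with the finite memory / regular witness machinery developed in \cref{sec:ThresholdProblem}. First, I would define $c_i = c - \tfrac{1}{i}$ for $i \in \mathbb{N}^+$, which yields a strictly increasing sequence of rationals with $\lim_{i \to \infty} c_i = c$. Since $\asv(v) = c$ is a supremum, each $c_i < c$ is not an upper bound and therefore \cref{lem:witness_strategy} provides some strategy of Player~0 achieving $\asv(\cdot)(v) > c_i$.

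Next, I would lift each such strategy to a \emph{finite memory} strategy. By \cref{CorASVEqASVFinNonEps} we have $\asv_{\mathsf{FM}}(v) = \asv(v)$, so for every $c_i < c$ there exists a finite memory strategy $\sigma_0^i \in \Sigma_0^{\mathsf{FM}}$ with $\asv(\sigma_0^i)(v) > c_i$. This step is essentially an application of the already established theorem and does not require new arguments.

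The main content is to extract, for each $i$, a witness play $\pi^i$ of the required shape $\pi_1(l_1^{\alpha k_i} \pi_2 l_2^{\beta k_i} \pi_3)^{\omega}$. For this I would invoke \cref{LemPlaysAsWitnessForASV}, which guarantees the existence of two simple cycles $l_1, l_2$, three simple finite plays $\pi_1, \pi_2, \pi_3$ connecting $v$ to $l_1$, $l_1$ to $l_2$ and $l_2$ to $l_1$, together with rational convex coefficients $\alpha, \beta \in \mathbb{Q}^+$ with $\alpha + \beta = 1$, such that the mean-payoff coordinate $(\alpha \cdot \mpgen_0(l_1) + \beta \cdot \mpgen_0(l_2),\; \alpha \cdot \mpgen_1(l_1) + \beta \cdot \mpgen_1(l_2)) = (c'_i, d_i)$ with $c'_i > c_i$, and no vertex on these cycles and paths is $(c_i, d_i)^{\epsilon}$-bad. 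Then, as in the construction in the proof of \cref{ThmWitnessASVFinMem} (Case~1 of the case analysis), I would choose $k_i$ large enough so that the regular play $\pi^i = \pi_1(l_1^{\alpha k_i} \pi_2 l_2^{\beta k_i} \pi_3)^\omega$ has mean-payoff $\mpinf_0(\pi^i) > c_i$ and $\mpinf_1(\pi^i) = d_i$; such a $k_i$ exists and is polynomial in the input as recalled in that proof. The strategy $\sigma_0^i$ can be chosen to be precisely the finite memory strategy that follows $\pi^i$ and punishes deviations by memoryless strategies at each visited vertex, exactly as in the proof of \cref{ThmNpForASV}, which ensures that $\pi^i$ is indeed a witness for $\asv(\sigma_0^i)(v) > c_i$.

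The only subtlety I anticipate is the case analysis from the proof of \cref{ThmWitnessASVFinMem}: if the two simple cycles obtained from \cref{LemPlaysAsWitnessForASV} happen to fall into Cases~3 or~4, one of them can be dropped and the witness degenerates to $\pi_1 \cdot l_1^\omega$, which does not literally match the stated shape $\pi_1(l_1^{\alpha k} \pi_2 l_2^{\beta k} \pi_3)^\omega$. I would handle this by observing that such a degenerate form is the limiting case $\alpha = 1, \beta = 0$ (or vice versa), which can be rewritten in the required syntactic form by taking $l_2$ to be any simple cycle reachable from and back to the end of $l_1$ with $k_i \cdot \beta = 0$, or alternatively by a mild reformulation of the statement to encompass this degenerate case. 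Apart from this bookkeeping, the proposition is a direct consequence of the results already proven in \cref{sec:ThresholdProblem}.
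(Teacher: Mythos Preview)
Your construction captures the right ingredients but misses the key uniformity step that the proposition actually asserts. When you invoke \cref{LemPlaysAsWitnessForASV} once for each threshold $c_i$, you obtain data $\pi_1^{(i)}, \pi_2^{(i)}, \pi_3^{(i)}, l_1^{(i)}, l_2^{(i)}, \alpha^{(i)}, \beta^{(i)}$ that a priori depend on $i$. The statement, however, requires a \emph{single} choice of $\pi_1, \pi_2, \pi_3, l_1, l_2, \alpha, \beta$ that works for all $i$, with only the exponent $k_i$ varying. Your write-up silently drops the index $i$ on these objects without any justification; as written, the argument only produces witnesses of the right syntactic shape, not witnesses built from the \emph{same} paths, cycles, and convex coefficients.

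The paper closes this gap by a pigeonhole step: since the arena has only finitely many simple paths and finitely many simple cycles, infinitely many of the $i$'s must share the same $\pi_1, \pi_2, \pi_3, l_1, l_2$, and one passes to that subsequence. The paper then argues that, once the paths and cycles are fixed, the witnesses differ only in the value $k_i$, and as $k_i \to \infty$ the mean-payoff of $\pi^i$ tends to $\alpha \cdot \mpgen_0(l_1) + \beta \cdot \mpgen_0(l_2)$, which must equal $c$ because the assumption is that $\asv(v)$ is \emph{not} achieved by any finite memory strategy (this hypothesis, stated just before the proposition, is also something your proof does not use). Without this pigeonhole extraction and the not-achievable-by-finite-memory hypothesis, you cannot conclude that the same combinatorial skeleton works all the way up to $c$, which is precisely what the subsequent construction of $\pi^*$ in \cref{ThmAchiev} relies on.
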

% \subsection{Proof of \cref{PropConvergenceStrategies}}
% \label{app:PropConvergenceStrategies}
\begin{proof}
    % For every $\delta > 0$, there exists $i \in \mathbb{N}$ such that $c - \delta < c_i < c$. 
    Consider the play $\pi^i = \pi_{1i}(l^{\alpha \cdot k_i}_{1i} \cdot \pi_{2i} \cdot l^{\beta \cdot k_i}_{2i} \cdot \pi_{3i})^{\omega}$ which is a witness for $ \asv(\sigma_0^i)(v) > c_i$ for the strategy $\sigma_0^i$.
    % such that $\asv (\sigma_0^i)(v) > c_i$. 
    Let $\mpinf_0(\pi^i)= c'_i > c_i$. 

    % Without loss of generality, assume that
    We have that $\mpinf_0(\pi^i)$ increases proportionally with $i$ as $\alpha \cdot k_i$ and $\beta \cdot k_i$ increase with increasing $k_i$. 
    % We can make this assumption 
    This follows because we disregard the cases where $\asv(v)$ is achievable with some finite memory strategy of Player~0, i.e., we only consider the case where $\asv(v)$ is not achievable by a finite memory strategy of Player~0.

    % Now we can construct a series of $(\pi^i)_{i \in \mathbb{N}^+}$ such that the  mean-payoff of Player~0 in $\pi_i$ increases with $i$, i.e.,
    
    Note that there are finitely many possible simple plays and simple cycles.
    Thus w.l.o.g. we can assume that in the sequence $(\pi^i)_{i \in \mathbb{N}^+}$, the finite plays $\pi_{1i}, \pi_{2i}, \pi_{3i}$, and the simple cycles $l_{1i}, l_{2i}$ are the same for different values of $i$.
    Thus, $\mpinf_0(\pi_{1i}(l^{\alpha \cdot k_i}_{1i} \cdot \pi_{2i} \cdot l^{\beta \cdot k_i}_{2i} \cdot \pi_{3i})^{\omega}) = c'_i > c_i, \mpinf_0(\pi_{1i}(l^{\alpha \cdot k_{i+1}}_{1i} \cdot \pi_{2i} \cdot l^{\beta \cdot k_{i+1}}_{2i} \cdot \pi_{3i})^{\omega}) = c'_{i+1} > c'_i, \mpinf_0(\pi_{1i}(l^{\alpha \cdot k_{i+2}}_{1i} \cdot \pi_{2i} \cdot l^{\beta \cdot k_{i+2}}_{2i} \cdot \pi_{3i})^{\omega}) = c'_{i+2} > c'_{i+1}$, and so on, and 
% 
% 
    % For every $i \in \mathbb{N}$, let $\mpinf_0(\pi_i) > c_i$ for some $c_i$. We note here that, for every $i \in \mathbb{N}$, there exists a finite memory strategy $\sigma_0^i$ which has a witness $\pi_i$ for $ \asv(\sigma_0^i)(v) > c_i$. 
    % We note that the sequence $(c_i)_{i \in \mathbb{N}^+}$ is strictly increasing with $\lim \limits_{i \to \infty} c_i = c$. 
    the only difference in the strategies $\sigma_0^i$ as $i$ changes is the value of 
    % $\alpha \cdot k_i$ and $\beta \cdot k_i$, 
    $k_i$,
    i.e, we increase the value of $\alpha \cdot k_i$ and $\beta \cdot k_i$ with increasing $k_i$ such that the effect of $\pi_2$ and $\pi_3$ on the mean-payoff is minimised.
    Thus, at the limit, as $i \to \infty$, the sequence $(c_i)_{i \in \mathbb{N}^+}$ converges to $\alpha \cdot \mpgen_0(l_1) + \beta \cdot \mpgen_0(l_2) = c$.
\end{proof}
These witnesses or plays in the sequence are regular, and they differ from each other only in the value of $k_i$ that they use.
% The proof of \cref{PropConvergenceStrategies} appears in \cref{app:PropConvergenceStrategies}.
\begin{comment}
\begin{proof}[Proof sketch]
    % For every $\delta > 0$, there exists $i \in \mathbb{N}$ such that $c - \delta < c_i < c$. 
    Consider the play $\pi^i = \pi_{1i}(l^{\alpha \cdot k_i}_{1i} \cdot \pi_{2i} \cdot l^{\beta \cdot k_i}_{2i} \cdot \pi_{3i})^{\omega}$ which is a witness for $ \asv(\sigma_0^i)(v) > c_i$ for the strategy $\sigma_0^i$.
    % such that $\asv (\sigma_0^i)(v) > c_i$. 
    Let $\mpinf_0(\pi^i)= c'_i > c_i$. 

    % Without loss of generality, assume that
    We have that $\mpinf_0(\pi^i)$ increases proportionally with $i$ as $\alpha \cdot k_i$ and $\beta \cdot k_i$ increase with increasing $k_i$, 
    % We can make this assumption 
    % This follows because we disregard the cases where $\asv(v)$ is achievable with some finite memory strategy of Player~0, i.e., we only consider the case where $\asv(v)$ is not achievable by a finite memory strategy of Player~0.

    % Now we can construct a series of $(\pi^i)_{i \in \mathbb{N}^+}$ such that the  mean-payoff of Player~0 in $\pi_i$ increases with $i$, i.e.,
    
    % Note that there are finitely many possible simple plays and simple cycles.
    % Thus w.l.o.g. we can assume that in the sequence $(\pi^i)_{i \in \mathbb{N}^+}$, the finite plays $\pi_{1i}, \pi_{2i}, \pi_{3i}$, and the simple cycles $l_{1i}, l_{2i}$ are the same for different values of $i$.
    % Thus, $\mpinf_0(\pi_{1i}(l^{\alpha \cdot k_i}_{1i} \cdot \pi_{2i} \cdot l^{\beta \cdot k_i}_{2i} \cdot \pi_{3i})^{\omega}) = c'_i > c_i, \mpinf_0(\pi_{1i}(l^{\alpha \cdot k_{i+1}}_{1i} \cdot \pi_{2i} \cdot l^{\beta \cdot k_{i+1}}_{2i} \cdot \pi_{3i})^{\omega}) = c'_{i+1} > c'_i, \mpinf_0(\pi_{1i}(l^{\alpha \cdot k_{i+2}}_{1i} \cdot \pi_{2i} \cdot l^{\beta \cdot k_{i+2}}_{2i} \cdot \pi_{3i})^{\omega}) = c'_{i+2} > c'_{i+1}$, and so on, and 
% 
% 
    % For every $i \in \mathbb{N}$, let $\mpinf_0(\pi_i) > c_i$ for some $c_i$. We note here that, for every $i \in \mathbb{N}$, there exists a finite memory strategy $\sigma_0^i$ which has a witness $\pi_i$ for $ \asv(\sigma_0^i)(v) > c_i$. 
    % We note that the sequence $(c_i)_{i \in \mathbb{N}^+}$ is strictly increasing with $\lim \limits_{i \to \infty} c_i = c$. 
    % the only difference in the strategies $\sigma_0^i$ as $i$ changes is the value of 
    % $\alpha \cdot k_i$ and $\beta \cdot k_i$, 
    % $k_i$,
    % i.e, we increase the value of $\alpha \cdot k_i$ and $\beta \cdot k_i$ 
    and with increasing $k_i$ such that the effect of $\pi_2$ and $\pi_3$ on the mean-payoff is minimised.
    At the limit, as $i \to \infty$, the sequence $(c_i)_{i \in \mathbb{N}^+}$ converges to $\alpha \cdot \mpgen_0(l_1) + \beta \cdot \mpgen_0(l_2) = c$.
\end{proof}
\end{comment}

% \begin{remark} 
%     \label{RemarkStratEqual} 
%     We note that for the sequence of strategies $(\sigma_0^i)_{i \in \mathbb{N}^+}$, the memoryless strategies which Player~0 uses when Player~1 deviates from $\pi_i$ are the same for all $i \in \mathbb{N}^+$.
% \end{remark}

% For the sequence of strategies $(\sigma_0^i)_{i \in \mathbb{N}^+}$, we note that the mean-payoff of Player~1 for the play $\pi_i$ changes monotonically with $i$. This is a result of proportional increase of $\alpha \cdot k_i$ and $\beta \cdot k_i$ with $i$. 

To show that $\lim \limits_{i \to \infty} \asv(\sigma_0^i)(v) = c$, we construct a play $\pi^*$ that starts from $v$, follows $\pi^1$ until the mean-payoff of Player~0 over the prefix becomes greater than $c_1$.
Then for $i \in \{2, 3, \dots\}$, starting from $\first(l_1)$, it follows $\pi^i$, excluding the initial simple finite play $\pi_1$, until the mean-payoff of the prefix of $\pi^i$ becomes greater than $c_i$.
Then the play $\pi^*$ follows the prefix of the play $\pi^{i+1}$, excluding the initial finite play $\pi_1$, and so on.
% for $i \in \{1, 2, 3, \dots\}$, the play $\pi_*$ follows the play $\pi_i$ until the mean-payoff of the prefix of the play $\pi_*$ becomes greater than $c_i$. Then the play $\pi^*$ follows the play $\pi_{i+1}$ and so on. We define a strategy $\sigma_0^*$ for Player~0 as follows: if Player~1 follows the play $\pi_*$, then Player~0 follows the play $\pi*$. If Player~1 deviates from $\pi_*$, then Player~0 employs the memoryless strategy described by \cref{RemarkStratEqual}.
% 
Clearly, we have that $\mpinf_1(\pi^*) = c$. We let $\mpinf_1(\pi^*) = d = \alpha \cdot \mpgen_1(l_1) + \beta \cdot \mpgen_1(l_2)$.

For the sequence of plays $(\pi^i)_{i \in \mathbb{N}^+}$ which are witnesses for $(\asv(\sigma_0^i)(v) > c_i)_{i \in \mathbb{N}^+}$ for the strategies $(\sigma_0^i)_{i \in \mathbb{N}^+}$, we let $\mpinf_1(\pi_i) = d_i$. 
We state the following proposition.

\begin{proposition}
\label{PropMonotonicDi}
The sequence $(d_i)_{i \in \mathbb{N}^+}$ is monotonic, and it converges to $d$ in the limit.
\end{proposition}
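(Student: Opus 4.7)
Each play $\pi^i = \pi_1 (l_1^{\alpha k_i}\,\pi_2\, l_2^{\beta k_i}\,\pi_3)^\omega$ is ultimately periodic, so its mean-payoff is unaffected by the finite prefix $\pi_1$ and equals the average weight over one full period. Writing $n_j = |l_j|$ for $j \in \{1,2\}$, this gives the closed form
$$d_i \;=\; \frac{\alpha k_i\, w_1(l_1) \;+\; w_1(\pi_2) \;+\; \beta k_i\, w_1(l_2) \;+\; w_1(\pi_3)}{\alpha k_i\, n_1 \;+\; |\pi_2| \;+\; \beta k_i\, n_2 \;+\; |\pi_3|},$$
which is a rational function of $k_i$ of the form $(A k_i + C)/(B k_i + D)$ with $A,B,C,D$ independent of $i$ and $B>0$.

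A direct calculation yields, for any $k > k' > 0$,
$$\frac{Ak + C}{Bk + D} \;-\; \frac{Ak' + C}{Bk' + D} \;=\; \frac{(AD - BC)(k - k')}{(Bk + D)(Bk' + D)},$$
whose sign depends only on $AD - BC$. Since $k_i$ is strictly increasing in $i$ by construction (see \cref{PropConvergenceStrategies}), the sequence $(d_i)_{i \in \mathbb{N}^+}$ is monotonic: non-decreasing if $AD \geqslant BC$, non-increasing otherwise. Moreover, $d_i \to A/B$ as $k_i \to \infty$. The identification $A/B = \alpha \mpgen_1(l_1) + \beta \mpgen_1(l_2) = d$ follows from the same normalization of $\alpha,\beta$ that was used in \cref{LemPlaysAsWitnessForASV} and in the proof of \cref{PropConvergenceStrategies} to establish the analogous limit $\mpinf_0(\pi^i) \to c$ for the first coordinate, now applied to the second coordinate.

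The main obstacle is merely bookkeeping: one must check that the rational constants $A,B,C,D$ appearing for the second coordinate are the ``same'' ones (up to the normalization of $\alpha,\beta$ fixed by the two simple cycles) as those governing the convergence of $(c_i)_{i\in \mathbb{N}^+}$ to $c$ in the first coordinate, so that $A/B$ coincides with the $d$ of the statement. All the substantive game-theoretic content -- the existence of a sequence of regular witnesses sharing fixed simple paths $\pi_1,\pi_2,\pi_3$ and fixed simple cycles $l_1,l_2$ -- has already been carried out in \cref{PropConvergenceStrategies}, so the present proof reduces to the elementary monotonicity computation above.
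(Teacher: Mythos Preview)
Your proof is correct and follows essentially the same approach as the paper. Both arguments rest on the observation that $d_i$ is a M\"obius-type function $(Ak_i+C)/(Bk_i+D)$ of the single growing parameter $k_i$, hence monotone with limit $A/B$; the paper phrases this informally (``the effect of the finite simple plays $\pi_2$ and $\pi_3$ decreases with increasing $\alpha k_i$ and $\beta k_i$'') and states the sign condition as $\alpha\,\mpgen_1(l_1)+\beta\,\mpgen_1(l_2) \geqslant \frac{w_1(\pi_2)+w_1(\pi_3)}{|\pi_2|+|\pi_3|}$, which is exactly your $AD\geqslant BC$ once the identification $A/B=d$ is made. Your version is the more explicit of the two, and you are right to flag the normalisation of $\alpha,\beta$ (cycle-length weighting) as the only point requiring care; the paper simply inherits it from \cref{PropConvergenceStrategies} without comment.
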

% \subsection{Proof of \cref{PropMonotonicDi}}
\begin{proof}
Recall that $\mpinf_0(\pi^i)$ increases monotonically with increasing $i$.
Since the effect of the finite simple plays $\pi_2$ and $\pi_3$ decreases with increasing $\alpha \cdot k_i$ and $\beta \cdot k_i$, the mean-payoff on the second dimension also changes monotonically.
If $\alpha \cdot \mpgen_1(l_1) + \beta \cdot \mpgen_1(l_2) \geqslant \frac{w_1(\pi_2) + w_1(\pi_3)}{|\pi_2| + |\pi_3|}$, then the sequence $(d_i)_{i \in \mathbb{N}^+}$ is monotonically non-decreasing.
Otherwise, the sequence is monotonically decreasing.

The fact that this sequence converges to $d$ in the limit can be seen from the construction of $\pi^*$ as described above.
\end{proof}

The above two propositions establish the existence of an infinite sequence of regular witnesses $\asv(\sigma_0^i)(v) > c_i$ for  a sequence of increasing numbers $c_1 < c_2 < \dotsc < c$, such that the mean-payoffs of the witnesses are monotonic and at the limit, the mean-payoffs of the witnesses converge to $c$ and $d$ for Player~0 and Player~1 respectively. These observations show the existence of a witness $\pi^*$ which gives Player~0 a mean-payoff value at least $c$ and Player~1 a mean-payoff value equal to $d$. 
% We show that Player~1 does not have any $\epsilon$-optimal strategy to deviate from $\pi^*$.
Assuming that Player~0 has a corresponding strategy $\sigma_0$, we show that Player~1 does not have an $\epsilon$-best response to $\sigma_0$ that gives Player~0 a payoff less than $c$.
Now, we have the ingredients to prove \cref{ThmAchiev}.

\begin{proof}[Proof of \cref{ThmAchiev}]
% We start by constructing a sequence of increasing numbers $c_1 < c_2 < c_3 < \dotsc < c$ such that:
% \begin{itemize}
%     \item For every $i \in \mathbb{N}^+$, we consider a strategy $\sigma_0^i$ of Player~0 that ensures $\asv(\sigma_0^i)(v) > c_i$.[Follows from \cref{ThmNpForASV}]
%     \item The strategy $\sigma_0^i$ follows a play $\pi^i = \pi_{1}(l^{\alpha \cdot k_i}_{1} \cdot \pi_{2} \cdot l^{\beta \cdot k_i}_{2} \cdot \pi_{3})^{\omega}$ where $\pi_1, \pi_2$ and $\pi_3$ are simple finite plays and $l_1, l_2$ are simple cycles in the game $\mathcal{G}$. [Follows from \cref{PropConvergenceStrategies}]
%     \item We let $\mpinf_0(\pi^i) = d_i$. The sequence $(d_i)_{i\in \mathbb{N^+}}$ is monotonic.[Follows from \cref{PropMonotonicDi}]
%     \item We construct a play $\pi^*$ from $(\pi^i)_{i \in \mathbb{N}^+}$ as described above such that $\mpinf_0(\pi^*)=c$ and $\mpinf_1(\pi^*)=d$.
% \end{itemize}

% If the $\asv$ is not achievable, then there exists a strategy of Player~1 to enforce some play $\pi'$ such that $\mpinf_0(\pi') = c' < c$ and $\mpinf_1(\pi') = d' > d - \epsilon$. 
% Now, we use the monotonicity of the sequence ($d_i$)$_{i \in \mathbb{N}^+}$ to show a contradiction. 
We consider a sequence of increasing numbers $c_1 < c_2 < c_3 < \dotsc < c$ such that for every $i \in \mathbb{N}^+$, by \cref{ThmNpForASV}, we consider a finite memory strategy $\sigma_0^i$ of Player~0 that ensures $\asv(\sigma_0^i)(v) > c_i$.

If the $\asv$ is not achievable, then there exists a strategy of Player~1 to enforce some play $\pi'$ such that $\mpinf_0(\pi') = c' < c$ and $\mpinf_1(\pi') = d' > d - \epsilon$. 
Now, we use the monotonicity of the sequence ($d_i$)$_{i \in \mathbb{N}^+}$ established in \cref{PropMonotonicDi} to show a contradiction. Since the sequence ($d_i$)$_{i \in \mathbb{N}^+}$ is monotonic, there can be two cases:
\vspace{-.1cm}
\begin{enumerate}
    \item The sequence ($d_i$)$_{i \in \mathbb{N}^+}$ is monotonically non-decreasing.
    \item The sequence ($d_i$)$_{i \in \mathbb{N}^+}$ is monotonically decreasing.
\end{enumerate}

We start with the first case where the sequence ($d_i$)$_{i \in \mathbb{N}^+}$ is non-decreasing.
Assume for contradiction that $\asv(v)$ is not achievable, i.e. Player~1 deviates from $\pi^*$ to enforce the play $\pi'$ such that $\mpinf_0(\pi') = c' < c$ and $\mpinf_1(\pi') = d'$.

Since ($d_i$)$_{i \in \mathbb{N}^+}$ is non-decreasing, and thus $d \geqslant d_i$ for all $i \in \mathbb{N}^+$, and since Player~1 can let his payoff to be reduced by an amount that is less than $\epsilon$ in order to reduce the payoff of Player~0, for all $i \in \mathbb{N}^+$ we have that $d' > d_i - \epsilon$.
We know that the sequence ($c_i$)$_{i \in \mathbb{N}^+}$ is increasing. Thus, there exists a $j \in \mathbb{N}$ such that $c' < c_j$.
Note that if $\pi'=\pi^{r}$ for some index $r$, we consider some $j$ which is also greater than $r$.

Now, consider the strategy $\sigma_0^j$ of Player~0 which follows the play $\pi_j$. We know that $\mpinf_0(\pi_j) > c_j$ and $\mpinf_1(\pi_j) = d_j$. We also know from \cref{LemPlaysAsWitnessForASV} that the play $\pi_j$ does not cross a $(c_j, d_j)^{\epsilon}$-bad vertex. 
Since by the construction of $\pi^*$, and by \cref{PropConvergenceStrategies}, the set of vertices appearing in the play $\pi^*$ is the same as the set of vertices appearing in the play $\pi_j$, for every vertex $v$ in $\pi^*$, we have that Player $1$ does not have a strategy such that 
$v \vDash \ll 1 \gg \mpinf_0 \leqslant c_j \land \mpinf_1 > d_j-\epsilon$.
Since $c' < c_j$ and $d' > d_j-\epsilon$, it also follows that Player $1$ does not have a strategy such that $v \vDash \ll 1 \gg \mpinf_0 \leqslant c' \land \mpinf_1 \geqslant d'$.
Stated otherwise, from the determinacy of multi-player mean-payoff games, we have that Player $0$ has a strategy to ensure $v \not \vDash \ll 1 \gg \mpinf_0 \leqslant c' \land \mpinf_1 \geqslant d'$ for every vertex $v$ appearing in $\pi^*$.
In fact, Player $0$ can ensure $v \not \vDash \ll 1 \gg \mpinf_0 \leqslant c' \land \mpinf_1 \geqslant d'$ by choosing the strategy $\sigma_{j'}$ for some $j' \ge j$.
Since $\asv(v) = \sup\limits_{\sigma_0 \in \Sigma_0} \asv(\sigma_0)(v)$, 
and the sequence 
$(c_i)_{i \in \mathbb{N}^+}$ is increasing, and we have that $\asv(\sigma_i)(v) > c_i$ for all $i \in \mathbb{N}^+$, it follows that the existence of $\pi'$ is a contradiction.

Now, we consider the case where the sequence ($d_i$)$_{i \in \mathbb{N}^+}$ is monotonically decreasing.
Again, assume for contradiction that $\asv(v)$ is not achievable, i.e., Player~1 deviates from $\sigma_0^*$ to enforce the play $\pi'$ such that $\mpinf_0(\pi') = c' < c$ and $\mpinf_1(\pi') = d'$.
Since the sequence ($d_i$)$_{i \in \mathbb{N}^+}$ is monotonically decreasing, we know that there must exist a $j \in \mathbb{N}$ such that 
\begin{inparaenum}[(i)]
\item $d' > d_j - \epsilon$, and $\forall i \geqslant j$, we have that $d' > d_i - \epsilon$, and
\item $c_j > c'$, which follows since ($c_i$)$_{i \in \mathbb{N}^+}$ is a strictly increasing sequence.
\end{inparaenum}
Thus for every vertex $v$ in $\pi^*$, Player $1$ does not have a strategy such that there exists a play $\pi$ in $\outv(\sigma_0^j)$, and $v \vDash \ll 1 \gg \mpinf_0 \leqslant c_j \land \mpinf_1 > d_j-\epsilon$.

Finally, using the fact that $c' < c_j$ and $d' > d_j-\epsilon$, the contradiction follows exactly as above where the sequence ($d_i$)$_{i \in \mathbb{N}^+}$ is monotonically non-decreasing.\footnote{We note that in the definition of $\epsilon$-best response in \cref{sec:prelim}, if we allow 
% we allow $\epsilon \geqslant 0$. However, in this work
% % since Player~1 is not considered to be fully rational, 
% to model nonzero perturbation, we have $\epsilon > 0$.
% In the proof above, if we consider $\epsilon \geqslant 0$, 
$\mpinf_1(\outv(\sigma_0,\sigma_1)) \geqslant \mpinf_1(\outv(\sigma_0,\sigma'_1)) - \epsilon$, 
then the existence of a $j$ for the case when the sequence ($d_i$)$_{i \in \mathbb{N}^+}$ is monotonically decreasing, does not necessarily hold true.}
\end{proof}

% \track[FOSSACS]{We note here that the two propositions can be used to establish the existence of an infinite sequence of regular witnesses $\asvgen(\sigma_0^i)(v) > c_i$ for  a sequence of increasing numbers $c_1 < c_2 < \dotsc < c$, such that the mean-payoffs of the witnesses are monotonic and at the limit, the mean-payoff of Player~0 of the witnesses converges to $c$ and the mean-payoff of Player~1 of the witnesses converges to a value $d$ at the limit. However, we find that we cannot establish a similar achievability result for the $\asvgen$ case, as the trouble arises when the sequence of ($d_i$)$_{i \in \mathbb{N}^+}$ is monotonically non-decreasing. If we must prove that the $\asvgen$ is indeed achievable, we must show any deviation from the play $\pi^*$ would yield a mean-payoff value less than $c$ to Player~0 and greater than or equal to $d$ for Player~1. Consider the case where the deviation yields a mean-payoff of $d$ to Player~1. In this case, Player~1 has a valid optimal strategy to deviate from the play $\pi^*$. Thus, we can show that $\asvgen$ may not always be achievable.}
  % \noindent In this \mychapter, we study the memory requirements for the strategies of both Player~0 and Player~1.

\subparagraph{{\bf Memory requirements of the players' strategies}}
First we show that there exists a mean-payoff game $\mathcal{G}$ in which Player~0 needs an infinite memory strategy to achieve the $\asv$.

\begin{theorem}
\label{ThmExNeedInfMem}
There exist a mean-payoff game $\mathcal{G}$, a vertex $v$ in $\mathcal{G}$, and an $\epsilon > 0$ such that Player $0$ needs an infinite memory strategy to achieve the
% the value
$\asv(v)$.
\end{theorem}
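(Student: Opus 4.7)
The plan is to exhibit the game $\mathcal{G}$ from Example~\ref{Ex:CompASVTOR_new} (see Figure~\ref{fig:ach_example_new}) and to pick an \emph{irrational} $\epsilon \in (0,1)$ such that $2\epsilon/(1-\epsilon)$ is irrational, for instance $\epsilon = 1/\pi$, so $2\epsilon/(1-\epsilon) = 2/(\pi-1)$. From the computation in Example~\ref{Ex:CompASVTOR_new} we already have $\asv(v_0) = 1-\epsilon$ for every $\epsilon \in (0,1)$, so it only remains to rule out finite-memory attainability; the existence of some achieving strategy then follows from Theorem~\ref{ThmAchiev}, and that strategy must necessarily have infinite memory.

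The core step is an irrationality argument. Since $v_0$ is Player~1's only decision vertex, her strategies effectively split into two flavours: (a)~always take $v_0 \to v_1$, and (b)~eventually take $v_0 \to v_2$. Flavour~(b) traps the play at $v_2$ and yields mean-payoff $(0,1)$. Flavour~(a) produces, against any finite-memory $\sigma_0$, a deterministic play on a finite combined state space, hence an eventually periodic lasso with cycle mean $(x,y)$; a direct inspection of the admissible cycles shows $x + y = 2$ and $y = (2T + 2S)/(2T + S)$, where $T$ counts visits to $v_0$ and $S$ counts $v_1$-self-loops in the periodic part, so $y \in \mathbb{Q}$. I would then run a short case analysis on the $\epsilon$-best-response definition to conclude that $\asv(\sigma_0)(v_0) = 1-\epsilon$ can hold only when $y = 1+\epsilon$ exactly: any $y > 1+\epsilon$ leaves option~(a) as the unique $\epsilon$-best response but forces $x = 2 - y < 1-\epsilon$, while $y \leq 1+\epsilon$ makes option~(b) an $\epsilon$-best response and drags $\asv(\sigma_0)(v_0)$ down to $0$. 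Rewriting $y = 1+\epsilon$ in terms of $\bar{n} := S/T$ yields $\bar{n} = 2\epsilon/(1-\epsilon)$, contradicting the irrationality of the right-hand side since $\bar{n} \in \mathbb{Q}$.

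The main delicate point will be to verify that every response of Player~1, possibly using infinite memory, really factors through one of the two flavours above, so that the lasso-rationality argument covers all $\epsilon$-best responses and not just memoryless ones. For concreteness I would also sketch an explicit infinite-memory strategy of Player~0 that realises $\asv(v_0) = 1-\epsilon$: play successive blocks $v_0 v_1^{k_i+1}$ with $k_i = \lfloor i \cdot 2\epsilon/(1-\epsilon)\rfloor - \lfloor (i-1) \cdot 2\epsilon/(1-\epsilon)\rfloor$, so that $\frac{1}{n}\sum_{i=1}^{n} k_i$ converges to the irrational $2\epsilon/(1-\epsilon)$. The induced play makes Player~1's mean-payoff converge exactly to $1+\epsilon$ and Player~0's to $1-\epsilon$, so option~(b) is not an $\epsilon$-best response and Player~1 is confined to option~(a), delivering $\asv(\sigma_0)(v_0) = 1-\epsilon$ as required.
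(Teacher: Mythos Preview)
Your proposal is correct and constitutes a genuinely different proof from the paper's. The paper works with the game of Figure~\ref{fig:no_finite_strategy}, where the weight $(0,2+2\epsilon)$ on the $v_1$-self-loop is chosen so that $\asv(v_0)=1$; the need for infinite memory there is \emph{structural}: the transition edges $v_0\!\to\! v_1$ and $v_1\!\to\! v_0$ carry weight $(0,0)$, so any eventually periodic play passing through them infinitely often incurs a non-vanishing drag that prevents simultaneously hitting Player~0 payoff $1$ and Player~1 payoff $1+\epsilon$. Your route instead recycles the game of Example~\ref{Ex:CompASVTOR_new}/Figure~\ref{fig:ach_example_new} and leverages an \emph{irrationality} obstruction: since every finite-memory lasso in that game yields a rational Player~1 value $y=(2T+2S)/(2T+S)$, hitting the knife-edge $y=1+\epsilon$ is impossible when $\epsilon$ is irrational, and your case split (which is correct: $y>1+\epsilon$ gives Player~0 $2-y<1-\epsilon$; $y<1+\epsilon$ makes option~(b) an $\epsilon$-best response and collapses $\asv(\sigma_0)(v_0)$ to $0$) forces exactly that knife-edge.

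What each approach buys: your argument is more elementary, reuses already-computed material, and dovetails nicely with Theorem~\ref{ThmExNeedFinMem}---the very same game admits finite-memory achievability for rational $\epsilon$ and requires infinite memory for irrational $\epsilon$, exhibiting a sharp dichotomy. The paper's construction, by contrast, shows that the infinite-memory requirement is not an artifact of irrational $\epsilon$: its $\asv$ value is the integer $1$, and the obstruction persists even when all data are rational. In that sense the paper's example is conceptually stronger (and arguably closer to the spirit of the algorithmic parts of the paper, where $\epsilon$ is treated as a rational input), whereas yours is shorter and more self-contained. The delicate point you flag---that Player~1's responses really do split into the two flavours---is indeed easy here since $v_0$ is Player~1's sole decision vertex and $v_2$ is absorbing, so ``never go to $v_2$'' determines the play entirely through $\sigma_0$.
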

% \subsection{Proof of \cref{ThmExNeedInfMem}}
% \label[appendix]{appendix:ThmExNeedInfMem}
\begin{figure}[t]
% \centering
    % \begin{minipage}[b]{1\textwidth}    
\centering
\scalebox{0.9}{
    \begin{tikzpicture}[->,>=stealth',shorten >=1pt,auto,node distance=2cm,
                        semithick, squarednode/.style={rectangle, draw=blue!60, fill=blue!5, very thick, minimum size=7mm}]
      \tikzstyle{every state}=[fill=white,draw=black,text=black,minimum size=8mm]
    
      \node[squarednode]   (A)                    {$v_0$};
      \node[state, draw=red!60, fill=red!5]         (B) [left of=A] {$v_1$};
      \node[state, draw=red!60, fill=red!5]         (C) [right of=A] {$v_2$};
      \node[draw=none, fill=none, minimum size=0cm, node distance = 1.2cm]         (D) [below of=A] {$start$};
      \path (A) edge [bend left, below] node {(0,0)} (B)
                edge              node {(0,1)} (C)
                edge [loop above] node {(2,0)} (A)
            (B) edge [loop above] node {(0,2+2$\epsilon$)} (B)
                edge [bend left, above] node {(0,0)} (A)
            (C) edge [loop above] node {(0,1)} (C)
            (D) edge [left] node {} (A);
    \end{tikzpicture}}
    \caption{Finite memory strategy of Player~$0$ may not achieve $\asv(v_0)$. Also, no finite memory $\epsilon$-best response exists for Player~1 for the strategy $\sigma_0$ of Player~0.}
    \label{fig:no_finite_strategy}
    % \end{minipage}
\end{figure}
\begin{proof}
Consider the example in \cref{fig:no_finite_strategy}. We show that in this example the $\asv(v_0) = 1$, and that this value can only be achieved using an infinite memory strategy. Assume a strategy $\sigma_0$ for Player~0
such that the game is played in rounds.
In round $k$: 
% defined as: starting with $k=1$, repeat forever, 
\begin{inparaenum}[(i)]
%\begin{itemize}[-]
    \item if Player~1 plays $v_0 \to v_0$ repeatedly at least $k$ times before playing $v_0 \to v_1$, then from $v_1$, play $v_1 \to v_1$ repeatedly $k$ times and then play $v_1 \to v_0$ and 
    % increment $k$ by 1, i.e., $k=k+1$; 
    move to round $k+1$;
    \item else, if Player~1 plays $v_0 \to v_0$ less than $k$ times before playing $v_0 \to v_1$, then from $v_1$ , play $v_1 \to v_0$.
\end{inparaenum}
Note that $\sigma_0$ is an infinite memory strategy.
%\end{itemize}

The best-response for Player~1 to strategy $\sigma_0$ would be to choose $k$ sequentially as $k = 1, 2, 3, \dotsc$, to get a play $\pi = ((v_0)^i(v_1)^i)_{i \in \mathbb{N}}$. We have that $\mpinf_1(\pi) = 1+\epsilon$ and $\mpinf_0(\pi) = 1$. Player~1 can only sacrifice an amount that is less than $\epsilon$ to minimize the mean-payoff of Player~0, and thus he would not play $v_0 \to v_2$. 

We now show that $\asv(\sigma_0)(v_0) = \asv(v_0)$, and that no finite memory strategy of Player~0 can achieve an $\asv(v_0)$ of $1$.
First we observe in \cref{fig:no_finite_strategy} that a strategy $\sigma_1$ of Player~1 that prescribes playing the edge $v_0 \to v_2$ some time 
% which is defined as playing $v_0 \to v_2$,
yields a mean-payoff of 1 for Player~1, and hence we conclude that $\sigma_1 \notin \br(\sigma_0)$. Player~1 cannot play any other strategy without increasing the mean-payoff of Player~0 and/or decreasing his own payoff.
% Note that, it can be easily seen 
We can see that Player~1 does not have a finite memory best-response strategy. Thus, the $\asv(\sigma_0)(v_0) = 1$.

We now prove the claim  $\asv(\sigma_0)(v_0) = \asv(v_0)$. For every strategy $\sigma_1$ of Player~1 such that $\sigma_1 \in \br(\sigma_0)$, we note that the higher the payoff Player~1 has, the lower is the payoff for Player~0. For every other strategy $\sigma_0'$ of Player~0, if best-response of Player~1 to $\sigma_0'$ gives a mean-payoff less than $1+\epsilon$, then Player~1 will switch to $v_2$, thus giving Player~0 a payoff of 0. If best-response of Player~1 to $\sigma_0'$ gives him a mean-payoff greater than $1+\epsilon$, then Player~0 will have a lower $\asv(\sigma_0')$.

Now we show that a finite memory strategy of Player~0 cannot achieve an $\asv(v_0)$ of $1$.
In order to achieve $\asv(v_0)$ of $1$, the edge $v_0 \to v_1$ and the edge $v_1 \to v_0$ need to be taken infinitely many times. 
If this does not happen, we note that either Player~1 would play $v_0 \to v_1$ and Player~0 would loop forever on $v_1$ or Player~1 would play $v_0 \to v_2$ and Player~0 would loop forever on $v_2$, both of which yield a mean-payoff of $0$ to Player~0.
Additionally, we note that the two edges $v_0 \to v_1$ and $v_1 \to v_0$ have edge weights $(0,0)$, i.e., it gives a payoff of $0$ to both players.
Therefore, we need a strategy which suppresses the effect of the edges on the mean-payoff of a play. 
% We can do this by ensuring that the frequency at which these edges are taken is $0$. 
This can be achieved by choosing these edges less and less frequently.
We now show that this cannot be done with any finite memory strategy, and we indeed need an infinite memory strategy as described by the Player~0 strategy $\sigma_0$.
Consider a finite memory strategy of Player~0 and an an infinite memory $\epsilon$-best-response for Player~1 for which no finite memory strategy exists. 
Given that he owns only one vertex which is $v_0$, such an infinite memory strategy can only lead to looping over $v_0$ more and more, thus giving him a payoff which is eventually 0.
Thus consider a finite memory response of Player~1 to the finite memory strategy of Player~0. 
Note that Player~0 would choose a finite memory strategy such that the best-response of Player~1 gives him a value of at least $1 + \epsilon$. Also since both players have finite memory strategies, the resultant outcome is a regular play over vertices $v_0$ and $v_1$. 
In every such regular play, the effect of the edge from $v_0$ to $v_1$ and the edge from $v_1$ to $v_0$ is non-negligible, and hence if the payoff of Player~1 is at least $1 + \epsilon$, the payoff of Player~0 will be less than 1. 
Thus no finite memory strategy can achieve an $\asv$ that is equal to 1.
%\todo[inline]{Reviewer: Make this reasoning in the last paragraph more formal.}
\end{proof}
We also show that exist mean-payoff games in which a finite memory (but not memoryless) strategy for Player $0$ can achieve the $\asv$.
% \subsection{Example of a mean-payoff game where finite memory strategy of Player $0$ achieves the $\asv$}
% \label[appendix]{appendix:ThmExNeedFinMem}
\begin{figure}[t]
    \begin{minipage}[b]{1\textwidth}
    \centering 
    \scalebox{1}{
    \begin{tikzpicture}[->,>=stealth',shorten >=1pt,auto,node distance=2cm,
                        semithick, squarednode/.style={rectangle, draw=blue!60, fill=blue!5, very thick, minimum size=7mm}]
      \tikzstyle{every state}=[fill=white,draw=black,text=black,minimum size=8mm]
    
      \node[squarednode]   (A)                    {$v_0$};
      \node[state, draw=red!60, fill=red!5]         (B) [left of=A] {$v_1$};
      \node[state, draw=red!60, fill=red!5]         (C) [right of=A] {$v_2$};
      \node[draw=none, fill=none, minimum size=0cm, node distance = 1.2cm]         (D) [above of=A] {$start$};
      \path (A) edge [bend left, below] node {(1,1)} (B)
                edge              node {(0,1)} (C)
            (B) edge [loop above] node {(0,2)} (B)
                edge [bend left, above] node {(1,1)} (A)
            (C) edge [loop above] node {(0,1)} (C)
            (D) edge [left] node {} (A);
    \end{tikzpicture}}
    \caption{an example in which a finite memory strategy for Player $0$ achieves the $\asv(v_0)$.}
    \label{fig:finite_strategy_response}
    \end{minipage}
\end{figure}
\begin{theorem}
\label[appendix]{ThmExNeedFinMem}
There exists a mean-payoff game $\mathcal{G}$, a vertex $v$ in $\mathcal{G}$, and an $\epsilon > 0$ such that a finite memory strategy of Player $0$ that can not be represented by any memoryless strategy suffices to achieve $\asv(v)$.
\end{theorem}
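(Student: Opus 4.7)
The plan is to use as witness the game $\mathcal{G}$ of Figure~\ref{fig:finite_strategy_response}, with initial vertex $v_0$ and parameter $\epsilon = 1/3$. The arena of this game coincides with the one analysed in Example~\ref{Ex:CompASVTOR_new}, so we already know that $\asv(v_0) = 1 - \epsilon = 2/3$; it remains to show that this value is achieved by a finite memory strategy that is not equivalent to any memoryless one, and that no memoryless strategy achieves it.

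First I would define a two-state strategy $\sigma_0$: at $v_2$, Player~$0$ plays the only available self-loop, and at $v_1$, Player~$0$ uses a single memory bit that flips on every visit to $v_1$, playing the self-loop on the visit immediately following an arrival from $v_0$ and playing $v_1 \to v_0$ on the next visit. This is finite memory, but no memoryless strategy can simulate it, because at $v_1$ it prescribes two different actions depending on the history.

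Next I would verify that $\asv(\sigma_0)(v_0) = 2/3$. The only choices left to Player~$1$ are at the $v_0$-vertices. If Player~$1$ ever picks $v_0 \to v_2$, the play is absorbed in the $v_2$-loop, producing mean-payoff $(0,1)$. If Player~$1$ always picks $v_0 \to v_1$, the play is $(v_0 v_1 v_1)^\omega$, whose three edges have weights $(1,1)$, $(0,2)$, $(1,1)$ and hence mean-payoff $(2/3, 4/3)$. Therefore $\sup_{\sigma_1} \mpinf_1(\outv(\sigma_0, \sigma_1)) = 4/3$, and a response is $\epsilon$-best iff it gives Player~$1$ strictly more than $4/3 - 1/3 = 1$. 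The absorbed plays give Player~$1$ exactly $1$ and so are not $\epsilon$-best, whereas $(v_0 v_1 v_1)^\omega$ is. Hence every $\epsilon$-best response of Player~$1$ yields Player~$0$ mean-payoff $2/3$, so $\asv(\sigma_0)(v_0) = 2/3 = \asv(v_0)$.

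Finally I would rule out memoryless strategies. A memoryless strategy $\sigma_0'$ of Player~$0$ must commit to a single outgoing edge at $v_1$. If $\sigma_0'$ always loops at $v_1$, then Player~$1$'s unique $\epsilon$-best response is $v_0 \to v_1$ (since $v_2$ gives Player~$1$ mean-payoff $1$ and the $v_1$-loop gives $2$, and $1 \not> 2 - 1/3$), yielding Player~$0$ mean-payoff $0$. If $\sigma_0'$ always takes $v_1 \to v_0$, both branches from $v_0$ give Player~$1$ mean-payoff exactly $1$, so $v_0 \to v_2$ is also an $\epsilon$-best response, and Player~$0$'s infimum drops to $0$. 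In both cases $\asv(\sigma_0')(v_0) = 0 < 2/3$, so no memoryless strategy achieves $\asv(v_0)$. The only delicate point in the proof is the strictness of the $\epsilon$-best-response inequality, which is precisely what excludes the absorbed plays under $\sigma_0$ while forcing the inclusion of the $v_2$-branch under the second memoryless option.
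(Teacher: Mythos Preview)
Your proof is correct and follows essentially the same approach as the paper: both use the game of Figure~\ref{fig:finite_strategy_response}, exhibit a periodic strategy at $v_1$ that alternates looping with returning to $v_0$, and argue that the strict $\epsilon$-best-response inequality rules out the $v_2$-branch. Your version simply specialises to $\epsilon = 1/3$ (which suffices for the existential statement) and adds the explicit verification that neither memoryless choice at $v_1$ can achieve the value, which the paper omits.
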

\begin{proof}
To show the existence of mean-payoff games in which Player $0$ can achieve the adversarial value with finite memory (but not memoryless) strategies, we consider the example in \cref{fig:finite_strategy_response}. We show that $\asv(v_0) = 1 - \epsilon$. Assume a strategy $\sigma_0$ for Player~0 defined as: repeat forever, from $v_1$ play $j$ times $v_1 \to v_1$, and then repeat playing $v_1 \to v_0$ for $k$ times, with $j$ and $k$ chosen such that mean-payoff for Player~0 is equal to $1 - \epsilon$.
For every rational $\epsilon$, such a $k$ always exists.
In this example, we have that $k=\frac{1-\epsilon}{2\epsilon}j$. The $\epsilon$-best-response of Player~1 to $\sigma_0$ is to always play $v_0 \to v_1$ as by playing this edge forever, Player~1 gets a mean-payoff equal to $1+\epsilon$, whereas if Player~1 plays $v_0 \to v_2$, then Player~1 receives a payoff of $1$. Since $1 \ngtr (1 + \epsilon) - \epsilon$, a strategy of Player~1 that chooses $v_0 \to v_2$ is not an $\epsilon$-best-response for Player~1, thus forcing Player~1 to play $v_0 \to v_1$. Thus $\asv(v_0)$ is achieved with a finite memory strategy of size $k$ for Player~0. Note that this size $k$ is a function of $\epsilon$.
\end{proof}
% An example is given in \cref{appendix:ThmExNeedFinMem}.

% \input{Appendix/Section_3/fin_memory_example}
% \input{fin_memory_example}

Further, we show that there exist games such that for a strategy $\sigma_0$ of Player $0$, and an $\epsilon > 0$, there does not exist any finite memory best-response of Player $1$ to the strategy $\sigma_0$.

\begin{theorem}
\label{ThmP1NeedInfMem}
There exist a mean-payoff game $\mathcal{G}$, an $\epsilon > 0$, and a Player~0 strategy $\sigma_0$ in $\mathcal{G}$ such that every Player~1 strategy $\sigma_1 \in \br(\sigma_0)$ is an infinite memory strategy.
\end{theorem}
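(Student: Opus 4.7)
The plan is to reuse the game $\mathcal{G}$ of \cref{fig:no_finite_strategy} together with the Player~0 strategy $\sigma_0$ already constructed in the proof of \cref{ThmExNeedInfMem}: in round $k$, Player~0 waits for Player~1 to stay at $v_0$ at least $k$ consecutive times before cooperating with $k$ self-loops at $v_1$ and advancing to round $k+1$, and otherwise returns immediately from $v_1$ to $v_0$. I will show that against this $\sigma_0$, every $\epsilon$-best response of Player~1 requires infinite memory.

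First, I will verify that $\sup_{\sigma_1 \in \Sigma_1} \mpinf_1(\outv(\sigma_0,\sigma_1)) = 1+\epsilon$: the infinite memory strategy $\sigma_1^\star$ that in round $k$ plays exactly $k$ self-loops at $v_0$ and then $v_0 \to v_1$ produces the play whose mean-payoff for Player~1 is computed as $\sum_{k=1}^{K} k(2+2\epsilon) \big/ \sum_{k=1}^{K}(2k+2) \to 1+\epsilon$, and a cofinal analysis at intermediate positions of each round shows $\liminf$ also equals $1+\epsilon$. Hence $\sigma_1 \in \br(\sigma_0)$ iff $\mpinf_1(\outv(\sigma_0,\sigma_1)) > 1$, and $\sigma_1^\star$ witnesses that $\br(\sigma_0) \neq \emptyset$.

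The main step, which I expect to be the principal obstacle, is showing that every \emph{finite} memory strategy $\sigma_1$ of Player~1 yields $\mpinf_1(\outv(\sigma_0,\sigma_1)) \leqslant 1$, and hence cannot lie in $\br(\sigma_0)$. Fix any Mealy machine $\mathcal{M}$ with state set $Q$ realizing $\sigma_1$. Since Player~1 only chooses at $v_0$, I look at the iterated transition map $q \mapsto \delta(q,v_0)$ on $Q$: from every state $q$, either Player~1 eventually chooses $v_0 \to v_1$ or $v_0 \to v_2$ after a bounded number $N(q)$ of self-loops, or Player~1 sinks into a cycle of $Q$ all of whose states play $v_0 \to v_0$ and thus stays at $v_0$ forever. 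Let $N := \max_q N(q)$, which is finite because $|Q|$ is finite. Three cases arise: (a) Player~1 eventually plays $v_0 \to v_2$, after which the play is locked in the self-loop at $v_2$ of Player~1-weight $1$, giving $\mpinf_1 = 1$; (b) Player~1 stays at $v_0$ forever from some point onwards, giving $\mpinf_1 = 0$; (c) Player~1 keeps alternating between $v_0$ and $v_1$ without ever reaching $v_2$. In case (c), each visit to $v_0$ produces at most $N$ self-loops before moving to $v_1$, so once Player~0's round counter $k$ exceeds $N$, the guard $i_k \geqslant k$ in $\sigma_0$ fails every round and Player~0 always returns immediately from $v_1$; consequently the only edges of positive Player~1-weight visited from that point on are absent, and the tail of the play consists of blocks of weight $0$ for Player~1, forcing $\mpinf_1 = 0$. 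In all three cases $\mpinf_1 \leqslant 1 \not> 1$, so $\sigma_1 \notin \br(\sigma_0)$.

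Combining the two steps, $\br(\sigma_0)$ is non-empty but contains no finite memory strategy, which proves the theorem. The technical crux is the boundedness argument in case (c): extracting a uniform bound $N$ from Player~1's finite automaton on the length of any consecutive run of $v_0 \to v_0$ moves, and matching it against the unbounded growth of Player~0's round counter $k$ to force an eventual payoff collapse to $0$.
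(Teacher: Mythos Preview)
Your proposal is correct and follows exactly the paper's approach: the same game from \cref{fig:no_finite_strategy}, the same round-based strategy $\sigma_0$, and the same conclusion that every finite-memory Player~1 strategy yields $\mpinf_1 \leqslant 1$ while the supremum is $1+\epsilon$. Your write-up is in fact more detailed than the paper's proof, which simply asserts ``for all finite memory strategies, Player~1 gets at most $1$'' without spelling out the case analysis; your extraction of the uniform bound $N$ from the Mealy machine and the three-case split (a)/(b)/(c) makes explicit what the paper leaves to the reader.
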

% \subsection{Proof of \cref{ThmP1NeedInfMem}}
% \label[appendix]{appendix:ThmExP1NeedInfMem}
\begin{proof}
% \todo[inline]{This needs changes.}
Consider the example in \cref{fig:no_finite_strategy}, and the strategy $\sigma_0$ of Player $0$ where the game is played in rounds as described in \cref{ThmExNeedInfMem}:
In round $k$: 
% defined as: starting with $k=1$, repeat forever, 
\begin{inparaenum}[(i)]
%\begin{itemize}[-]
    \item if Player~1 plays $v_0 \to v_0$ repeatedly at least $k$ times before playing $v_0 \to v_1$, then from $v_1$, play $v_1 \to v_1$ repeatedly $k$ times and then play $v_1 \to v_0$ and 
    % increment $k$ by 1, i.e., $k=k+1$; 
    move to round $k+1$;
    \item else, if Player~1 plays $v_0 \to v_0$ less than $k$ times before playing $v_0 \to v_1$, then from $v_1$ , play $v_1 \to v_0$.
\end{inparaenum} 
We can see that for all 
% $\epsilon \in \mathbb{R}$, 
finite memory strategies, Player~1 gets at most $1$, and thus no finite memory strategy is an $\epsilon$-best-response to $\sigma_0$. For each round $k$, Player~1's $\epsilon$-best-response is to play $v_0 \to v_0$ repeatedly at least $k$ times before playing $v_0 \to v_1$, which is an infinite memory strategy for Player~1.
\end{proof}
\begin{comment}
\begin{proof}
% \todo[inline]{This needs changes.}
Consider the example in \cref{fig:no_finite_strategy}, and the strategy $\sigma_0$ of Player $0$ where the game is played in rounds as described in \cref{ThmExNeedInfMem}:
In round $k$: 
% defined as: starting with $k=1$, repeat forever, 
\begin{inparaenum}[(i)]
%\begin{itemize}[-]
    \item if Player~1 plays $v_0 \to v_0$ repeatedly at least $k$ times before playing $v_0 \to v_1$, then from $v_1$, play $v_1 \to v_1$ repeatedly $k$ times and then play $v_1 \to v_0$ and 
    % increment $k$ by 1, i.e., $k=k+1$; 
    move to round $k+1$;
    \item else, if Player~1 plays $v_0 \to v_0$ less than $k$ times before playing $v_0 \to v_1$, then from $v_1$ , play $v_1 \to v_0$.
\end{inparaenum} 
We can see that for all 
% $\epsilon \in \mathbb{R}$, 
finite memory strategies, Player~1 gets at most $1$, and thus no finite memory strategy is an $\epsilon$-best-response to $\sigma_0$. For each round $k$, Player~1's $\epsilon$-best-response is to play $v_0 \to v_0$ repeatedly at least $k$ times before playing $v_0 \to v_1$, which is an infinite memory strategy for Player~1.
\end{proof}
\end{comment}
% The proof is given in \cref{appendix:ThmExP1NeedInfMem}.

\bibliography{refs}

\newpage
\appendix
% \input{Appendix/appendix_prelims}
% \input{Appendix/Fragility_Robustness/index}
% \section{Proofs of \cref{sec:ThresholdProblem}}
% \label{app:ThresholdProblem}

% \input{Appendix/Fragility_Robustness/existence-strategy-threshold}
% \input{Appendix/Threshold_Problem/ExistenceOfPlaysFromConvexHull}
% \input{Appendix/Threshold_Problem/ThmWitnessASVInfMem}
% \input{Appendix/Threshold_Problem/LemPlaysAsWitnessForASV}
\section{Additional details in the proof of \cref{ThmWitnessASVFinMem}}
\label[appendix]{app:expressionForKAndTau}
Below we compute the expressions for $k$ and $\tau$ for case 1 in the proof of 
% the second part of \cref{ThmWitnessASVInfMem}.
\cref{ThmWitnessASVFinMem}.
% \begin{proof}
We know that for play $\pi = \pi_1\rho_1\rho_2\rho_3\dots$ , where $\rho_i = l_1^{[\alpha i]}.\pi_2.l_2^{[\beta i]}.\pi_3$, constants $\alpha, \beta \in \mathbb{R}^{+}$ are chosen such that: 
\begin{align*}
    \alpha \cdot \mpinf_0(l_1) + \beta \cdot \mpinf_0(l_2) &= c'' \text{ where $c'' > c$}\\
    \alpha \cdot \mpinf_1(l_1) + \beta \cdot \mpinf_1(l_2) &= d \\
    \alpha + \beta &= 1 \\
\end{align*}
We assume here that $\mpinf_0(l_1) > \mpinf_0(l_2)$ and $\mpinf_1(l_1) < \mpinf_1(l_2)$. This implies that one simple cycle, $l_1$, increases Player~0's mean-payoff while the other simple cycle, $l_2$, increases Player~1's mean-payoff. We build a play $\pi' = \pi_1 \cdot (l_1^{[\alpha \cdot k]} \cdot \pi_2 \cdot l_2^{[\beta \cdot (k+\tau)]} \cdot \pi_3)^{\omega}$ where we choose constant $k \in \mathbb{N}$ and constant $\tau > 0$ such that $\mpinf_0(\pi') = c'$ and $\mpinf_1(\pi') = d$ for some $c' > c$. We try to express the conditions for $k$ and $\tau$ below:
\small
\begin{align*}
    &\mpinf_0(\pi') \\
    &= \frac{k\cdot\alpha\cdot w_0(l_1) + (k + \tau) \cdot \beta \cdot w_0(l_2) + w_0(\pi_2) + w_0(\pi_3)}{k\cdot\alpha\cdot|l_1| + (k+\tau)\cdot\beta\cdot|l_2| + |\pi_2| + |\pi_3|} \\
    &= \frac{k\cdot(\alpha\cdot w_0(l_1) + \beta \cdot w_0(l_2)) + \tau \cdot \beta \cdot w_0(l_2) + w_0(\pi_2) + w_0(\pi_3)}{k\cdot(\alpha\cdot|l_1| + \beta\cdot|l_2|) + \tau\cdot\beta + |\pi_2| + |\pi_3|}\\
    &\mpinf_1(\pi') \\
    &= \frac{k\cdot\alpha\cdot w_1(l_1) + (k + \tau) \cdot \beta \cdot w_1(l_2) + w_1(\pi_2) + w_1(\pi_3)}{k\cdot\alpha\cdot|l_1| + (k+\tau)\cdot\beta\cdot|l_2| + |\pi_2| + |\pi_3|}\\
    &= \frac{k\cdot(\alpha\cdot w_1(l_1) + \beta \cdot w_1(l_2)) + \tau \cdot \beta \cdot w_1(l_2) + w_1(\pi_2) + w_1(\pi_3)}{k\cdot(\alpha\cdot|l_1| + \beta\cdot|l_2|) + \tau\cdot\beta + |\pi_2| + |\pi_3|}
\end{align*}
\normalsize
Let $|\pi_2| + |\pi_3| = v$, $\alpha\cdot w_0(l_1) + \beta \cdot w_0(l_2) = x_0$, $\alpha\cdot w_1(l_1) + \beta \cdot w_1(l_2) = x_1$, $\alpha\cdot|l_1| + \beta\cdot|l_2| = y$, $w_0(\pi_2) + w_0(\pi_3) = z_0$ and $w_1(\pi_2) + w_1(\pi_3) = z_1$. We simplify the inequalities above to get:

\begin{align*}
    \mpinf_0(\pi') &= \frac{k\cdot x_0 + \tau \cdot \beta \cdot w_0(l_2) + z_0}{k\cdot y + \tau\cdot\beta + v}\\
    \mpinf_1(\pi') &= \frac{k\cdot x_1 + \tau \cdot \beta \cdot w_1(l_2) + z_1}{k\cdot y + \tau\cdot\beta + v}
\end{align*}

\noindent We know that $\mpinf_0(\pi') = c'$ and $\mpinf_1(\pi') = d$. Thus,

\begin{align*}
    \frac{k\cdot x_0 + \tau \cdot \beta \cdot w_0(l_2) + z_0}{k\cdot y + \tau\cdot\beta + v} &= c'\\
    \frac{k\cdot x_1 + \tau \cdot \beta \cdot w_1(l_2) + z_1}{k\cdot y + \tau\cdot\beta + v} &= d\\
    k\cdot x_0 + \tau \cdot \beta \cdot w_0(l_2) + z_0 &= c' \cdot (k\cdot y + \tau\cdot\beta + v)\\
    k\cdot x_1 + \tau \cdot \beta \cdot w_1(l_2) + z_1 &= d \cdot (k\cdot y + \tau\cdot\beta + v)\\
\end{align*}
Simplifying the above inequalities we get:
\begin{align*}
    k\cdot(x_0 - c'\cdot y) &= c'\cdot v + \tau \cdot \beta \cdot (c' - w_0(l_2)) - z_0 \\
    \tau \cdot \beta \cdot (w_1(l_2) - d) &= v\cdot d + k\cdot(d\cdot y - x_1) - z_1 \\
\end{align*}
Finally, after substitution of $\tau$ in the first inequality expression and further simplification of both expressions, we finally get:
\begin{align*}
   k &= \frac{(c'\cdot v - z_0)(w_1(l_2) - d) + (c' - w_0(l_2))(v\cdot d - z_1)}{(x_0 - c'\cdot y)(w_1(l_2) - d) - (d\cdot y - x_1)} \\
   \tau &= \frac{v\cdot d - z_1}{\beta \cdot(w_1(l_2) - d)} + \frac{d \cdot y - x_1}{w_1(l_2) - d}\cdot \frac{k}{\beta}
\end{align*}

\noindent The above two inequalities specify the range from which we can choose a suitable $k$ and $\tau$, such that the requirements $\mpinf_0(\pi') = c'$ and $\mpinf_1(\pi') = d$ are met. We note that $k$ and $\tau$ are polynomial in size of the game and the weights on the edges, i.e., $\mathcal{O} (k) = |W|^2 \cdot |V|^3$ and $\mathcal{O} (\tau) = |W|^3 \cdot |V|^5$.
% \end{proof}
% \input{Appendix/Threshold_Problem/ExistenceOfRegWitness}
% \input{Appendix/Threshold_Problem/ThmNPForASV}
%\input{Appendix/Threshold_Problem/ZeroSumGames}
% \input{Appendix/Threshold_Problem/ThmHardnessThreshold}
% \input{Appendix/Threshold_Problem/ASVpseudopolynomial}
% \input{Appendix/Threshold_Problem/ThmNPHardMemlessStrat}
% \input{Appendix/Computation/index}
% \input{Appendix/Achievability/index}

\end{document}